\theoremstyle{plain} 
\newtheorem{theorem}{Theorem}[section]
\newtheorem*{theorem*}{Theorem}
\newtheorem{lemma}[theorem]{Lemma}
\newtheorem*{corollary*}{Corollary}
\newtheorem{proposition}[theorem]{Proposition}
\newtheorem*{proposition*}{Proposition}
\newtheorem{definition}[theorem]{Definition}
\newtheorem*{definition*}{Definition}
\newtheorem{assumption}[theorem]{Assumption}
\theoremstyle{definition} 
\newtheorem{example}[theorem]{Example}
\newtheorem*{example*}{Example}
\newtheorem{remark}[theorem]{Remark}
\newtheorem*{remark*}{Remark}
\newtheorem*{remarks*}{Remarks}
\newcommand{\deq}{\mathrel{\mathop:}=}
\newcommand{\e}[1]{\mathrm{e}^{#1}}
\newcommand{\R} {\mathbb{R}}
\newcommand{\C} {\mathbb{C}}
\newcommand{\N} {\mathbb{N}}
\newcommand{\Z} {\mathbb{Z}}
\newcommand{\E} {\mathbb{E}}
\newcommand{\T} {\mathbb{T}}
\newcommand{\p} {\mathbb{P}}
\DeclareMathOperator{\diag}{diag}
\DeclareMathOperator{\Tr}{Tr}
\DeclareMathOperator{\supp}{supp}
\DeclareMathOperator{\re}{\mathrm{Re}}
\DeclareMathOperator{\im}{\mathrm{Im}}
\newcommand{\caD}{{\mathcal D}}
\newcommand{\caE}{{\mathcal E}}
\newcommand{\caN}{{\mathcal N}}
\newcommand{\caO}{{\mathcal O}}
\newcommand{\caX}{{\mathcal X}}
\newcommand{\opunit}{\text{1}\kern-0.22em\text{l}}
\newcommand{\frn}{{\mathfrak n}}
\newcommand{\frX}{{\mathfrak X}}
\newcommand{\wt}{\widetilde}
\newcommand{\beq}{ \begin{equation} }
\newcommand{\eeq}{ \end{equation} }
\newcommand{\baq}{ \begin{eqnarray} }
\newcommand{\eaq}{ \end{eqnarray} }
\newcommand{\bet}{ \begin{theorem} }
\newcommand{\eet}{ \end{theorem} }
\newcommand{\GOE}{\textrm{GOE}}
\newcommand{\lone}{\mathbbm{1}}
\newcommand{\ben}{\begin{arabicenumerate}}
\newcommand{\een}{\end{arabicenumerate}}
\newcommand{\dd}{\mathrm{d}}
\newcommand{\ii}{\mathrm{i}}
\newcommand{\w}{\mathrm{W}}
\renewcommand{\b}{\mathrm{b}}
\renewcommand{\P}{\mathbb{P}}
\numberwithin{equation}{section} 
\numberwithin{theorem}{section}
\begin{document}
 \begin{minipage}{0.85\textwidth}
 \vspace{3cm}
 \end{minipage}
\begin{center}
\LARGE\bf
Edge Universality for Deformed Wigner Matrices
\end{center}

\renewcommand{\thefootnote}{\fnsymbol{footnote}}	
\vspace{1.5cm}
\begin{center}
 \begin{minipage}{0.3\textwidth}
\begin{center}
Ji Oon Lee\footnotemark[1]  \\
\footnotesize { KAIST }\\
{\it jioon.lee@kaist.edu}
\end{center}
\end{minipage}
\begin{minipage}{0.3\textwidth}
 \begin{center}
Kevin Schnelli\footnotemark[2]\\
\footnotesize 
{IST Austria}\\
{\it kevin.schnelli@ist.ac.at}
\end{center}
\end{minipage}
 \footnotetext[1]{Partially supported by National Research Foundation of Korea Grant 2011-0013474 and TJ Park Junior Faculty Fellowship.}
\footnotetext[2]{Supported by ERC Advanced Grant RANMAT, No. 338804. }
\end{center}

\vspace{1.5cm}

\begin{center}
 \begin{minipage}{0.8\textwidth}
\indent
\small

We consider $N\times N$ random matrices of the form $H = W + V$ where $W$ is a real symmetric Wigner matrix and $V$ a random or deterministic, real, diagonal matrix whose entries are independent of~$W$. We assume subexponential decay for the matrix entries of~$W$ and we choose~$V$ so that the eigenvalues of $W$ and $V$ are typically of the same order. For a large class of diagonal matrices~$V$ we show that the rescaled distribution of the extremal eigenvalues is given by the Tracy-Widom distribution $F_1$ in the limit of large $N$. Our proofs also apply to the complex Hermitian setting, i.e., when $W$ is a complex Hermitian Wigner matrix.
 \end{minipage}
\end{center}
 
 \vspace{4mm}
 
{\small
\noindent\textit{AMS Subject Classification (2010)}: 15B52, 60B20, 82B44
 \vspace{1mm}
 
 \noindent\textit{Keywords}: Random matrix, Edge Universality
 }
  \vspace{1.5cm}

\section{Introduction}
It is widely believed that the behavior of the extremal eigenvalues of many random matrix ensembles is universal. This edge universality has been established for a large class of Wigner matrices: Let $\mu_1$ denote the largest eigenvalue of a Wigner matrix of size~$N$. The limiting distribution of $\mu_1$ was identified for the Gaussian ensembles by Tracy and Widom~\cite{TW1, TW2}. They proved that
\begin{align}\label{prototype}
\lim_{N\to\infty} \mathbb{P}(N^{2/3}(\mu_1 -2) \le s) = F_\beta (s)\,,\qquad\qquad(\beta\in\{1,2\})\,,
\end{align}
$s\in\R$, where the Tracy-Widom distribution functions $F_{\beta}$ are described by Painlev\'{e} equations. The choice of $\beta=1,2$ corresponds to the Gaussian Orthogonal/Unitary ensemble (GUE/GOE).  The edge universality can also be extended to the $k$ largest eigenvalues, where the joint distribution of the $k$ largest eigenvalues can be written in terms of the Airy kernel, as first shown for the GUE/GOE in~\cite{F}. These results also hold for the $k$ smallest eigenvalues.

 Edge universality for Wigner matrices was first proved in~\cite{So1} (see also~\cite{SiSo1}) for real symmetric and complex Hermitian ensembles with symmetric distributions. The symmetry assumption on the entries' distribution was partially removed in~\cite{PeSo1,PeSo2}. Edge universality without any symmetry assumption was proved in~\cite{TV2} under the condition that the distribution of the matrix elements has subexponential decay and its first three 
moments match those of the Gaussian distribution, i.e., the third moment of the entries vanish. The vanishing third moment condition was removed in~\cite{EYY}. Recently, a necessary and sufficient condition on the entries' distribution for the edge universality of Wigner matrices was given in~\cite{LY}.

In the present paper, we establish edge universality for deformed Wigner matrices: A deformed Wigner matrix, $H$, is an $N\times N$ random matrix of the form 
\begin{align}\label{la matrice}
H =\lambda V + W \,,\qquad\qquad (\lambda\in\R)\,,
\end{align}
where $V$ is a real, diagonal, random or deterministic matrix and $W$ is a real symmetric or complex Hermitian Wigner matrix independent of $V$. The matrices are normalized so that the eigenvalues
of~$V$ and~$W$ are order one. The ``coupling'' constant $\lambda\in\R$ may depend on~$N$, yet we will always assume that $\lambda$ remains finite in the limit of large~$N$. If the entries of $V$ are random we may think of $V$ as a ``random potential''; if the entries of $V$ are deterministic, matrices of the form~\eqref{la matrice} are sometimes referred to as ``Wigner matrices with external source''. For $W$ belonging to the GUE/GOE, the model~\eqref{la matrice} is often called the deformed GUE/GOE.

Assuming that the empirical eigenvalue distribution of $V=\diag(v_1,\ldots,v_N)$, 
\begin{align}\label{empirical V}
\widehat\nu\deq\frac{1}{N}\sum_{i=1}^N\delta_{v_i}\,, 
\end{align}
converges weakly, respectively weakly in probability, to a non-random measure, $\nu$, it was shown in~\cite{P} that the empirical distribution of the eigenvalues of $H$ converges weakly in probability to a deterministic measure which we refer to as the deformed semicircle law, $\rho_{fc}$. The deformed semicircle law $\rho_{fc}$ depends on $\nu$ and $\lambda$, and is thus in general distinct from Wigner's semicircle law. For many choices of $\nu$, however, the deformed semicircle law $\rho_{fc}$ has compact support and, similar to the standard semicircle law, exhibits a square-root type behavior at the endpoints of its support (see Lemma~\ref{square_root} for the precise statement). This suggests that the typical eigenvalue spacing at the spectral edge is of order~$N^{-2/3}$ as in the Wigner case and that the edge universality holds in the following sense. We assume that $V$ is such that all eigenvalues of $H$ stick to the support of the measure $\rho_{fc}$, i.e., that there are no ``outliers'' in the limit 
of large~$N$. We further assume for simplicity that $\rho_{fc}$ is supported on a single interval. Then the edge 
universality for deformed Wigner matrices states that there are $\gamma_0\equiv\gamma_0(N)$ and $\widehat E_+\equiv\widehat E_+(N)$, such that the limiting distribution of the largest eigenvalue~$\mu_1$ of~$H$ satisfies
\begin{align}\label{prototype2}
\lim_{N\to\infty} \mathbb{P}(\gamma_0N^{2/3}(\mu_1 -\widehat E_+) \le s) = F_\beta (s)\,,
\end{align}
where $\gamma_0>0$ and $\widehat E_+\in\R$ solely depend on $\widehat\nu\equiv\widehat\nu(N)$ defined in~\eqref{empirical V} and the coupling constant $\lambda$. Further, $\widehat E_+$ converges in probability to the upper endpoint, $E_+$, of the deformed semicircle law $\rho_{fc}$. The scaling factor $\gamma_0$ is order one and guarantees that the typical eigenvalue spacings at the edge of the rescaled matrix $\gamma_0 H$ match those of the GOE/GUE up to negligible errors.

The deformed GUE for the special case when~$V$ has two eigenvalues~$\pm a$, each with equal multiplicity, has been treated in a series of papers~\cite{BK1,BK2,BK3}. In this setting the local eigenvalue statistics at the edge can be obtained via the solution to a Riemann-Hilbert problem; see also~\cite{CW} for the case when~$V$ has equispaced eigenvalues. For general $V$, the joint distribution of the eigenvalues of the deformed GUE can be expressed explicitly by the Brezin-Hikami/Johansson formula that admits an asymptotic analysis of the distribution of the extremal eigenvalues for various choices of $V$ and ranges of~$\lambda$; see~\cite{J2,S1,CP}. Once the edge universality has been established for the deformed GUE, it may be extended to complex Hermitian deformed Wigner matrices by appropriate modifications of the comparison methods introduced in~\cite{TV2} and in~\cite{EYY}. However, if the matrix $W$ is real symmetric there is no explicit formula for the joint distribution of the eigenvalues available 
and the methods referred to above cannot be used to identify the Tracy-Widom distribution~$F_{1}$ in the real symmetric setting.

In the present paper, we establish the edge universality for real symmetric deformed Wigner matrices for a large class of $V$ and wide ranges of $\lambda$; see Theorem~\ref{thm main}. In particular, we identify the Tracy-Widom distributions~$F_1$ as the limiting distributions of the extremal eigenvalues. Our proof also applies with minor modifications to the complex Hermitian setting, i.e., when $W$ is a complex Hermitian matrix.

For the special case when the entries of $V$ are independent and identically distributed (i.i.d.) random variables, for simplicity assumed to be bounded, we find that the limiting distribution of the  largest rescaled eigenvalue of $H$ is given by the convolution of the Tracy-Widom distribution and a centered Gaussian distribution with appropriately chosen variance depending on $\lambda$: The relative size of the Tracy-Widom part and the Gaussian part depends on the coupling constant $\lambda$; the Gaussian part is negligible when $\lambda \ll N^{-1/6}$, whereas the Tracy-Widom component is dominated by the Gaussian if $\lambda \gg N^{-1/6}$. The transition from the Tracy-Widom to the Gaussian occurs at $\lambda\sim N^{-1/6}$ as was pointed out first in~\cite{J2} for the deformed GUE.  Yet, the law of the eigenvalue spacing at the spectral edge is solely determined by the Tracy-Widom distribution for all finite~$\lambda$. (See Theorem \ref{thm2} for more detail.)

The main difficulty of the proof of our main results~Theorem~\ref{thm main} and Theorem~\ref{thm2} lies in the proof of the Green function comparison theorem, Proposition \ref{prop green}. The Green function comparison method has shown to be very successful in proving the edge universality of Wigner matrices. However, the direct application of the conventional Green function comparison uses Lindeberg's replacement strategy, which does not work for deformed Wigner matrices mainly due to the large diagonal elements. Simply put, as in the framework of the four-moment theorem in \cite{TV1,TV2}, the usual method requires that the change of the averaged Green function from each replacement is $o(N^{-2})$, which is negligible since the number of such replacement is $O(N^2)$. On the other hand, for the deformed Wigner matrices with $\lambda \sim 1$, the replacement in the diagonal element causes an $O(1)$ change in the averaged Green function, which is too large a change if the number of replacement steps is $O(N)$.

The main novelty of the present paper is a new approach to the Green function comparison theorem via Dyson Brownian motion (DBM). We estimate the change of the Green function along the flow of the DBM, which interpolates between the deformed Wigner matrix and the corresponding Gaussian ensemble. In other words, instead of converting the given random matrix entry by entry, we change all entries simultaneously, continuously. (See Section~\ref{sec:dbm} for more detail.) The continuity of the DBM was used in~\cite{BY} to compare the local eigenvalue statistics along the flow of the DBM for very short times. In our proof we follow the flow of the Green function over a time interval of order $\log N$ during which it undergoes a change of order one. The continuous changes in the Green function can then be compensated by rescaling or ``renormalizing'' the matrix and the spectral parameter of the Green function. Such a proof of the Green function comparison requires, for $\lambda\sim 1$, some non-trivial 
estimates on functions of Green functions as is explained in Section~\ref{sec:fluctuation}. (See, e.g.,~\eqref{3 ward} for such an estimate referred to as an ``optical theorem'' below.) For $\lambda=0$, the presented method also yields, based on estimates in~\cite{EYY}, a streamlined proof of the edge universality for Wigner matrices. (See Section~\ref{sec:dbm}.) For brevity we carry out the proof for real symmetric deformed Wigner matrices only, but the proof also applies with minor modifications to complex Hermitian deformed Wigner matrices. 

Edge universality for deformed Wigner matrices may alternatively be studied via the local ergodicity of the DBM~\cite{ESY4,ESYY}. This approach has been followed in~\cite{BEY} to prove the edge universality for generalized Wigner matrices.  A basic ingredient of that proof is a global entropy estimate whose analogue version has been established for deformed Wigner matrices in Proposition~5.3 of~\cite{LSSY} for some choices of $V$. Relying on this estimate, one can prove edge universality for deformed Winger matrices following the lines of~\cite{BEY} (see Remark~2.9 in~\cite{LSSY}). The advantage of the method presented in the present paper is that it is purely local: the only technical input is the local deformed semicircle law, i.e., estimates on the Green function on scale $N^{-2/3}$. (See Theorem~\ref{local law} below.) Local laws for the deformed ensemble have been established in~\cite{LS1,LSSY}. However, in the proof presented in this paper these estimates are only needed at the edge of the spectrum and no further a priori control of 
the eigenvalues or Green function away from the edge is required. In particular, the method can also be used to study the extremal eigenvalues in a multi cut regime where the eigenvalues' limiting distribution is supported on several disjoint intervals. In such a setting the corresponding global entropy estimates in Proposition 5.3 of~\cite{LSSY} were rather difficult to obtain. Another advantage of the method of the present paper is that it does not require that the eigenvalues evolve autonomously under DBM, i.e., that the stochastic differential equations for the eigenvalue and the eigenvectors decouple under DBM. The method can therefore also be applied to matrix ensembles for which the eigenvalues do not evolve autonomously under DBM.

This paper is organized as follows: In Section~\ref{sec:main}, we define the model precisely and introduce the main results of the paper. In Sections~\ref{sec:prelim} and~\ref{sec:renormalize}, we collect the tools and known results we need in the proof of the main results. In Section~\ref{sec:proof of main results}, we prove the main theorems using the Green function comparison theorem. In Sections~\ref{sec:dbm}-\ref{sec:proof of prop}, we explain the proof of the Green function comparison theorem. While the main ideas of the proof are rather nice and pleasant, the details of the proof of the Green function theorem include long explicit, but elementary, computations that can be found in the Appendices.

{\it Acknowledgements:} We thank Horng-Tzer Yau for numerous helpful discussions and remarks. We are also
grateful to Paul Bourgade and L\'aszl\'o Erd\H{o}s for helpful comments.  We are grateful to Thomas Spencer for hospitality at the IAS where a major part of
this research was conducted.

\section{Definition and Main Result} \label{sec:main}

\subsection{Definition of the model}

\begin{definition} \label{assumption wigner}
Let $W$ be an $N\times N$ random matrix, whose entries $(w_{ij})$ are independent, up to the symmetry constraint $w_{ij}= {w_{ji}}$, centered real random variables. We assume that the random variables $(w_{ij})$ have variance $1/N$ and have finite moments, uniformly in~$N$, $i$ and $j$. More precisely, we assume that for each $p\in\N$ there is a constant $c_p$ such that
\beq\label{eq.C0}
 \E w_{ij}=0\,,\qquad\quad \E w_{ij}^2 = \frac{1+ c_2\delta_{ij}}{N}\,, \qquad\quad \E|w_{ij}|^p \leq \frac{c_p}{N^{p/2}}\,,\qquad (p\ge 3)\,.
\eeq
In case $(w_{ij})$ are Gaussian random variables with $c_2=1$, $W$ belongs to the Gaussian orthogonal ensemble (GOE).
\end{definition}

Let $V=\mathrm{diag}(v_i)$ be an $N\times N$ diagonal, random or deterministic matrix, whose entries $(v_i)$ are real-valued. We denote by $\widehat\nu$ the empirical eigenvalue distribution of the diagonal matrix~$V=\mathrm{diag}(v_i)$,
\begin{align}\label{widehatmuV}
 \widehat\nu\deq\frac{1}{N}\sum_{i=1}^N\delta_{v_i}\,.
\end{align}

\begin{assumption}\label{assumption mu_V convergence}
There is a (non-random) compactly supported probability measure $\nu$ and strictly positive constants~$\alpha_0$ and~$\beta_0$ such that the following holds. For any compact set $\caD\subset\C^+$ with $\mathrm{dist}(\caD,\supp\nu)>0$, there is~$C$ such that 
\begin{align}\label{equation assumption mu_V convergence}
 \P\left(\max_{z\in \caD}\left|\int\frac{\dd\widehat\nu(v)}{v-z}-\int\frac{\dd\nu(v)}{v-z} \right|\le CN^{-\alpha_0}\right)\ge 1-N^{-\beta_0} \,,
\end{align}
for $N$ sufficiently large.
\end{assumption}
Note that~\eqref{equation assumption mu_V convergence} implies that $\widehat\nu$ converges to $\nu$ in the weak sense as $N\to\infty$. Also note that the condition~\eqref{equation assumption mu_V convergence} holds for any $0<\alpha_0<1/2$ and any $\beta_0>0$ if $(v_i)$ are i.i.d.\ random variables.

We define the deformed Wigner matrix ensemble as follows:
\begin{definition} \label{assumption deformed}
A deformed Wigner matrix of size $N$ is an $N\times N$ symmetric random matrix $H$ that can be decomposed into
\beq \label{thematrix}
H = (h_{ij}) \deq \lambda_0 V + W\,,
\eeq
where $W$ is a real symmetric Wigner matrix of size $N$ and $V=\diag(v_i)$ is an $N\times N$ real diagonal matrix. The entries of $V$ can be random or deterministic. In case $V$ is random, we assume that $(v_i)$ are independent of $(w_{ij})$, yet $(v_i)$ need not be independent among themselves. Finally, $\lambda_0\ge 0$ is a finite coupling constant. 
\end{definition}

Our second assumption on $\widehat\nu$, $\nu$ and $\lambda_0$ guarantees that the limiting eigenvalue distribution of $H$ is supported on a single interval and has a square root behavior at the two endpoints of the support. Sufficient conditions for this behavior have been presented in~\cite{S2}. The assumption below also rules out the possibility that the matrix $H$ has ``outliers'' in the limit of large $N$. 
\begin{assumption}\label{assumption mu_V}
Let $I_{\nu}$ be the smallest closed interval such that $\supp{\nu}\subseteq I_{\nu}$. Then, there exists $\varpi>0$, independent of $N$, such that
\begin{align}\label{eq assumption mu_V}
\inf_{x\in I_{\nu}}\int\frac{\dd\nu(v)}{(v-x)^2}\ge (1+\varpi)\lambda_0^2\,.
\end{align}
Moreover, let $I_{\widehat\nu}$ be the smallest closed interval such that $\supp{\widehat\nu}\subseteq I_{\widehat\nu}$. Then,
we assume that there is a constant $\beta_1>0$, such that
\begin{align}\label{eq assumption mu_V 2}
\mathbb{P}\left(\inf_{x\in I_{\widehat\nu}}\int\frac{\dd\widehat\nu(v)}{(v-x)^2}\ge (1+{\varpi})\lambda_0^2 \right)\ge 1-N^{-\beta_1}\,,
\end{align}
for $N$ sufficiently large.
\end{assumption}
\begin{remark}
The left side of~\eqref{eq assumption mu_V} may be infinite. In this case~\eqref{eq assumption mu_V} should be understood in the sense that~$\lambda_0$ can be chosen as any finite positive number (independent of $N$).  Note that if~\eqref{eq assumption mu_V} is satisfied for some~$\lambda\equiv\lambda_0$ and~$\nu$, then it is also satisfied for all~$\lambda_0\in[0,\lambda]$ for this~$\nu$. 
\end{remark}
\begin{remark}
The coupling constant $\lambda_0$ can be chosen to depend on $N$, as long as it stays bounded and converges sufficiently fast in the limit of large $N$. To simplify the exposition we only consider the case $\lambda_0= \sigma_0 N^{-\delta}$, for some constants $\delta\ge 0$ and $\sigma_0\ge 0$, below. 
\end{remark}

\noindent We give two examples for which Assumption~\ref{assumption mu_V} is satisfied: We choose $(v_i)$ to be i.i.d.\ random variables with law $\nu$.
\begin{itemize}
\item [$(1)$] Choosing $\nu=\frac{1}{2}(\mathrm{\delta}_{-1}+\mathrm{\delta}_{1})$, $\lambda\ge 0$, we have $I_{\nu}=[-1,1]$. For $\lambda<1$, one checks that there exist~$\varpi$ and $\beta_1>0$ such that~\eqref{eq assumption mu_V} and~\eqref{eq assumption mu_V 2} are satisfied and that the deformed semicircle law is supported on a single interval with a square root type behavior at the edges. However, in case $\lambda>1$, the deformed semicircle law is supported on two disjoint intervals. For more details see~\cite{BK1,BK2,BK3}.

\item[$(2)$] Let $\nu$ to be a centered Jacobi measure of the form
\begin{align}\label{Jacobi measure}
\nu(v)=Z^{-1}(1+v)^{\mathrm{a}}(1-v)^{\mathrm{b}} d(v)\lone_{[-1,1]}(v)\,,\end{align}
where $d\in C^{1}([-1,1])$, with $d(v)>0$, $-1<\mathrm{a},\mathrm{b}<\infty$, and $Z$ a normalization constant. Then for $\mathrm{a},\mathrm{b}<1$, there is, for any $\lambda\ge0$,  $\varpi\equiv\varpi(\lambda)>0$ and $\beta_1>0$ such that~\eqref{eq assumption mu_V} and~\eqref{eq assumption mu_V 2} are satisfied. However, if $\mathrm{a}>1$ or $\mathrm{b}>1$ then~\eqref{assumption mu_V} may not be satisfied for $\lambda_0$ sufficiently large. In this setting the deformed semicircle law is still supported on a single interval, but the square root behavior at the edge may fail. We refer to~\cite{LS1,LS2} for a detailed discussion.
\end{itemize}

\subsection{Deformed semicircle law}\label{subsection deformed semicirle law}

The deformed semicircle law can be described in terms of the Stieltjes transform: For a (probability) measure $\omega$ on the real line we define its Stieltjes transform, $m_{\omega}$, by
\begin{align}
 m_{\omega}(z)\deq\int\frac{\dd\omega(v)}{v-z}\,,\qquad\qquad (z\in\C^+)\,.
\end{align}
Note that $m_{\omega}$ is an analytic function in the upper half plane and that $\im m_\omega(z)\ge 0$, $\im z> 0$. Assuming that $\omega$ is absolutely continuous with respect to Lebesgue measure, we can recover the density of $\omega$ from $m_{\omega}$ by the inversion formula
\begin{align}\label{stieltjes inversion formula}
 \omega(E)=\lim_{\eta\searrow 0}\frac{1}{\pi}\im m_{\omega}(E+\ii \eta)\,,\qquad\qquad (E\in\R)\,.
\end{align}
We use the same symbols to denote measures and their densities.

Choosing $\omega$ to be the standard semicircular law $\rho_{sc}$, the Stieltjes transform $m_{sc}\equiv m_{\rho_{sc}}$ can be computed explicitly and one checks that $m_{sc}$ satisfies the relation
\begin{align}
 m_{sc}(z)=\frac{-1}{m_{sc}(z)+z}\,,\qquad \quad\im m_{sc}(z)\ge0\,,\qquad\qquad (z\in\C^+)\,.
\end{align}

The deformed semicircle law is conveniently defined through its Stieltjes transform. Let $\nu$ be the limiting probability measure of Assumption~\ref{assumption mu_V convergence}. Then it is well-known~\cite{P} that the functional equation
\begin{align}\label{mfc equation}
 m_{fc}(z)= \int\frac{\dd\nu(v)}{\lambda_0 v-z-m_{fc}(z)}\,,\qquad\quad \im m_{fc}(z)\ge 0\,,\qquad\qquad (z\in\C^+)\,,
\end{align}
 has a unique solution that satisfies $\limsup_{\eta\searrow 0}\im m(E+\ii\eta)<\infty$, for all $E\in\R$.
The deformed semicircle law, denoted by $\rho_{fc}$, is then defined through its density
\begin{align}
 \rho_{fc}(E)\deq\lim_{\eta\searrow 0}\frac{1}{\pi}\im m_{fc}(E+\ii\eta)\,,\qquad \quad(E\in\R)\,,
\end{align}
where $m_{fc}$ is the solution to~\eqref{mfc equation}. The measure $\rho_{fc}$ has been studied in detail in~\cite{B}. For example, it was shown there that the density $\rho_{fc}$ is an analytic function inside the support of the measure. For our analysis the following result from~\cite{S2,LS1} is relevant.

\begin{lemma}\label{lemma vorbereitung}
 Let $\nu$ and $\lambda_0$ satisfy~\eqref{eq assumption mu_V} for some $\varpi>0$. Then there are $E_-,E_+\in\R$, such that $\supp\,\rho_{fc}=[E_-,E_+]$. Moreover, $\rho_{fc}$ has a strictly positive density on $(E_-,E_+)$.
\end{lemma}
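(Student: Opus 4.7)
The plan is to perform a turning-point analysis of the real-analytic inverse of $m_{fc}$, using~\eqref{mfc equation} together with hypothesis~\eqref{eq assumption mu_V}. I introduce $\xi(z) \deq z + m_{fc}(z)$ and the real-analytic function
\[
F(\xi) \deq \xi - \int \frac{\dd\nu(v)}{\lambda_0 v - \xi}\,, \qquad \xi \in \R \setminus \lambda_0 \supp \nu\,,
\]
so that~\eqref{mfc equation} becomes $z = F(\xi(z))$ and $m_{fc}(z) = \xi(z) - z$. On $\R \setminus \supp \rho_{fc}$ both sides are real by the Stieltjes inversion formula~\eqref{stieltjes inversion formula}, and I will identify the edges $E_\pm$ of $\supp \rho_{fc}$ with the critical values of $F$.

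Differentiation gives $F'(\xi) = 1 - \int \dd\nu(v)/(\lambda_0 v - \xi)^2$; with $w \deq \xi/\lambda_0$ the critical equation $F'(\xi) = 0$ reads $\int \dd\nu(v)/(v-w)^2 = \lambda_0^2$. By~\eqref{eq assumption mu_V} this admits no solution in $I_\nu$. On each unbounded component of $\R \setminus I_\nu$ the map $w \mapsto \int \dd\nu(v)/(v-w)^2$ is real-analytic, strictly monotone, vanishes at infinity, and attains values $\ge (1+\varpi)\lambda_0^2 > \lambda_0^2$ as $w$ approaches $I_\nu$. Hence there is a unique solution on each side, say $w_- < \inf I_\nu$ and $w_+ > \sup I_\nu$; set $\xi_\pm \deq \lambda_0 w_\pm$ and $E_\pm \deq F(\xi_\pm)$.

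A sign analysis of $F'$ shows that $F'>0$ on $(-\infty,\xi_-)\cup(\xi_+,\infty)$ and $F'<0$ in between (away from $\lambda_0\supp\nu$). Hence $F$ restricts to strictly increasing bijections $(-\infty, \xi_-] \to (-\infty, E_-]$ and $[\xi_+, \infty) \to [E_+, \infty)$, and in particular $E_-<E_+$. On each preimage, $m \deq \xi - E$ (with $\xi = F^{-1}(E)$) is a real, smooth solution of~\eqref{mfc equation} with the correct asymptotic $m \sim -1/E$ as $|E| \to \infty$; by uniqueness of the Herglotz solution with this asymptotic, $m_{fc}$ agrees with $m$ there, so $\im m_{fc}(E + \ii 0) = 0$ and, via~\eqref{stieltjes inversion formula}, $\rho_{fc} \equiv 0$ on $\R \setminus [E_-, E_+]$. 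This gives $\supp \rho_{fc} \subseteq [E_-, E_+]$.

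For strict positivity on $(E_-, E_+)$ I argue by contradiction. Suppose $\rho_{fc} \equiv 0$ on a maximal open interval $(a, b) \subset (E_-, E_+)$. The same inversion of $F$ produces a real-analytic branch $E \mapsto \xi(E)$ on $(a, b)$ with $m_{fc}(E) = \xi(E) - E$. Writing $m_{fc}(E + \ii 0) = \alpha(E) + \ii \beta(E)$, the imaginary part of~\eqref{mfc equation} at any interior point of $\supp\rho_{fc}$ with $\beta > 0$ reads $\int \dd\nu(v)/|\lambda_0 v - E - m_{fc}(E + \ii 0)|^2 = 1$. Continuity of $m_{fc}$ up to $\R$, provided by the regularity theory in~\cite{B}, allows passing to the limit $E \to a^-$ from the support side, yielding $F'(\xi(a)) = 0$ and thus an additional critical point of $F$, contradicting the exact count $\{\xi_-, \xi_+\}$ established above. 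The main obstacle is this last step: establishing enough boundary regularity of $m_{fc}$ to pass to the limit and identify the intermediate endpoint as a critical point, thereby ruling out gaps inside $[E_-, E_+]$.
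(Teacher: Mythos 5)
The paper does not prove this lemma; it is quoted directly from~\cite{S2,LS1}. Your approach---inverting the self-consistent equation via $F(\xi) = \xi - \int\dd\nu(v)/(\lambda_0 v - \xi)$, locating exactly two critical points $\xi_\pm$ of $F$ outside $\lambda_0 I_\nu$ thanks to Assumption~\ref{assumption mu_V}, and identifying $E_\pm = F(\xi_\pm)$ as the edges---is the standard route, and it is essentially the one used in those references. Your construction of $\xi_\pm$, the sign analysis of $F'$, and the inclusion $\supp\rho_{fc}\subseteq[E_-,E_+]$ are all sound, though the ``uniqueness of the Herglotz solution'' clause should be argued a little more carefully: one should invert $F$ on a complex neighbourhood of $[\xi_+,\infty)$, verify that $m(z)\deq F^{-1}(z)-z$ maps $\C^+$ to $\C^+$ there, match with $m_{fc}$ near infinity, and continue analytically, rather than appeal to the real asymptotics alone.

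The positivity step has the gap you yourself flag---boundary continuity of $m_{fc}$, which must be imported from~\cite{B}---and in addition the set-up of a ``maximal open interval $(a,b)\subset(E_-,E_+)$'' silently skips the case where the support stops short of $E_-$ or $E_+$: if $\supp\rho_{fc}=[c,E_+]$ with $c>E_-$, the relevant zero interval is $(-\infty,c)$, not contained in $(E_-,E_+)$, so your contradiction is never triggered. Both problems disappear if you argue positivity directly rather than by classifying gaps. Fix $E_0\in(E_-,E_+)$ and suppose $\im m_{fc}(E_0+\ii 0)=0$; set $\zeta_0=E_0+m_{fc}(E_0+\ii 0)\in\R$. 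Taking imaginary parts of~\eqref{mfc equation} on $\C^+$ gives $\im m_{fc}(z)=(\eta+\im m_{fc}(z))\int\dd\nu(v)/|\lambda_0 v-z-m_{fc}(z)|^2$, so the integral is $<1$ there; Fatou as $\eta\searrow 0$ yields $\int\dd\nu(v)/(\lambda_0 v-\zeta_0)^2\le 1$, and Assumption~\ref{assumption mu_V} then forces $\zeta_0/\lambda_0\notin I_\nu$. Hence $F$ is analytic at $\zeta_0$, $F(\zeta_0)=E_0$ (pass to the limit in~\eqref{mfc equation}, now with a bounded integrand), and $F'(\zeta_0)\ge 0$. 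If $F'(\zeta_0)=0$ then $\zeta_0\in\{\xi_-,\xi_+\}$ and $E_0\in\{E_-,E_+\}$; if $F'(\zeta_0)>0$ your own sign analysis gives $E_0=F(\zeta_0)\in(-\infty,E_-)\cup(E_+,\infty)$. Either way contradicts $E_0\in(E_-,E_+)$, so $\rho_{fc}(E_0)>0$. This still needs the boundary continuity of $m_{fc}$ from~\cite{B}, but nothing more, and it simultaneously establishes $(E_-,E_+)\subset\supp\rho_{fc}$, so that $\supp\rho_{fc}=[E_-,E_+]$.
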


The measure $\rho_{fc}$ is also called the additive free convolution of the semicircular law and, up to the rescaling by $\lambda_0$, the measure~$\nu$. More generally, the additive free convolution of two (probability) measure $\omega_1$ and $\omega_2$, usually denoted by $\omega_1\boxplus\omega_2$, is defined as the distribution of the sum of two freely independent non-commutative random variables, having distributions $\omega_1$,~$\omega_2$ respectively; we refer to, e.g.,~\cite{VDN,AGZ} for reviews. Similar to~\eqref{mfc equation}, the free convolution measure $\omega_1\boxplus\omega_2$ can be described in terms of a set of functional equations for the Stieltjes transforms; see~\cite{CG, BeB07}.

\subsection{Main result}

Let $\mu_1^{\w}$ be the largest eigenvalue of the Wigner matrix $W$. The edge universality for Wigner matrices asserts that
\beq\label{le TW}
\lim_{N \to \infty} \p \left( N^{2/3} (\mu_1^{\w} -2) \leq s \right) = F_1 (s)\,,
\eeq
where $F_1$ is the Tracy-Widom distribution function for the Gaussian orthogonal ensembles. We remark that the Tracy-Widom distributions $F_2$ and $F_4$ arise as the limiting laws of the largest eigenvalues for the Gaussian unitary and Gaussian symplectic ensembles. Statement~\eqref{le TW} holds true for the smallest eigenvalue $\mu_N^{\w}$ as well. We henceforth focus on the largest eigenvalues, the smallest eigenvalues can be dealt with in exactly the same way.

The edge universality for deformed real symmetric Wigner matrices, the main result of this paper, is as follows.
\begin{theorem} \label{thm main}
Let $H = \lambda_0 V + W$ be a deformed Wigner matrix of the form \eqref{thematrix}, where $W$ is a real symmetric Wigner matrix satisfying the assumptions in Definition~\ref{assumption wigner}, $V$ is a real diagonal random or deterministic matrix satisfying Assumption~\ref{assumption mu_V convergence} that is independent of~$W$. Further, assume that $V$ and $\lambda_0\ge 0$ satisfy Assumption~\ref{assumption mu_V}. Let~$\mu_1$ be the largest eigenvalue of $H$.

Then, there exists $\gamma_0\equiv \gamma_0(N)$ and $\widehat E_+\equiv\widehat E_+(N)$ depending only on $\lambda_0$ and $\widehat\nu$ such that the distribution of the rescaled largest eigenvalue converges to the Tracy-Widom distribution $F_1$, i.e.,
\beq \label{eq:main}
\lim_{N \to \infty} \p \left( \gamma_0 N^{2/3} \big( \mu_1 - \widehat E_+ \big) \leq s \right) = F_1 (s)\,.
\eeq
Moreover, $\widehat E_+(N)$ converges in probability in the limit $N\to\infty$ to $E_+$, the upper endpoint of the measure $\rho_{fc}$.
\end{theorem}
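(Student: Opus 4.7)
The plan is to follow the standard Green-function-comparison paradigm for edge universality, but with the crucial novelty (signaled in the introduction) of running the comparison along a Dyson Brownian motion rather than entry-by-entry. \textbf{Step 1 (edge reduction).} First I would reduce \eqref{eq:main} to a statement about smooth functionals of the averaged Green function $m_N(z)=N^{-1}\Tr(H-z)^{-1}$ near the right edge. By a standard truncation argument one expresses $\lone\{\mu_1\le \widehat E_+ + sN^{-2/3}/\gamma_0\}$ as $N-\Tr\chi(H)+o(1)$ for a smooth cutoff $\chi$ at the appropriate scale, and rewrites the trace as a contour integral of $\im m_N(z)$ tested against a mollifier of scale $\eta\sim N^{-2/3-\varepsilon}$. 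The deterministic parameters $\widehat E_+$ and $\gamma_0$ are extracted from \eqref{mfc equation} with $\nu$ replaced by $\widehat\nu$: $\widehat E_+$ is the upper endpoint of the corresponding free-convolution density, and $\gamma_0>0$ is fixed so that the square-root decay at this endpoint, guaranteed by Lemma~\ref{lemma vorbereitung}, has the standard normalization. The local deformed semicircle law (Theorem~\ref{local law} below) then supplies the edge rigidity $|\mu_1-\widehat E_+|\lesssim N^{-2/3+\varepsilon}$ and the Green-function bounds at scale $N^{-2/3}$ needed to justify the reduction.

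\textbf{Step 2 (Green-function comparison along DBM).} The heart of the argument is the comparison
\[
\bigl|\,\E F\!\bigl(\im m_N(z,H)\bigr)-\E F\!\bigl(\im m_N(z,H^{\mathrm{ref}})\bigr)\,\bigr|=o(1),
\]
for smooth $F$ and spectral parameters $z$ at height $\sim N^{-2/3+\varepsilon}$ above $\widehat E_+$, where $H^{\mathrm{ref}}\deq\lambda_0 V+W^{\GOE}$. The Lindeberg swap fails here because replacing a single diagonal entry $h_{ii}=\lambda_0 v_i$ moves $\Tr G$ by $O(1)$; instead I would interpolate the Wigner part via the Ornstein-Uhlenbeck/DBM flow $W(t)=\e{-t/2}W+\sqrt{1-\e{-t}}\,\widetilde W^{\GOE}$ (with $V$ and $\widetilde W^{\GOE}$ frozen and independent of $W$) and differentiate $\E F(\im m_N(z(t),H(t)))$ in $t$, while simultaneously renormalizing the spectral parameter $z(t)$ so that the classical edge of the free convolution associated to $H(t)$ stays stationary. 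It\^o's formula converts the derivative into averaged polynomials in entries of $G(z(t))$: cubic terms are absorbed by rigidity, while the quadratic terms produce genuinely $O(1)$ contributions that must be cancelled by combining the renormalization of $z(t)$ with self-consistent ``optical'' identities among sums such as $N^{-1}\sum_{ij}|G_{ij}|^2$, $N^{-1}\sum_{ij}G_{ii}G_{jj}$, and $\im m_{fc}$, which are the estimates foreshadowed near~\eqref{3 ward}. The flow is run until $t\sim\log N$, at which point $W(t)$ is within $o(1)$ of a pure GOE, giving access to $H^{\mathrm{ref}}$.

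\textbf{Step 3 (identification of $F_1$, and main obstacle).} To pin down the Tracy-Widom law for the reference $H^{\mathrm{ref}}$, I would run a second DBM in the coupling itself, $H(s)=sV+W^{\GOE}$ for $s\in[0,\lambda_0]$, and rerun the comparison of Step~2 along this path (the argument is essentially the same, only simpler because $W^{\GOE}$ is already Gaussian) to reduce to $s=0$, i.e.\ pure GOE, whose top eigenvalue obeys \eqref{le TW}. The convergence $\widehat E_+(N)\to E_+$ in probability then follows from Assumption~\ref{assumption mu_V convergence} by continuity of the free-convolution edge under weak convergence of $\widehat\nu$. The main obstacle is Step~2: one must run the flow for time of order $\log N$, during which the distribution of $H(t)$ changes by $O(1)$, while maintaining $o(1)$ accuracy in $F(\im m_N)$. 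This forces the renormalization of $z(t)$ and the optical-theorem cancellations to match to very high order uniformly in $z$ near the edge, and is expected to be where essentially all of the technical work of the paper resides.
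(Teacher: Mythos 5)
Your outline is correct in its paradigm and matches the paper's strategy closely: (i) reduce $\{\mu_1 \le E\}$ via rigidity and a smooth cutoff to expectations of a smooth functional of $\int \im m_N$ at scale $N^{-2/3-\epsilon}$; (ii) compare by differentiating that expectation along an interpolating matrix flow; (iii) cancel the leading $O(1)$ terms in the derivative by renormalizing the spectral parameter and scale factor as the free-convolution edge moves, using the ``optical'' identities such as~\eqref{3 ward}. The one structural difference is that you propose a two-stage flow --- first $W\to W^{\GOE}$ with $V$ frozen, then $sV+W^{\GOE}$ as $s$ decreases from $\lambda_0$ to $0$ --- whereas the paper uses a \emph{single} Ornstein--Uhlenbeck flow~\eqref{le DBM} under which the entire matrix, diagonal included, relaxes to GOE: at time $t$ the matrix has the law of $\lambda_0\e{-t/2}V+\e{-t/2}W+(1-\e{-t})^{1/2}W^{\GOE}$, so $\lambda(t)=\lambda_0\e{-t/2}$ decays and $\widehat L_+(t)$, $\gamma(t)$, $\tau(t)$ move coherently along the OU semigroup. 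Your decomposition is valid, but you have misattributed the difficulty. In your Step~2, with $V$ frozen, the deformed semicircle law and hence $\widehat L_+$, $\gamma$ and $z$ are \emph{stationary} along the $W(t)$-flow, so there is nothing to renormalize and no optical theorem to invoke --- the cancellation of the OU drift against the It\={o} correction is the pure-Wigner comparison of Example~\ref{ex:wigner}, with $m_{sc}$ replaced by the $a$-dependent $g_a$ of~\eqref{local 2} in the analogue of Lemma~\ref{baby lemma}. Everything you describe as Step~2's ``main obstacle'' actually belongs to your Step~3 (or, in the paper, to its single combined flow): there the moving edge forces the $\dot z$, $\dot\gamma$ renormalizations, and Lemma~\ref{lem:G_ii estimate} together with Appendices~\ref{appendix III}--\ref{appendix V} show, by expanding $\E[F'(\frX)G_{ia}G_{ai}]$ in the deterministic weights $(\lambda\gamma v_a-\tau)^{-n}$ and exploiting the sum rules for $A_n$, $A_n'$, that the coefficients $C_2$, $C_3$, $C_0'$ vanish identically. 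Note also that your Step~3 is not a DBM but a deterministic homotopy in the coupling constant; that is harmless, though you should not call it one.
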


\begin{remark}
A precise definition of $\widehat E_+\equiv\widehat E_+(N)$ is given in~\eqref{supp mu_fc} and \eqref{def E hat}. As mentioned above $\widehat E_+$ converges in probability to $E_+$, and we may replace $\widehat E_+$ by $E_+$ in~\eqref{eq:main} if the convergence is sufficiently fast. Note that the speed of convergence depends on the exponent $\alpha_0$ in~\eqref{equation assumption mu_V convergence}.

The normalization factor $\gamma_0\equiv\gamma_0(N)$ is given by
\begin{align}\label{le gamma}
\gamma_0=\left(-\int\frac{\dd\widehat\nu(v)}{(\lambda_0 v-\zeta)^3} \right)^{-1/3}\,,
\end{align}
where $\zeta=\widehat E_++m_{fc}(\widehat E_+)$; see~\eqref{gamma}. For any $\lambda_0$ such that Assumption~\ref{assumption mu_V} is satisfied, we have for some constant~$c$, independent of $N$, that $\zeta - \lambda_0 v > c > 0$, for all $v\in\supp\widehat\nu$. In particular, $\gamma_0=O(1)$. It hence follows from Assumption~\ref{assumption mu_V convergence} that we may replace~$\gamma_0$ by the $N$-independent quantity
\begin{align}
 \left(-\int\frac{\dd\nu(v)}{(\lambda_0v-E_++m_{fc}(E_+))^3} \right)^{-1/3}
\end{align}
in~\eqref{eq:main}.
\end{remark}

\begin{remark} \label{k main}
Theorem \ref{thm main} can be extended to correlation functions of extreme eigenvalues as follows: For any fixed~$k$, the joint distribution function of the first $k$ rescaled eigenvalues converges to that of the GOE, i.e., if we denote by $\mu_1^{\GOE} \geq \mu_2^{\GOE} \geq \ldots \geq \mu_N^{\GOE}$ the eigenvalues of a GOE matrix, then
\begin{align} \label{eq: k main}
&\lim_{N \to \infty} \p \left( \gamma_0 N^{2/3} \big( \mu_1 - \widehat E_+ \big) \leq s_1 \,, \gamma_0 N^{2/3} \big( \mu_2 - \widehat E_+ \big) \leq s_2 \,, \ldots, \gamma_0 N^{2/3} \big( \mu_k - \widehat E_+ \big) \leq s_k \right)\nonumber \\
&\quad= \lim_{N \to \infty} \p \left( N^{2/3} \big( \mu_1^{\GOE} - 2 \big) \leq s_1 \,, N^{2/3} \big( \mu_2^{\GOE} - 2 \big) \leq s_2 \,, \ldots, N^{2/3} \big( \mu_k^{\GOE} - 2 \big) \leq s_k \right)\,.
 \end{align}
\end{remark}

Our second result classifies the fluctuation of the largest eigenvalues of $H = \lambda_0 V + W$ when the entries $(v_i)$ of $V$ are i.i.d.\ random variables that are independent of~$W$. 

\begin{theorem} \label{thm2}

Let $H = \lambda_0 V + W$ be a deformed Wigner matrix of the form \eqref{thematrix}, where $W$ is a real symmetric Wigner matrix satisfying the assumptions in Definition~\ref{assumption wigner} and $V$ is a real diagonal random matrix independent of~$W$. Assume that the entries of~$V$ are i.i.d.\ random variables with distribution~$\nu$. Let $m^{(k)}(\nu)$ denote the $k$-th central moment of $\nu$. (In particular, $m^{(2)}(\nu)$ is the variance of $\nu$.) Let $\mu_1$ be the largest eigenvalue of $H$.

Then, with $E_+$ in Lemma \ref{lemma vorbereitung}, which depends only on $\nu$ and $\lambda_0$, the following holds.
\begin{itemize}
\item[i.] Let $\lambda_0=\sigma_0N^{-\delta}$, for two constants $\delta$ and $\sigma_0$ satisfying  $1/6< \delta $, $\sigma_0\ge 0$. Then the distribution of the rescaled largest eigenvalue converges to the Tracy-Widom distribution, i.e., 
\beq \label{thm2 case1}
\lim_{N \to \infty} \p \left( N^{2/3} \big( \mu_1 - E_+\big) \leq s \right) = F_1 (s)\,.
\eeq
\item[ii.]  Let $\lambda_0 =\sigma_0N^{-1/6}$ for some constant $\sigma_0>0$. Then the distribution of the rescaled largest eigenvalue converges to the convolution of the Tracy-Widom distribution and the Gaussian distribution, i.e.,
\beq \label{thm2 case3}
\lim_{N \to \infty} \p \left( N^{2/3} \big( \mu_1 -E_+ \big) \leq s \right) = \p( X_{F} + X_{\Phi} \leq s )\,,
\eeq
where $X_{F}$ and $X_{\Phi}$ are independent random variables whose cumulative distribution functions are $F_1$ and $\Phi_{\sigma^2}$, respectively. Here, $\Phi_{\sigma^2}$ denotes the cumulative distribution function of a centered Gaussian distribution with variance $\sigma^2=\sigma_0^2 m^{(2)}(\nu)$.

\item[iii.] Let $\lambda_0 =\sigma_0N^{-\delta}$, for two constants $\delta$ and $\sigma_0$ satisfying  $0\le \delta<1/6 $, $\sigma_0> 0$. Then the distribution of the rescaled largest eigenvalue converges to the Gaussian distribution, i.e., there exist constants $E_+, \sigma > 0$, depending only on $\lambda_0$ and $\nu$, such that
\beq \label{thm2 case2}
\lim_{N \to \infty} \p \left( N^{1/2} \lambda_0^{-1} \big( \mu_1 - E_+ \big) \leq s \right) = \Phi_{\sigma^2} (s)\,,
\eeq
where the standard deviation $\sigma$ is of order $O(1)$, satisfying
\begin{align}
\sigma^2= \lim_{N \to \infty} \lambda_0^{-2}(1-m_{fc}(E_+)^2)\,.
\end{align}
In particular, when $\delta>0$, we have $\sigma = m^{(2)}(\nu)$.
\end{itemize}

Finally, let $m_1(\nu)$ denote the mean of $\nu$. Then, the point $E_+$ admits the asymptotic expansion
\begin{align} \label{E asymptotic}
 E_+= 2 + \lambda_0 m_1(\nu) + \lambda_0^2 m^{(2)}(\nu) + \lambda_0^3 m^{(3)}(\nu) + \lambda_0^4 \left( m^{(4)}(\nu) - \frac{9 (m^{(2)}(\nu))^2}{4} \right) + O(\lambda_0^5)\,,
\end{align}
as $\lambda_0\to 0$.
\end{theorem}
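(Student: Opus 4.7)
Theorem~\ref{thm main} already identifies the fluctuations of $\mu_1$ around the random centering $\widehat E_+$; the whole content of Theorem~\ref{thm2} is therefore to analyse the random shift $\widehat E_+ - E_+$ and combine it with the Tracy--Widom fluctuations. Since $V$ is independent of $W$ and $\widehat E_+$, $\gamma_0$ depend on $V$ only through $\widehat\nu$, I would write
\begin{equation*}
N^{2/3}(\mu_1-E_+) \;=\; \gamma_0^{-1}\bigl[\gamma_0 N^{2/3}(\mu_1-\widehat E_+)\bigr] + N^{2/3}(\widehat E_+-E_+)\,,
\end{equation*}
apply Theorem~\ref{thm main} to the bracketed quantity (noting that $\gamma_0\to 1$ as $\lambda_0\to 0$, this gives $X_F\sim F_1$), and treat the second term as a smooth functional of $\widehat\nu$ amenable to a central limit theorem. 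The two pieces should be asymptotically independent: the second depends only on $V$, while the convergence in Theorem~\ref{thm main} holds uniformly for realisations of $V$ in the high-probability event of Assumption~\ref{assumption mu_V convergence}.

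The key step is a first-order expansion of $\widehat E_+-E_+$ in $\widehat\nu-\nu$. Recall from~\eqref{mfc equation} that $E_+=\zeta-m_{fc}(E_+)$, where the auxiliary parameter $\zeta$ is determined by the pair of equations
\begin{equation*}
m_{fc}(E_+)\;=\;\int\frac{\dd\nu(v)}{\lambda_0 v-\zeta}\,,\qquad\int\frac{\dd\nu(v)}{(\lambda_0 v-\zeta)^2}\;=\;1\,,
\end{equation*}
and analogous relations determine $\widehat E_+$, $\widehat\zeta$ from $\widehat\nu$. Differentiating this pair along the path $\nu_t=(1-t)\nu+t\widehat\nu$, the variation of $\zeta$ cancels in the variation of $E_+$ thanks to the edge condition, and one obtains
\begin{equation*}
\widehat E_+-E_+ \;=\; -\frac{1}{N}\sum_{i=1}^N\Bigl[\frac{1}{\lambda_0 v_i-\zeta}-\int\frac{\dd\nu(v)}{\lambda_0 v-\zeta}\Bigr] + O_{\P}(N^{-1})\,,
\end{equation*}
where the quadratic remainder is controlled by the boundedness of $v_i$ and of $\lambda_0$. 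Each centered summand has variance $\int\dd\nu(v)/(\lambda_0v-\zeta)^2-m_{fc}(E_+)^2 = 1-m_{fc}(E_+)^2$, so the Lindeberg--L\'evy CLT gives $N^{1/2}(\widehat E_+-E_+)\Rightarrow\caN(0,\,1-m_{fc}(E_+)^2)$.

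Cases i--iii will then follow by matching scales. Solving the two edge equations perturbatively with the ansatz $\zeta=1+\lambda_0 m_1(\nu)+\tfrac{3}{2}\lambda_0^2 m^{(2)}(\nu)+O(\lambda_0^3)$ and substituting back into $E_+=\zeta-\int\dd\nu(v)/(\lambda_0 v-\zeta)$ yields simultaneously the asymptotic expansion~\eqref{E asymptotic} and the companion expansion $1-m_{fc}(E_+)^2=\lambda_0^2 m^{(2)}(\nu)+O(\lambda_0^3)$. Hence for $\lambda_0=\sigma_0 N^{-\delta}$ the Gaussian contribution to $N^{2/3}(\mu_1-E_+)$ has standard deviation $\sigma_0 N^{1/6-\delta}\sqrt{m^{(2)}(\nu)}\,(1+o(1))$: negligible in case~i ($\delta>1/6$), equal to $\sigma_0\sqrt{m^{(2)}(\nu)}$ in case~ii ($\delta=1/6$), and divergent in case~iii ($\delta<1/6$). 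In the last case, the alternative rescaling $N^{1/2}\lambda_0^{-1}$ should be used: it suppresses the Tracy--Widom contribution by the factor $N^{\delta-1/6}/\sigma_0\to 0$ and retains only the Gaussian part with variance $\lambda_0^{-2}(1-m_{fc}(E_+)^2)\to m^{(2)}(\nu)$ (when $\delta>0$). The principal obstacle will be the asymptotic independence claim in case~ii: I expect it to follow from a quantitative version of Theorem~\ref{thm main} that is uniform over $\widehat\nu$ in the high-probability set of Assumption~\ref{assumption mu_V convergence}, so that conditioning on $V$ does not distort the Tracy--Widom limit.
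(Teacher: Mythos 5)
Your proposal is correct and follows essentially the same strategy as the paper: split $\mu_1-E_+$ into the Tracy--Widom fluctuation of $\mu_1$ around $\widehat E_+$ (controlled by Theorem~\ref{thm main}) and the $V$-measurable fluctuation $\widehat E_+-E_+$, show the latter is asymptotically Gaussian with variance $N^{-1}(1-m_{fc}(E_+)^2)$, and then match the two scales by expanding this variance and $E_+$ in $\lambda_0$ (with the rescaling $N^{1/2}\lambda_0^{-1}$ in case~iii). The only meaningful difference is that you derive the CLT for $\widehat E_+-E_+$ directly by differentiating the edge equations along $\nu_t=(1-t)\nu+t\widehat\nu$ (with $\dot\zeta$ cancelling via the edge condition $\int\dd\nu/(\lambda_0 v-\zeta)^2=1$), while the paper simply invokes Lemma~C.1 of~\cite{LS2} for this step.
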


\begin{remark}
 For the deformed GUE with deterministic $V$, Theorem~\ref{thm main} with deterministic potential has been obtained in~\cite{S2} for rather general $V$ and $\lambda=O(1)$. For the deformed GUE with random $V$, Theorem~\ref{thm main} has been established for some ranges of $\lambda$ in~\cite{S2}. The extension to all $\lambda=O(1)$ was obtained in~\cite{CP}. For random $V$ with i.i.d.\ entries, statement~$ii$ of Theorem~\ref{thm2} at $\lambda\sim N^{-1/6}$ has been established in~\cite{J2} for the deformed GUE. For the deformed GOE with random $V$, partial results on the linear statistics of the eigenvalues at the edge have been obtained in~\cite{Su} for $\lambda=o(1)$. Since very recently, there is a result~\cite{FG} on bulk and edge universality for perturbations of Gaussian matrices under polynomials of matrices with an assumption on the asymptotic expansion of moments.
\end{remark}

\begin{remark}
As remarked before, Assumption~\ref{assumption mu_V} insures that the deformed semicircle law $\rho_{fc}$ has a square root decay at the edges. When Assumption~\ref{assumption mu_V} is not satisfied this may no longer be true and one expects a different edge behavior. Assuming that $(v_i)$ are i.i.d.\ random variables with law given by a Jacobi measure as in~\eqref{Jacobi measure} this has been studied in~\cite{LS2}. For example, when $\b > 1$ then there exists $0<\lambda_+<\infty$ such that if $\lambda_0<\lambda_+$ then the Assumption~\ref{assumption mu_V} holds and the law of the rescaled largest eigenvalues converges to the Tracy-Widom distribution. When $\lambda_0>\lambda_+$, the Assumption~\ref{assumption mu_V} is not satisfied, the deformed semicircle law $\rho_{fc}$ does not have a square root behavior at the upper edge and the law of the rescaled largest eigenvalue converges to a Weibull distribution. Correspondingly, the eigenvectors associated to the largest eigenvalues are completely delocalized 
for $\lambda_0<\lambda_+$ (see~\cite{LS1}), while they are (partially) localized for $\lambda_0>\lambda_+$ (see~\cite{LS2}).
\end{remark}

\section{Preliminaries} \label{sec:prelim}

\subsection{Notations} \label{notation}
We introduce a notation for high-probability estimates which is suited for our purposes. A slightly different form was first used in~\cite{EKY}.
\begin{definition}
Let
\begin{align}
 X=(X^{(N)}(u)\,:\, N\in\N\,, u\in U^{(N)})\,,\qquad\qquad Y=(Y^{(N)}(u)\,:\, N\in\N,\,u\in U^{(N)})
\end{align}
be two families of nonnegative random variables where $U^{(N)}$ is a possibly $N$-dependent parameter set. We say that $Y$ stochastically dominates $X$, uniformly in $U$, if for all (small) $\epsilon>0$ and (large) $D>0$,
\begin{align}
 \sup_{u\in U^{(N)}}\P\left[X^{(N)}(u)>N^{\epsilon} Y^{(N)}(u) \right]\le N^{-D}\,,
\end{align}
for sufficiently large $N\ge N_0(\epsilon,D)$. If $Y$ stochastically dominates $X$, uniformly in $u$, we write $X \prec Y$. If for some complex family $X$ we have $|X|\prec Y$ we also write $X=\caO(Y)$.  Further, if $\lone(\Xi)|X|\prec Y$ for some possibly $N$-dependent event $\Xi^{(N)}\equiv \Xi$, we also write $X=\caO_{\Xi}(Y)$.
\end{definition}
For example, we have from~\eqref{eq.C0} and Chebyshevs's inequality that
$|h_{ij}|\prec \frac{1}{\sqrt{N}}$. The relation $\prec$ is a partial ordering: it is transitive and it satisfies the arithmetic rules of an order relation, e.g., if $X_1\prec Y_1$ and $X_2\prec Y_2$ then $X_1+X_2\prec Y_1+Y_2$ and $X_1 X_2\prec Y_1 Y_2$.

We use the symbol $O(\,\cdot\,)$ and $o(\,\cdot\,)$ for the standard big-O and little-o notation. The notations $O$, $o$, $\ll$, $\gg$, refer to the limit $N\to \infty$ unless stated otherwise. Here $a\ll b$ means $a=o(b)$. We use $c$ and $C$ to denote positive constants that do not depend on $N$, usually with the convention $c\le C$. Their value may change from line to line. We write $a\sim b$, if there is $C\ge1$ such that $C^{-1}|b|\le |a|\le C |b|$.

Finally, we use double brackets to denote index sets, i.e., 
$$
\llbracket n_1, n_2 \rrbracket \deq [n_1, n_2] \cap \Z\,,
$$
for $n_1, n_2 \in \R$.

\subsection{Green function and minors}
Let $A$ be an $N\times N$ real symmetric matrix. The Green function or resolvent of~$A$ is defined as $G_A(z)\deq(A-z)^{-1}$, $z\in\C^+$, and the averaged Green function of $A$ is defined as $m_A(z)\deq\frac{1}{N}\Tr G_A(z)$, $z\in\C^+$. Bellow we often drop the subscript $A$ and the argument $z$ in $G_A(z)$ and $m_A(z)$.

Let $\T\subset \llbracket1, N \rrbracket$. Then we define $A^{(\T)}$ as the $(N-|\T|)\times(N-|\T|)$ minor of $A$ obtained by removing all columns and rows of $A$ indexed by $i\in\T$. We do not change the names of the indices of $A$ when defining $A^{(\T)}$.
More specifically, we define an operation $\pi_i$, $i\in \llbracket1, N \rrbracket$, on the probability space by
\begin{align}
 (\pi_i(A))_{kl}\deq\lone(k\not=i)\lone(l\not=i)h_{kl}\,.
\end{align}
Then, for $\T\subset \llbracket1, N \rrbracket$, we set $\pi_{\T}\deq\prod_{i\in\T}\pi_i$ and define
\begin{align}
 A^{(\T)}\deq((\pi_{\T}(A)_{ij})_{i,j\not\in\T}\,.
\end{align}

The Green functions $G^{(\T)}$, are defined in an obvious way using $A^{(\T)}$.  Moreover, we use the shorthand notations
\begin{align}
 \sum_{i}^{(\T)}\deq\sum_{\substack{i=1\\i\not\in\T}}^N\,\,,\qquad \qquad\sum_{i\not=j}^{(\T)}\deq\sum_{\substack{i=1,\, j=1\\ i\not=j\,,\,i,j\not\in\T}}^N\,,
\end{align}
abbreviate $(i)=(\{i\})$, $(\T i)=(\T\cup\{i\})$. In Green function entries $(G_{ij}^{(\T)})$ we refer to $\{i,j\}$ as {\it lower indices} and to $\T$ as {\it upper indices}.

Finally, we set
\begin{align}
 m^{(\T)}\deq\frac{1}{N}\sum_{i}^{(\T)}G_{ii}^{(\T)}\,.
\end{align}
Here, we use the normalization $N^{-1}$, instead $(N-|\T|)^{-1}$, since it is more convenient for our computations.

\subsection{Resolvent identities}
The next lemma collects the main identities between resolvent matrix elements of~$A$ and~$A^{(\T)}$.

\begin{lemma}
Let $A=A^*$ be an $N\times N$ matrix. Consider the Green function $G(z)\deq(A-z)^{-1}$, $ z\in\C^+$. Then, for $i,j,k,l\in\llbracket 1,N\rrbracket$, the following identities hold:
\begin{itemize}
 \item[-] {\it Schur complement/Feshbach formula:}\label{feshbach} For any $i$,
\begin{align}\label{schur} 
 G_{ii}=\frac{1}{a_{ii}-z-\sum_{k,l}^{(i)}{a_{ik} G_{kl}^{(i)}}a_{li}}\,.
\end{align}
\item[-] For $i,j\not=k$,
\begin{align}\label{basic resolvent}
 G_{ij}=G_{ij}^{(k)}+\frac{G_{ik}G_{kj}}{G_{kk}}\,.
\end{align}

\item[-] For $i\not=j$,
\begin{align}\label{onesided}
 G_{ij}=-G_{ii}\sum_{k}^{(i)}a_{ik}G_{kj}^{(i)}=-G_{jj}\sum_{k}^{(j)} G_{ik}^{(j)} a_{kj}\,.
\end{align}
\item[-] For $i\not=j$,
\begin{align}\label{twosided}
 G_{ij}=-G_{ii}G_{jj}^{(i)}\left(a_{ij}-\sum_{k,l}^{(ij)}a_{ik}G_{kl}^{(ij)}a_{lj}\right)\,.
\end{align}

\end{itemize}
\end{lemma}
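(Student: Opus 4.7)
The plan is to derive all four identities by pure linear algebra, using the $2\times 2$ block (Schur complement) decomposition as the single nontrivial input; no probability enters. Fix an index $i$ and write, after reordering rows and columns so that $i$ comes first,
\[
A - z = \begin{pmatrix} a_{ii} - z & \mathbf{a}_i^{\top} \\ \mathbf{a}_i & A^{(i)} - z \end{pmatrix}, \qquad \mathbf{a}_i = (a_{ki})_{k \neq i}.
\]
The standard $2\times 2$ block-inverse formula gives, from the $(1,1)$ entry of the inverse, $G_{ii}^{-1} = a_{ii} - z - \mathbf{a}_i^{\top}(A^{(i)} - z)^{-1}\mathbf{a}_i$, and the Schur/Feshbach formula \eqref{schur} follows on unfolding the quadratic form as $\sum^{(i)}_{k,l} a_{ik} G^{(i)}_{kl} a_{li}$. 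Reading off the $(1,2)$ block of the same inverse identifies the $i$-th row of $G$ (off the diagonal) with $-G_{ii}\,\mathbf{a}_i^{\top}(A^{(i)} - z)^{-1}$; extracting the $j$-th component for $j\neq i$ yields the first form of \eqref{onesided}, and the second form follows by repeating the decomposition with $j$ distinguished (equivalently, by the symmetry $A = A^{\top}$).

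For the two-sided identity \eqref{twosided}, I would iterate \eqref{onesided} inside the minor. Starting from $G_{ij} = -G_{ii}\sum^{(i)}_k a_{ik} G^{(i)}_{kj}$, I single out the $k = j$ term, which contributes $-G_{ii}\, a_{ij}\, G^{(i)}_{jj}$, and for each remaining $k$ (with $k \neq i, j$) I apply \eqref{onesided} again, this time within $A^{(i)}$ and with $j$ distinguished, to write $G^{(i)}_{kj} = -G^{(i)}_{jj}\sum^{(ij)}_l G^{(ij)}_{kl} a_{lj}$. Factoring $-G_{ii} G^{(i)}_{jj}$ out of the two contributions assembles them into exactly the bracket in \eqref{twosided}.

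The remaining identity \eqref{basic resolvent} expresses that $A^{(k)}$ differs from $A$ by a rank-two modification (zeroing out one row and column), and is most cleanly obtained by redoing the block decomposition with index $k$ distinguished. The block-inverse formula directly gives, for $i, j \neq k$,
\[
G_{ij} - G^{(k)}_{ij} \;=\; G_{kk}\,\bigl(G^{(k)}\mathbf{a}_k\bigr)_i\,\bigl(\mathbf{a}_k^{\top} G^{(k)}\bigr)_j,
\]
and combining this with the two instances $G_{ik} = -G_{kk}\bigl(G^{(k)}\mathbf{a}_k\bigr)_i$ and $G_{kj} = -G_{kk}\bigl(\mathbf{a}_k^{\top} G^{(k)}\bigr)_j$ of \eqref{onesided} rewrites the right-hand side as $G_{ik} G_{kj}/G_{kk}$, giving \eqref{basic resolvent}. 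The only real difficulty throughout is notational bookkeeping: keeping the excluded-index sets $(i)$, $(ij)$, $(k)$ consistent when iterating \eqref{onesided}, and handling the edge cases where a lower index coincides with a member of the upper set, in accordance with the conventions introduced earlier in the section.
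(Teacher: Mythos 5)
Your proof is correct and follows the standard route: all four identities fall out of the $2\times 2$ block-inverse (Schur complement) decomposition, with \eqref{twosided} obtained by iterating \eqref{onesided} once inside the minor. The paper itself does not prove the lemma (it cites \cite{EKYY1}), and the derivation there is the same block-matrix argument, so there is no methodological divergence to report.

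Two small remarks on hygiene rather than substance. First, you write $\mathbf{a}_i^{\top}(A^{(i)}-z)^{-1}\mathbf{a}_i$ with $\mathbf{a}_i=(a_{ki})_{k\neq i}$; for this to unfold to $\sum_{k,l}^{(i)}a_{ik}G^{(i)}_{kl}a_{li}$ as in \eqref{schur} you are implicitly using $A=A^{\top}$, i.e.\ real symmetry. The lemma is stated for $A=A^*$, so for a complex Hermitian $A$ you should keep the row vector $(a_{ik})_k$ and the column vector $(a_{ki})_k$ separate; the block formula then gives $a_{ii}-z-\sum_{k,l}^{(i)}a_{ik}G^{(i)}_{kl}a_{li}$ directly without any transpose identification. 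Second, for the same reason, the phrase ``by the symmetry $A=A^{\top}$'' to get the second form of \eqref{onesided} only applies in the real case; in the Hermitian case one has $G(z)^{*}=G(\bar z)$ rather than $G=G^{\top}$, so the correct fallback is the one you also mention, namely repeating the block decomposition with $j$ distinguished and reading off the $(2,1)$ block. With those two caveats the argument is complete and airtight, including the bookkeeping for the excluded-index sets in the iteration step for \eqref{twosided}.
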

For a proof we refer to, e.g.,~\cite{EKYY1}.

\subsection{Large deviation estimates}

Consider two families of random variables $(X_i)$ and $(Y_i)$, $i\in\llbracket 1,N\rrbracket$,  satisfying
\begin{align}\label{assumption for large deviation}
\E Z_i = 0\,, \qquad\quad \E|Z_i|^2 = 1 \,,\qquad\quad \E |Z_i|^p\le c_p\,,\qquad\quad(p\ge 3)\,,
\end{align}
$Z_i=X_i,Y_i$, for all $p\in\N$ and some constants $c_p$, uniformly in $i\in\llbracket 1,N\rrbracket$. The following lemma, taken from~\cite{EKYY4}, provides useful large deviation estimates.
\begin{lemma}\label{lemma large deviation estimates}
 Let $(X_i)$ and $(Y_i)$ be independent families of random variables and let $(a_{ij})$ and $(b_i)$, $i, j \in\llbracket 1, N\rrbracket$, be collections of complex numbers. Suppose that all entries $(X_i)$ and $(Y_i)$
are independent and satisfy~\eqref{assumption for large deviation}. Then we have the bounds:
\begin{align}
\left|\sum_i b_iX_i\right|\prec \left(\sum_i |b_i|^2\right)^{1/2}\,,\\
\left|\sum_{i}\sum_{j}a_{ij} X_iY_j\right|\prec\left(\sum_{i,j}|a_{ij}|^2 \right)^{1/2}\,,\\
\left|\sum_{i\not=j}a_{ij} X_iX_j\right|\prec\left(\sum_{i\not=j}|a_{ij}|^2 \right)^{1/2}\,.
\end{align}
If the coefficients $(a_{ij})$ and $(b_i)$ depend on an additional parameter $u$, then all of these estimates are uniform
in $u$, i.e. the threshold $N_0 = N_0 (\epsilon, D)$ in the definition of $\prec$ depends only on the family
$c_p$ from~\eqref{assumption for large deviation}; in particular, $N_0$ does not depend on $u$.
\end{lemma}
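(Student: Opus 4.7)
The plan is to establish all three bounds by a standard high-moment estimate combined with Markov's inequality. For any (large, even) integer $p$, it suffices to prove
\[
\E\Bigl|\sum_i b_i X_i\Bigr|^{2p}\le C_p\Bigl(\sum_i |b_i|^2\Bigr)^{p},\qquad \E\Bigl|\sum_{i,j} a_{ij}X_iY_j\Bigr|^{2p}\le C_p\Bigl(\sum_{i,j}|a_{ij}|^2\Bigr)^{p},
\]
and analogously for the off-diagonal quadratic form, where $C_p$ depends only on $p$ and on the moment constants $c_q$ in~\eqref{assumption for large deviation}. Indeed, Markov's inequality then gives
\[
\P\Bigl(\Bigl|\sum_i b_iX_i\Bigr|>N^{\epsilon}\bigl(\sum_i|b_i|^2\bigr)^{1/2}\Bigr)\le C_p N^{-2p\epsilon},
\]
and taking $p$ large enough (depending on $\epsilon,D$) gives the $\prec$-bound. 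The uniformity in any parameter $u$ is automatic, because the bound depends on $(b_i)$ and $(a_{ij})$ only through their $\ell^2$-norms and the constants $C_p$ are $u$-independent.

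For the linear sum, I would expand
\[
\E\Bigl|\sum_i b_iX_i\Bigr|^{2p}=\sum_{i_1,\dots,i_{2p}}\overline{b_{i_{p+1}}\cdots b_{i_{2p}}}\,b_{i_1}\cdots b_{i_p}\,\E\bigl[X_{i_1}\cdots X_{i_p}\overline{X_{i_{p+1}}\cdots X_{i_{2p}}}\bigr],
\]
and use the vanishing first moment together with independence to discard every tuple in which some index appears exactly once. The surviving tuples correspond to set partitions of $\llbracket 1,2p\rrbracket$ whose blocks all have size at least $2$; on each block the moment factor is bounded by $c_{2p}$, and by Cauchy--Schwarz the sum over indices factorises into at most $\|b\|_2^{2}$ per block. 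Since the maximal number of blocks is $p$ (attained by pair partitions), the claimed bound follows.

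For the bilinear estimate $\sum_{ij}a_{ij}X_iY_j$, the independence of the two families $(X_i)$ and $(Y_j)$ makes the expansion factor over the $i$-indices and the $j$-indices separately; the vanishing-mean hypothesis forces both each row index and each column index of the surviving tuples to be repeated. The resulting sum is organised as a graph sum where vertices correspond to repeated row/column labels and edges to factors $a_{ij}$; repeated Cauchy--Schwarz on these edges produces the bound $\bigl(\sum_{ij}|a_{ij}|^2\bigr)^{p}$. The off-diagonal quadratic sum $\sum_{i\neq j}a_{ij}X_iX_j$ is treated along the same lines, with the modification that now a single index family appears in both positions. Here the restriction $i\neq j$ plays the essential role: it prevents pair partitions from collapsing an edge onto a diagonal entry, so the pair partitions still pair each $X$ with another factor at a \emph{different} index, and the graph-sum bound survives in the form $\bigl(\sum_{i\neq j}|a_{ij}|^2\bigr)^{p}$.

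The main technical obstacle is the last (off-diagonal quadratic) case: one needs to check that the diagonal-exclusion does not allow a few partitions to escape the $\ell^2$-bookkeeping, and that sub-leading partitions (those with fewer than $p$ blocks) are only smaller by factors that are controllable by brute-force $\ell^\infty$-bounds on $(a_{ij})$. Once the combinatorics of the pair-partition dominance is settled, the rest is a routine application of Cauchy--Schwarz and the moment hypothesis~\eqref{assumption for large deviation}.
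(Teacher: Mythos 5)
The paper does not actually prove this lemma; it is cited verbatim from~\cite{EKYY4}. Your high-moment/Markov strategy is exactly the standard route used in that reference and its predecessors, so the overall plan is correct. For the linear and bilinear cases the combinatorial reduction to partitions with no singleton blocks is fine as you state it. Two remarks on the off-diagonal quadratic case, which you rightly flag as the delicate one. First, your worry about using \emph{brute-force $\ell^\infty$-bounds} on $(a_{ij})$ is actually harmless: since $\max_{ij}|a_{ij}|\le\bigl(\sum_{i\ne j}|a_{ij}|^2\bigr)^{1/2}$, any pointwise bound on an individual coefficient already costs at most one power of the $\ell^2$-norm, so the sub-leading partitions are controlled without introducing a ratio $\|a\|_\infty/\|a\|_2$ that the lemma is not allowed to depend on. Second, a cleaner alternative to the direct graph-sum combinatorics here is the standard decoupling trick: introduce an independent copy $(X'_j)$ of $(X_j)$ and show that $\E\bigl|\sum_{i\neq j}a_{ij}X_iX_j\bigr|^{2p}\le C_p\,\E\bigl|\sum_{i\neq j}a_{ij}X_iX'_j\bigr|^{2p}$, which reduces the quadratic form to the bilinear case you have already handled. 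This avoids having to re-verify the pair-partition dominance in the one-family setting, where the possibility of a position in one monomial pairing with either slot of another monomial makes the bookkeeping more error-prone. With either route the argument closes, and uniformity in the auxiliary parameter $u$ follows exactly as you say, because the moment bounds depend on the coefficients only through $\|b\|_2$ or $\|a\|_2$.
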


\section{Renormalization of the model} \label{sec:renormalize}
In this section, we rescale and ``renormalize'' the deformed Wigner matrix $H=\lambda_0 V+W$ in order to setup later computations. 

\subsection{Removal of the diagonal of $W$ and fixing of $V$.}\label{subsection centering}
To simply the notation in the upcoming sections, we replace the Wigner matrix $W$ by
\begin{align}
 W-\diag (w_{11},\ldots ,w_{NN})\,,
\end{align}
i.e., we replace $w_{ij}$ by $w_{ij}-w_{ii}\delta_{ij}$. With this modification we have $H=(h_{ij})$, 
\begin{align}
 h_{ii}=\lambda_0 v_i\,,\qquad\qquad h_{ij}=w_{ij}\,,\qquad(i\not=j)\,.
\end{align}
By the next lemma, this replacement causes a negligible shift in the extremal eigenvalues of $W$ or $H=\lambda_0V+W$ and we thus not explicitly display this modification in our notation. 
\begin{lemma}\label{lemma remove the diagonal}
Suppose that $\lambda_0$, $V$, and $W$ satisfy the assumptions in Theorem \ref{thm main}. Let $H_1 = \lambda_0 V + W$ and $H_2 = H_1 + C_2 \diag (w_{11},\ldots ,w_{NN})$ for some constant $C_2$ independent of $N$. Further, let $\mu_1 (H_1)$ and $\mu_1 (H_2)$ be the largest eigenvalues of $H_1$ and $H_2$, respectively. Then, there exists a constant $\delta > 0$ such that, for any $s \in \mathbb{R}$, we have
\begin{align}
\p \big( N^{2/3} (\mu_1 (H_1) - \widehat E_+) \leq s - N^{-\delta} \big) - N^{-\delta} &\leq \p \big( N^{2/3} (\mu_1 (H_2) - \widehat E_+) \leq s \big)\nonumber \\
&\leq \p \big( N^{2/3} (\mu_1 (H_1) - \widehat E_+) \leq s + N^{-\delta} \big) + N^{-\delta}\,.
\end{align}
\end{lemma}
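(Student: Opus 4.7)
The principal difficulty is that $\|H_2 - H_1\|_{\mathrm{op}} = |C_2|\max_i |w_{ii}| \prec N^{-1/2}$, which is much larger than the relevant edge scale $N^{-2/3}$. Thus Weyl's inequality alone only yields $|\mu_1(H_1)-\mu_1(H_2)| \prec N^{-1/2}$, which is too weak by a factor of $N^{1/6}$. We must extract a cancellation coming from delocalization of the top eigenvector.

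My strategy is to interpose the auxiliary matrix
\[
H_0 \deq \lambda_0 V + W - \diag(w_{11},\ldots,w_{NN}),
\]
so that $H_0$ is \emph{independent} of $D\deq\diag(w_{11},\ldots,w_{NN})$, and $H_1 = H_0+D$, $H_2 = H_0+(1+C_2)D$. The matrix $H_0$ is itself a deformed Wigner matrix of the form covered by Definition~\ref{assumption wigner} (with a zero‑diagonal Wigner component, i.e.\ with $c_2=-1$) and Assumption~\ref{assumption mu_V}, so the entrywise local deformed semicircle law applied to $H_0$ (Theorem~\ref{local law}) yields edge rigidity on scale $N^{-2/3}$ together with the delocalization bound $\max_i |(v_\star)_i|^2 \prec N^{-1}$ for the top eigenvector $v_\star$ of $H_0$. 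Setting $S_\star \deq \sum_i w_{ii}|(v_\star)_i|^2$, the goal is to establish that for any $a\in\R$ independent of $N$,
\[
\mu_1(H_0 + a D) = \mu_1(H_0) + a\,S_\star + \caO_\prec(N^{-1}),
\]
from which $\mu_1(H_1)-\mu_1(H_2) = -C_2 S_\star + \caO_\prec(N^{-1}) = \caO_\prec(N^{-1})$, and hence $N^{2/3}|\mu_1(H_1)-\mu_1(H_2)| \prec N^{-1/3}$, giving the claim with any fixed $\delta<1/3$.

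The first-order bound $|S_\star|\prec N^{-1}$ is immediate: since $v_\star$ is independent of $(w_{ii})$, the large deviation estimate of Lemma~\ref{lemma large deviation estimates} applied conditionally on $H_0$ gives
\[
|S_\star| \prec \Big(\sum_i |(v_\star)_i|^4 \cdot \mathrm{Var}(w_{ii})\Big)^{1/2} \prec (N^{-1}\cdot N^{-1})^{1/2} = N^{-1}.
\]
Rayleigh's inequality with test vector $v_\star$ then yields the one-sided bound $\mu_1(H_0+aD) \ge \mu_1(H_0) + a S_\star$. For the matching direction, I would interpolate along $H(t) \deq H_0 + t a D$ and use Hellmann--Feynman,
\[
\mu_1(H_0 + aD) - \mu_1(H_0) = a\int_0^1 \langle v_1(t), D v_1(t)\rangle\,\dd t,
\]
and then expand the integrand in the eigenbasis of $H_0$. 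The second-order contribution to the eigenvalue perturbation series,
\[
a^2 \sum_{k\neq 1}\frac{|\langle v_\star, D v_k(H_0)\rangle|^2}{\mu_1(H_0)-\mu_k(H_0)},
\]
has numerators $\prec N^{-2}$ by a further application of Lemma~\ref{lemma large deviation estimates} (using independence of $v_\star,v_k(H_0)$ from $D$ together with $\sum_i|(v_\star)_i|^2|(v_k)_i|^2\prec N^{-1}$), and the denominators satisfy $\sum_{k\neq 1}(\mu_1-\mu_k)^{-1} \prec N$ by edge rigidity, so the second-order contribution is $\prec N^{-1}$.

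The main obstacle is convergence of the eigenvalue expansion: because $\|aD\|_{\mathrm{op}} \sim N^{-1/2}$ strictly \emph{exceeds} the edge gap $\sim N^{-2/3}$, the eigenvector perturbation series diverges and one cannot naively quote Rayleigh--Schr\"odinger theory. The key point is that the scalar eigenvalue series still converges, because each off‑diagonal matrix element $\langle v_k(H_0), D v_l(H_0)\rangle$ gains an extra $N^{-1/2}$ compared to its operator-norm bound thanks to the independence of $(v_k(H_0))$ from $D$. Careful power counting of the resulting series, together with rigidity-based summation over intermediate indices, closes the argument and yields the stated lemma with, e.g., $\delta = 1/4$.
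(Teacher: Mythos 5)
You take a genuinely different route from the one the paper uses. The paper defers to Lemma~3.5 of~\cite{LY}, whose proof operates at the level of the averaged Green function: one compares $\E\,K\bigl(N\int \im m_{H_1}\bigr)$ with $\E\,K\bigl(N\int \im m_{H_2}\bigr)$ by a Lindeberg replacement of the diagonal entries $\lambda_0 v_i+w_{ii}\to\lambda_0 v_i+(1+C_2)w_{ii}$ one at a time, Taylor expanding the scalar $\frX$ in the scalar parameter $\Delta=C_2w_{ii}\prec N^{-1/2}$. The $k$-th Taylor coefficient of $\frX$ in $\Delta$ is controlled via the local law by $G_{ii}^{k-1}(G^2)_{ii}\prec N^{1/3}$, so the $k$-th term is $\prec N^{-k/2-1/3}$ and the expansion converges geometrically with ratio $N^{-1/2}$. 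Because $\E w_{ii}=0$ the linear term drops in expectation, the per-step contribution is $\prec N^{-4/3}$, hence $\prec N^{-1/3}$ after $N$ steps, and this feeds into Lemma~\ref{lem approx}. Your proposal instead targets a pointwise bound $|\mu_1(H_1)-\mu_1(H_2)|\prec N^{-1}$ via the Rayleigh--Schr\"odinger expansion around the pivot $H_0=\lambda_0 V+W-D$. The first-order inputs are fine: $|S_\star|\prec N^{-1}$ follows from delocalization of the top eigenvector of $H_0$ together with independence of $(w_{ii})$ from $H_0$, and the Rayleigh-quotient lower bound is correct.

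The gap is in the higher-order series. You correctly flag that $\|aD\|_{\mathrm{op}}\sim N^{-1/2}$ exceeds the edge gap $\sim N^{-2/3}$, but then assert that ``the scalar eigenvalue series still converges'' by ``careful power counting'' --- this is precisely the step that does not close. At order $n$ of the eigenvalue series one has $n$ matrix elements each $\prec N^{-1}$ (by the independence gain you invoke) and at most $n-1$ intermediate sums each of size $\sum_{k\ge 2}(\mu_1-\mu_k)^{-1}\prec N$ by rigidity, for a total of $\prec N^{-n}\cdot N^{n-1}=N^{-1}$ at \emph{every} order: the series does not decay, and the $N^{\epsilon}$ losses hidden in each $\prec$ accumulate with $n$. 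Closing the series would require exploiting random-sign cancellations among the products $\prod_j\langle v_{k_j},Dv_{k_{j+1}}\rangle$ across orders, which is a high-moment estimate, not a power count, and is not carried out. The Hellmann--Feynman variant hits the same wall: to bound $\langle v_1(t),Dv_1(t)\rangle$ for $t>0$ one must control $\|u(t)\|$ where $u(t)$ is the part of $v_1(t)$ orthogonal to $v_\star$, and the natural gap inequality reads $g\|u\|^2\lesssim N^{-1}+N^{-1/2}\|u\|+N^{-1/2}\|u\|^2$ with $g\sim N^{-2/3}$, which is vacuous because the coefficient $N^{-1/2}$ of $\|u\|^2$ on the right exceeds $g$. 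Absent a bootstrap or Riesz-projection argument that genuinely circumvents the operator-norm obstruction, the claimed $\caO(N^{-1})$ remainder is not established and the proof of the lemma is incomplete.
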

The proof Lemma~\ref{lemma remove the diagonal} follows almost verbatim from the proof of Lemma 3.5 in~\cite{LY}. (See also Theorem 3.3 in \cite{LY}). We remark that the local deformed semicircle law for $H_1$ and $H_2$, which is the key ingredient of the proof of Lemma 3.5 in~\cite{LY}, is given in Theorem~\ref{local law} below.

To  conveniently cope with the cases when $(v_i)$ are random, respectively deterministic, we introduce an event~$\Xi$ on which the random variables $(v_i)$ exhibit ``typical'' behavior. Recall that we denote by $m_{\widehat\nu}$ and $m_{\nu}$ the Stieltjes transforms of $\widehat\nu$, respectively $\nu$. 
\begin{definition}\label{definition of the event omega}
Let $\Xi\equiv\Xi(N)$ be an event such that the following holds on it:
\begin{itemize}
\item[$(1)$]There is a constant $\alpha_0>0$ such that, for any compact set $\caD\subset\C^+$ with $\mathrm{dist}(\caD,\supp \nu)>0$, there is $C$ such that 
\begin{align}\label{definition of omega_V I}
 \left|m_{\widehat\nu}(z)-m_{\nu}(z) \right|\le CN^{-\alpha_0}\,,
\end{align}
for $N$ sufficiently large.
\item[(2)] Recall the constant $\varpi>0$ and the intervals $I_\nu$, $I_{\widehat\nu}$ in Assumption~\ref{assumption mu_V}. We have
\begin{align}\label{definition of omega_V II}
 \inf_{x\in I_{\widehat\nu}}\int\frac{\dd\widehat\nu(v)}{(v-x)^2}\ge (1+\varpi)\lambda_0^2\,,\qquad\quad \inf_{x\in I_{\nu}}\int\frac{\dd\nu}{(v-x)^2}\ge (1+\varpi)\lambda_0^2\,,
\end{align}
for $N$ sufficiently large.
\end{itemize}
\end{definition}
In case $(v_i)$ are deterministic, $\Xi$ has full probability for $N$ sufficiently large by Assumptions~\ref{assumption mu_V convergence}. 

In the following we usually condition the random variables $(v_i)$ on $\Xi$, i.e., we consider $(v_i)$ as fixed and are such that~\eqref{definition of omega_V I} and~\eqref{definition of omega_V II} hold. 

\subsection{Rescaling of $H$}
Let $\lambda\in[0,\lambda_0]$. In order to compare the local edge statistics of $H=\lambda V+W$ with the local GOE edge statistics, it is natural to rescale $H$ in such a way that the typical size of the eigenvalue spacing at the upper edge of the rescaled matrix match those of $W$. Put slightly differently, we can find $\gamma\in\R$, depending on $\lambda_0$, such that the eigenvalue gaps of $\widetilde H\deq \gamma H$ typically agree for large $N$ with the gaps predicted by the Tracy-Widom distribution.

The scaling factor $\gamma$ may be constructed as follows. For any $\lambda \in [0, \lambda_0]$, let $\zeta \equiv \zeta(\lambda)$ be the largest solution to
\begin{align} \label{zeta}
\int \frac{\dd\widehat\nu(v)}{(\lambda v - \zeta)^2} = 1\,.
\end{align}
We note that, for $\widehat\nu$ satisfying~\eqref{definition of omega_V II}, such $\zeta$ exists for all $\lambda\le \lambda_0$. Note that $\zeta$ depends on $\lambda$ and the measure $\widehat\nu$. We then define the scaling factor $\gamma$ by
\begin{align} \label{gamma}
\gamma \equiv \gamma(\lambda) \deq \left( -\int\frac{\dd\widehat\nu(v)}{(\lambda v - \zeta)^3} \right)^{-1/3}\,,\qquad\quad \gamma_0\deq\gamma(\lambda_0)\,.
\end{align}
It follows from~\eqref{zeta} and H\"{o}lder's inequality that $0<\gamma\le 1$. Note that for $\lambda=0$, we have $\zeta=1$ and $\gamma=1$. 
 
We now set
\begin{align} \label{renormalized matrix}
\wt H \deq \gamma H = \gamma (\lambda V + W)\,,
\end{align}
and define  the Green function, respectively averaged Green function, of $\wt H$ by
\begin{equation*}
G_{\wt H}(z) \deq \frac{1}{\gamma (\lambda V + W) -z}\,,\qquad \qquad m_{\wt H}(z) \deq \frac{1}{N} \Tr G_{\wt H}(z)\,, \qquad \qquad (z \in \C^+)\,.
\end{equation*}

Next, we define $\widehat m_{fc}$ as the solution to the equation
\beq \label{widehat mfc}
\widehat m_{fc}(z) =\int\frac{\dd\widehat\nu(v)}{\lambda \gamma v-z- \gamma^2 \widehat m_{fc}(z)}\,,\qquad\quad \im\widehat m_{fc}(z)\ge 0\,, \qquad\qquad (z \in \C^{+})\,.
\eeq
Following the arguments in Subsection~\ref{subsection deformed semicirle law}, $\widehat m_{fc}(z)$ defines a probability measure $\widehat\rho_{fc}$ whose density is given by $\widehat\rho_{fc}(E)=\frac{1}{\pi}\lim_{\eta\searrow 0}\im \widehat\rho_{fc}(E+\ii\eta)$, $E\in\R$. For simplicity, we omit the $\lambda$- and $\gamma$-dependences of $\widehat m_{fc}$ and $\widehat\rho_{fc}$ from the notation. Note that the measure~$\widehat\rho_{fc}$ depends on $N$ through $\widehat\nu$. However, this being the main point here, $\widehat\rho_{fc}$ does not depend on the matrix~$W$ in any way. On the event $\Xi$ of typical realizations of $(v_i)$, $\widehat\rho_{fc}$ enjoys the following properties:

\begin{lemma} \label{square_root}
On $\Xi$ the following holds. There exist $\widehat L_-, \widehat L_+ \in \R$, with $\widehat L_-  <\widehat L_+$, such that 
\beq \label{supp mu_fc}
\mathrm{supp}\,\widehat \rho_{fc} = [\widehat L_-, \widehat L_+]\,.
\eeq
Denoting by $\kappa_E$ the distance to the endpoints of the support of $\rho_{fc}$, i.e.,
\beq
\kappa_E \deq \min \{|E-\widehat L_-|, |E-\widehat L_+|\}\,,
\eeq
we have
\beq
C^{-1} \sqrt \kappa_E \leq \widehat\rho_{fc}(E) \leq C \sqrt{\kappa_E}\,, \qquad \qquad (E \in [\widehat L_-,\widehat L_+])\,,
\eeq
for some constant $C \geq 1$, which can be chosen uniformly in $\lambda\in[0,\lambda_0]$. In particular, we have
\begin{align}\label{to be emphasized}
 \widehat\rho_{fc}(E)=\frac{1}{\pi}\sqrt{\kappa_E}(1+O(\kappa_E))\,,
\end{align}
as $E\to \widehat L_+$, $E\le\widehat L_+$.

Further, we have the following estimates for the imaginary part of $\widehat m_{fc}$:
\begin{enumerate}
\item
For $z=\widehat L_+-\kappa+\ii\eta$, with $0\le\kappa\le \widehat L_+$ and $0<\eta\le 2$, there exists a constant $C \geq 1$ such that
\beq
C^{-1}\sqrt{\kappa_E+\eta}\le \im \widehat m_{fc}(z)\le C \sqrt{\kappa_E+\eta}\,.
\eeq
\item
For $z=\widehat L_++\kappa+\ii\eta$, with $0\le\kappa\le 1$ and $0<\eta\le 2$, there exists a constant $C \geq 1$ such that
\beq
C^{-1}\frac{\eta}{\sqrt{\kappa_E+\eta}}\le \im \widehat m_{fc}(z)\le C\frac{\eta}{\sqrt{\kappa_E+\eta}}\,.
\eeq
\end{enumerate}
Moreover, all constants can be chosen uniformly in $\lambda\in[0,\lambda_0]$.
\end{lemma}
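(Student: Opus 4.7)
My plan is to study $\widehat m_{fc}$ through the functional inverse of $z\mapsto-\widehat m_{fc}(z)$. Introduce
\begin{equation*}
g(\xi) \deq \int \frac{\dd\widehat\nu(v)}{\xi - \lambda\gamma v}\,,\qquad F(\xi) \deq \xi + \gamma^2 g(\xi)\,,
\end{equation*}
so that~\eqref{widehat mfc} reads $\widehat m_{fc}(z) = -g(\xi)$ with $\xi = z + \gamma^2\widehat m_{fc}(z)$; equivalently, $F$ inverts $z\mapsto -\widehat m_{fc}(z)$ on the complement of the spectrum. The endpoints $\widehat L_\pm$ of $\supp\widehat\rho_{fc}$ are then the images under $F$ of the real critical points of $F$ outside $\lambda\gamma\supp\widehat\nu$, and the local geometry of $F$ at those points dictates the density near the edge.

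First I identify $\widehat L_+$. A change of variables $\xi = \gamma\eta$ yields $F'(\gamma\eta) = 1 - \int\dd\widehat\nu(v)/(\eta-\lambda v)^2$; on $\Xi$, the lower bound~\eqref{definition of omega_V II} then shows that the largest root of $F'(\gamma\eta)=0$ with $\eta > \sup I_{\widehat\nu}$ is exactly $\zeta(\lambda)$ defined in~\eqref{zeta}, so $\widehat L_+ = F(\gamma\zeta)$; the left endpoint is treated symmetrically. That $\supp\widehat\rho_{fc}=[\widehat L_-,\widehat L_+]$ with strictly positive density inside then follows from Lemma~\ref{lemma vorbereitung} applied with $\widehat\nu$ and $\lambda$ in place of $\nu$ and $\lambda_0$, which is valid on $\Xi$ by~\eqref{definition of omega_V II}.

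Next I extract the square-root asymptotics with the correct $1/\pi$ coefficient. Computing
\begin{equation*}
F''(\gamma\zeta) = 2\gamma^2 \int\frac{\dd\widehat\nu(v)}{(\gamma\zeta-\lambda\gamma v)^3} = \frac{2}{\gamma}\int\frac{\dd\widehat\nu(v)}{(\zeta-\lambda v)^3} = \frac{2}{\gamma^4}\,,
\end{equation*}
where the final equality is the defining relation~\eqref{gamma} for $\gamma$. Together with $g'(\gamma\zeta) = -\gamma^{-2}$ (a restatement of $F'(\gamma\zeta)=0$), the Taylor expansion $F(\gamma\zeta+\xi') = \widehat L_+ + \gamma^{-4}(\xi')^2 + O((\xi')^3)$ is inverted perturbatively for $z = \widehat L_+ + s + \ii\eta$ with small $|s|,\eta$, yielding $\xi' \approx \gamma^2\sqrt{s+\ii\eta}$ on the branch with $\im\xi'>0$, and hence $\im\widehat m_{fc}(z) \approx \gamma^{-2}\im\xi' = \im\sqrt{s+\ii\eta}$. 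Splitting into the regimes $s=-\kappa\le 0$ and $s=\kappa\ge 0$ and extracting real and imaginary parts produces simultaneously the density expansion~\eqref{to be emphasized} with leading coefficient $1/\pi$ and the estimates~(1) and~(2) for $\im\widehat m_{fc}$. The two-sided bound $C^{-1}\sqrt{\kappa_E}\le\widehat\rho_{fc}(E)\le C\sqrt{\kappa_E}$ extends from a neighbourhood of the edge to all of $[\widehat L_-,\widehat L_+]$ by continuity and strict positivity in the interior.

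The main obstacle will be ensuring uniformity of every constant in $\lambda\in[0,\lambda_0]$. For this the key input is the uniform separation $\varpi>0$ in Assumption~\ref{assumption mu_V}: on $\Xi$ it provides a lower bound $|\zeta(\lambda)-\lambda v|\ge c>0$ for $v\in\supp\widehat\nu$, uniformly in $\lambda\in[0,\lambda_0]$, which gives uniform control on $\gamma$ and on each derivative $F^{(k)}(\gamma\zeta)$, so that the Taylor expansion above is valid on a neighbourhood of $\gamma\zeta$ whose radius does not depend on $\lambda$. Note that the calibration of $\gamma$ in~\eqref{gamma} is chosen precisely to make the leading coefficient of the edge density equal to $1/\pi$, matching the Wigner semicircle normalisation and removing the need for further rescaling when the rescaled matrix $\wt H$ is compared with the GOE in Theorem~\ref{thm main}.
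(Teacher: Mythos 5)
Your proposal follows the standard inverse-function approach (via $F(\xi)=\xi+\gamma^2 g(\xi)$ and the critical point at $\gamma\zeta$) that the cited reference [LSSY] uses; the paper itself does not prove this lemma but defers to that reference. The key verifications — $F'(\gamma\zeta)=0$ identifying $\zeta$, $g'(\gamma\zeta)=-\gamma^{-2}$, $F''(\gamma\zeta)=2\gamma^{-4}$ from the calibration~\eqref{gamma}, and the resulting local inversion $\xi'\approx\gamma^2\sqrt{s+\ii\eta}$ which produces the $1/\pi$ normalisation and the two edge regimes — are correct.

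Two small points worth tightening. First, the parenthetical ``$F$ inverts $z\mapsto-\widehat m_{fc}(z)$'' is not literally right: $F$ is the functional inverse of $z\mapsto\xi(z)=z+\gamma^2\widehat m_{fc}(z)$, while $\widehat m_{fc}(z)=-g(\xi(z))$ is read off afterward; your computations use $\xi$ consistently, so this is only a mislabel. Second, the final extension of $C^{-1}\sqrt{\kappa_E}\le\widehat\rho_{fc}(E)\le C\sqrt{\kappa_E}$ from the edge to all of $[\widehat L_-,\widehat L_+]$ by ``continuity and strict positivity'' also needs a word on uniformity in $\lambda\in[0,\lambda_0]$ and in $N$ (through $\widehat\nu$): the constants in the bulk must come from the same compactness/separation input (the $\varpi$-gap in~\eqref{definition of omega_V II} together with~\eqref{definition of omega_V I} on $\Xi$) that you invoke near the edge, not from a pointwise continuity argument. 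Neither gap is structural; the argument as a whole is sound.
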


The proof of Lemma~\ref{square_root} can be found in~\cite{LSSY}. Returning to the introductory remarks of this subsection, we emphasize~\eqref{to be emphasized}: The scaling factor $\gamma$ has been chosen such that~\eqref{to be emphasized} holds for all $\lambda\in [0,\lambda_0]$, i.e., up to a global shift, the measure $\widehat\rho_{fc}^0$ exhibits a square root decay at the upper edge with the same rate as the standard semicircle law~$\rho_{sc}$.

\begin{remark} \label{E hat}
Let $\widehat m_{fc}^0$ be the solution to the equation
\begin{align*}
\widehat m_{fc}^0 (z) = \int \frac{\dd\widehat\nu(v)}{\lambda v-z- \widehat m_{fc}^0 (z)}\,,\qquad\quad \im\widehat m_{fc}^0 (z)\ge 0\,, \qquad\qquad (z \in \C^{+})\,,
\end{align*}
and $\widehat\rho_{fc}^0$ the probability measure whose density is given by $\widehat\rho_{fc}^0(E)\deq\frac{1}{\pi}\lim_{\eta\searrow 0}\im \widehat\rho_{fc}^0(E+\ii\eta)$, $E\in\R$. As in Lemma~\ref{square_root} we find that there are $\widehat E_-, \widehat E_+ \in \R$ such that $\supp \widehat \rho_{fc}^0 = [\widehat E_-, \widehat E_+]$ and that $\widehat\rho_{fc}^0$ has a strictly positive density in $(\widehat E_-, \widehat E_+)$. By definition, it is obvious that
\begin{align} \label{def E hat}
\widehat E_+ = \gamma^{-1} \widehat L_+\,.
\end{align}
\end{remark}

\begin{remark} \label{rem:gamma and tau}
The scaling factor $\gamma$ defined in~\eqref{gamma} satisfies the relation
\begin{align}\label{relation for the gamma}
\gamma = \left( -\int\frac{\dd\widehat\nu(v)}{(\lambda \gamma v - \tau)^3} \right)^{-1/6}\,,
\end{align}
where $\tau=\widehat L_++\gamma^2m_{fc}(\widehat L_+)$.

To see that~\eqref{gamma} implies~\eqref{relation for the gamma}, we note that $\widehat m_{fc}^0 (\widehat E_+) = \gamma \widehat m_{fc} (\widehat L_+)$, hence $\zeta$ in \eqref{zeta} satisfies $\zeta =\widehat E_+ + \widehat m_{fc}^0 (\widehat E_+)$.
This also shows that
\beq \label{upper edge}
\int\frac{\dd\widehat\nu(v)}{\big( \lambda \gamma v-\widehat L_+ - \gamma^2 \widehat m_{fc}(\widehat L_+) \big)^2} = \frac{1}{\gamma^2}\int \frac{\dd\widehat\nu(v)}{\big( \lambda v-\widehat E_+ - \widehat m_{fc}^0(\widehat E_+) \big)^2} = \frac{1}{\gamma^2 }\int \frac{\dd\widehat\nu(v)}{(\lambda v- \zeta)^2} = \frac{1}{\gamma^2}\,.
\eeq
We define $\tau$ by
\begin{align} \label{tau}
\tau = \widehat L_+ + \gamma^2 \widehat m_{fc}(\widehat L_+) = \gamma \zeta\,,
\end{align}
and find that
\begin{align*}
\int\frac{\dd\widehat\nu(v)}{(\lambda \gamma v - \tau)^3} = \frac{1}{\gamma^3 }\int\frac{\dd\widehat\nu(v)}{(\lambda v - \zeta)^3} = -\frac{1}{\gamma^6}\,.
\end{align*}
This proves~\eqref{relation for the gamma}.
\end{remark}

Next, we collect estimates on the Green function of $\widetilde H$. Fix a small $\xi>0$ and define the domain
\begin{align}
 \caD_{\xi}\deq\lbrace z=E+\ii\eta \in\C^+\,:\,  0\le |E|\le \lambda_0+1\,, N^{-1+\xi}\le \eta\le 3 \rbrace\,.
\end{align}
We also introduce the control parameter
\begin{align}\label{le Pi}
 \Pi(z)\deq \sqrt{\frac{\im \widehat m_{fc}(z)}{N\eta}}+\frac{1}{N\eta}\,.
\end{align}
 The next theorem is the local deformed semicircle law for $\widetilde H$, which was established in Theorem 3.3 of~\cite{LSSY}.
\begin{theorem}[Local deformed semicircle law] \label{local law}
On $\Xi$, the following holds true. For any small fixed $\xi>0$, we have 
\begin{align} \label{local 1}
  |m_{\wt H}(z) - \widehat m_{fc}(z)| \prec \frac{1}{N\eta} \,,\qquad \quad\max_{i\not=j}|(G_{\wt H})_{ij}(z)|\prec \Pi(z)\,,
\end{align}
uniformly in $z\in\caD_{\xi}$ and $\lambda\in[0,\lambda_0]$. Further, setting $g_{i}(z)\deq (\lambda\gamma v_i-z-\gamma^2\widehat m_{fc}(z))^{-1}$, we also have  
\begin{align}\label{local 2}
 \max_i|(G_{\wt H})_{ii}(z)-g_i(z)|\prec \Pi(z)\,,
\end{align}
uniformly in $z\in\caD_{\xi}$ and $\lambda\in[0,\lambda_0]$. In particular, we have $|(G_{\wt H})_{ii}(z)|\prec 1$.
\end{theorem}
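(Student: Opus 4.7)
The plan is to follow the standard Schur-complement / self-consistent-equation / stability / bootstrap strategy, adapted to the deformed setting. Throughout I work on the event $\Xi$, so that $(v_i)$ may be treated as deterministic and satisfying \eqref{definition of omega_V I}--\eqref{definition of omega_V II}; all randomness then comes from $W$. Writing $G \equiv G_{\wt H}(z)$ and $m \equiv m_{\wt H}(z)$, I first apply the Schur-complement formula \eqref{schur} to $\wt H = \gamma(\lambda V + W)$ (with $w_{ii}=0$ after the reduction in Section~\ref{subsection centering}) to obtain
\begin{equation*}
G_{ii}^{-1} = \gamma \lambda v_i - z - \gamma^2 m^{(i)}(z) - \gamma^2 Z_i, \qquad Z_i \deq \sum_{k\neq l}^{(i)} w_{ik} G_{kl}^{(i)} w_{il} + \sum_{k}^{(i)} \big(w_{ik}^2 - \tfrac{1}{N}\big) G_{kk}^{(i)}.
\end{equation*}
Since $(w_{ik})_{k\neq i}$ is independent of $G^{(i)}$, Lemma~\ref{lemma large deviation estimates} together with the Ward identity $\sum_{k,l}^{(i)} |G_{kl}^{(i)}|^2 = \eta^{-1}\sum_k^{(i)} \im G_{kk}^{(i)}$ yields $|Z_i| \prec \Pi(z)$ provided one has the a priori bound $|G_{kk}^{(i)}| \prec 1$. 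Together with $|m - m^{(i)}| \prec 1/(N\eta)$ from \eqref{basic resolvent}, this delivers the approximate self-consistent equation
\begin{equation*}
F(m, z) \deq m - \int \frac{\dd\widehat\nu(v)}{\gamma\lambda v - z - \gamma^2 m} = \caO(\Pi(z)).
\end{equation*}

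The second step is the stability analysis of the exact equation $F(\widehat m_{fc}, z) = 0$; cf.\ \eqref{widehat mfc}. A Taylor expansion about $\widehat m_{fc}$ gives $F(w,z) = [1 - \gamma^2 \caI_2(z)](w - \widehat m_{fc}) - \gamma^4 \caI_3(z)(w - \widehat m_{fc})^2 + O((w-\widehat m_{fc})^3)$, where $\caI_p(z) \deq \int \dd\widehat\nu(v)/(\gamma\lambda v - z - \gamma^2 \widehat m_{fc}(z))^p$. The choice of $\gamma$ encoded in \eqref{upper edge} forces $1 - \gamma^2 \caI_2(\widehat L_+) = 0$, so the equation is only quadratically invertible at the upper edge. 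Lemma~\ref{square_root} and direct computation yield $|1 - \gamma^2 \caI_2(z)| \sim \sqrt{\kappa_E + \eta}$ and $|\caI_3(z)| \sim 1$ uniformly in $z \in \caD_\xi$ and $\lambda \in [0,\lambda_0]$, whence the stability inequality
\begin{equation*}
|w - \widehat m_{fc}(z)| \prec \frac{|F(w,z)|}{\sqrt{\kappa_E + \eta} + \sqrt{|F(w,z)|}}.
\end{equation*}
Feeding $|F(m,z)| \prec \Pi(z)$ into this bound gives a crude first estimate, which is then sharpened by a fluctuation-averaging argument: expanding $G_{ii} = g_i + \gamma^2 g_i G_{ii}(m^{(i)} - \widehat m_{fc}) + \gamma^2 g_i G_{ii} Z_i$ and averaging over $i$, the dominant fluctuation $\frac{1}{N}\sum_i g_i^2 Z_i$ can be shown to satisfy $|\frac{1}{N}\sum_i g_i^2 Z_i| \prec \Pi(z)^2$ (rather than $\Pi/\sqrt N$), by exploiting the approximate conditional independence of the $Z_i$ across $i$. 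Combined with the linearization, this upgrades the averaged bound to $|m - \widehat m_{fc}| \prec 1/(N\eta)$.

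The a priori hypothesis $|G_{ii}|\prec 1$ used in the first step is installed by a continuity argument in $\eta$: at $\eta = 3$ all bounds are trivial from $\|G(z)\|\le \eta^{-1}$, and one introduces a polynomially fine lattice in $\caD_\xi$, uses Lipschitz continuity of $G(z)$ in $\eta$, and decreases $\eta$ in increments small enough that the bound established at $\eta_0$ plugged back into the Schur step rederives the improved version at $\eta_0 - N^{-C}$; iterating down to $\eta = N^{-1+\xi}$ closes the bootstrap. Once the averaged law and $|G_{ii}|\prec 1$ are in hand, \eqref{twosided} gives $|G_{ij}|\prec \Pi(z)$ for $i\neq j$ (the diagonal factors being $O(1)$ and the bracketed sum $\caO(\Pi)$ by Lemma~\ref{lemma large deviation estimates}), while inverting $G_{ii}^{-1} - g_i(z)^{-1} = -\gamma^2(m^{(i)} - \widehat m_{fc}) - \gamma^2 Z_i$ yields the entrywise bound $|G_{ii} - g_i(z)| \prec \Pi(z)$, and in particular $|G_{ii}| \prec 1$.

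The principal obstacle is the stability analysis at the spectral edge: the linearization of the self-consistent equation degenerates precisely at $z = \widehat L_+$, and the effective square-root gain $\sqrt{\kappa_E + \eta}$ must be extracted from the detailed shape information for $\widehat\rho_{fc}$ given in Lemma~\ref{square_root}, uniformly in $\lambda \in [0,\lambda_0]$ and over realizations of $\widehat\nu$ on $\Xi$. A secondary subtlety is that the coefficients $\caI_p(z)$ are functionals of the random empirical measure $\widehat\nu$ rather than of the deterministic limit $\nu$; this is precisely what Assumption~\ref{assumption mu_V} via \eqref{definition of omega_V II} and the tailor-made rescaling by $\gamma$ defined through \eqref{zeta}--\eqref{gamma} are designed to ensure.
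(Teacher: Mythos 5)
The paper does not prove this statement itself: immediately above Theorem~\ref{local law} it writes that the result ``was established in Theorem 3.3 of~\cite{LSSY}'' (see also Theorem 2.21 of~\cite{LS1}), so there is no internal proof to compare against. Your blind sketch, however, is precisely the strategy used in~\cite{LSSY} and in the general local-law literature: Schur complement to isolate the fluctuation $Z_i$, large-deviation bounds via Lemma~\ref{lemma large deviation estimates} and the Ward identity, derivation of the approximate self-consistent equation, quadratic stability analysis at the edge where the linearization coefficient $1-\gamma^2\caI_2(z)$ degenerates like $\sqrt{\kappa_E+\eta}$, fluctuation averaging to upgrade $\Pi$ to $\Pi^2$ in the averaged equation, and a continuity/bootstrap in $\eta$ from $\eta=3$ down to $\eta=N^{-1+\xi}$. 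The pieces fit together correctly. One minor imprecision: the degeneracy $1-\gamma^2\caI_2(\widehat L_+)=0$ is an intrinsic feature of the self-consistent equation at a regular spectral edge (it holds for any $\gamma$), rather than something ``forced'' by the rescaling; the choice of $\gamma$ via \eqref{zeta}--\eqref{gamma} serves to normalize the square-root coefficient in \eqref{to be emphasized}, not to create the degeneracy. This does not affect your stability argument, which only uses the degeneracy itself.
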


The following lemma gives a rigidity estimate on the eigenvalue location of $\wt H$. We denote by $\wt \mu_1 \geq \wt \mu_2 \geq \cdots \geq \wt \mu_N$ the eigenvalues of $\wt H$ in descending order. Define the ``classical'' location, $\gamma_k$, of the $k$-th eigenvalue of $\wt H$ by
\beq \label{classical location}
\int_{-\infty}^{\gamma} \widehat\rho_{fc}(x) \dd x = \frac{k}{N}\,,\qquad\qquad (k\in\llbracket 1,N\rrbracket)\,.
\eeq

\begin{lemma}[Rigidity of eigenvalues] \label{rigidity}
On $\Xi$, we have
\beq \label{rigidity of eigenvalues equation}
|\wt \mu_{k} - \gamma_{k}|\prec {N^{-2/3}}\left(\frac{1}{\widehat k}\right)^{1/3}\,,
\eeq
uniformly in $\lambda\in[0,\lambda_0]$, where we have set $\widehat k \deq \min \{k, N-k\}$.
\end{lemma}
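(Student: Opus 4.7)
The plan is the standard deduction of eigenvalue rigidity from a local law. The two inputs are Theorem~\ref{local law} (local deformed semicircle law for $\widetilde H$) and Lemma~\ref{square_root} (square-root edge behaviour of $\widehat\rho_{fc}$); everything else is essentially mechanical and borrowed from the Wigner-matrix literature.

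The first step is a counting-function comparison. Set
\begin{equation*}
\mathfrak n(E) \deq \frac{1}{N}\#\{i\in\llbracket 1,N\rrbracket: \widetilde\mu_i \le E\}\,,\qquad n_{fc}(E) \deq \int_{-\infty}^E \widehat\rho_{fc}(x)\,\dd x\,.
\end{equation*}
Using the Helffer--Sj\"ostrand functional calculus to represent $\mathfrak n(E)-n_{fc}(E)$ in terms of an integral of $m_{\widetilde H}-\widehat m_{fc}$ against a smooth cutoff at scale $\eta \in[N^{-1+\xi},1]$, and inserting the bound $|m_{\widetilde H}(z) - \widehat m_{fc}(z)| \prec (N\eta)^{-1}$ on $\caD_\xi$ from~\eqref{local 1}, I would obtain
\begin{equation*}
|\mathfrak n(E) - n_{fc}(E)| \prec N^{-1}
\end{equation*}
uniformly for $E$ in a fixed neighbourhood of $[\widehat L_-,\widehat L_+]$. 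I would also rule out eigenvalues outside this neighbourhood by exploiting that on $\Xi$, by~\eqref{local 1} and part~(2) of Lemma~\ref{square_root}, $\im m_{\widetilde H}(E+\ii\eta)$ is small for $E$ away from $\supp\widehat\rho_{fc}$ and $\eta=N^{-1+\xi}$, which through the spectral decomposition $\im m_{\widetilde H}(E+\ii\eta)=\frac{1}{N}\sum_i \eta/((\widetilde\mu_i-E)^2+\eta^2)$ prevents any eigenvalue from lying there.

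The second step is inversion at the edge. Near the upper edge Lemma~\ref{square_root} gives $\widehat\rho_{fc}(x) \sim \pi^{-1}\sqrt{\widehat L_+-x}$, so $n_{fc}(\widehat L_+) - n_{fc}(E) \sim (\widehat L_+-E)^{3/2}$, and consequently the classical location $\gamma_k$ defined in~\eqref{classical location} satisfies $\widehat L_+ - \gamma_k \sim (k/N)^{2/3}$ for $k\le N/2$, with $n_{fc}'(\gamma_k)\sim (k/N)^{1/3}$. Inverting the locally linear relation between $\mathfrak n$ and $E$, with slope $\sim n_{fc}'(\gamma_k)$, the bound $|\mathfrak n(\gamma_k\pm\delta)-n_{fc}(\gamma_k\pm\delta)|\prec N^{-1}$ gives $|\widetilde\mu_k-\gamma_k| \prec N^{-1}/n_{fc}'(\gamma_k) \sim N^{-2/3} k^{-1/3}$. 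The same argument at the lower edge handles $k\ge N/2$ after replacing $k$ by $N-k$, so $\widehat k=\min\{k,N-k\}$ appears in~\eqref{rigidity of eigenvalues equation}. Uniformity in $\lambda\in[0,\lambda_0]$ is inherited from the uniformity already present in Theorem~\ref{local law} and Lemma~\ref{square_root}.

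The main obstacle is the Helffer--Sj\"ostrand step near the spectral edges. The local law~\eqref{local 1} is only available on $\caD_\xi$ with $\eta\ge N^{-1+\xi}$, whereas the target rigidity scale at the edge is $N^{-2/3}$. One resolves this by splitting the Helffer--Sj\"ostrand integral at $\eta= N^{-1+\xi}$: the part with $\eta$ below this threshold is bounded by $\int \im\widehat m_{fc}(E+\ii\eta)\,\dd\eta$, which by parts (1)--(2) of Lemma~\ref{square_root} is of size $N^{-1+\xi}\sqrt{\kappa_E+N^{-1+\xi}}$ and is therefore absorbed into the claimed bound after paying an arbitrarily small $N^\varepsilon$ factor, whereas the part above the threshold is controlled directly by \eqref{local 1}. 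This calculation, while delicate, is by now a routine use of the refined imaginary-part estimates of Lemma~\ref{square_root} and does not require anything beyond the local law already available.
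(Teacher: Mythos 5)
Your proposal follows exactly the paper's route: the paper proves Lemma~\ref{rigidity} by citing the Helffer--Sj\"ostrand argument of Lemma~5.1 in~\cite{EYY} verbatim, with the only inputs being the local law of Theorem~\ref{local law} and the square-root edge behaviour from Lemma~\ref{square_root}, which is precisely the counting-function comparison, edge inversion, and $\eta$-threshold splitting you describe. The only place your sketch is slightly looser than the reference is in ruling out eigenvalues far outside $[\widehat L_-,\widehat L_+]$, where the naive spectral-decomposition bound at $\eta = N^{-1+\xi}$ is not by itself strong enough (one needs the refined bound on $|m_{\widetilde H}-\widehat m_{fc}|$ outside the support implicit in the local-law proof, as in~\cite{EYY}), but this is the same implicit reliance the paper makes.
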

Lemma~\ref{rigidity} follows from Theorem~\ref{local law} by an application of the Helffer-Sj\"{o}strand calculus. The proof of Lemma 5.1 in~\cite{EYY} for (generalized) Wigner matrices applies ad verbum to deformed Wigner matrices.

Alluding once more to the introductory remarks of the present subsection, we remark that the classical locations~$(\gamma_\alpha)$ depend on $\lambda$. Yet, close to the upper edge, i.e., $\alpha\ll N^{1/3}$, the gaps $\gamma_{\alpha+1}-\gamma_{\alpha}$ are essentially independent of $\lambda$ as follow from~\eqref{to be emphasized}. From Lemma~\ref{rigidity}, we can extend this conclusion to the eigenvalue gaps $\widetilde\mu_{\alpha+1}-\widetilde\mu_{\alpha}$ at the upper edge on~$\Xi$ for~$N$ sufficiently large. 
\begin{remark}
The local law in Theorem~\ref{local law} and the rigidity result in Lemma \ref{rigidity} are stronger than the corresponding results in \cite{LS1}. The improvement is based on fixing the diagonal element $(v_i)$. See Theorem 2.12, Remark 2.12, and Remark 2.14 in \cite{LS1} for more discussion. In fact, the estimates in Theorem~\ref{local law} and Lemma~\ref{rigidity} are essentially optimal up to corrections $N^{\epsilon}$. 
\end{remark}

\section{Proof of Main results} \label{sec:proof of main results}

\subsection{Density of states and the averaged Green function}

We follow the proof of the edge universality in~\cite{EYY, EKYY2}. Recall that $\wt \mu_1 \geq \wt \mu_2 \geq \ldots \geq \wt \mu_N$ denote the eigenvalues of $\wt H$ and $\widehat L_+$ is the upper edge of $\supp \widehat \rho_{fc}$. Recall the event~$\Xi$ in Definition~\ref{definition of the event omega}. From Lemma~\ref{rigidity}, we find that
\beq
|\wt \mu_1 - \widehat L_+ |\prec N^{-2/3}\,,\nonumber
\eeq	
on $\Xi$. Thus, we may assume in \eqref{eq:main} that $s \prec 1$.

Fix $E_*$ such that
\beq
E_* - \widehat L_+ \prec N^{-2/3}, \qquad\quad  \lone(\mu_1 - E_*>0) \prec 0\,. \nonumber
\eeq

 We note that the choice of $E_*$ guarantees that the event $\mu_1 > E_*$ is negligible. For $E$ satisfying
\beq \label{def_E}
|E- \widehat L_+| \prec N^{-2/3}\,,
\eeq
we let
\beq
\chi_E \deq \lone_{[E, E_*]}\,.\nonumber
\eeq
We also define the Poisson kernel, $\theta_\eta$, for $\eta > 0$, by
\beq
\theta_{\eta} (x) \deq \frac{\eta}{\pi (x^2 + \eta^2)} = \frac{1}{\pi} \im \frac{1}{x - \ii \eta}\,.\nonumber
\eeq
Introduce a smooth cutoff function $K\,:\, \R \to \R$ satisfying
\beq \label{def_K}
K(x) =
	\begin{cases}
	1 & \text{ if } x \leq 1/9\,, \\
	0 & \text{ if } x \geq 2/9\,.
	\end{cases}
\eeq
Let $\caN(E_1, E_2)$ be the number of the eigenvalues in $(E_1, E_2]$, i.e.,
\beq
\caN(E_1,E_2) \deq |\{ \alpha\,:\, E_1 < \wt \mu_{\alpha} \leq E_2\}|\,,\nonumber
\eeq 
and define the density of states in the interval $[E_1,E_2]$ by
\beq
\frn(E_1,E_2) \deq \frac{1}{N} \caN(E_1,E_2)\,.\nonumber
\eeq
In order to estimate $\p (\wt \mu_1 \leq E)$, we consider the following approximation:
\beq \label{edge_approx}
\p (\wt \mu_1 \leq E) = \E K( \caN (E, \infty) ) \simeq \E K( \caN (E, E_*) ) \simeq \E K \left( N \int_E^{E_*} \im m(y + \ii \eta)\, \dd y \right)\,,
\eeq
with $\eta\sim N^{-2/3-\epsilon'}$, for some small $\epsilon'>0$. The first approximation in \eqref{edge_approx} follows from Lemma \ref{rigidity}, the rigidity of the eigenvalues, and the second from
\beq
\caN (E, E_*) = \Tr \chi_E (H) \simeq \Tr \chi_E * \theta_{\eta} (H) = \frac{1}{\pi} N \int_E^{E_*} \im m(y + \ii \eta)\, \dd y\,.\nonumber
\eeq

The following lemma shows that the approximations in~\eqref{edge_approx} indeed hold.
\begin{lemma} \label{lem approx}
Suppose that $E$ satisfies \eqref{def_E}. Let $K$ be a smooth function satisfying \eqref{def_K}. For $\epsilon>0$, let $\ell \deq \frac{1}{2} N^{-2/3 - \epsilon}$ and $\eta \deq N^{-2/3 - 9\epsilon}$. Then, for any sufficiently small $\epsilon > 0$ and any (large) $D > 0$, we have
\beq
\Tr \left( \chi_{E + \ell} * \theta_{\eta} (H) \right) - N^{-\epsilon} \leq \frn (E, \infty) \leq \Tr \left( \chi_{E - \ell} * \theta_{\eta} (H) \right) + N^{-\epsilon}
\eeq
and
\beq
\E K \left( \Tr \left( \chi_{E - \ell} * \theta_{\eta} (H) \right) \right) \leq \p (\wt \mu_1 \leq E) \leq \E K \left( \Tr \left( \chi_{E + \ell} * \theta_{\eta} (H) \right) \right) + N^{-D}\,,
\eeq
for any sufficiently large $N \geq N_0 (\epsilon, D)$.
\end{lemma}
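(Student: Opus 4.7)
\medskip
\noindent\textbf{Proof plan for Lemma \ref{lem approx}.}

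The plan is to derive the first assertion by a pointwise comparison of the convolution $\chi_{E\pm\ell}\ast\theta_\eta$ with the sharp indicator $\mathbf{1}_{[E\pm\ell,E_*]}$, to convert the resulting sum of errors into an imaginary part of $m_{\widetilde H}$ at spectral parameter $E\pm\ell+\ii\eta$, and then to control that imaginary part via the local deformed semicircle law (Theorem~\ref{local law}) combined with the square-root edge estimate of Lemma~\ref{square_root}. The second assertion follows from the first by exploiting the step-shape of $K$ and the integer-valuedness of $\mathcal{N}(E,\infty)$.

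\medskip

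First, by the hypothesis $\lone(\mu_1-E_*>0)\prec 0$, one may replace $\frn(E,\infty)=\mathcal{N}(E,\infty)$ by $\mathcal{N}(E,E_*)$ up to an event of negligible probability. Next, a direct computation with $\theta_\eta(t)=\eta/(\pi(t^2+\eta^2))$ gives, for any $x\in\mathbb{R}$, the deterministic bound
\begin{align*}
\bigl|(\chi_{E\pm\ell}\ast\theta_\eta)(x)-\mathbf{1}_{[E\pm\ell,E_*]}(x)\bigr|\;\le\; C\,\frac{\eta}{(x-(E\pm\ell))^2+\eta^2}\,\eta\;+\;C\,\frac{\eta}{(x-E_*)^2+\eta^2}\,\eta\,.
\end{align*}
Summing this bound over the eigenvalues $\widetilde\mu_\alpha$ and recognising the resulting sums as $\pi N\eta\,\im m_{\widetilde H}(E\pm\ell+\ii\eta)$ and $\pi N\eta\,\im m_{\widetilde H}(E_*+\ii\eta)$ respectively, I arrive at
\begin{align*}
\bigg|\Tr\bigl((\chi_{E\pm\ell}\ast\theta_\eta)(H)\bigr)-\mathcal{N}(E\pm\ell,E_*)\bigg|\;\le\; C\,N\eta\,\bigl(\im m_{\widetilde H}(E\pm\ell+\ii\eta)+\im m_{\widetilde H}(E_*+\ii\eta)\bigr)\,.
\end{align*}

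\medskip

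The main quantitative step is to show that the right-hand side is $\prec N^{-\epsilon}$. Since $|E-\widehat L_+|\prec N^{-2/3}$ and $\ell=\tfrac12 N^{-2/3-\epsilon}$, we have $\kappa_{E\pm\ell}+\eta\prec N^{-2/3}$, so Lemma~\ref{square_root} yields $\im \widehat m_{fc}(E\pm\ell+\ii\eta)\prec N^{-1/3}$. The local law in Theorem~\ref{local law} at scale $\eta=N^{-2/3-9\epsilon}\gg N^{-1}$ improves this to $\im m_{\widetilde H}(E\pm\ell+\ii\eta)\prec N^{-1/3}$, and an identical estimate holds at $E_*$. Plugging back gives
\begin{align*}
N\eta\,\im m_{\widetilde H}(E\pm\ell+\ii\eta)\;\prec\;N\cdot N^{-2/3-9\epsilon}\cdot N^{-1/3}\;=\;N^{-9\epsilon}\;\ll\;N^{-\epsilon}\,.
\end{align*}
Combining this with $\mathcal{N}(E+\ell,E_*)\le \mathcal{N}(E,E_*)\le\mathcal{N}(E-\ell,E_*)$ (and $\mathcal{N}(E,E_*)=\mathcal{N}(E,\infty)$ on the overwhelming-probability event $\{\mu_1\le E_*\}$) delivers the two-sided bound claimed in the first assertion.

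\medskip

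For the second assertion I use the first together with the fact that $\mathcal{N}(E,\infty)\in\mathbb{N}_0$. If $\Tr((\chi_{E-\ell}\ast\theta_\eta)(H))\le 1/9$, then on the event where the first assertion holds, $\mathcal{N}(E,\infty)\le 1/9+N^{-\epsilon}<1$, hence $\mathcal{N}(E,\infty)=0$, i.e.\ $\widetilde\mu_1\le E$; thus $K(\Tr(\chi_{E-\ell}\ast\theta_\eta(H)))\le\mathbf{1}(\widetilde\mu_1\le E)$ on this event, and taking expectations gives the lower bound. Conversely, if $\widetilde\mu_1\le E$ then $\mathcal{N}(E,\infty)=0$, so again by the first assertion $\Tr((\chi_{E+\ell}\ast\theta_\eta)(H))\le N^{-\epsilon}<1/9$, and $K(\Tr(\chi_{E+\ell}\ast\theta_\eta(H)))=1$; hence $\mathbf{1}(\widetilde\mu_1\le E)\le K(\Tr(\chi_{E+\ell}\ast\theta_\eta(H)))$ on the high-probability event, whose complement contributes at most $N^{-D}$. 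The main obstacle is the quantitative step involving the local law and edge behaviour; the choice $\eta\ll\ell$ (more precisely $\eta/\ell=N^{-8\epsilon}$) is exactly what is needed so that the smoothing error is beaten by the factor $N^{-\epsilon}$ on the right-hand side.
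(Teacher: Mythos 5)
Your plan diverges from the paper's route at the crucial quantitative step, and there is a genuine gap there. The paper does not reprove this lemma from scratch; it explicitly refers to the proof of Corollary~6.2 of~\cite{EYY} (and Lemma~6.5 of~\cite{EKYY2}), noting only that the local deformed semicircle law (Theorem~\ref{local law}) and the edge estimates on $\im\widehat m_{fc}$ (Lemma~\ref{square_root}) replace the corresponding Wigner inputs.

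The problem is the claimed deterministic pointwise bound. With $\theta_\eta(t)=\eta/(\pi(t^2+\eta^2))$ and $\chi_{a}=\lone_{[a,E_*]}$, one computes $(\chi_a\ast\theta_\eta)(x)=\frac{1}{\pi}\bigl(\arctan\tfrac{x-a}{\eta}-\arctan\tfrac{x-E_*}{\eta}\bigr)$. For $x\in(a,E_*)$ with $x-a,E_*-x\gg\eta$, the deviation from~$1$ is $\approx \tfrac{\eta}{\pi(x-a)}+\tfrac{\eta}{\pi(E_*-x)}$, i.e.\ it decays like $\eta/d$ at distance $d$ from the endpoints, not like $\eta^2/d^2$ as you assert. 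Your bound $C\eta\,\theta_\eta(x-a)+C\eta\,\theta_\eta(x-E_*)$ therefore understates the true error by a factor of order $d/\eta$ away from the endpoints, and the subsequent identification of the accumulated error with $CN\eta\,\im m_{\widetilde H}$ does not follow. The actual accumulated error is controlled by a sum of the form $\sum_\alpha \min\!\bigl(1,\eta/|\widetilde\mu_\alpha-(E\pm\ell)|\bigr)$ (plus the analogue at $E_*$), and bounding this by $N^{-\epsilon}$ requires a dyadic decomposition in the distance $|\widetilde\mu_\alpha-(E\pm\ell)|$, a separate treatment of the $\prec 1$ eigenvalues within distance $\ell$ of the endpoint, and repeated use of the square-root edge behaviour of $\im\widehat m_{fc}$ together with rigidity (Lemma~\ref{rigidity}). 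That multi-scale argument is exactly the content of \cite[Cor.~6.2]{EYY}.

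A second issue: even granting the reduction to $N\eta\,\im m_{\widetilde H}(E\pm\ell+\ii\eta)$, the asserted bound $\im m_{\widetilde H}\prec N^{-1/3}$ at $\eta=N^{-2/3-9\epsilon}$ is too optimistic. The local law gives $|m_{\widetilde H}-\widehat m_{fc}|\prec (N\eta)^{-1}=N^{-1/3+9\epsilon}$, which dominates $\im\widehat m_{fc}\prec N^{-1/3}$ at this scale; hence one only gets $\im m_{\widetilde H}\prec N^{-1/3+9\epsilon}$, so $N\eta\,\im m_{\widetilde H}\prec 1$ rather than $\prec N^{-9\epsilon}$. (This is not just a technicality: at a scale $\eta$ far below the eigenvalue spacing $N^{-2/3}$, the event that an eigenvalue lies within $\eta$ of $E$ has polynomial probability, and on that event $N\eta\,\im m_{\widetilde H}\gtrsim 1$.) So the argument as stated cannot close.

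The derivation of the second assertion from the first, using the shape of $K$ on $[1/9,2/9]$ and the integer-valuedness of $\caN(E,\infty)$, is fine and is essentially the standard argument; the high-probability failure set supplies the $N^{-D}$ term. To repair the first assertion you should follow \cite[Lemma~6.1 and Cor.~6.2]{EYY}: decompose the eigenvalues into those within $\ell$ of the endpoints (bounded by a counting function, itself controlled by rigidity and the local law), those at intermediate distance (dyadic shells, each contributing $\eta/d$ per eigenvalue), and those far outside, where the tail of the convolved indicator does decay like $\eta\,(E_*-E)/d^2$.
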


\begin{proof}
We may follow the proof of Corollary 6.2 of \cite{EYY}. (See also Lemma 6.5 of \cite{EKYY2}.) Note that the estimates on $|m (E + \ii \ell) - m_{fc} (E + \ii \ell)|$ and $\im m_{fc} (E - \kappa + \ii \ell)$, which replace similar estimates with respect to $m_{sc}$ in the proof of Corollary 6.2 in \cite{EYY}, are already proved in Lemma~\ref{square_root} and Theorem~\ref{local law}.
\end{proof}

\subsection{Green function comparison and proof of Theorem \ref{thm main}}

We now prove the main result of the paper using the following proposition, which compares the right side of \eqref{edge_approx} and the corresponding expectation with respect to the Wigner matrix $W$. Recall that the averaged Green function of $\wt H$ is defined by
\beq
m_{\wt H}(z) \deq \frac{1}{N} \Tr (\wt H-z)^{-1}\,, \qquad \qquad (z \in \C^+)\,.\nonumber
\eeq
Let $W^{\GOE}$ be a standard GOE matrix which is independent of $V$ and $W$. We define the averaged Green function of $W^{\GOE}$ by
\beq
m^{\GOE}(z) \deq \frac{1}{N} \Tr (W^{\GOE}-z)^{-1}, \qquad\qquad ( z \in \C^+)\,.\nonumber
\eeq

\begin{proposition}[Green function comparison] \label{prop green}
Let $\epsilon>0$ and set $\eta = N^{-2/3 - \epsilon}$. Let $E_1, E_2\in\R$ satisfy
\begin{align}\label{E1 and E2}
|E_1 - 2| \leq N^{-2/3 + \epsilon}\,, \qquad\quad |E_2 - 2| \leq N^{-2/3 + \epsilon}\,.
\end{align}
Let $F : \R \to \R$ be a smooth function satisfying
\beq \label{F bound}
\max_x |F^{(\ell)}(x)| (|x|+1)^{-C} \leq C\,,\qquad \qquad \ell=1,2, 3, 4\,.
\eeq
Then, there exists a constant $C' > 0$ such that, for any sufficiently large $N$ and for any sufficiently small $\epsilon > 0$, we have that, on $\Xi$,
\begin{align} \label{green_comp}
\left| \E F \left( N \int_{E_1}^{E_2} \im m_{\wt H}(x + \widehat L_+ - 2 + \ii \eta) \,\dd x \right) - \E F \left( N \int_{E_1}^{E_2} \im m^{\mathrm{GOE}}(x + \ii \eta)\, \dd x \right) \right| \leq N^{-1/6 + C'\epsilon}\,,
\end{align}
where the expectation $\E$ is with respect to $W$.
\end{proposition}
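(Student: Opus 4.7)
My plan is to interpolate between the deformed Wigner matrix and a pure GOE by running the matrix Ornstein--Uhlenbeck flow
\begin{equation*}
\dd H_t = \frac{1}{\sqrt{N}}\, \dd B_t - \frac{1}{2} H_t\, \dd t\,, \qquad H_0 = \lambda_0 V + W\,,
\end{equation*}
where $B_t$ is a symmetric matrix Brownian motion with GOE covariance. The solution is explicit, $H_t = \e{-t/2}(\lambda_0 V + W) + \sqrt{1 - \e{-t}}\, W^{\GOE}$ with $W^{\GOE}$ an independent GOE matrix. Choosing $T \deq K \log N$ with $K$ large, the factor $\e{-T/2}(\lambda_0 V + W)$ is smaller than any negative power of $N$ in operator norm on $\Xi$, so $H_T$ agrees with a GOE matrix up to negligible error. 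It thus suffices to show that, conditionally on $V$ fixed in $\Xi$, the quantity
\begin{equation*}
Y(t) \deq \E F\left( N \int_{E_1}^{E_2} \im m_{\widetilde H_t}\big( x + \widehat L_+(t) - 2 + \ii \eta \big)\, \dd x \right)
\end{equation*}
varies by at most $N^{-1/6 + C'\epsilon}$ over $t \in [0,T]$. Here $\widetilde H_t \deq \gamma(t) H_t$, and $\gamma(t)$, $\widehat L_+(t)$ are the scaling factor and upper edge defined as in Section~\ref{sec:renormalize} for effective coupling $\lambda(t) \deq \e{-t/2} \lambda_0$ and potential $\e{-t/2} V$, so that $\gamma(T) \to 1$ and $\widehat L_+(T) \to 2$ and the endpoint $Y(T)$ matches the right-hand side of~\eqref{green_comp} up to an $N^{-D}$ error.

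I would then apply It\^o's formula to $Y(t)$. Writing $z(t) = x + \widehat L_+(t) - 2 + \ii\eta$ and $G(t) = (\widetilde H_t - z(t))^{-1}$, the time derivative splits into two kinds of terms: a deterministic part coming from $\partial_t \gamma(t)$ and $\partial_t \widehat L_+(t)$ acting through $z(t)$ and through the rescaling of the matrix, and a stochastic part given by the OU drift $-\tfrac{1}{2} H_t\, \dd t$ together with the quadratic variation of $\dd B_t$. Using the defining equation~\eqref{widehat mfc} for $\widehat m_{fc}$ at parameter $\lambda(t)$ and the edge relation~\eqref{upper edge}, the leading $O(1)$ pieces of these two contributions should cancel; this is precisely the ``renormalization'' alluded to in the introduction. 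What remains is $\partial_t Y(t) = \E[F'(\cdot)\cdot \mathcal R(t)]+\E[F''(\cdot)\cdot \mathcal Q(t)]$, where $\mathcal R(t)$ and $\mathcal Q(t)$ are polynomials of bounded degree in the Green function entries $G_{ij}$ and the deterministic functions $g_i(z(t))$, summed against factors of $N^{-1}$.

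The hard part is that the naive bounds $|G_{ij}|\prec N^{-1/3+\epsilon}$ off-diagonal and $|G_{ii}|\prec 1$ on-diagonal coming from Theorem~\ref{local law} are not sharp enough: summing over indices produces powers of $N$ that overwhelm the target estimate $\partial_t Y(t) = O(N^{-7/6 + C\epsilon})$ that must hold uniformly in $t$ in order to close the argument after integrating over $[0,T]$ of length $O(\log N)$. The remedy is a family of ``optical theorem'' identities, obtained by combining the Ward identity $\sum_j |G_{ij}|^2 = \eta^{-1}\im G_{ii}$ with the self-consistent fluctuation equation for $G_{ii}$ about $g_i(z)$, which exhibit cancellations invisible to any entrywise bound. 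Once these identities are in place, the remaining terms are controlled by Lemma~\ref{rigidity}, the subexponential tails of $(w_{ij})$, and the fact that the local law holds for $\widetilde H_t$ uniformly in $t \in [0,T]$ (since $\lambda(t)$ interpolates monotonically in $[0,\lambda_0]$, both satisfy Assumption~\ref{assumption mu_V}, and the estimates of Theorem~\ref{local law} are uniform in the coupling). This gives $\partial_t Y(t) = O(N^{-7/6 + C\epsilon})$, and integrating over $[0,T]$ yields~\eqref{green_comp}.
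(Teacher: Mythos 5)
Your approach is the same as the paper's: an Ornstein--Uhlenbeck flow that interpolates $H_0 = \lambda_0 V + W$ to a GOE, together with a time-dependent renormalization $\gamma(t)$, $\widehat L_+(t)$ so that the flow of the averaged Green function can be controlled by It\^o's formula, the leading $O(1)$ drift being cancelled by the renormalization, and the remaining terms controlled by non-trivial cancellation identities and then integrated over a time interval of order $\log N$. That is exactly Sections~\ref{sec:dbm}--\ref{sec:proof of prop}.

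Two details are worth correcting, since as stated they would lead you astray. First, the per-time bound you set as your target, $\partial_t Y(t) = O(N^{-7/6+C\epsilon})$, is much stronger than what the method gives and also much stronger than what is needed. What the expansion actually yields (Proposition~\ref{prop expansion}) is $|\E\,\Theta(t,H(t))| \le C N^{1/2}\Psi^2 = O(N^{-1/6+C\epsilon})$ uniformly in $t$, and this suffices: integrating over $[0,\,4\log N]$ produces only a harmless $\log N$ factor that is absorbed into $N^{C'\epsilon}$. If you insist on $N^{-7/6}$ you will not be able to close the estimate. Second, the cancellation you need is not the Ward identity. The Ward identity controls $\sum_j |G_{ij}|^2 = \eta^{-1}\im G_{ii}$, whereas the quantity that appears in the drift is $\sum_s G_{is}G_{si} = (G^2)_{ii} = \partial_z G_{ii}$ (no complex conjugation), which is \emph{not} bounded by $\eta^{-1}\im G_{ii}$. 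The actual mechanism in the paper is the sum rule
\begin{align*}
\frac{1}{N}\sum_a \frac{1}{(\lambda\gamma\, v_a-\tau)^2} = \gamma^{-2}\,,
\end{align*}
which is exactly the equation defining $\gamma(t)$: after a systematic three-step expansion (remove the lower index $a$ from $G_{ia}G_{ai}$ via~\eqref{onesided} and Schur's formula~\eqref{schur}, take the partial expectation $\E_a$ to integrate out the $a$-th row/column, then remove the upper index via~\eqref{basic resolvent}), one sums over $a$, the sum rule makes the leading contribution collapse, and the ``optical theorems'' such as~\eqref{3 ward} fall out. So the cancellation is an edge-normalization phenomenon, not a Ward-type absolute-value cancellation, and you should look for it there rather than in $\im G_{ii}$.
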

We prove Proposition~\ref{prop green} in the Section~\ref{sec:dbm}. 

\begin{remark}
Proposition \ref{prop green} can be extended as follows: Let $\epsilon>0$ and set $\eta = N^{-2/3 - \epsilon}$. Let $E_0, E_1, \ldots, E_k \in\R$ satisfy
\beq
|E_0 - 2| \leq N^{-2/3 + \epsilon}\,, \qquad |E_1 - 2| \leq N^{-2/3 + \epsilon}\,, \qquad \ldots \,, \qquad |E_k - 2| \leq N^{-2/3 + \epsilon}\,.\nonumber
\eeq
Let $F : \R^k \to \R$ be a smooth function satisfying
\beq
\max_x |F^{(\ell)}(x)| (|x|+1)^{-C} \leq C\,,\qquad \qquad \ell=1, 2, 3, 4\,.\nonumber
\eeq
Then, there exists a constant $C' > 0$ such that, for any sufficiently large $N$ and for any sufficiently small $\epsilon > 0$, we have on $\Xi$ that
\begin{align} \label{k green_comp}
&\Bigg| \E F \Bigg( N \int_{E_1}^{E_0} \im m_{\wt H}(x + \widehat L_+ - 2 + \ii \eta) \,\dd x \,, \ldots , N \int_{E_k}^{E_0} \im m_{\wt H}(x + \widehat L_+ - 2 + \ii \eta)\, \dd x \Bigg)\nonumber \\
&\qquad \qquad - \E F \Bigg( N \int_{E_1}^{E_0} \im m^{\mathrm{GOE}}(x + \ii \eta)\, \dd x \,, \ldots , N \int_{E_k}^{E_0} \im m^{\mathrm{GOE}}(x + \ii \eta) \,\dd x \Bigg) \Bigg| \leq N^{-1/6 + C'\epsilon}\,. 
\end{align}
The proof of~\eqref{k green_comp} is similar to that of Proposition \ref{prop green} and will be omitted. Assuming the validity of the proposition, we now prove the main result. 
\end{remark}

\begin{proof}[Proof of Theorem \ref{thm main}]
Recall that we denote by $\mu_1^{\GOE} \geq \mu_2^{\GOE} \geq \cdots \geq \mu_N^{\GOE}$ the eigenvalues of $W^{\GOE}$. Since $\P (\Xi) \to 1$ as $N \to \infty$ by assumption, we may assume that $V$ is fixed and condition on $\Xi$. Thus, to prove~\eqref{eq:main}, it suffices to establish
\beq \label{edge}
\p [N^{2/3} (\mu_1^{\GOE} - 2) \leq s] - N^{-\phi} < \p [N^{2/3} \big( \wt \mu_1 - \widehat L_+ \big) \leq s] < \p [N^{2/3} (\mu_1^{\GOE} - 2) \leq s] + N^{-\phi}\,,
\eeq
for some $\phi > 0$.

Fix $s \prec 1$ and let $E \deq \widehat L_+ + s N^{-2/3}$. Let $\ell \deq \frac{1}{2} N^{-2/3 - \epsilon}$ and $\eta := N^{-2/3 - 9\epsilon}$. For any sufficiently small $\epsilon > 0$, we have from Lemma \ref{lem approx} that
\beq
\p (\wt \mu_1 \leq E) \geq \E \left[ K \left( \Tr \left( \chi_{E - \ell} * \theta_{\eta} (H) \right) \right) \right]\,.\nonumber
\eeq
From Proposition \ref{prop green}, we find that
\beq
\E \left[ K \left( \Tr \left( \chi_{E - \ell} * \theta_{\eta} (H) \right) \right) \right] \geq \E \left[ K \left( \Tr \left( \chi_{E - (\widehat L_+ - 2) - \ell} * \theta_{\eta} (W^{\GOE}) \right) \right) \right] - N^{-\phi}\,,\nonumber
\eeq
for some $\phi > 0$. Finally, we have from Corollary 6.2 of \cite{EYY} that
\beq
\E \left[ K \left( \Tr \left( \chi_{E - (\widehat L_+ - 2) - \ell} * \theta_{\eta} (W^{\GOE}) \right) \right) \right] \geq \p \left(\mu_1^{\GOE} \leq E - (\widehat L_+ - 2) \right) - N^{-\phi}\,.\nonumber
\eeq
Altogether, we have shown that
\beq
\p (\wt \mu_1 \leq E) \geq \p \left(\mu_1^{\GOE} \leq E - (\widehat L_+ - 2)\right) - 2 N^{-\phi}\,,\nonumber
\eeq
which proves the first inequality of \eqref{edge}. The second inequality can be proved similarly. 

To complete the proof of the desired theorem, we notice that it was proved in Lemma~C.1 of~\cite{LS2} that there exists a random variable $X \equiv X(N)$, which converges to the Gaussian random variable with mean $0$ and variance $N^{-1} (1 - (m_{fc}(E_+))^2)$, satisfying
\beq
\widehat E_+ - E_+ = X + \caO(N^{-1})\,,\nonumber
\eeq
on $\Xi$. In the proof of Theorem \ref{thm2}, we will show that $\mathrm{Var}(X) \sim N^{-1} \lambda_0^2$, which implies that $\widehat E_+$ converges in probability to $E_+$.
\end{proof}

Using the general form of the Green function comparison as in \eqref{k green_comp}, we can prove \eqref{k main} in a similar manner.

\subsection{Proof of Theorem \ref{thm2}}

We next prove Theorem \ref{thm2}. Recall that $\rho_{fc}$ denotes the deformed semicircle measure, whose Stieltjes transform is denoted by $m_{fc}$.
\begin{proof}[Proof of Theorem \ref{thm2}]
 For simplicity, assume that $\nu$ is centered; the proof is essentially the same even if $\nu$ is not centered. Recall that we denote by $m^{(n)}(\nu)$ the $n$-th central moment of $\nu$. We notice that $\supp \rho_{fc} = [E_-, E_+]$, for some $E_- <0 < E_+$; see Lemma~\ref{lemma vorbereitung}. As pointed out in the proof of Theorem \ref{thm main}, there exists a random variable $X$, which converges to the Gaussian random variable with mean $0$ and variance $N^{-1} (1 - (m_{fc}(E_+))^2)$, satisfying
\beq
\widehat E_+ - E_+ = X + \caO(N^{-1})\,,\nonumber
\eeq
on $\Xi$. Let
\beq
\vartheta \deq E_+ + m_{fc}(E_+) = E_+ + \int \frac{\dd \nu(v)}{\lambda_0 v - \vartheta}\,.\nonumber
\eeq
It was shown in \cite{LS1} that $\vartheta$ is the solution to the equation
\beq \label{eq:xi}
\int \frac{\dd \nu(v)}{(\lambda_0 v - \vartheta)^2} = 1\,,\qquad\qquad \vartheta\ge 0\,,
\eeq
and that there exists a constant $c > 0$, independent of $N$, such that $\vartheta - \lambda_0 v > c$ for any $v \in \supp \nu$.

We first consider the case $\lambda_0 \ll 1$. Expanding \eqref{eq:xi} in terms of $\lambda_0$, we obtain
\begin{align} \label{xi expansion}
1 &= \int \frac{\dd \nu(v)}{(\lambda_0 v - \vartheta)^2} = \int \dd \nu(v) \left( \frac{1}{\vartheta^2} + \frac{2 \lambda_0 v}{\vartheta^3} + \frac{3 \lambda_0^2 v^2}{\vartheta^4} + O(\lambda_0^3) \right) = \frac{1}{\vartheta^2} + \frac{3 \lambda_0^2 m^{(2)}(\nu)}{\vartheta^4} + O(\lambda_0^3)\,,
\end{align}
where we used that $\nu$ is centered and has variance $m^{(2)}(\nu)$. We thus get $\vartheta^2 = 1 + O(\lambda_0^2)$ and, by putting it back into~\eqref{xi expansion},
\beq
\vartheta = 1 + \frac{3 \lambda_0^2m^{(2)}(\nu)}{2} + O(\lambda_0^3)\,.\nonumber
\eeq
We now have
\beq
m_{fc}(E_+) = \int \frac{\dd \nu(v)}{\lambda_0 v - \vartheta} = -\frac{1}{\vartheta} - \frac{\lambda_0^2 m^{(2)}(\nu)}{\vartheta^3} + O(\lambda_0^3) = -1 + \frac{\lambda_0^2m^{(2)}(\nu)}{2} + O(\lambda_0^3)\,,\nonumber
\eeq
and thus
\beq \label{var X}
N\mathrm{Var}(X) = 1 - (m_{fc}(E_+))^2 =  \lambda_0^2 m^{(2)}(\nu) + O( \lambda_0^3)\,.
\eeq
A similar computation yields,
\begin{align}
 \gamma_0=1+O(\lambda_0^2)\,,\nonumber
\end{align}
on $\Xi$. 

Comparing $|\widehat E_+ -E_+|$ and  $|\mu_1 - \widehat E_+|$, with Theorem \ref{thm main} and Equation \eqref{var X}, we can establish the first part of Theorem~\ref{thm2},~\eqref{thm2 case1}, and the second part,~\eqref{thm2 case3}. Similarly, if $N^{-1/6} \ll \lambda_0 \ll 1$, we can also prove \eqref{thm2 case2} by using Theorem \ref{thm main} and \eqref{var X}, and in particular, $\sigma = m^{(2)}(\nu)$.

We next show that, for any $\lambda_0 \gg N^{-1/6}$, \eqref{thm2 case2} holds for some $\sigma \sim 1$. We notice that, if $\lambda_0 < \epsilon$ for some sufficiently small constant $\epsilon > 0$, independent of $N$, we can prove an estimate on the variance of $X$ similar to \eqref{var X}, i.e.,
$$
\frac{1}{2} N^{-1} \lambda_0^2 \leq \mathrm{Var}(X) \leq 2 N^{-1} \lambda_0^2\,,
$$
which shows that $\sigma \sim 1$ when $\lambda_0 < \epsilon$. When $\lambda_0 \geq \epsilon$, it is obvious that
$$
1 - (m_{fc}(E_+))^2 = \int \frac{\dd \nu(v)}{(\lambda_0 v - \vartheta)^2} - \left(\int \frac{\dd \nu(v)}{\lambda_0 v - \vartheta}\right)^2 > c > 0\,,
$$
for some constant $c > 0$, hence $\mathrm{Var} (X) \sim N^{-1} \sim N^{-1} \lambda_0^2$. Thus, we can conclude that $\sigma \sim 1$ in any case. This show statement~$iii$ of Theorem~\ref{thm2}. Since $X$ is a Gaussian random variable with $\mathrm{Var} (X) \sim N^{-1}$, we see that $\widehat E_+ \to E_+$ as $N \to \infty$.

It remains to prove \eqref{E asymptotic}. Expanding \eqref{xi expansion} further and solving it for $\vartheta$, we find that
$$
\vartheta = 1 + \frac{3 \lambda_0^2 m^{(2)}(\nu)}{2} + 2 \lambda_0^3 m^{(3)}(\nu) + \lambda_0^4 \left( \frac{5 m^{(4)}(\nu)}{2} - \frac{45 (m^{(2)}(\nu))^2}{4} \right) + O(\lambda_0^5)\,.
$$
Thus, we obtain that
\begin{align*}
E_+ &= \vartheta - m_{fc}(E_+) = \vartheta - \int \frac{\dd \nu(v)}{\lambda_0 v - \vartheta} \\
&= \vartheta + \frac{1}{\vartheta} + \frac{\lambda_0^2 m^{(2)}(\nu)}{\vartheta^3} + \frac{\lambda_0^3 m^{(3)}(\nu)}{\vartheta^4} + \frac{\lambda_0^4 m^{(4)}(\nu)}{\vartheta^5} + O(\lambda_0^5) \\
&= 2 + \lambda_0^2 m^{(2)}(\nu) + \lambda_0^3 m^{(3)}(\nu) + \lambda_0^4 \left( m^{(4)}(\nu) - \frac{9 (m^{(2)}(\nu))^2 }{4} \right) + O(\lambda_0^5)\,,
\end{align*}
proving~\eqref{E asymptotic}. This completes the proof of Theorem~\ref{thm2}.
\end{proof}

\section{Dyson Brownian motion} \label{sec:dbm}

In this section, we prove Proposition \ref{prop green}. The guiding idea of our proof is that Dyson's Brownian motion (DBM) interpolates (in the sense of distributions) between the deformed Wigner matrix $H$ and the GOE matrix $W^{\GOE}$. Following the flow of the DBM we show that the expectations of the Green functions of $H$ and $W^{\GOE}$ can be compared for appropriately chosen energies.

We first recapitulate Dyson's Brownian motion~\cite{D} in Subsection~\ref{setup}.
\subsection{Preliminaries} \label{setup}
Let $H_0$ be the matrix
\begin{align}\label{le initial condition}
 H_0={ \lambda_0}V+W\,,
\end{align}
where $V=\diag(v_i)$ is a diagonal matrix and $W$ is a real symmetric  Wigner matrix that satisfies the assumptions in Definition \ref{assumption wigner} and has vanishing diagonal entries (see Subsection~\ref{subsection centering}). Here $\lambda_0$ and $(v_i)$ are chosen to satisfy Assumption~\ref{assumption mu_V}. We consider $(v_i)$ to be fixed, in particular, if $V$ is a random we consider them to be conditioned on the event $\Xi$ introduced in Definition~\ref{definition of the event omega}. 

Let $(\beta_{ij}(t))$ be a real symmetric matrix, whose diagonal entries are zero and the off-diagonal entries are a collection of independent, up to the symmetry constraint, real standard Brownian motions, independent of $H_0$. More precisely,  $\beta_{ii}(t)=0$, $t\ge0$, while $\beta_{ij}(t)$, $i<j$, $t\ge0$, is a standard Brownian motion starting at zero.
 
Let $H(t)=(h_{ij}(t))$, $t\ge 0$, satisfy the stochastic differential equation,
\begin{align}\label{le DBM}
\dd h_{ii}=-\frac{1}{2} h_{ii}\,\dd t\,,\qquad\quad \dd h_{ij}=\frac{\dd\beta_{ij}}{\sqrt{N}}-\frac{1}{2}h_{ij}\,\dd t\,,\qquad\qquad(i\not=j)\,, 
\end{align}
with initial condition $H(t=0)=H_0$. In the following we usually write $h_{ij}\equiv h_{ij}(t)$ and we refer to $t$ as time. Note that we consider in~\eqref{le DBM} a matrix-valued Ornstein-Uhlenbeck process with a drift term which insures that the variances of~$h_{ij}(t)$ remain constant over time. In Dyson's original work~\cite{D} this drift term was absent while the diagonal entries were also driven by  Brownian motions. It is easy to check that the distribution of $H(t)$ agrees with the distribution of the matrix
\begin{align*}
 { \lambda_0}\e{-t/2}V+\e{-t/2}W+(1-\e{-t})^{1/2}{W^{\GOE}}\,,
\end{align*}
where $W^{\GOE}$ is a GOE matrix independent of $V$ and $W$, whose diagonal entries are set to zero. Thus the process defined by~\eqref{le DBM} indeed interpolates in the sense of distributions between the deformed ensemble and the GOE (with vanishing diagonal) which is invariant under the process defined in~\eqref{le DBM}.

In the following we denote by~$\E$ to expectation with respect to the off-diagonal random variables~$(h_{ij})$, $i\not=j$, while we use the notation~$\E_V$ for the expectation with respect the diagonal random variables~$(h_{ii})$.

Recall the definition of the $\lambda$-dependent quantities $\zeta\equiv\zeta(\lambda)$ and $\gamma\equiv\gamma(\lambda)$ in~\eqref{zeta} and~\eqref{gamma}. Setting $\lambda(t)\deq \lambda_0\e{-t/2}$, we may now view $\zeta$, $\gamma$ and $\lambda$ as depending on $t$ (and $\lambda_0$) by extending the definitions in~\eqref{zeta} and~\eqref{gamma} in the natural way. In the same way, we obtain a $t$-dependent measure $\widehat\rho_{fc}(t)$ (whose density at $E\in\R$ is denote by $\widehat\rho_{fc}(t,E)$) by choosing $\gamma$ and $\lambda$ depending on $t$ via $\lambda\equiv\lambda_0\e{-t/2}$ in the defining equation~\eqref{widehat mfc} for $\widehat m_{fc}\equiv\widehat m_{fc}(t)$. Note that the statements of Lemma~\ref{square_root} directly carry over to~$\widehat\rho_{fc}(t)$ and~$\widehat m_{fc}(t)$. We denote by~$\widehat L_+ \equiv \widehat L_+(t)$ the upper endpoint of the support of the measure~$\widehat\rho_{fc}(t)$.

We now consider the Green function of  the rescaled random matrix  $\widetilde H(t)\deq\gamma(t) H(t)$. To prove Proposition~\ref{prop green}, we also have to choose the spectral parameter $z$ as time dependent.  Fix some small $\epsilon>0$ and define the domain, $\caE_\epsilon\equiv\caE_\epsilon(t)$, of the spectral parameter $z$ by
\begin{align}
 \caE_{\epsilon}(t)\deq\{z=L_+(t)+y+\ii\eta\in\C^+\,:\, y\in[-N^{-2/3+\epsilon}, N^{-2/3+\epsilon}]\,,\eta=N^{-2/3-\epsilon}\}\,,\qquad\quad (t\ge 0)\,.
\end{align}
For $z\equiv z(t)\in\caE_{\epsilon}(t)$, we consider the Green functions
\begin{align}
G(t,z) \deq \frac{1}{\gamma(t) H(t)-z(t)}\,,\qquad\qquad m(t,z)=\frac{1}{N}\Tr G(t,z)\,,\qquad\qquad (t\ge0)\,.\nonumber
\end{align}
Recalling Lemma~\ref{square_root} and Theorem~\ref{local law} we obtain that, on $\Xi$,
\begin{align}\label{local law adapted}
 |m(t,z)-\widehat m_{fc}(t,z)|\prec \Psi\,,\qquad\quad \max_{i\not=j} |G_{ij}(t,z)|\prec\Psi \,,\qquad\quad \max_{i}|G_{ii}(t,z)|\prec 1\,,
\end{align}
uniformly in $\caE_{\epsilon}(t)$, $t\ge 0$, where we have set
\begin{align}\label{le psi}
\Psi \deq N^{-1/3 + C' \epsilon}\,,
\end{align}
for some constant $C'$ independent of $N$, $\epsilon$ and $t$. For simplicity, we abbreviate$$
G \equiv G(t, z), \qquad\qquad m \equiv m(t,z)\,,\qquad\qquad \widehat L_+\equiv\widehat L_+(t)\,,
$$
 etc., in the following. Note that, for fixed $t\ge 0$ and $\epsilon>0$, the spectral parameter $z$ is a function of $y\in\R$ (with $|y|\le N^{-2/3+\epsilon}$) and so are $G$ and $m$.

\subsection{Proof of Proposition~\ref{prop green}}
In this subsection we give the proof of Proposition~\ref{prop green}.

Using It\={o}'s lemma we derive the stochastic differential equation for the matrix entries $G_{ij}(t,z)$ in Subsection~\ref{subsection: green function flow}. Anticipating this computation and further calculations of the remaining sections, we next state the key result, Proposition~\ref{prop expansion} below, that directly leads to the proof of Proposition~\ref{prop green}. Recall that we use the symbol $\E$ to denote the expectation with respect to the off-diagonal random variables~$(h_{ij})$, $i\not=j$, while we use the notation~$\E_V$ for the expectation with respect to the diagonal random variables~$(h_{ii})$.

\begin{proposition} \label{prop expansion}
Let $H\equiv H(t)$ be the solution of~\eqref{le DBM}. Let
\begin{align}\label{le definition of le X}
\frX \deq N \int_{E_1}^{E_2} \im m(\widehat L_+ +x  -2 + \ii \eta)\, \dd x = N \int_{E_1 -2}^{E_2 -2} \im m(\widehat L_+ +\wt y + \ii \eta) \,\dd \wt y\,,
\end{align}
where $E_1\,,E_2$ satisfy~\eqref{E1 and E2}. Let $F : \R \to \R$ be a smooth function satisfying~\eqref{F bound}. Then there exist a real-valued function $(t,H)\mapsto\Theta(t, H)\equiv\Theta$ and a martingale $t\mapsto M(t)$ with zero expectation such that
\begin{align}
 \dd F(\frX)=\Theta\,\dd t+\dd M\,.
\end{align}
On $\Xi$, $\Theta$ satisfies
\begin{align} \label{theta bound}
| \E [\Theta(t,H(t))]|\le C N^{1/2}\Psi^2\,,
\end{align}
uniformly in $t\ge 0$, where $\Psi$ is given by~\eqref{le psi}.

  \end{proposition}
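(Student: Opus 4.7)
The plan is to apply It\^o's formula twice --- first to $G(t, z(t,x))$ viewed as a function of the matrix SDE~\eqref{le DBM} and of the time-dependent spectral parameter $z(t,x) = \widehat L_+(t) + x - 2 + \ii \eta$, and then to $F(\frX)$ --- and to identify $\Theta$ as the resulting drift. Since for $i \neq j$ the entries of $H(t)$ are driven by independent Brownian motions, with $d[\widetilde H_{ij}, \widetilde H_{kl}] = \tfrac{\gamma^2}{N}(\delta_{ik}\delta_{jl} + \delta_{il}\delta_{jk})\, dt$, while the diagonal entries and the scalars $\gamma(t), \widehat L_+(t)$ evolve smoothly (and deterministically on $\Xi$), matrix It\^o combined with $(G\widetilde H G)_{ii} = G_{ii} + z(G^2)_{ii}$ yields an SDE
\begin{align*}
d m(t, z(t,x)) = d \caM(t,x) + \theta_m(t,x)\, dt ,
\end{align*}
where $\caM$ is a martingale and $\theta_m$ is a rational expression in $(G_{ij})$ built from $\dot{\widehat L}_+$, $\dot \gamma / \gamma$, $z(t,x)$, and the It\^o second-order correction $\tfrac{\gamma^2}{N^2} \sum_{k\neq l}(G_{ik} G_{lk} G_{li} + G_{ik} G_{ll} G_{ki})$. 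A second application of It\^o to $F(\frX)$ then produces $dF(\frX) = \Theta\, dt + dM$ with
\begin{align*}
\Theta = F'(\frX)\cdot N \int_{E_1}^{E_2} \im \theta_m(t,x)\, \dd x + \tfrac{1}{2} F''(\frX) \cdot Q(t),
\end{align*}
where $Q(t)$ denotes the quadratic-variation density of $\frX$.

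The central observation is that $\gamma(t)$ and $\widehat L_+(t)$ were defined by~\eqref{gamma} and through the support of $\widehat \rho_{fc}(t)$ precisely so that, to leading order, the free evolution of $\widehat m_{fc}(t, z(t,x))$ exactly compensates the Brownian It\^o drift. Concretely, differentiating the self-consistent equation~\eqref{widehat mfc} in $t$ along $\lambda(t) = \lambda_0 e^{-t/2}$ and inserting the defining relation for $\gamma$ gives an algebraic identity showing that the \emph{deterministic skeleton} of $\theta_m$ --- obtained by substituting $G_{ii} \to g_i(z)$ and $G_{ij} \to 0$ for $i \neq j$ --- vanishes identically. Consequently $\theta_m$ reduces to a sum of pure fluctuation terms of two schematic shapes: diagonal contributions of the form $(G_{ii} - g_i) \cdot (\text{three-Green product})$, and off-diagonal cubic sums such as $\tfrac{\gamma^2}{N^2} \sum_{k \neq l} G_{lk} (G^2)_{kl}$. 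By Theorem~\ref{local law} the first is pointwise $\prec \Psi^2$; the second is only $\prec \Psi$ naively and must lose an additional factor $N^{-1/2}$ via averaging.

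Supplying that missing cancellation is the main obstacle. It is provided by the \emph{optical theorems} advertised in the introduction --- the Ward identity $\sum_k |G_{ik}|^2 = \eta^{-1} \im G_{ii}$ and its cubic analogue of the form~\eqref{3 ward} --- which are derived from the resolvent identities~\eqref{basic resolvent}--\eqref{twosided} combined with the local law. Once these are available, standard bookkeeping closes the argument: integrate the pointwise estimate $|\E \theta_m| \prec N^{-1/2} \Psi^2$ over the window $[E_1, E_2]$ of length $N^{-2/3 + \epsilon}$, use $|F^{(\ell)}(\frX)| \prec 1$ on $\Xi$ (since Lemma~\ref{rigidity} and Lemma~\ref{lem approx} imply $|\frX| \prec 1$), and bound the $F''$ term via $\E Q \prec N^{-1}$ obtained from the Ward identity alone. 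This yields $|\E[\Theta]| \le C N^{1/2} \Psi^2$ uniformly in $t \ge 0$. The substantial analytic content that remains --- the sharp form of the cubic optical theorem at edge scale $\eta = N^{-2/3 - \epsilon}$ with $\lambda \sim 1$, where the deformation $\lambda V$ destroys the clean Wigner-case algebra --- is exactly what the authors flag as the long but elementary computations relegated to the appendices.
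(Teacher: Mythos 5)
Your high-level architecture is right — apply It\^o to $G$ and then to $F(\frX)$, observe that the renormalizations $\gamma(t)$ and $\widehat L_+(t)$ were tuned so that the leading drift cancels, and supply the residual cancellation via an ``optical theorem.'' But the two claims doing the actual work are incorrect as stated, and they are not what the paper does.

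First, the ``deterministic skeleton vanishes'' step --- substituting $G_{ii}\to g_i$ and $G_{ij}\to 0$ for $i\ne j$ --- discards precisely the contribution that matters. In the drift one meets sums like $\sum_{i,a}\big(-\partial_t(\lambda\gamma)v_a + \dot z\big)G_{ia}G_{ai}$; the off-diagonal part is $O(N^2\cdot\Psi^2)$ before any cancellation, which is far from negligible. Setting $G_{ij}\to 0$ leaves only the $a=i$ diagonal piece, so the identity you would prove does not address the dominant term. The paper's mechanism (Sections~\ref{sec:fluctuation}, \ref{sec:proof of prop} and Appendices~\ref{appendix III}--\ref{appendix V}) is instead a three-step resolvent expansion that \emph{decouples} the deterministic label $a$ from the Green functions, writing $\E[F'(\frX)G_{ia}G_{ai}] = \sum_k f_k(v_a)\,\E Y_{k,i} + \caO_\Xi(\Psi^5)$. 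Only after this decoupling can one sum over $a$ with weight $v_a$ and match the result against $\dot z$, leading to explicit coefficients $C_2,C_3,C_0'$ (Lemma~\ref{lem:G_ii estimate}) that are then shown to vanish by differentiating the relations defining $\gamma$ and $\tau$. No pointwise ``skeleton'' substitution reproduces this.

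Second, you invoke the Ward identity $\sum_k|G_{ik}|^2 = \eta^{-1}\im G_{ii}$ as the source of the residual cancellation. That identity is a universal resolvent fact and yields only $\sum_k|G_{ik}|^2\prec \eta^{-1}\Psi\sim N^{2/3+\epsilon}\Psi$, which is useless here. The paper's ``optical theorem''~\eqref{3 ward}, namely $\E\big[(z+m-\tau)\sum_s G_{is}G_{si} + \frac{1}{N}\sum_{s,k}G_{ik}G_{ks}G_{si}\big]=\caO_\Xi(\Psi)$, is a genuinely different, nontrivial estimate: it is \emph{not} a resolvent identity but a consequence of the sum rule $\frac{1}{N}\sum_a(\lambda v_a-\tau)^{-2}=1$ built into the definition of $\gamma$, and it only holds at the edge with the specific scaling chosen. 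Replacing this with the Ward identity leaves the key cancellation unsupplied. The remaining bookkeeping you describe (integrate over the window, bound $F''$ terms) is fine, but it sits on top of these two missing technical inputs.
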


Assuming that Proposition \ref{prop expansion} holds, we can easily prove the desired result, Proposition \ref{prop green}. 
\begin{proof}[Proof of Proposition \ref{prop green}]\label{Proof of Proposition green}
Since $|\E\Theta(t,H(t))|\le C N^{1/2}\Psi^2$, integrating $\dd F(\frX)$ from $t=0$ to $t = 4 \log N$ and taking the expectation, we find from Proposition~\ref{prop expansion} that, on~$\Xi$,
\begin{align}
&\left| \E\left[ F \left( N \int_{E_1}^{E_2} \im m( \widehat L_+ +x - 2 + \ii \eta)\Big|_{t=0}\, \dd x \right)\right] - \E \left[F \left( N \int_{E_1}^{E_2} \im m(\widehat L_+ +x - 2 + \ii \eta)\Big|_{t=4 \log N}\, \dd x \right) \right]\right| \nonumber \\
& \qquad\qquad\leq N^{-1/6 + C'\epsilon}\,,
\end{align}
for some constant $C' > 0$, where we used~\eqref{le psi}.

At $t = 4 \log N$, we have $\lambda = \lambda_0 N^{-2}$, hence $\gamma = 1 + O(N^{-2})$ on $\Xi$. In particular, the distribution of $\widetilde H(t)$ with $t=4\log N$ agrees with the distribution of the matrix
\begin{equation}\label{long time evolution}
\frac{\gamma}{N^2} (\lambda_0 V + W) + \gamma \left(1 - \frac{1}{N^4} \right)^{1/2} W^{\GOE}\,.
\end{equation}
Denoting by $\mu_1^{\GOE} \geq \mu_2^{\GOE} \geq \cdots \geq \mu_N^{\GOE}$ the eigenvalues of $W^{\GOE}$, and by $\widetilde\mu_1\ge\widetilde\mu_2\ge\cdots\ge\widetilde\mu_{N}$ the eigenvalues of the matrix in~\eqref{long time evolution}, we have at $t=4\log N$,
$$
|\wt \mu_j - \mu_j^{\GOE}| = \caO_{\Xi}(N^{-2})\,.
$$
Thus, we have that
$$
m(z) = \frac{1}{N} \sum_j \frac{1}{\wt \mu_j - z} =\frac{1}{N} \sum_j \frac{1}{\mu_j^{\GOE} - z} + \caO(N^{-4/3 + \epsilon}) = m^{\GOE}(z) + \caO_{\Xi}(N^{-4/3 + \epsilon})\,,
$$
and, since $\widehat L_+ - 2 = O(N^{-2})$ on $\Xi$, we have
\begin{align}
\int_{E_1}^{E_2} \im m(x + \widehat L_+ - 2 + \ii \eta)\Big|_{t=4 \log N} \dd x - \int_{E_1}^{E_2} \im m^{\GOE} (x + \ii \eta) \dd x = \caO_{\Xi}(N^{-2 + C \epsilon})\,.\nonumber
\end{align}
Hence
\begin{align}
\left| \E\left[ F \left( N \int_{E_1}^{E_2} \im m(x + \widehat L_+ - 2 + \ii \eta)\Big|_{t=4 \log N} \dd x \right)\right] - \E\left[ F \left( N \int_{E_1}^{E_2} \im m^{\GOE} (x + \ii \eta) \dd x \right) \right]\right| \leq N^{-1 + C\epsilon}\,.\nonumber
\end{align}
Using the uniform boundedness of $F$, we obtain
\begin{align*}
 &\left|\E_V\E\left[ F \left( N \int_{E_1}^{E_2} \im m( \widehat L_+ +x - 2 + \ii \eta)\Big|_{t=0} \dd x \right) \right]-\E_V\E\left[ F \left( N \int_{E_1}^{E_2} \im m^{\GOE} (x + \ii \eta) \dd x \right)\right] \right|\nonumber\\
&\qquad\le CN^{-1/6+C'\epsilon}+CN^{-\epsilon'\alpha}\,,
\end{align*}
where we used that $\P_V(\Xi)\le C N^{-\epsilon'\alpha}$ with $\alpha=\min\{\beta_0,\beta_1\}$, by Assumption~\ref{assumption mu_V convergence} and Assumption~\ref{assumption mu_V}. This completes the proof of Proposition \ref{prop green}
\end{proof}

\subsection{Green function flow}\label{subsection: green function flow}
Recall that we let $\dd F(\frX)=\Theta\dd t+\dd M$. To prove Proposition~\ref{prop expansion}, we first describe $\Theta(t,H(t))$, where $H(t)$ is the solution to~\eqref{le DBM}, in terms of the entries of $G(t,z)$.

\begin{lemma} \label{lem:E theta}\label{lem:dot z}
Under the assumptions of Proposition \ref{prop expansion} we have 
\begin{align} \label{EF G_ii expand}
\E [\Theta(t&,H(t))] \nonumber \\
&= \sum_{i, a} \left(-\partial_t(\lambda\gamma) v_a \E \left[ F'(\frX) \left( \im \int_{E_1 -2}^{E_2 -2} G_{ia}G_{ai} \dd y \right) \right] + \dot z\,\E \left[F'(\frX) \left( \im \int_{E_1 -2}^{E_2 -2} G_{ia}G_{ai} \dd y \right) \right] \right) \nonumber \\
&\quad+ \frac{2\dot\gamma\gamma}{{ N}} \sum_{i}\sum_{a\not= b} \left(\E \left[F'(\frX) \left( \im \int_{E_1 -2}^{E_2 -2} G_{ia}G_{ab}G_{bi} \dd y \right) \right] + \E \left[ F'(\frX) \left( \im \int_{E_1 -2}^{E_2 -2} G_{ia}G_{bb}G_{ai} \dd y \right) \right] \right) \nonumber \\
&\quad+ \frac{2\dot\gamma\gamma}{{ N}} \sum_{i,j}\sum_{a\not= b} \E \left[ F''(\frX) \left( \im \int_{E_1 -2}^{E_2 -2} G_{ja} G_{bj} \dd y \right) \left( \im \int_{E_1 -2}^{E_2 -2} G_{ia}G_{bi} \dd y \right) \right] +\caO_{\Xi}(N^{1/2}\Psi^2)\,,
\end{align}
where we abbreviate $G \equiv G(\widehat L_+ + y + \ii \eta)$. Moreover, recalling that $z\equiv z(t)=\widehat L_+(t)+y+\ii\eta\in\caE_\epsilon(t)$, we have 
\begin{align}\label{le dot z}
 \dot z(t)=-2\gamma\dot\gamma \widehat m_{fc}(\widehat L_+) +\gamma^2\partial_t(\lambda\gamma)\frac{1}{N}\sum_{i=1}^N\frac{v_i}{(\lambda\gamma v_i-\tau)^2}\,,
\end{align}
for $t\ge 0$ on $\Xi$.
\end{lemma}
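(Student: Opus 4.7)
The plan is to apply It\^o's formula to $F(\frX)$, viewed as a smooth functional of $\widetilde H(t)=\gamma(t)H(t)$ and $z(t)=\widehat L_+(t)+y+\ii\eta$, and to read off the $dt$-drift, which by definition is $\Theta\,dt$. Since $\frX=N\int\im m(z)\,dy$ with $m=N^{-1}\Tr G$ and $G=(\widetilde H(t)-z(t))^{-1}$, everything reduces to differentiating $G$ along the flow, then chain-ruling through $\Tr G$, the $y$-integral, and finally $F$.

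\textbf{It\^o for $G$ and assembling~\eqref{EF G_ii expand}.} First compute $d\widetilde H$. The diagonal entries $\widetilde h_{ii}(t)=\gamma(t)\lambda(t)v_i$ are deterministic, so $d\widetilde h_{ii}=\partial_t(\gamma\lambda)\,v_i\,dt$. For $a\neq b$, applying It\^o to $\widetilde h_{ab}=\gamma h_{ab}$ together with~\eqref{le DBM} yields
\[
d\widetilde h_{ab}=\Bigl(\frac{\dot\gamma}{\gamma}-\frac{1}{2}\Bigr)\widetilde h_{ab}\,dt+\frac{\gamma}{\sqrt N}\,d\beta_{ab},
\]
with quadratic variation $(\gamma^2/N)\,dt$, and It\^o for the matrix inverse gives $dG=-G(d\widetilde H)G+\dot z\,G^2\,dt+G(d\widetilde H)G(d\widetilde H)G$. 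The martingale part vanishes in expectation. The cleanest bookkeeping of the remaining drift is to view the DBM as a Gaussian heat flow on the off-diagonal entries: the variance $\sigma^2(t)=\gamma(t)^2/N$ evolves at rate $\dot\sigma^2=2\dot\gamma\gamma/N$, so for any smooth $\Phi(\widetilde H)$,
\[
\partial_t\E[\Phi]=\partial_t(\lambda\gamma)\sum_a v_a\E[\partial_{\widetilde h_{aa}}\Phi]+\tfrac12\dot\sigma^2\sum_{a<b}\E[\partial_{\widetilde h_{ab}}^2\Phi]+\dot z\,\E[\partial_z\Phi]+(\text{error}).
\]
Equivalently, the drift $(\dot\gamma/\gamma-1/2)\widetilde h_{ab}$ in $d\widetilde h_{ab}$ recombines via Gaussian integration by parts (Stein's identity with variance $\gamma^2/N$) with the second-order It\^o term $\tfrac12(\gamma^2/N)\partial_{\widetilde h_{ab}}^2$, leaving the clean net coefficient $\dot\gamma\gamma/N$ on each $\partial_{\widetilde h_{ab}}^2$ term in the sum $\sum_{a<b}$. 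Taking $\Phi=F(\frX)$, the chain rule produces $F'(\frX)\partial_{\widetilde h_{ab}}^2\frX+F''(\frX)(\partial_{\widetilde h_{ab}}\frX)^2$; combining this with the explicit derivatives $\partial_{\widetilde h_{ab}}G_{ij}=-(G_{ia}G_{bj}+G_{ib}G_{aj})$ and $\partial_{\widetilde h_{ab}}\Tr G=-2(G^2)_{ab}$ (since $G$ is symmetric), a short computation of $\partial_{\widetilde h_{ab}}^2\Tr G$, and symmetrising $\sum_{a<b}=\tfrac12\sum_{a\neq b}$ yield exactly the three explicit Green-function sums displayed in~\eqref{EF G_ii expand}, with the coefficients $-\partial_t(\lambda\gamma)v_a+\dot z$ on the quadratic-in-$G$ terms and $2\dot\gamma\gamma/N$ on the cubic-in-$G$ and $F''$-terms.

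\textbf{Deriving~\eqref{le dot z}.} Since $z(t)=\widehat L_+(t)+y+\ii\eta$ with $y,\eta$ fixed, one has $\dot z=\dot{\widehat L_+}$. By Remark~\ref{rem:gamma and tau}, $\widehat L_+(t)$ is determined by $\tau=\widehat L_++\gamma^2\widehat m_{fc}(\widehat L_+)$ combined with~\eqref{widehat mfc}, which at $z=\widehat L_+$ gives $\widehat m_{fc}(\widehat L_+)=\int d\widehat\nu(v)/(\lambda\gamma v-\tau)$, and the edge condition~\eqref{upper edge}, namely $\int d\widehat\nu(v)/(\lambda\gamma v-\tau)^2=1/\gamma^2$. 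Differentiating the identity $\tau=\widehat L_++\gamma^2\int d\widehat\nu(v)/(\lambda\gamma v-\tau)$ in $t$, and using~\eqref{upper edge} to cancel the resulting $\dot\tau$-contribution, gives
\[
0=\dot{\widehat L_+}+2\gamma\dot\gamma\,\widehat m_{fc}(\widehat L_+)-\gamma^2\,\partial_t(\lambda\gamma)\int\frac{v\,d\widehat\nu(v)}{(\lambda\gamma v-\tau)^2},
\]
which is exactly~\eqref{le dot z} after rewriting the integral as $N^{-1}\sum_i v_i/(\lambda\gamma v_i-\tau)^2$.

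\textbf{Main obstacle.} The delicate part is the error-term bookkeeping: verifying that all subleading contributions in the It\^o/Stein step---the non-Gaussian cumulants of the $(h_{ab})$ (controlled via~\eqref{eq.C0}), diagonal cross terms from the drift of $\widetilde h_{ii}$, and the symmetrisation between $\sum_{a<b}$ and $\sum_{a\neq b}$---combine to an error bounded uniformly by $\caO_{\Xi}(N^{1/2}\Psi^2)$. This relies on the local law~\eqref{local 1}, giving $|G_{ii}|\prec 1$ and $|G_{ij}|\prec \Psi$, together with Ward-type identities such as $\sum_i|G_{ij}|^2=\im G_{jj}/\eta$ (the ``optical theorems'' of the introduction) that convert a sum over $N$ indices into a single factor $\Psi^2$, and the resolvent identity $G\widetilde H=I+zG$ to reduce expressions like $\sum_b G_{ia}\widetilde h_{ab}G_{bj}$ arising from the off-diagonal drift to lower-order quantities.
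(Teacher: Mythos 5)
Your proposal is correct and follows the paper's argument in all essentials: It\^o's formula for $F(\frX)$, recombination of the Ornstein--Uhlenbeck drift with the second-order It\^o term via Stein's lemma / cumulant expansion to produce the net $\dot\gamma\gamma/N$ coefficient on the $\partial_{\widetilde h_{ab}}^2$ terms, the chain rule through $\frX$ to generate the $F'$ and $F''$ contributions, and implicit differentiation of the edge condition $\int\dd\widehat\nu/(\lambda\gamma v-\tau)^2=\gamma^{-2}$ to obtain~\eqref{le dot z}. One small point of terminology: the ``optical theorems'' of the paper (such as~\eqref{3 ward}) are not needed for this lemma---the error $\caO_\Xi(N^{1/2}\Psi^2)$ is controlled by the local law~\eqref{local law adapted} together with the bound $\E\,\partial^2_{w_{ab}}R=\caO_\Xi(\Psi^3)$ from Lemma~\ref{lem:Q'' estimate} (and the elementary Ward identity); the genuine optical theorems enter only later, in the proof of Proposition~\ref{prop expansion}.
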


We prove Lemma \ref{lem:E theta} in Subsection~\ref{pf first}. To illustrate the essence of the proof, we first consider the differentials~$\dd G_{ij}$ of the Green function $G(t,z)$. Using It\={o}'s lemma, we compute
\begin{align}\label{from ito}
 \dd G_{ij} &=\frac{\partial G_{ij}}{\partial t}\dd t+\sum_{a\le b}\frac{\partial G_{ij}}{\partial h_{ab}}\dd h_{ab}+\sum_{a\le b}\sum_{c\le d}\frac{1}{2}\frac{\partial^2 G_{ij}}{\partial h_{ab}\partial h_{cd}}\,\dd\langle h_{ab}, h_{cd}\rangle_t\nonumber\\
&=\frac{\partial G_{ij}}{\partial t}\dd t+\sum_{a< b}\frac{\partial G_{ij}}{\partial h_{ab}}\left(\frac{\dd\beta_{ab}}{\sqrt{N}}-\frac{h_{ab}}{2}\dd t \right)
-\frac{1}{2}\sum_{a}\frac{\partial G_{ij}}{\partial h_{aa}}{h_{aa}}\dd t \nonumber\\
&\qquad+\sum_{a\le b}\sum_{c\le d}\frac{1}{2}\frac{\partial^2 G_{ij}}{\partial h_{ab}\partial h_{cd}}\,\dd\langle h_{ab}, h_{cd}\rangle_t\,.
\end{align}
The cross-variance in~\eqref{from ito} is explicitly given by
\begin{align}\label{cross bracket}
 \langle h_{ab}, h_{ab}\rangle_t&=\langle h_{ab},h_{ba}\rangle_t=
	\begin{cases}
	\frac{1}{N} & \text{ if } a \neq b\,, \\
	0 & \text{ if } a = b\,,
	\end{cases}
\end{align}
with $\langle h_{ab}, h_{cd}\rangle_t=0$ if $\{a,b\}\not=\{c,d\}$. 
Using~\eqref{cross bracket} and the symmetries $h_{ab} = h_{ba}$, we obtain from~\eqref{from ito} that

\begin{align}\label{from ito 2}
 \dd G_{ij}=\left(\frac{\partial G_{ij}}{\partial t}- \frac{1}{2} \sum_a \frac{\partial G_{ij}}{\partial h_{aa}} h_{aa} - \frac{1}{4} \sum_{a\neq b}\frac{\partial G_{ij}}{\partial h_{ab}}h_{ab} +\frac{1}{4N}\sum_{a\not= b} \frac{\partial G_{ij}^2}{\partial h_{ab}^2}\right)\dd t+\dd M_{ij}\,,
\end{align}
where we have introduced the martingale term
\begin{align}\label{le baby martingale}
\dd M_{ij} \deq\sum_{a< b}\frac{\partial G_{ij}}{\partial h_{ab}}\frac{\dd \beta_{ab}}{\sqrt{N}}\,.
\end{align}
Next, we compute the derivatives in~\eqref{from ito 2}. For the time derivative we obtain
\begin{align*}
\frac{\partial G_{ij}}{\partial t}=\sum_{a}\left(-\dot\gamma\,G_{ia} \lambda v_{a}G_{aj}+\dot z\,G_{ia}G_{aj}\right)-\sum_{a\neq b}\left(\dot\gamma\,G_{ia} w_{ab}G_{bj}\right)\,.
\end{align*}
For the first spatial derivate we obtain, for $a\not=b$, 
\begin{align*}
 -\frac{\partial G_{ij}}{\partial h_{aa}}h_{aa}=\lambda\gamma v_aG_{ia}G_{aj} \,,\qquad\qquad-\frac{\partial G_{ij}}{\partial h_{ab}}h_{ab}=2{\gamma} G_{ia} w_{ab} G_{bj}\,.
\end{align*}
For the second spatial derivatives we find, for $a\not=b$, 
\begin{align*}
 \frac{\partial^2 G_{ii}}{\partial h_{ab}^2}=2{\gamma^2}\left(G_{ia}G_{ab}G_{bj}+G_{ib}G_{ba}G_{aj}+G_{ib}G_{aa}G_{bj}+G_{ia}G_{bb}G_{aj} \right)\,.
\end{align*}
Thus, using $\partial_t(\lambda\gamma)=-\lambda\gamma/2+\dot\gamma\lambda$, we can rewrite~\eqref{from ito 2} as
\begin{align} \label{G_ii expand general}
\dd G_{ij} &=\sum_{a}\bigg(-\partial_t(\gamma\lambda) v_{a}G_{ia}  G_{aj}+\dot z\,G_{ia}G_{aj} \bigg)\dd t+\dd M_{ij}\nonumber \\
&\quad + \sum_{a\not= b} \left(-\dot\gamma\,G_{ia} w_{ab}G_{bj}+ \frac{\gamma}{2}\, G_{ia}w_{ab} G_{bj}+\frac{{\gamma^2}}{N} G_{ia}G_{ab}G_{bj}+\frac{{\gamma^2}}{N}G_{ia}G_{bb}G_{aj}\right)\dd t\,. 
\end{align}

\begin{example} \label{ex:wigner}
In the simple case where $\lambda=\lambda_0 = 0$ and $\gamma=\gamma_0 = 1$, we have $\dot z = \partial_t \widehat L_+ = 0$ and Equation \eqref{G_ii expand general} becomes
\begin{align} \label{G_ii expand 2}
\dd G_{ij}&=
\sum_{a , b} \left( \frac{1}{2} G_{ia}w_{ab}G_{bj} + \frac{1}{N} G_{ia}G_{ab}G_{bj} + \frac{1}{N} G_{ia}G_{bb}G_{aj} \right) \dd t+\dd M_{ij}\,.
\end{align}
Note that in this simple example $H(t=0)$ reduces to a real symmetric Wigner matrix (with vanishing diagonal) and we have $\widehat m_{fc}\equiv m_{sc}$, where $m_{sc}$ is the Stieltjes transform of the standard semicircle law~$\rho_{sc}$.

Eventually, we are going to take the expectation of~\eqref{G_ii expand 2}. To compute the expectation of $G_{ia}w_{ab}G_{bj}$, we use the following lemma that was used in the context of random matrix theory before in~\cite{KKP}, see also~\cite{Su,So09}. For a function~$Q$ of the matrix entry~$h_{ab}$, we denote $\partial_{ab}^mQ\equiv\frac{\partial^m}{\partial h_{ab}^m}Q$, $m\in\N$.

\begin{lemma}\label{stein lemma}
 Assume that $Q\in C^{M+1}(\R)$ for some $M\in\N$. Then,
\begin{align}\label{le ansatz}
\E \partial_{{ab}}Q(h_{ab})h_{ab}=\sum_{m=1}^{M}\frac{ \kappa_{ab}^{(m)}}{(m-1)!}\E [\partial^m_{{ab}}Q(h_{ab})]+ O(\|\partial_{{ab}}^{M+1}Q\|_\infty \kappa_{ab}^{(M+1)})\,,
\end{align}
where $(\kappa_{ab}^{(m)})$, $m\in\N$,  are the cumulants of $(h_{ab})$.
\end{lemma}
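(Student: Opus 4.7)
\emph{Proof plan.} The identity is a finite-order version of the classical cumulant expansion, and I would prove it by Taylor expanding both sides around $h_{ab}=0$ and then invoking the standard moment--cumulant recursion to match the resulting coefficients. On the left-hand side, expand $Q'$ around $0$ to degree $M-1$ with Lagrange remainder $R_M$ satisfying $|R_M(h)|\le \|\partial_{ab}^{M+1}Q\|_\infty |h|^M/M!$; multiplying by $h_{ab}$ and taking expectation yields
\[
\E[\partial_{ab}Q(h_{ab})\,h_{ab}] \;=\; \sum_{n=1}^M \frac{Q^{(n)}(0)}{(n-1)!}\,\E[h_{ab}^n] \;+\; O\!\big(\|\partial_{ab}^{M+1}Q\|_\infty\,\E|h_{ab}|^{M+1}\big).
\]
On the right-hand side, similarly Taylor expand each $Q^{(m)}(h_{ab})$ to degree $M-m$, take expectation, multiply by $\kappa_{ab}^{(m)}/(m-1)!$, sum over $m\le M$, and reindex $n=m+j$. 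This produces
\[
\sum_{m=1}^M \frac{\kappa_{ab}^{(m)}}{(m-1)!}\,\E[Q^{(m)}(h_{ab})] \;=\; \sum_{n=1}^M \frac{Q^{(n)}(0)}{(n-1)!}\,\bigg(\sum_{m=1}^{n}\binom{n-1}{m-1}\kappa_{ab}^{(m)}\,\E[h_{ab}^{n-m}]\bigg) \;+\; (\text{error}).
\]

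Comparing the two expansions term by term in $Q^{(n)}(0)$, they agree exactly when the moment--cumulant recursion $\E[h^n]=\sum_{m=1}^n\binom{n-1}{m-1}\kappa^{(m)}\E[h^{n-m}]$ holds. This identity is classical: differentiating the formal power series relation $\log\E e^{ith}=\sum_{m\ge 1}(it)^m\kappa^{(m)}/m!$ gives $\E[he^{ith}]=\big(\sum_{m\ge 1}(it)^{m-1}\kappa^{(m)}/(m-1)!\big)\E[e^{ith}]$, and comparing coefficients of $(it)^{n-1}$ on both sides produces exactly the recursion. Hence the two sides of the claimed identity differ only through the collected remainders.

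The main (and essentially only non-trivial) step is the bookkeeping of these remainders: I have to check that after summing all the Taylor remainders against the weights $\kappa_{ab}^{(m)}/(m-1)!$, the total error is still of order $O(\|\partial_{ab}^{M+1}Q\|_\infty\,\kappa_{ab}^{(M+1)})$. Each individual remainder is bounded by $\|\partial_{ab}^{M+1}Q\|_\infty$ times an absolute moment $\E|h_{ab}|^{k}$ for some $k\le M+1$; under the moment assumptions in Definition~\ref{assumption wigner} every such moment has the same order in $N$ as $\kappa_{ab}^{(M+1)}$ up to $N$-independent combinatorial constants depending on $M$, so the collected error conforms to the stated bound.
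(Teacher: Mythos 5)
Your approach matches the paper's: reduce to polynomials by Taylor expansion and match coefficients via the moment--cumulant recursion $M^{(n)}=\sum_{m=1}^{n}\binom{n-1}{m-1}\kappa^{(m)}M^{(n-m)}$ (the paper records this recursion with a typo, $\kappa_{ab}^{(n)}$ inside the sum where $\kappa_{ab}^{(m)}$ is meant). The paper's one-paragraph proof is exactly this argument stated tersely, and your coefficient calculation and derivation of the recursion from $\log \E\, \e{\ii t h}$ are sound. One phrasing issue to tighten before use: your closing sentence attributes to each absolute moment $\E|h_{ab}|^{k}$, $k\le M+1$, the same $N$-order as $\kappa_{ab}^{(M+1)}$, which is false for $k<M+1$ since $\E|h_{ab}|^{k}\sim N^{-k/2}$ is then larger; the statement that your bookkeeping actually delivers is that the $m$-th remainder enters multiplied by $\kappa_{ab}^{(m)}/(m-1)!$, so the relevant quantities are the products $\kappa_{ab}^{(m)}\,\E|h_{ab}|^{M+1-m}\sim N^{-(M+1)/2}$, each of which is indeed comparable to $\kappa_{ab}^{(M+1)}$.
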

\begin{proof}
 By assumption it suffices to check~\eqref{le ansatz} for monomials up to order $M$. For monomials~\eqref{le ansatz} is a direct consequence of the moment-cumulant relation
\begin{align}
 \kappa_{ab}^{(n)}=M_{ab}^{(n)}-\sum_{m=1}^{n-1}\binom{n-1}{m-1}\kappa_{ab}^{(n)}M_{ab}^{(n-m)}\,,\nonumber
\end{align}
where $(M_{ab}^{(n)})$, $(\kappa_{ab}^{(n)})$  are the moments, respectively cumulants of $(h_{ab})$.	
\end{proof}
Note that we have, by Assumption~\ref{assumption wigner} and the definition of the cumulants, for $a\not=b$,
\begin{align}
 \kappa^{(1)}_{ab}= 0 \,,\qquad\quad \kappa_{ab}^{(2)}=\frac{1}{N}\,,\qquad \quad\kappa_{ab}^{(p)}\le \frac{k_p}{N^{p/2}}\,,\qquad (k\ge 3)\,,\nonumber
\end{align}
for constants $(k_p)$ independent of $N$.

Choosing $Q\equiv G_{ij}$, we get from Lemma~\ref{stein lemma}, for $a\not=b$,
\begin{align}\label{Q expansion}
 \E \,\frac{\partial G_{ij}}{\partial h_{ab}}h_{ab}=\frac{1}{N}\E\frac{\partial^2 G_{ij}}{\partial h_{ab}^2}+\frac{1}{2}\kappa_{ab}^{(3)}\E\frac{\partial^3 G_{ij}}{\partial h_{ab}^3}+\caO(N^{-2}{\Psi^2})\,.
\end{align}

The first term on the right side of~\eqref{Q expansion} can be handed with the following lemma whose proof appeared first in~\cite{EYY}.
\begin{lemma}\label{baby lemma}
 For $i\neq a \neq b \neq j$, we have
\begin{align}\label{estimate in baby lemma}
\left|\E\,\frac{\partial^3 G_{ij}}{\partial h_{ab}^3}\right|\prec \Psi^3\,.
\end{align}

\end{lemma}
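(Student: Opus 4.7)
The strategy is explicit iteration followed by an expectation-based cancellation that improves the naive pointwise bound. Iterating the identity
\begin{equation*}
\frac{\partial G_{xy}}{\partial h_{ab}} = -\gamma\bigl(G_{xa}G_{by} + G_{xb}G_{ay}\bigr), \qquad a\neq b,
\end{equation*}
three times shows that $\partial_{ab}^3 G_{ij}$ is a finite linear combination of monomials of the form $(-\gamma)^3\, G_{i\alpha_1}G_{\alpha_2\alpha_3}G_{\alpha_4\alpha_5}G_{\alpha_6 j}$ with each $\alpha_k\in\{a,b\}$. Under the hypothesis $i,j\notin\{a,b\}$, the two outer factors $G_{i\alpha_1}$ and $G_{\alpha_6 j}$ are off-diagonal and hence $\prec\Psi$ by Theorem~\ref{local law}, while the two inner factors have indices in $\{a,b\}$ and are only known to be $\caO(1)$.

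Consequently, a monomial is already pointwise $\prec\Psi^3$ whenever at least one of its two inner factors is off-diagonal. What remains are the problematic monomials where both inner factors are diagonal, namely of the form $G_{i\alpha_1}G_{\alpha_2\alpha_2}G_{\alpha_4\alpha_4}G_{\alpha_6 j}$ with $\alpha_1,\alpha_2,\alpha_4,\alpha_6 \in\{a,b\}$. For these, I would use the local law \eqref{local 2} to write
\begin{equation*}
G_{\alpha_2\alpha_2}G_{\alpha_4\alpha_4} = g_{\alpha_2}(z)\,g_{\alpha_4}(z) + \caO(\Psi),
\end{equation*}
where $g_\cdot$ is the deterministic $\caO(1)$ function from \eqref{local 2}. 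The $\caO(\Psi)$ fluctuation error, multiplied by the two outer off-diagonal factors, contributes only $\caO(\Psi^3)$. Extracting the $\caO(1)$ deterministic prefactor $g_{\alpha_2}g_{\alpha_4}$ reduces the estimate to showing
\begin{equation*}
|\E\, G_{i\alpha_1}\,G_{\alpha_6 j}| \prec \Psi^3, \qquad \alpha_1,\alpha_6\in\{a,b\},
\end{equation*}
for the four distinct indices $i,j,\alpha_1,\alpha_6$.

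This last bound is the technical heart of the lemma and the main obstacle, since pointwise $|G_{i\alpha_1}G_{\alpha_6 j}|$ is only $\prec\Psi^2$ and a Cauchy--Schwarz estimate recovers at best $\Psi^2$. To gain the additional factor of $\Psi$ one must exploit the mean-zero structure of the off-diagonal entries. I would apply one further resolvent identity such as $G_{i\alpha_1} = -G_{ii}\sum_{k}^{(i)} h_{ik}\,G_{k\alpha_1}^{(i)}$ (with $G^{(i)}$ independent of the $i$-th row), take expectation, and invoke the cumulant expansion of Lemma~\ref{stein lemma} with respect to the family $(h_{ik})_{k\neq i}$. The leading-order contribution is then bounded via a Ward-type identity $\sum_k |G_{k\alpha_1}^{(i)}|^2 \prec \im G_{\alpha_1\alpha_1}^{(i)}/\eta \prec N\Psi^2$, combined with the local law for the minor $G^{(i)}$; higher-order cumulant contributions are controlled by the scaling $\kappa^{(m)}_{ik}=\caO(N^{-m/2})$ and turn out to be of lower order. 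Assembling the three steps yields $|\E\,\partial_{ab}^3 G_{ij}|\prec\Psi^3$, as asserted.
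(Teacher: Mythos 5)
Your first two steps reproduce the paper's argument exactly: three applications of $\partial_{ab}$ yield the chain monomials $G_{i\alpha_1}G_{\alpha_2\alpha_3}G_{\alpha_4\alpha_5}G_{\alpha_6 j}$; with $i,j\notin\{a,b\}$ the two outer factors are off-diagonal, every term with at least one off-diagonal inner factor is already pointwise $\prec\Psi^3$, and in the two surviving monomials $G_{ia}G_{aa}G_{bb}G_{bj}$ and $G_{ib}G_{aa}G_{bb}G_{aj}$, replacing $G_{aa}G_{bb}$ by $m_{sc}^2$ costs $\caO(\Psi^3)$, reducing the claim to $|\E\,G_{ia}G_{bj}|\prec\Psi^3$ together with its $a\leftrightarrow b$ analogue.

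Where you diverge from the paper is the final step, and this is where a gap appears. The paper expands $G_{ia}$ in the $a$-th column via \eqref{twosided} while replacing $G_{bj}$ by the minor $G_{bj}^{(a)}$; since $G_{bj}^{(a)}$ is $\E_a$-measurable, the partial expectation factorizes as $\E_a[G_{ia}]\,G_{bj}^{(a)}$, and $\E_a[G_{ia}]=\caO(\Psi^2)$ gives $\Psi^3$ immediately. You instead expand $G_{i\alpha_1}$ in the $i$-th row and invoke the cumulant expansion of Lemma~\ref{stein lemma} in $(h_{ik})_k$. But the surviving factor $G_{\alpha_6 j}$ also depends on the $i$-th row, so $\partial_{ik}$ in the second-cumulant term acts on it as well; this produces, among others, the contribution
\begin{equation*}
\frac{1}{N}\sum_{k}\E\bigl[G_{ii}^2\,G_{k\alpha_1}^{(i)}\,G_{\alpha_6 k}\,G_{ij}\bigr],
\end{equation*}
which by Cauchy--Schwarz and the Ward identity is bounded only by $\Psi^2\,|G_{ij}|$. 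You stipulated ``four distinct indices $i,j,\alpha_1,\alpha_6$'', in particular $i\neq j$, in which case $|G_{ij}|\prec\Psi$ and the estimate closes. However, the hypothesis $i\neq a\neq b\neq j$ of the lemma does not rule out $i=j$, and the diagonal case is precisely what is needed when the argument is applied to $\E\,\dd G_{ii}$; compare Lemma~\ref{lem:Q'' estimate}, the general version of this lemma, which is stated with $j=i$. When $i=j$ the residual term above reduces, after $G_{ii}^2\to m_{sc}^2+\caO(\Psi)$, to $m_{sc}^2\,\E\bigl[\tfrac{1}{N}\sum_k G_{ka}^{(i)}G_{bk}^{(i)}\bigr]$, an object of the same kind you started with, and bounding it by $\Psi^3$ requires a further expansion; the argument does not close as written. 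The paper's choice of expanding in the $a$-th column avoids this: the cancellation $\E_a[G_{ia}]=\caO(\Psi^2)$ is completely insensitive to whether $i=j$, because after passing to the minor the other factor is deterministic for $\E_a$.
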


\begin{proof}
 To show~\eqref{estimate in baby lemma} it clearly suffices to control $\frac{\partial^2 }{ \partial w_{ab}^2 }(G_{ia} G_{bj})$. Observe that each  term in this last expression contains at least three off-diagonal resolvent entries except $G_{ia} G_{aa} G_{bb} G_{bj}$ and $G_{ib}G_{aa}G_{bb}G_{aj}$. We will focus on the former term, the latter can be treated in the very same way. Using $|G_{aa}-m_{sc}|\prec \Psi$ (see Theorem~\ref{local law}, with $\lambda=0$, $\gamma=1$) we get
\begin{align}\label{in baby lemma}
 \E\, G_{ia} G_{aa} G_{bb} G_{bj}=m_{sc}^2\E\, G_{ia}G_{bj}+\caO(\Psi^3)\,.
\end{align}
Using the resolvent identity~\eqref{twosided}, we may write
\begin{align*}
 G_{ia}=-G_{aa} G_{ii}^{(a)}\left (h_{ia} - \sum_{p, q}^{(a)} h_{ip} G_{pq}^{(a)} h_{qa}\right) =m_{sc}^2\left (h_{ia} - \sum_{p, q}^{(a)} h_{ip} G_{pq}^{(a)} h_{qa}\right) + \caO(\Psi^2)\,,
\end{align*}
where we used once more the local law~\eqref{local 2}. Since the first term on the very right side has vanishing expectation, we obtain from~\eqref{in baby lemma} that $| \E\,G_{ia} G_{aa} G_{bb} G_{bj}|\prec \Psi^3$ which implies the claim.
\end{proof}
Returning to~\eqref{Q expansion}, we obtain, for $i\neq a \neq b \neq j$, 
\begin{align*}
\E \frac{\partial G_{ij}}{\partial w_{ab}}w_{ab}&=- \frac{1}{N}\E G_{ia}G_{ab}G_{bj}- \frac{1}{N}\E G_{ib}G_{ab}G_{aj} -\frac{1}{N}\E G_{ib}G_{aa}G_{bj} \nonumber\\
&\qquad-\frac{1}{N}\E G_{ia}G_{bb}G_{aj} +\caO(N^{-3/2}\Psi^3)\,.
\end{align*}
In sum, we have shown that
\begin{align*}
 \sum_{a\neq b}\E G_{ia}w_{ab}G_{bj}&=-\frac{2}{N} \sum_{a\neq b} \left( \E G_{ia}G_{ab}G_{bj} + \E G_{ib}G_{aa}G_{bj} \right)+\caO(N^{1/2}\Psi^3)\,,
\end{align*}
 where we used $|G_{ii}w_{ib}G_{bj}|\,,|G_{ia}w_{aj}G_{jj}|\,,|\E_iG_{ii}w_{ij}G_{jj} |\prec N^{-1/2}\Psi$, to cope with the cases $a=i$, $a\not=b$, etc.. This shows that~\eqref{G_ii expand 2} can be written as
\begin{align*}
 \E\dd G_{ij}(z, t) = \caO(N^{1/2}\Psi^3)\dd t+\caO(\Psi^2)\dd t+\caO(N^{-1/2}\Psi)\dd t=\caO(N^{1/2}\Psi^3)\dd t\,,
\end{align*}
(uniformly in $t\ge 0$), where we used that the expectation of the martingale term defined in~\eqref{le baby martingale} vanishes. Integration over $t$ from $0$ to $4 \log N$, leads to
\begin{align*}
\E \left( \frac{1}{W -z} \right)_{ij} - \E \left( \frac{1}{W^{\GOE} -z} \right)_{ij} = \caO(N^{1/2}\Psi^3)\,,
\end{align*}
which is stronger an estimate than the trivial bound $\caO(\Psi)$ obtained from the local laws in Theorem~\ref{local law}.
\end{example}

\subsection{Computation of $\dd F(\frX)$ and proof of Lemma \ref{lem:E theta}} \label{pf first}
We now turn to the computation of the differential $\dd F(\frX)$, where $F$ is a smooth function satisfying~\eqref{F bound} and where $\frX$ is defined in~\eqref{le definition of le X}. Choosing $i=j$ in~\eqref{G_ii expand general}, we get
\begin{align}\label{G_ii expand general 2}
 \dd G_{ii} &=\sum_{a}\bigg(-\partial_t(\gamma\lambda) v_{a}G_{ia}  G_{ai}+\dot z\,G_{ia}G_{ai} \bigg)\dd t+\dd M_{ii}\nonumber \\
&\quad + \sum_{a\not= b} \left(-\dot\gamma\,G_{ia} w_{ab}G_{bi}+ \frac{\gamma}{2}\, G_{ia}w_{ab} G_{bi}+\frac{\gamma^2}{N} G_{ia}G_{ab}G_{bi}+\frac{\gamma^2}{N}G_{ia}G_{bb}G_{ai}\right)\dd t\,. 
\end{align}
with the martingale term
\begin{align}
\dd M_{ii} = \sum_{a< b}\frac{\partial G_{ii}}{\partial h_{ab}}\frac{\dd \beta_{ab}}{\sqrt{N}} = \frac{\gamma}{\sqrt N} \sum_{a\not= b} G_{ia} G_{bi} \dd \beta_{ab}\,.\nonumber
\end{align}
Recalling the definitions of $\frX$ in Proposition~\ref{prop expansion}, we obtain from It\={o}'s lemma and~\eqref{G_ii expand general 2},
\begin{align} \label{F G_ii expand 1}
\dd F(\frX) &= F'(\frX) \sum_{i, a}\bigg( \im \int_{E_1 -2}^{E_2 -2}\dd y\, \Big( -\partial_t(\gamma\lambda)\,v_aG_{ia}  G_{ai}+\dot z\,G_{ia}G_{ai} \Big) \bigg)\dd t\nonumber \\
&\qquad + F'(\frX)  \sum_{i}\sum_{a\not= b} \left( \im \int_{E_1 -2}^{E_2 -2}\dd y\, \Big( \left(\frac{\gamma}{2}-\dot\gamma\right)\,G_{ia} w_{ab}G_{bi}+\frac{\gamma^2}{N}G_{ia}G_{ab}G_{bi}+\frac{\gamma^2}{N} G_{ia}G_{bb}G_{ai} \Big)  \right)\dd t \nonumber \\
&\qquad + F''(\frX) \frac{\gamma^2}{{ N}} \sum_{i, j}\sum_{a\not= b} \left( \im \int_{E_1 -2}^{E_2 -2}\dd y\, G_{ia}  G_{bi} \right) \left( \im \int_{E_1 -2}^{E_2 -2}\dd y\, G_{ja} G_{bj}  \right) \dd t + \dd M \,,
\end{align}
for some martingale $M$ of vanishing expectation. Here, we use the notation $G \equiv G(y + L_+ + \ii \eta)$. We remark that~\eqref{F G_ii expand 1} gives rise to the definitions of $\Theta$ and $M$ in $\dd F(\frX) = \Theta \dd t + \dd M$ in Proposition~\ref{prop expansion}.

Next, we take the expectation in~\eqref{F G_ii expand 1}. The resulting expression can be treated following the lines of Example~\ref{ex:wigner}:
For $a \neq b$, set
\begin{align}\label{definition of the Q}
R (w_{ab})\deq F'(\frX) G_{ia}G_{bi}\,.
\end{align}
The following lemma bounds $R$. 
\begin{lemma} \label{lem:Q'' estimate}
Let $R(w_{ab})\deq F'(\frX) G_{ia}G_{bi}$. Then, for $i \neq a \neq b \neq i$, we have
\begin{align}
\E \frac{\partial^2 R}{\partial w_{ab}^2} = \caO_{\Xi}(\Psi^3)\,.
\end{align}
Here $\Xi$ denotes the event defined in Definition~\ref{definition of the event omega}.
\end{lemma}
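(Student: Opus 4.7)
The plan is to apply the Leibniz rule to expand $\partial^2_{ab}R$, classify the resulting terms by their powers of $\Psi$, and use the local law together with the resolvent identities to handle the ``critical'' terms of naive size $\Psi^2$. Abbreviating $\mathcal{A} \deq \partial_{ab}\frX$ and $\mathcal{B} \deq \partial^2_{ab}\frX$, we write
\begin{align}
\partial^2_{ab}R &= F'''(\frX)\mathcal{A}^2 G_{ia}G_{bi} + F''(\frX)\mathcal{B}\, G_{ia}G_{bi} + 2F''(\frX)\mathcal{A}\, \partial_{ab}(G_{ia}G_{bi}) \nonumber \\
&\quad + F'(\frX)\, \partial^2_{ab}(G_{ia}G_{bi})\,,
\end{align}
and attack each summand separately.

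First I would establish two preliminary bounds. For $\mathcal{A}$, the key observation is that $\partial_{ab}m = -2\gamma (G^2)_{ba}/N$ and $(G^2)_{ba} = -\partial_z G_{ba} = -\partial_y G_{ba}$ since $z = \widehat L_+ + y + \ii\eta$. Thus the integral defining $\mathcal{A}$ telescopes, and the local law~\eqref{local law adapted} yields $|\mathcal{A}| \prec \Psi$. For $\mathcal{B}$, differentiating once more produces terms like $G_{bb}(G^2)_{aa}$; Ward's identity $\sum_j |G_{aj}|^2 = \im G_{aa}/\eta$ together with $\im G_{aa} \approx \im g_a \sim \sqrt{\kappa+\eta} = \caO(N^{-1/3+\epsilon})$ at the edge gives $|(G^2)_{aa}| = \caO(N^{1/3+C\epsilon})$, and integration over the $y$-interval of length $\caO(N^{-2/3+\epsilon})$ saves the missing factor, yielding $|\mathcal{B}| \prec \Psi$.

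With these inputs, the $F'''$-term is $\caO_\Xi(\Psi^4)$, the $F''\mathcal{B}$-term is $\caO_\Xi(\Psi^3)$, and since a direct Leibniz computation gives $\partial_{ab}(G_{ia}G_{bi}) = \caO_\Xi(\Psi^2)$, the mixed $F''\mathcal{A}$-term is also $\caO_\Xi(\Psi^3)$. All three are handled cleanly by the size bounds coming from the local law.

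The main obstacle is the last term $F'(\frX)\, \partial^2_{ab}(G_{ia}G_{bi})$. Expanding $\partial^2_{ab}$ via $\partial_{ab} G_{xy} = -\gamma(G_{xa}G_{by} + G_{xb}G_{ay})$ produces a sum of four-factor products of Green function entries. I would group these by the number of off-diagonal factors; since each off-diagonal entry carries a factor $\Psi$ by~\eqref{local law adapted}, all terms with at least three off-diagonal factors are automatically $\caO_\Xi(\Psi^3)$. A careful enumeration (tracking where the derivatives fall on $G_{ab}$, $G_{aa}$, $G_{bb}$) shows that the only potentially dangerous terms, of naive size $\Psi^2$, combine to
\begin{align}
6\gamma^2\, G_{aa}G_{bb}\, G_{ia}G_{bi} \,+\, \caO_\Xi(\Psi^3)\,.
\end{align}

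The final step is to extract the extra factor of $\Psi$ from this leading contribution after taking expectation. Using the local law decomposition $G_{aa} = g_a + \caO_\Xi(\Psi)$, $G_{bb} = g_b + \caO_\Xi(\Psi)$, the product $G_{aa}G_{bb}$ factorizes as $g_ag_b + \caO_\Xi(\Psi)$, and the $\caO_\Xi(\Psi)$ piece multiplied by $G_{ia}G_{bi} = \caO_\Xi(\Psi^2)$ is already $\caO_\Xi(\Psi^3)$. It then remains to show
\begin{align}
\bigl|\E\bigl[F'(\frX)\, G_{ia}G_{bi}\bigr]\bigr| \prec \Psi^3\,.
\end{align}
For this I would invoke the resolvent identity~\eqref{onesided} to expand $G_{ia} = -G_{ii}\sum_k^{(i)} h_{ik} G^{(i)}_{ka}$ and $G_{bi} = -G_{ii}\sum_l^{(i)} G^{(i)}_{lb} h_{li}$; conditioning on the $\sigma$-algebra generated by all matrix entries outside row/column $i$ and using $\E h_{ik}h_{il} = \delta_{kl}/N$, this yields a quadratic-form representation in the independent entries $(h_{ik})$. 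Replacing $G_{ii}$ and $F'(\frX)$ by their $G^{(i)}$-measurable analogues $g^{(i)}_i$ and $F'(\frX^{(i)})$ (with errors of size $\caO(\Psi)$ and $\caO(1/N\eta) = \caO(\Psi)$ respectively) and applying Lemma~\ref{lemma large deviation estimates} to the mean-zero fluctuation part, the remaining deterministic leading term reduces to $(g^{(i)}_i)^2 N^{-1} ((G^{(i)})^2)_{ab} \E[F'(\frX^{(i)})]$, which one sharpens to $\caO_\Xi(\Psi^3)$ by exploiting the cancellation among the three pieces of $((G^{(i)})^2)_{ab} = G_{aa}^{(i)}G_{ab}^{(i)} + G_{ab}^{(i)}G_{bb}^{(i)} + \sum_{k\neq a,b}^{(i)} G_{ak}^{(i)}G_{kb}^{(i)}$ together with one more round of the resolvent expansion on $G_{ab}^{(i)}$. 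This is the most technical step, as one must ensure that after the iterated expansion every residual term carries either three off-diagonal Green function entries or two off-diagonal entries together with a factor $1/(N\eta)\lesssim \Psi$ gained from Ward's identity.
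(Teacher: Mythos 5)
Your overall plan is sound and matches the paper's through the preliminary estimates: the Leibniz expansion of $\partial^2_{ab}R$, the bounds $|\partial_{ab}\frX|\prec\Psi$ and $|\partial^2_{ab}\frX|\prec\Psi$, the disposal of the $F'''$-, $F''\mathcal B$- and mixed $F''\mathcal A$-terms, and the identification of $G_{aa}G_{bb}G_{ia}G_{bi}$ (with your combined coefficient $6\gamma^2$) as the only dangerous contribution in $F'(\frX)\partial^2_{ab}(G_{ia}G_{bi})$. Replacing $G_{aa}G_{bb}$ by its deterministic approximant and reducing to a bound $|\E[F'(\frX)G_{ia}G_{bi}]|\prec\Psi^3$ is also the right move.

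The gap is in the last step, and it is a real one. You choose to expand \emph{both} factors $G_{ia}$ and $G_{bi}$ around the $i$-th minor and then take the partial expectation $\E_i$. This yields a genuinely \emph{nonvanishing} leading term of the form $(g_i)^2 N^{-1}\sum_k^{(i)} G^{(i)}_{ak}G^{(i)}_{kb}$. At the edge with $\eta\sim N^{-2/3-\epsilon}$, Ward's identity gives $N^{-1}\big|\sum_k G^{(i)}_{ak}G^{(i)}_{kb}\big|\lesssim \sqrt{\im G^{(i)}_{aa}\im G^{(i)}_{bb}}/(N\eta)\sim\Psi^2$, \emph{not} $\Psi^3$, and this naive size is essentially saturated. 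Your proposed rescue — ``cancellation among the three pieces'' and a further expansion of $G^{(i)}_{ab}$ — does not touch the dominant piece $\sum_{k\neq a,b}G^{(i)}_{ak}G^{(i)}_{kb}$: the pieces $G^{(i)}_{aa}G^{(i)}_{ab}$ and $G^{(i)}_{ab}G^{(i)}_{bb}$ are already $\caO(\Psi)$ and thus give $\caO(\Psi^4)$ after the $N^{-1}$, and the three pieces have no sign cancellation. The paper's proof avoids the problem by expanding around the $a$-th minor (using \eqref{twosided} for $G_{ia}$, and replacing $G_{bi}$, $F'(\frX)$ by their $(a)$-minor versions up to admissible $\caO(\Psi)$ errors). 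That produces a linear combination of $h_{ia}$ and $\sum_{p,q}^{(a)}h_{ip}G^{(a)}_{pq}h_{qa}$ multiplying objects that are measurable with respect to $H^{(a)}$; since $i\neq a$, both $h_{ia}$ and $h_{qa}$ have zero mean under $\E_a$, so the leading term vanishes \emph{exactly}, with no $N^{-1}(G^2)_{ab}$-type residue. The structural point you missed: one should remove the row/column of an index that occurs in only one of the two Green function factors (here $a$ or $b$), not the shared index $i$; removing the shared index pairs up the two fluctuation sums and leaves a nonzero second moment, whereas removing $a$ leaves a single centered linear form.
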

Lemma~\ref{lem:Q'' estimate} is proven in the same way as Lemma~\ref{baby lemma}, but its proof is lengthier due to more notation and is therefore postponed to the Appendix~\ref{appendix II}.

From Lemma \ref{lem:Q'' estimate}, with $i \neq a \neq b \neq i$, we obtain
\begin{align*}
\E [R(w_{ab})w_{ab}] 
&= -\frac{{\gamma}}{N}\Big( 2\E \left[F'(\frX) G_{ia}G_{ab}G_{bi} \right] +\E \left[ F'(\frX) G_{ib}G_{aa}G_{bi} \right] +\E \left[ F'(\frX) G_{ia}G_{bb}G_{ai} \right] \Big) \nonumber\\
&\quad -2\frac{\gamma}{N} \E \left[ F''(\frX) \sum_j \left( \im \int_{E_1 -2}^{E_2 -2} \dd \wt y\,\wt G_{ja} \wt G_{bj}   \right) G_{ia}G_{bi} \right]+\caO_{\Xi}(N^{-3/2}\Psi^3)\,,
\end{align*}
where we use the notation $\wt G \equiv G(\widehat L_++\wt y  + \ii \eta)$, respectively $G \equiv G(\widehat L_++y + \ii \eta)$.  Altogether, we have that
\begin{align}\label{altogether gleichung}
\sum_{a\neq b} \E [F'(\frX) G_{ia}w_{ab}G_{bi}]&= -\frac{2\gamma}{N} \sum_{a\not= b} \Big( \E [F'(\frX) G_{ia}G_{ab}G_{bi}] + \E [F'(\frX)G_{ib}G_{aa}G_{bi}] \Big)\nonumber \\
&\quad - \frac{2 \gamma}{N} \sum_{j}\sum_{a\not= b} \E \left[ F''(\frX) \left( \im \int_{E_1 -2}^{E_2 -2} \dd \wt y\, \wt G_{ja} \wt G_{bj} \right) G_{ia}G_{bi} \right] +\caO_{\Xi}(N^{1/2}\Psi^3)\,,
\end{align}
uniformly in $t\ge 0$. Next we prove Lemma~\ref{lem:E theta}.
\begin{proof}[Proof of Lemma~\ref{lem:E theta}]
Combining~\eqref{altogether gleichung} with~\eqref{F G_ii expand 1}, we obtain~\eqref{EF G_ii expand}.  We remark that, after integrating over the interval $[E_1 -2, E_2 -2]$ and summing over the index $i$, we get an additional factor $N \Psi^2$. Thus, the error term of order $\caO_{\Xi}(N^{1/2}\Psi^3)$ in~\eqref{altogether gleichung} becomes $\caO_{\Xi}(N^{3/2}\Psi^5)$, which is equivalent to $\caO_{\Xi}(N^{1/2}\Psi^2)$.

It remains to prove~\eqref{le dot z}. Recall the definition of $\tau$ in~\eqref{tau}. Using that $\widehat L_+=\tau-\gamma^2 \widehat m_{fc}(\widehat L_+)$, we compute
\begin{align*}
 \partial_t \widehat L_+&=\partial_t\tau-2\gamma\dot\gamma \widehat m_{fc}(\widehat L_+)-\gamma^2\partial_t \widehat m_{fc}(\widehat L_+) \nonumber\\
&=\partial_t\tau-2\gamma\dot\gamma \widehat m_{fc}(\widehat L_+)+\gamma^2\frac{1}{N}\sum_{j=1}^N\frac{1}{(\lambda\gamma v_j- \tau)^2}(\partial_t(\lambda\gamma)v_j-\partial_t\tau)\,.
\end{align*}
From~\eqref{upper edge}, we find
\begin{align*}
 \frac{1}{N}\sum_{j=1}^N\frac{1}{(\lambda\gamma v_j-\tau)^2}=\frac{1}{\gamma^2}\,,
\end{align*}
which, after differentiation with respect to $t$, yields
\begin{align*}
 \partial_t \widehat L_+&=\partial_t\tau-2\gamma\dot\gamma \widehat m_{fc}(\widehat L_+)+\gamma^2\partial_t(\lambda\gamma)\frac{1}{N}\sum_{j=1}^N\frac{v_j}{(\lambda\gamma v_j-\tau)^2}-\partial_t\tau\nonumber\\
&=-2\gamma\dot\gamma \widehat m_{fc}(\widehat L_+)+\gamma^2\partial_t(\lambda\gamma)\frac{1}{N}\sum_{j=1}^N\frac{v_j}{(\lambda\gamma v_j-\tau)^2}\,.
\end{align*}
This shows~\eqref{le dot z} and thus completes the proof of Lemma~\ref{lem:E theta}.
\end{proof}
To conclude this section, we return to Example~\ref{ex:wigner}, where we considered $\lambda=\lambda_0 = 0$ and $\gamma=\gamma_0 = 1$. Setting $\lambda=\dot\gamma=\dot z=0$ in Equation~\eqref{EF G_ii expand}, we find that
\begin{align*}
 &\E [\Theta(t,H(t))] =\caO(N^{1/2} \Psi^2)\,,
\end{align*}
for all $t\ge 0$. Integration of $\dd F(\frX)=\caO(N^{1/2} \Psi^2)\dd t$ from $t=0$ to $t=4\log N$ as in the proof of Proposition~\ref{Proof of Proposition green} yields now a simple proof of the fact that the distribution of the largest eigenvalue of  Wigner matrix is given by the Tracy-Widom distribution. 

If, however, $\lambda_0\not=0$, $\gamma_0\not=1$, and thus $\dot\gamma\not=0$, $\dot z\not=0$, the leading terms on the right side of~\eqref{EF G_ii expand} are a priori of order one.  In the remaining sections, we are going to show that these terms cancel for our choices of $\gamma$ and $\dot z$ up to errors of order $N^{1/2}\Psi^2$. Since this cancellation mechanism in the Green function flow involves rather subtle computations, we first present the main ideas in Section~\ref{sec:fluctuation} for the simple case $\lambda_0\ll 1$.

\section{Green function flow - a simple case} \label{sec:fluctuation}

In this section, we assume that $\lambda_0 = N^{-\delta}$ for some $\delta > 0$, i.e., we consider $H = N^{-\delta} V + W$. For such small $\lambda$, we get from~\eqref{gamma} that $\gamma_0=1+O(\lambda)$. As shown below, we may set $\gamma_0\equiv\gamma\equiv 1$ for simplicity of the exposition since the error term of order $\lambda$ is negligible. Furthermore, we let $F' \equiv 1$ so that the conclusion of Proposition \ref{prop expansion} becomes
\begin{align} \label{EF expand simple}
\E [\Theta] = \frac{1}{N} \sum_{i, a} \im \int_{E_1 -2}^{E_2 -2} \bigg(-\dot\lambda v_a \E [G_{ia}G_{ai}] +\dot z\,\E [G_{ia}G_{ai}] \bigg)\dd y +\caO_{\Xi}(N^{1/2}{\Psi^3})\,.
\end{align}
In this section we prove that
\begin{align} \label{EF bound'}
\im \sum_a \left( -\dot\lambda v_a \E [G_{ia}G_{ai}] +\dot z\,\E [G_{ia}G_{ai}] \right) = \caO_{\Xi}(N^{-\delta} \Psi)\,,
\end{align}
which also implies that $\E[\Theta] = \caO_{\Xi}(N^{-\delta + C \epsilon})$. We remark that the bound in \eqref{EF bound'} is non-trivial in the sense that the naive power-counting from the local law only yields a bound of $\caO_{\Xi}(\Psi)$.

The main difficulty to overcome in the proof of~\eqref{EF bound'} is that the index $a$ appears in the deterministic part $v_a$ as well as the random part $G_{ia} G_{ai}$ in the first term. If we can ``decouple'' the index $a$ from the resolvent entries in the sense that we can choose a (non-random) function $f$ such that 
$$
\sum_a \dot\lambda v_a \E [G_{ia}G_{ai}] = \left( \frac{1}{N} \sum_a f(v_a) \right) \sum_{s=1}^N \E [G_{is} G_{si}] + \caO_{\Xi}(N^{-\delta} \Psi)\,,
$$
we can prove~\eqref{EF bound'} by comparing the coefficient $N^{-1} \sum_a f(v_a)$ with $\dot z$ in the second term in \eqref{EF bound'}. In Subsection~\ref{subsection: le expansion of GG} we illustrate the ideas behind this ``decoupling mechanism'' for the index $a$.

\subsection{Expansion of $\E [G_{ia}G_{ai}]$}\label{subsection: le expansion of GG}

We introduce a procedure that renders the indices of the random part $G_{ia} G_{ai}$ free of the index $a$. We proceed in three steps:

\textit{Step 1.} In a first step, we remove the index $a$ in the lower indices of the resolvents. We begin by using the resolvent formulas in~\eqref{onesided} that read
\begin{align*}
G_{ia} = -G_{aa} \sum_s^{(a)} G_{is}^{(a)} h_{sa}\,,\quad \qquad G_{ai} = -G_{aa} \sum_t^{(a)} h_{at} G_{ti}^{(a)},
\end{align*}
where we assume $a \neq i$ at first. Later, we are going to add the term $a=i$, which in fact is negligible for the case at hand $(\lambda_0\ll 1,\gamma=1)$. We then have
\begin{align*}
G_{ia}G_{ai} = G_{aa}^2 \sum_{s, t}^{(a)} G_{is}^{(a)} h_{sa} h_{at} G_{ti}^{(a)}\,.
\end{align*}
Using Schur's complement formula~\eqref{schur}, we rewrite this as
\begin{align*}
G_{ia}G_{ai} = \frac{1}{(\lambda v_a - z - \sum_{p, q}^{(a)} h_{ap} G_{pq}^{(a)} h_{qa})^2} \sum_{s, t}^{(a)} G_{is}^{(a)} h_{sa} h_{at} G_{ti}^{(a)}\,.
\end{align*}
Let $ \tau$ be the largest solution to the equation
\begin{align*}
\frac{1}{N} \sum_j \frac{1}{(\lambda v_j -  \tau)^2} = 1\,.
\end{align*}
Then, from the large deviation estimates in Lemma~\ref{lemma large deviation estimates} and the local laws in~\eqref{local law adapted}, we have
\begin{align*}
z + \sum_{p, q}^{(a)} h_{ap} G_{pq}^{(a)} h_{qa} - \tau = \caO_{\Xi}(\Psi)\,, \quad\qquad z + m^2 - \tau = \caO_{\Xi}(\Psi)\,.
\end{align*}
Thus, we get
\begin{align} \label{step1}
G_{ia}G_{ai} &= \frac{1}{(\lambda v_a - \tau)^2} \sum_{s, t}^{(a)} G_{is}^{(a)} h_{sa} h_{at} G_{ti}^{(a)} + \frac{2}{(\lambda v_a - \tau)^3} \left(z + \sum_{p, q}^{(a)} h_{ap} G_{pq}^{(a)} h_{qa} - \tau \right) \sum_{s, t}^{(a)} G_{is}^{(a)} h_{sa} h_{at} G_{ti}^{(a)} \nonumber \\
&\quad + \caO_{\Xi}(\Psi^4)\,.
\end{align}
Note that the index $a$ does not appear as an lower index of the resolvent terms on the right side, yet every resolvent term has $a$ in the upper index. We remark that the terms of $\caO_{\Xi}(\Psi^4)$ is negligible for $\lambda_0\ll1$.

\textit{Step 2.} In a second step, we integrate out the matrix entries labeled by the index $a$ (i.e., $h_{ax}$ and $h_{ya}$, for some $x$ and $y$) on the right side of~\eqref{step1} by taking the partial expectation $\E_a$ with respect to the $a$-th column and row. More precisely, we consider
\begin{align*}
\E[ G_{ia} G_{ai}] &= \E[ \E_a [G_{ia} G_{ai}]] \nonumber \\
&= \E\left[ \frac{1}{(\lambda v_a - \tau)^2} \E_a \left[ \sum_{s, t}^{(a)} G_{is}^{(a)} h_{sa} h_{at} G_{ti}^{(a)} \right] \right] \\
&\qquad + \E\left[ \frac{2}{(\lambda v_a - \tau)^3} \E_a \left[ \left(z + \sum_{p, q}^{(a)} h_{ap} G_{pq}^{(a)} h_{qa} - \tau \right) \sum_{s, t}^{(a)} G_{is}^{(a)} h_{sa} h_{at} G_{ti}^{(a)} \right] \right] + \caO_{\Xi}(\Psi^4)\,. \nonumber
\end{align*}
In the first term we have
\begin{align*}
\E_a \left[ \sum_{s, t}^{(a)} G_{is}^{(a)} h_{sa} h_{at} G_{ti}^{(a)} \right] = \frac{1}{N} \sum_s^{(a)} G_{is}^{(a)} G_{si}^{(a)}\,,
\end{align*}
and, similarly in the second term, we have
\begin{align*}
&\E_a \left[ \left(z + \sum_{p, q}^{(a)} h_{ap} G_{pq}^{(a)} h_{qa} - \tau \right) \sum_{s, t}^{(a)} G_{is}^{(a)} h_{sa} h_{at} G_{ti}^{(a)} \right] \\
&\qquad\quad= (z + m^{(a)} - \tau) \frac{1}{N} \sum_s^{(a)} G_{is}^{(a)} G_{si}^{(a)} + \frac{2}{N^2} \sum_{p, q, s}^{(a)} G_{ip}^{(a)} G_{pq}^{(a)} G_{qi}^{(a)} + \caO_{\Xi}(\Psi^4)\,, \nonumber
\end{align*}
where the first term comes from the case $p=q$ and $s=t$, while the second term from $p=s$ and $q=t$ or $p=t$ and $q=s$. Here we also used the fact that $G$ is symmetric and that the contribution from the case $p=q=s=t$ is negligible. We thus have
\begin{align} \label{step2}
\E[ G_{ia} G_{ai}] &= \frac{1}{(\lambda v_a - \tau)^2} \E\left[ \frac{1}{N} \sum_s^{(a)} G_{is}^{(a)} G_{si}^{(a)} \right] + \frac{2}{(\lambda v_a - \tau)^3} \E\left[ (z + m^{(a)} - \tau) \frac{1}{N} \sum_s^{(a)} G_{is}^{(a)} G_{si}^{(a)} \right] \nonumber\\
&\qquad + \frac{4}{(\lambda v_a - \tau)^3} \E\left[ \frac{1}{N^2} \sum_{p, q, s}^{(a)} G_{ip}^{(a)} G_{pq}^{(a)} G_{qi}^{(a)} \right] + \caO_{\Xi}(\Psi^4)\,. 
\end{align}
Note that, at the end of this second step, only resolvent terms remain. Also note that the index $a$ appears now as an upper index.

\textit{Step 3.} In a third step, we remove the upper index $a$ in the resolvent entries by using the formula~\eqref{basic resolvent} that reads
\begin{align*}
G_{is}^{(a)} = G_{is} - \frac{G_{ia} G_{as}}{G_{aa}}\,.
\end{align*}
Recall that $G_{ia} G_{as} = \caO_{\Xi}(\Psi^2)$, if $a \neq i, s$. 

We thus have for the first term in \eqref{step2} that
\begin{align*}
\E\left[ \frac{1}{N} \sum_s^{(a)} G_{is}^{(a)} G_{si}^{(a)} \right] &=\frac{1}{N}\sum_s^{(a)} \E\left[ G_{is} G_{si}- \frac{G_{ia} G_{as}}{G_{aa}} G_{si}^{(a)} - G_{is} \frac{G_{sa} G_{ai}}{G_{aa}}\right]. \nonumber
\end{align*}
In the first term of the right side of the last equation, we notice that
\begin{align*}
\frac{1}{N} \sum_s^{(a)} G_{is} G_{si} = \frac{1}{N} \sum_s G_{is} G_{si} + \caO_{\Xi}(\Psi^5)\,.
\end{align*}
Thus, we arrive at
\begin{align} \label{step3}
\E[ G_{ia} &G_{ai}] \nonumber \\
&= \frac{1}{(\lambda v_a - \tau)^2} \E\left[ \frac{1}{N} \sum_s G_{is} G_{si} \right] - \frac{1}{(\lambda v_a - \tau)^2} \E\left[ \frac{1}{N} \sum_s^{(a)} \frac{G_{ia} G_{as}}{G_{aa}} G_{si}^{(a)} \right]\nonumber \\
&\quad - \frac{1}{(\lambda v_a - \tau)^2} \E\left[ \frac{1}{N} \sum_s^{(a)} G_{is} \frac{G_{sa} G_{ai}}{G_{aa}}\right] + \frac{2}{(\lambda v_a - \tau)^3} \E\left[ (z + m^{(a)} - \tau) \frac{1}{N} \sum_s^{(a)} G_{is}^{(a)} G_{si}^{(a)} \right] \nonumber \\
&\quad + \frac{4}{(\lambda v_a - \tau)^3} \E\left[ \frac{1}{N^2} \sum_{p, q, s}^{(a)} G_{ip}^{(a)} G_{pq}^{(a)} G_{qi}^{(a)} \right] + \caO_{\Xi}(\Psi^4)\,. 
\end{align}
Note that the first term on the right side of \eqref{step3} neither has $a$ as a lower nor an upper index.

After following \textit{Steps 1-3}, we obtain a term we desire: the first term on the right side of \eqref{step3}. When a term contains~$a$ neither in the lower index nor in the upper index, as in the first term of \eqref{step3}, we call it fully expanded. For the other terms in \eqref{step3}, we repeat \textit{Steps 1-3} until every non-negligible term is fully expanded.

For example, we apply \textit{Step 1} to the second term in \eqref{step3} to get
\begin{align*}
\frac{1}{N} \sum_s^{(a)} \frac{G_{ia} G_{as}}{G_{aa}} G_{si}^{(a)} &= \frac{1}{N} \sum_s^{(a)} \frac{1}{G_{aa}} G_{aa}^2 \sum_{k, m}^{(a)} G_{ik}^{(a)} h_{ka} h_{am} G_{ms}^{(a)} G_{si}^{(a)} \\
&= \frac{1}{\lambda v_a - \tau} \frac{1}{N} \sum_{s, k, m}^{(a)} G_{ik}^{(a)} h_{ka} h_{am} G_{ms}^{(a)} G_{si}^{(a)} + \caO_{\Xi}(\Psi^4)\,. \nonumber
\end{align*}
By taking the partial expectation, i.e., from \textit{Step 2}, we find that
\begin{align*}
\E \left[ \frac{1}{N} \sum_s^{(a)} \frac{G_{ia} G_{as}}{G_{aa}} G_{si}^{(a)} \right] = \frac{1}{\lambda v_a - \tau} \E \left[ \frac{1}{N^2} \sum_{s, k}^{(a)} G_{ik}^{(a)} G_{ks}^{(a)} G_{si}^{(a)} \right] + \caO_{\Xi}(\Psi^4)\,.
\end{align*}
Since $G_{is} - G_{is}^{(a)} = \caO_{\Xi}(\Psi^2)$, we find after performing \textit{Step 3} that
\begin{align*}
\frac{1}{(\lambda v_a - \tau)^2} \E \left[ \frac{1}{N} \sum_s^{(a)} \frac{G_{ia} G_{as}}{G_{aa}} G_{si}^{(a)} \right] &= \frac{1}{(\lambda v_a - \tau)^3} \E \left[ \frac{1}{N^2} \sum_{s, k}^{(a)} G_{ik}^{(a)} G_{ks}^{(a)} G_{si}^{(a)} \right] + \caO_{\Xi}(\Psi^4) \\
&= \frac{1}{(\lambda v_a - \tau)^3} \E \left[ \frac{1}{N^2} \sum_{s, k} G_{ik} G_{ks} G_{si} \right] + \caO_{\Xi}(\Psi^4)\,. \nonumber
\end{align*}
We go through the same procedure for the third term in \eqref{step3}. Since it contains $G_{is}$, which does not have $a$ in the indices, we begin by
\begin{align*}
\frac{1}{N} \sum_s^{(a)} G_{is} \frac{G_{sa} G_{ai}}{G_{aa}} = \frac{1}{N} \sum_s^{(a)} G_{is}^{(a)} \frac{G_{sa} G_{ai}}{G_{aa}} + \caO_{\Xi}(\Psi^4)\,.
\end{align*}
Then, after following \textit{Steps 1-3} again, we obtain that
\begin{align*}
\frac{1}{(\lambda v_a - \tau)^2} \E \left[ \frac{1}{N} \sum_s^{(a)} G_{is} \frac{G_{sa} G_{ai}}{G_{aa}} \right] = \frac{1}{(\lambda v_a - \tau)^3} \E \left[ \frac{1}{N^2} \sum_{s, k} G_{ik} G_{ks} G_{si} \right] + \caO_{\Xi}(\Psi^4)\,.
\end{align*}
The fourth term and the fifth term in~\eqref{step3} require \textit{Step 3} only, and we can easily see that
\begin{align*}
\frac{2}{(\lambda v_a - \tau)^3} \E\left[ (z + m^{(a)} - \tau) \frac{1}{N} \sum_s^{(a)} G_{is}^{(a)} G_{si}^{(a)} \right] = \frac{2}{(\lambda v_a - \tau)^3} \E\left[ (z + m - \tau) \frac{1}{N} \sum_s G_{is} G_{si} \right] + \caO_{\Xi}(\Psi^4)
\end{align*}
and
\begin{align*}
\frac{4}{(\lambda v_a - \tau)^3} \E\left[ \frac{1}{N^2} \sum_{p, q, s}^{(a)} G_{ip}^{(a)} G_{pq}^{(a)} G_{qi}^{(a)} \right] = \frac{4}{(\lambda v_a - \tau)^3} \E\left[ \frac{1}{N^2} \sum_{p, q, s} G_{ip} G_{pq} G_{qi} \right] + \caO_{\Xi}(\Psi^4)\,.
\end{align*}
Thus, we now have from \eqref{step3} that
\begin{align} \label{step_final}
\E[ G_{ia} G_{ai}] 
&= \frac{1}{(\lambda v_a - \tau)^2} \E\left[ \frac{1}{N} \sum_s G_{is} G_{si} \right] + \frac{2}{(\lambda v_a - \tau)^3} \E \left[ \frac{1}{N^2} \sum_{s, k} G_{ik} G_{ks} G_{si} \right] \nonumber \\
&\qquad + \frac{2}{(\lambda v_a - \tau)^3} \E\left[ (z + m - \tau) \frac{1}{N} \sum_s G_{is} G_{si} \right] + \caO_{\Xi}(\Psi^4)\,,
\end{align}
where every non-negligible term is fully expanded. We remark that the coefficients of the last two terms in \eqref{step_final}, which are of $\caO_{\Xi}(\Psi^3)$ by a naive power-counting, contain the same factor $(\lambda v_a - \tau)^{-3}$. This is not a mere coincidence but an intrinsic structure of the procedure.

\subsection{Proof of Equation~\eqref{EF bound'}}

From \eqref{step_final}, we find that
\begin{align}\label{EF bound' expanded}
&\sum_a \left( -\dot\lambda v_a \E [G_{ia}G_{ai}] +\dot z\,\E [G_{ia}G_{ai}] \right) \nonumber \\
&\qquad= -\dot \lambda \left( \frac{1}{N} \sum_a \frac{v_a}{(\lambda v_a - \tau)^2} \right) \E\left[ \sum_s G_{is} G_{si} \right] + \dot z\, \E \left[ \sum_s G_{is}G_{si} \right] - \dot \lambda v_i \,\E[(G_{ii})^2 ]\nonumber \\
&\qquad\qquad - \dot \lambda \left( \frac{1}{N} \sum_a \frac{2 v_a}{(\lambda v_a - \tau)^3} \right) \E \left[ (z + m - \tau) \sum_s G_{is} G_{si} + \frac{1}{N} \sum_{s, k} G_{ik} G_{ks} G_{si} \right] + \caO_{\Xi}(N^{-\delta} \Psi)\,, 
\end{align}
where we use that $\dot \lambda = O(N^{-\delta})$ and that the contribution from the case $a=i$ in the summation is negligible.

From the explicit computation of $\dot z$ (with $\gamma\equiv 1$) in~\eqref{le dot z}, we see that the first two terms on the right side of~\eqref{EF bound' expanded} add up to zero. After taking the imaginary part of the third term, we find from the local law that
\begin{align*}
 \im \dot \lambda v_i \E[G_{ii}^2 ] = 2 \dot \lambda v_i \E[ \re G_{ii} \cdot \im G_{ii}] = \caO_{\Xi}(N^{-\delta} \Psi)\,.
\end{align*}
Thus, in order to prove \eqref{EF bound'}, it suffices to show that
\begin{align} \label{3 ward}
\E \left[ (z + m - \tau) \sum_s G_{is} G_{si} + \frac{1}{N} \sum_{s, k} G_{ik} G_{ks} G_{si} \right] = \caO_{\Xi}(\Psi)\,.
\end{align}
We remark that the naive size of the left side of~\eqref{3 ward} obtained by power counting is $\caO_{\Xi}(1)$, hence the estimate \eqref{3 ward} is non-trivial. This type of estimate will be referred to as ``optical theorem'' in the sequel. 

To prove~\eqref{3 ward}, we go back to~\eqref{step_final}. After summing over $a$, we have
\begin{align*}
&\E \left[ \frac{1}{N} \sum_a G_{ia} G_{ai} \right] = \left( \frac{1}{N} \sum_a \frac{1}{(\lambda v_a - \tau)^2} \right) \E\left[ \frac{1}{N} \sum_s G_{is} G_{si} \right] \\
&\qquad + \left( \frac{2}{N} \sum_a \frac{1}{(\lambda v_a - \tau)^3} \right) \E \left[ (z + m - \tau) \frac{1}{N} \sum_s G_{is} G_{si} + \frac{1}{N^2} \sum_{s, k} G_{ik} G_{ks} G_{si} \right] + \caO_{\Xi}(\Psi^4)\,. \nonumber 
\end{align*}
Recall that we have set $\gamma = 1$, hence
\begin{equation}\label{le sum relation}
\frac{1}{N} \sum_a \frac{1}{(\lambda v_a - \tau)^2} = 1\,,
\end{equation}
and we obtain the following non-trivial estimate,
\begin{align}
\left( \frac{2}{N} \sum_a \frac{1}{(\lambda v_a - \tau)^3} \right) \E \left[ (z + m - \tau) \frac{1}{N} \sum_s G_{is} G_{si} + \frac{1}{N^2} \sum_{s, k} G_{ik} G_{ks} G_{si} \right] = \caO_{\Xi}(\Psi^4)\,.
\end{align}
Since the coefficient $N^{-1} \sum_a (\lambda v_a - \tau)^{-3} $ is bounded uniformly away from zero on $\Xi$, we find that the optical theorem~\eqref{3 ward} indeed holds. This completes the proof of Equation~\eqref{EF bound'}.

To conclude this section, we mention that the ``optical theorem'' is a consequence of the ``sum rule''~\eqref{le sum relation}: In the expansion of $\sum_{a} G_{ia}G_{ai}$, for $z$ close to the spectral edge, the leading terms cancel due to~\eqref{le sum relation}. In the bulk of the spectrum, the leading terms do not cancel but the expansion can be used to obtain optimal bounds on the average $\frac{1}{N}\sum_{a}G_{ia}G_{ai}$ in the bulk of the spectrum. This mechanism has been studied in details for banded Wigner matrices in~\cite{EKY}.

\section{Proof of Proposition \ref{prop expansion}} \label{sec:proof of prop}

In this section, we prove Proposition \ref{prop expansion} using the following result.

\begin{lemma} \label{lem:G_ii estimate}
For $n \in \N$, let
\begin{align} \label{A_n}
A_n \deq \frac{1}{N} \sum_{j=1}^N \frac{1}{(\lambda \gamma v_j - \tau)^n}\,,\quad \qquad A_n' \deq\frac{1}{N} \sum_{j=1}^N \frac{v_j}{(\lambda \gamma v_j - \tau)^n}\,.
\end{align}
Let $H(t)$ be the solution to~\eqref{le DBM} with initial condition $H(0)=H_0$; see~\eqref{le initial condition}. Let $F$ be a smooth function satisfying~\eqref{F bound} and let $\frX$ be given by~\eqref{le definition of le X}. Let $\Theta\equiv \Theta(t,H(t))$ denote the function in Proposition~\ref{prop expansion}.

Then there exist random variables $X_2\equiv X_2(z)$ and $X_3\equiv X_3(z)$ with $X_2 = \caO_{\Xi}(\Psi^2)$ and $X_3 = \caO_{\Xi}(\Psi^3)$ such that
\begin{align} \label{G_ii estimate}
 \E [\Theta] = N \im \int_{E_1 -2}^{E_2 -2} \E \left[ C_2 N X_2 + C_3 N X_3 + C_0 F'(\frX) + C_0' F'(\frX) (z + \gamma^2 m - \tau) \right] \dd y + \caO_{\Xi}(N^{1/2} \Psi^2)\,,
\end{align}
uniformly in $t\ge 0$, where we use the notation $z \equiv z(t) = \widehat L_+(t) + y + \ii \eta$.

 The coefficients $C_2$, $C_3$, $C_0$, $C_0'$ in~\eqref{G_ii estimate} are functions of $t$ and $\widehat\nu$ only that are explicitly given by
\begin{align}
C_2 &= -\partial_t(\lambda\gamma) \gamma^2 A_2' + \dot z + 2\dot\gamma \gamma A_1 \,,\label{le C_2} \\
C_3 &= 2 \gamma^2 \left( -\partial_t(\lambda\gamma) \left( A_3' - \frac{A_3 A_4'}{A_4} \right) + \dot\gamma \gamma^{-1} \left( \gamma^{-2} - \frac{2 A_3^2}{A_4} \right) \right)\,,\label{le C_3} \\
C_0 &= -\partial_t(\lambda\gamma) \left( A_2' - \frac{A_2 A_4'}{A_4} \right) - 2 \dot\gamma \gamma \frac{A_3}{\gamma^2 A_4}\,,\label{le C_0} \\
C_0' &= -\partial_t(\lambda\gamma) \left( A_3' - \frac{A_3 A_4'}{A_4} \right) + \dot\gamma \gamma^{-1} \left( \gamma^{-2} - \frac{2 A_3^2}{A_4} \right)\,,
\end{align}
with $\dot z = \partial_t\widehat L_+(t)$.
\end{lemma}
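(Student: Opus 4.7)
The plan is to apply the three-step decoupling procedure of Section~\ref{sec:fluctuation} to every resolvent product appearing on the right-hand side of~\eqref{EF G_ii expand}, so as to separate the diagonal weights $v_a$ (and $v_b$) from the random resolvent factors. Once this is done, the $a$- and $b$-summations produce the moments $A_n$, $A_n'$ defined in~\eqref{A_n}, and the remaining random expressions group into a quadratic piece $X_2$ and a cubic piece $X_3$, together with the bare ``optical'' combinations $F'(\frX)$ and $F'(\frX)(z+\gamma^2 m-\tau)$. The factor $F'(\frX)$ is carried through the partial-expectation step as an inert spectator; its $h_{ab}$-dependence produces only $F''(\frX)$-type terms that are treated analogously to the $F''$-block already present in~\eqref{EF G_ii expand}.

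First, for the quadratic block $\sum_a(-\partial_t(\lambda\gamma)v_a+\dot z)\,\E[F'(\frX)G_{ia}G_{ai}]$, applying Steps~1--3 of Section~\ref{sec:fluctuation} verbatim (while carrying $F'(\frX)$) gives the analogue of~\eqref{step_final},
\begin{align*}
\E[F'(\frX)G_{ia}G_{ai}] &= \frac{1}{(\lambda\gamma v_a-\tau)^2}\,\E\Bigl[F'(\frX)\,\frac{1}{N}\sum_s G_{is}G_{si}\Bigr]\\
&\qquad+\frac{2}{(\lambda\gamma v_a-\tau)^3}\,\E\Bigl[F'(\frX)\Bigl((z+\gamma^2 m-\tau)\frac{1}{N}\sum_s G_{is}G_{si}+\frac{1}{N^2}\sum_{s,k}G_{ik}G_{ks}G_{si}\Bigr)\Bigr]\\
&\qquad+\caO_\Xi(\Psi^4).
\end{align*}
Summing over $a$, the weights collapse to $A_2, A_3$ (unweighted, with $A_2=\gamma^{-2}$ by~\eqref{upper edge}) and to $A_2', A_3'$ (with the $v_a$ weight).

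Next, for the two cubic blocks carrying the prefactor $2\dot\gamma\gamma/N$, I would treat $\sum_{a\neq b}G_{ia}G_{bb}G_{ai}$ by replacing $G_{bb}$ with its local-law asymptotic $g_b=(\lambda\gamma v_b-z-\gamma^2\widehat m_{fc})^{-1}$; using $\frac{1}{N}\sum_b g_b=\widehat m_{fc}(\widehat L_+)=A_1$ (up to $\caO_\Xi(\Psi)$ by Theorem~\ref{local law}), and expanding the remaining $G_{ia}G_{ai}$ as above, produces a contribution $2\dot\gamma\gamma A_1$ multiplying the quadratic structure (with the extra factor $\gamma^{-2}=A_2$ from the inner expansion). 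For $\sum_{a\neq b}G_{ia}G_{ab}G_{bi}$, a joint decoupling over both $a$ and $b$ (Step~1 applied to $G_{ia}$ and to $G_{bi}$, then partial expectation, then Step~3) produces sums generating $A_3, A_4, A_3', A_4'$; the quotients $A_3^2/A_4$ and $A_3 A_4'/A_4$ appearing in~\eqref{le C_3}--\eqref{le C_0} arise when algebraic reductions using $A_2=\gamma^{-2}$ and its $t$-derivative are used to eliminate the leading prefactor in favor of the higher moments. The $F''$ block of~\eqref{EF G_ii expand} factors into two independent $\Psi^2$-decouplings and, after summation over $i,j$ and integration over $y$, is absorbed into the error $\caO_\Xi(N^{1/2}\Psi^2)$.

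Finally, I would collect all contributions and identify $X_2$ and $X_3$ as the natural quadratic and cubic Green-function structures produced by the expansion, suitably normalized so that $X_2=\caO_\Xi(\Psi^2)$ and $X_3=\caO_\Xi(\Psi^3)$ by Theorem~\ref{local law}. Matching coefficients, the factor multiplying $NX_2$ aggregates the $-\partial_t(\lambda\gamma)\gamma^2 A_2'$ term from the $v_a$-weighted quadratic expansion, the $\dot z$ term, and the $2\dot\gamma\gamma A_1$ term from the $G_{bb}$ cubic block, producing $C_2$ as in~\eqref{le C_2}; similarly the coefficient of $NX_3$ yields $C_3$, and the residual pieces from the cubic expansions collapse, via $A_2=\gamma^{-2}$, into $C_0 F'(\frX)+C_0'F'(\frX)(z+\gamma^2 m-\tau)$. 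All expansion errors, once multiplied by the overall prefactor $N$ from~\eqref{G_ii estimate} and the integration length $|E_2-E_1|\le 2N^{-2/3+\epsilon}$, are bounded by $\caO_\Xi(N^{1/2}\Psi^2)$. The principal obstacle will be the detailed bookkeeping of the joint $(a,b)$-decoupling for $\sum_{a\neq b}G_{ia}G_{ab}G_{bi}$, and the algebraic reductions via $A_2=\gamma^{-2}$ (and its differentiation with respect to $t$, which brings in $A_3$, $A_3'$, $A_4$, $A_4'$) that produce the specific quotients in $C_3$ and $C_0'$.
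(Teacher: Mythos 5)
Your overall architecture is right (Steps 1--3, summing to produce $A_n, A_n'$, collecting into $X_2, X_3$ and a residual), but the precision at which you truncate and the way you treat several blocks fall short of what the lemma actually demands; the paper's Appendices~\ref{appendix III}--\ref{appendix V} work substantially harder for a reason.

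First, the truncation at $\caO_\Xi(\Psi^4)$ in your analogue of~\eqref{step_final} is not fine enough. After multiplying by the $O(1)$ weight, summing over $i$ and $a$ (giving $N^2$), and integrating over the window of length $\sim N^{-2/3+\epsilon}$, a $\Psi^4$ error contributes $\sim N^{4/3+\epsilon}\Psi^4 = N^{O(\epsilon)}$, which dwarfs the target $\caO_\Xi(N^{1/2}\Psi^2)=\caO(N^{-1/6+C'\epsilon})$. The expansion in~\eqref{eq:first} must therefore be carried to $\caO_\Xi(\Psi^5)$, tracking all $\Psi^4$-order random variables ($\caX_{42},\caX_{43},\caX_{44},\caX_{44}'$, and the several $F''$- and $F'''$-weighted quantities). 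These $\Psi^4$ terms are not discarded: they are precisely what is reorganized, via the ``optical theorems'' \eqref{3-4 relation}, \eqref{3-4 relation 2}, \eqref{3-4 relation 3}, \eqref{3-4 relation 4}, into contributions proportional to $X_3$, $F'(\frX)$, and $F'(\frX)(z+\gamma^2 m-\tau)$, and it is this reorganization that produces the quotients $A_3^2/A_4$ and $A_3 A_4'/A_4$ in $C_3$, $C_0$, $C_0'$.

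Second, two of your block treatments are too crude for the same reason. Replacing $G_{bb}$ by its local-law approximation $g_b$ introduces a $\Psi$-sized error which, against the $\Psi^2$ of the remaining $G_{ia}G_{ai}$, gives a $\Psi^3$-level error in that block and hence $\sim N^{4/3+\epsilon}\Psi^3\sim N^{1/3}$ overall. The paper instead performs a full Schur-complement expansion of $G_{bb}$ around $(\lambda\gamma v_b-\tau)^{-1}$ through two orders (Appendix~\ref{appendix IV}, \eqref{eq:a+b+c}--\eqref{eq:second}). Likewise, the $F''$ term in~\eqref{EF G_ii expand} is not absorbable into the error: a naive count of $\sum_{i,j,a\ne b}$ times the doubly-integrated $\Psi^4$-sized integrand and the $1/N$ prefactor gives $\sim N^{1/3}$; this block is expanded and contributes the third constituent of $X_3$ in~\eqref{X_3} and feeds the optical theorem \eqref{3-4 relation 4}.

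Finally, you attribute the $C_0 F'(\frX)$ and $C_0' F'(\frX)(z+\gamma^2 m-\tau)$ pieces to ``residual cubic pieces collapsing via $A_2=\gamma^{-2}$.'' That is not where they come from. As the paper's remark preceding the proof of Proposition~\ref{prop expansion} makes explicit, these sub-leading (but non-negligible) terms originate from the diagonal $a=i$ contribution, i.e., from $\frac1N\sum_i\E[F'(\frX)G_{ii}^2]$ and $\frac1N\sum_i\E[v_iF'(\frX)G_{ii}^2]$, which are themselves expanded in~\eqref{G_ii^2}--\eqref{G_ii^2 2}, and they mix with the $X_4$-elimination step \eqref{3-4 relation}--\eqref{le second_final before}. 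Your proposal nowhere isolates the $a=i$ case, so the mechanism that produces $C_0$ and $C_0'$ is missing from your argument.
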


\begin{remark}
The random variable $X_2$ in Lemma \ref{lem:G_ii estimate} is defined by
$$
X_2 \deq \frac{1}{N^2} \sum_{i, s} F'(\frX) G_{is} G_{si}\,.
$$
The precise definition of $X_3$ is given in \eqref{X_3} below. (See also \eqref{caX_32} and \eqref{caX_33}.)
\end{remark}

\begin{remark} \label{rem:A_n}
By definition, we have for $n \geq 2$ that
\beq
\lambda A_n' - \tau A_n = A_{n-1}\,.\nonumber
\eeq
We also have
\begin{align} \label{A_n values}
A_1 = \widehat m_{fc}(\widehat L_+)\,, \qquad\quad A_2 = \gamma^{-2}\,, \quad\qquad A_3 = -\gamma^6\,.
\end{align}
See Remark \ref{rem:gamma and tau} for more details.
\end{remark}

We prove Lemma \ref{lem:G_ii estimate} in the Appendix~\ref{appendix V} by using the ideas demonstrated in Section~\ref{sec:fluctuation} and estimates carried out in the Appendices~\ref{appendix III} and~\ref{appendix IV}. Applying the three step expansion procedure of Section~\ref{sec:fluctuation} to the right side of~\eqref{EF G_ii expand}, we obtain the leading order term $N X_2$, which corresponds to the term $\sum_a G_{ia} G_{ai}$ in Section~\ref{sec:fluctuation}. By power counting we have $X_2=\caO_{\Xi}(\Psi^2)$. Continuing the expansion procedure, we obtain the next order term $X_3$, with $X_3 = \caO_{\Xi}(\Psi^3)$. We also compute the third order term, but it can be absorbed into $X_3$ by using a higher order ``optical theorem'', i.e., an extension of~\eqref{3 ward}. The next higher order terms are negligible. Finally, unlike as in the expansion in Section \ref{sec:fluctuation}, the proof of Lemma~\ref{lem:G_ii estimate} requires estimates on the diagonal terms corresponding to the terms from 
the case $i=a$ in \eqref{EF bound' expanded}. These terms 
become the sub-leading, but not negligible, terms $F'(\frX)$ and $F'(\frX) (z + \gamma^2 m - \tau)$ in~\eqref{G_ii estimate}.

\begin{proof}[Proof of Proposition \ref{prop expansion}]
Assuming the validity of~\eqref{G_ii estimate}, it suffices to show that $C_2 = C_3 = C_0'=0$, since $C_0$ and $F'(\frX)$ are real, hence they vanish after taking the imaginary part. 

In \eqref{le dot z}, we showed that
\begin{align}
\dot z = \partial_t \widehat L_+ = -2\dot\gamma \gamma A_1 + \partial_t(\lambda\gamma) \gamma^2 A_2'\,,\nonumber
\end{align}
thus we find from~\eqref{le C_2} that $C_2 = 0$. 

From the definition of $A_2$, we have
\begin{align}
A_2=\frac{1}{N} \sum_a \frac{1}{(\lambda \gamma v_a - \tau)^2} = \gamma^{-2}\,.\nonumber
\end{align}
Taking the partial derivative with respect to $t$, we obtain
\beq\label{C3 relation 1}
\partial_t (\lambda \gamma) A_3' = \dot \tau A_3 + \gamma^{-3} \dot \gamma\,.
\eeq

Similarly, from the definition of $A_3$ we obtain
\beq\label{C3 relation 2}
\partial_t (\lambda \gamma) A_4' = \dot \tau A_4 - \frac{\dot A_3}{3}\,.
\eeq

Thus, combining~\eqref{le C_3},~\eqref{C3 relation 1} and~\eqref{C3 relation 2} we get
\beq \begin{split}
C_3 &= \left( -\partial_t(\lambda\gamma) \left( A_3' - \frac{A_3 A_4'}{A_4} \right) + \dot\gamma \gamma^{-1} \left( \gamma^{-2} - \frac{2 A_3^2}{A_4} \right) \right)\nonumber \\
&= -\dot \tau A_3 - \dot \gamma \gamma^{-3} + \frac{A_3}{A_4} \left( \dot \tau A_4 - \frac{\dot A_3}{3} \right) + \dot \gamma \gamma^{-1} \left( \gamma^{-2} - \frac{2 A_3^2}{A_4} \right)\nonumber \\
&= -\frac{A_3}{3 \gamma A_4} ( \gamma \dot A_3 + 6 \dot \gamma A_3 )\,.
\end{split} \eeq
Since $\gamma^{-6} = -A_3$, we obtain $C_3=0$.

Similarly, one shows that $C_0' = 0$ as well.

Therefore, we have shown that
\begin{align*}
\E [\Theta] = \caO_{\Xi}(N^{1/2} \Psi^2)\,,
\end{align*}
which completes the proof of Proposition \ref{prop expansion}.
\end{proof}

\begin{remark}
From the relation
\begin{align*}
-\partial_t (\lambda \gamma) A_2' + \dot \tau A_2 = \dot A_1 = \partial_t \widehat m_{fc}(\widehat L_+)\,,
\end{align*}
we conclude, together with~\eqref{le C_0} and the identity $\gamma^{-6} = -A_3$, that
\begin{align*}
C_0 = \partial_t \widehat m_{fc}(\widehat L_+) - \dot \tau A_2 + \frac{A_2}{A_4} \left(\dot \tau A_4 - \frac{\dot A_3}{3}\right) - \frac{2 \dot \gamma A_2 A_3}{\gamma A_4} =  \partial_t \widehat m_{fc}(\widehat L_+)\,,
\end{align*}
as was to be expected.
\end{remark}
The proof of Lemma~\ref{lem:G_ii estimate} is divided into three steps. These steps are outlined in the Appendices~\ref{appendix III},~\ref{appendix IV} and~\ref{appendix V}.
\begin{appendix}

\section{}\label{appendix III}
In a first step of the proof of Lemma~\ref{lem:G_ii estimate},  we expand in this appendix the first term on the right side of \eqref{EF G_ii expand}. The aim of this expansion is to decouple the deterministic part $v_a$ from the random resolvent part by deriving an approximation of the form
\begin{align}\label{le aim of the expansion}
  \E [F'(\frX) G_{ia}(z)G_{ai}(z)] = \sum_k\sum_l  f_{k,l} (v_a) \, \E\,[ Y_{k,l,i}(z)] + \caO_{\Xi}(\Psi^5)\,,
\end{align}
for a finite family of deterministic functions $(f_{k,l})$ and a finite family of random variables $(Y_{k,l,i})\equiv (Y_{k,l,i}(z))$, both indexed by natural numbers $l$, $k$ and the index $i$ such that $Y_{k,l,i}=\caO_{\Xi}(\Psi^{k+1})$ for all $l$ and $i$. In~\eqref{le aim of the expansion}, we have implicitly chosen $z=\widehat{L}_+ +y+\ii\eta$, $y\in[-N^{-2/3+\epsilon},N^{-2/3+\epsilon}]$ and we will do so hereafter. Later, we will see that it suffices to consider $k = 1, 2, 3$. 

The purpose of the approximation in~\eqref{le aim of the expansion} is twofold. First, after multiplying~\eqref{le aim of the expansion} by $v_a$, we obtain the expression corresponding to~\eqref{EF bound' expanded}. This enables us to estimate the right side of~\eqref{EF G_ii expand}. Second, we can prove from~\eqref{le aim of the expansion} an optical theorem, which is essential in the proof of Lemma \ref{lem:G_ii estimate}. (The corresponding result for small $\lambda$ was given in~\eqref{3 ward}.)

We remark that $Y_{1,l,i}$ and $Y_{2,l,i}$ can be written in terms of the $z$-dependent random variables
\begin{align}
\caX_{22} &\deq \frac{1}{N} \sum_p G_{ip} G_{pi}\,,\label{le definitionof x22} \\
\caX_{32} &\deq ( z + \gamma^2 m - \tau ) \frac{1}{N} \sum_p G_{ip} G_{pi}\,, \label{caX_32} \\	
\caX_{33} &\deq \frac{1}{N^2} \sum_{p, q} G_{ip} G_{pq} G_{qi} \,. \label{caX_33}
\end{align}
To simplify the notation slightly, we drop the subscript $\Xi$ in $\caO_{\Xi}$, yet we always condition on $\Xi$.

From the local law in~\eqref{local law adapted}, it can be checked that
$$
z + \gamma^2 m - \tau = \caO(\Psi)\,.
$$
Thus, we have the a priori bounds
\begin{align*}
\caX_{22} = \caO(\Psi^2)\,, \qquad \caX_{32} = \caO(\Psi^3)\,, \qquad \caX_{33} = \caO(\Psi^3)\,.
\end{align*}

In the expansion of \eqref{EF G_ii expand}, we will also use the following $z$-dependent random variables:
\begin{align}
\caX_{42} &\deq ( z + \gamma^2 m - \tau )^2 \frac{1}{N} \sum_{a} G_{ia} G_{ai}\,, \\
\caX_{43} &\deq ( z + \gamma^2 m - \tau ) \frac{1}{N^2} \sum_{a, b} G_{ia} G_{ab} G_{bi}\,, \\
\caX_{44} &\deq \frac{1}{N^3} \sum_{a, b, c} G_{ia} G_{ab} G_{bc} G_{ci}\,, \\
\caX_{44}' &\deq \frac{1}{N^3} \sum_{a, b, c} G_{ia} G_{ai} G_{bc} G_{cb} \,.
\end{align}
We remark that
$$
\caX_{42}\,, \caX_{43}\,, \caX_{44}\,, \caX_{44}' = \caO(\Psi^4)\,.
$$

Let
\begin{align}
\frX^{(a)} \deq N \int_{E_1}^{E_2} \im m^{(a)}(\widehat{L}_++x -2 + \ii \eta) \,\dd x = N \int_{E_1 -2}^{E_2 -2} \im m^{(a)}( \widehat{L}_++y+ \ii \eta) \,\dd y \,.
\end{align}
Since
$$
m(\widehat{L}_++x -2 + \ii \eta) - m^{(a)}( \widehat{L}_++x  -2 + \ii \eta) = \caO(\Psi^2)\,,
$$
we have
$$
\frX - \frX^{(a)} = \caO(\Psi)\,,
$$
hence by a Taylor expansion
\begin{align} \label{frX taylor}
F'(\frX) = F'(\frX^{(a)}) + F''(\frX^{(a)}) (\frX - \frX^{(a)}) + \frac{F'''(\frX^{(a)})}{2} (\frX - \frX^{(a)})^2 + \caO(\Psi^3)\,.
\end{align}
In particular,
$$
F'(\frX) - F'(\frX^{(a)}) = \caO(\Psi)\,.
$$

We consider first the case $a \neq i$. (Later, we will add the term $F'(\frX) G_{ii}^2$ for the case $a=i$.) The general idea of the expansion is the same as in Section \ref{sec:fluctuation}, and the ultimate goal of the expansion is to decouple the index $a$ appearing as a lower or upper index of the resolvent entries from all other indices. In a first step, using the resolvent formula~\eqref{onesided}, we find
\begin{align*}
G_{ia} G_{ai} = G_{aa}^2 \sum_{s, t}^{(a)} G_{is}^{(a)} h_{sa} h_{at} G_{ti}^{(a)} = \frac{1}{(\lambda \gamma v_a - z - \sum_{p, q}^{(a)} h_{ap} G_{pq}^{(a)} h_{qa})^2} \sum_{s, t}^{(a)} G_{is}^{(a)} h_{sa} h_{at} G_{ti}^{(a)}\,.
\end{align*}
Applying the large deviation estimates of~Lemma~\ref{lemma large deviation estimates} to the term $\sum_{p, q}^{(a)} h_{ap} G_{pq}^{(a)} h_{qa}$, we find 
$$
z + \sum_{p, q}^{(a)} h_{ap} G_{pq}^{(a)} h_{qa} - \tau = \caO(\Psi)\,.
$$
Thus, by expanding around $(\lambda \gamma v_a - \tau)$, we get
\begin{align}
G_{ia} G_{ai} &= \frac{1}{(\lambda \gamma v_a - \tau)^2} \sum_{s, t}^{(a)} G_{is}^{(a)} h_{sa} h_{at} G_{ti}^{(a)} \nonumber\\
& \qquad + \frac{2}{(\lambda \gamma v_a - \tau)^3} \left( z + \sum_{p, q}^{(a)} h_{ap} G_{pq}^{(a)} h_{qa} - \tau \right) \sum_{s, t}^{(a)} G_{is}^{(a)} h_{sa} h_{at} G_{ti}^{(a)}\nonumber \\
& \qquad + \frac{3}{(\lambda \gamma v_a - \tau)^4} \left( z + \sum_{p, q}^{(a)} h_{ap} G_{pq}^{(a)} h_{qa} - \tau \right)^2 \sum_{s, t}^{(a)} G_{is}^{(a)} h_{sa} h_{at} G_{ti}^{(a)} + \caO(\Psi^5)\,.
 \end{align}
Using \eqref{frX taylor}, we also find
\begin{align} \label{eq:1+2+3+4+5+6}
F'(\frX) G_{ia} G_{ai}& = \frac{F'(\frX^{(a)})}{(\lambda \gamma v_a - \tau)^2} \sum_{s, t}^{(a)} G_{is}^{(a)} h_{sa} h_{at} G_{ti}^{(a)} \nonumber \\
& \qquad + \frac{F''(\frX^{(a)})}{(\lambda \gamma v_a - \tau)^2} (\frX - \frX^{(a)}) \sum_{s, t}^{(a)} G_{is}^{(a)} h_{sa} h_{at} G_{ti}^{(a)} + \frac{F'''(\frX^{(a)})}{2(\lambda \gamma v_a - \tau)^2} (\frX - \frX^{(a)})^2 \sum_{s, t}^{(a)} G_{is}^{(a)} h_{sa} h_{at} G_{ti}^{(a)} \nonumber \\
& \qquad + \frac{2 F'(\frX^{(a)})}{(\lambda \gamma v_a - \tau)^3} \left( z + \sum_{p, q}^{(a)} h_{ap} G_{pq}^{(a)} h_{qa} - \tau \right) \sum_{s, t}^{(a)} G_{is}^{(a)} h_{sa} h_{at} G_{ti}^{(a)}\nonumber \\
& \qquad + \frac{2 F''(\frX^{(a)})}{(\lambda \gamma v_a - \tau)^3} (\frX - \frX^{(a)}) \left( z + \sum_{p, q}^{(a)} h_{ap} G_{pq}^{(a)} h_{qa} - \tau \right) \sum_{s, t}^{(a)} G_{is}^{(a)} h_{sa} h_{at} G_{ti}^{(a)} \nonumber \\
& \qquad + \frac{3 F'(\frX^{(a)})}{(\lambda \gamma v_a - \tau)^4} \left( z + \sum_{p, q}^{(a)} h_{ap} G_{pq}^{(a)} h_{qa} - \tau \right)^2 \sum_{s, t}^{(a)} G_{is}^{(a)} h_{sa} h_{at} G_{ti}^{(a)} + \caO(\Psi^5)\,.
\end{align}

We notice that, after taking the partial expectation $\E_a$, i.e., performing \textit{Step 2} of Section~\ref{sec:fluctuation}, the right side of~\eqref{eq:1+2+3+4+5+6} does no more contain $a$ as a lower index. In the next step, \textit{Step 3}, we remove the upper index $a$ in the resolvent entries. After completing one cycle of \textit{Steps 1-3}, we find that the index $a$ in the leading order term is decoupled, hence the expansion for the leading term is finished. We will repeat the same procedure until we obtain an expansion where all non-negligible terms are fully expanded. In the rest of this section, we expand each term in \eqref{eq:1+2+3+4+5+6} by following the procedure in Section \ref{sec:fluctuation}.

\subsection{Expansion of the first term in \eqref{eq:1+2+3+4+5+6}}

We begin by taking the partial expectation $\E_a$ of the first term in~\eqref{eq:1+2+3+4+5+6}. This corresponds to \textit{Step 2} in Section~\ref{sec:fluctuation}. From the relation
$$
G_{is}^{(a)} G_{si}^{(a)} = G_{is} G_{si} - \frac{G_{ia} G_{as}}{G_{aa}} G_{si} - G_{is}^{(a)} \frac{G_{sa} G_{ai}}{G_{aa}}\,,
$$
we have that
\begin{align} \label{eq:11+12+13+14+15}
&\E_a \left[ \frac{F'(\frX^{(a)})}{(\lambda \gamma v_a - \tau)^2} \sum_{s, t}^{(a)} G_{is}^{(a)} h_{sa} h_{at} G_{ti}^{(a)} \right] = \frac{\gamma^2}{N} \frac{F'(\frX^{(a)})}{(\lambda \gamma v_a - \tau)^2} \sum_s^{(a)} G_{is}^{(a)} G_{si}^{(a)} \nonumber\\
&\qquad\qquad= \frac{\gamma^2}{N} \frac{F'(\frX)}{(\lambda \gamma v_a - \tau)^2} \sum_s G_{is} G_{si} - \frac{\gamma^2}{N} \frac{F''(\frX^{(a)})}{(\lambda \gamma v_a - \tau)^2} (\frX - \frX^{(a)}) \sum_s^{(a)} G_{is} G_{si}\nonumber \\
&\qquad\qquad\qquad - \frac{\gamma^2}{2N} \frac{F'''(\frX^{(a)})}{(\lambda \gamma v_a - \tau)^2} (\frX - \frX^{(a)})^2 \sum_s^{(a)} G_{is} G_{si} \nonumber\\
&\qquad\qquad\qquad - \frac{\gamma^2}{N} \frac{F'(\frX^{(a)})}{(\lambda \gamma v_a - \tau)^2} \sum_s^{(a)} \frac{G_{ia} G_{as}}{G_{aa}} G_{si} - \frac{\gamma^2}{N} \frac{F'(\frX^{(a)})}{(\lambda \gamma v_a - \tau)^2} \sum_s^{(a)}  G_{is}^{(a)} \frac{G_{sa} G_{ai}}{G_{aa}} + \caO(\Psi^5)\,.
\end{align}
By the definition of $\caX_{22}$ in~\eqref{le definitionof x22}, we have for the first term on the right side of \eqref{eq:11+12+13+14+15} that
\begin{align} \label{eq:11}
\E \left[ \frac{\gamma^2}{N} \frac{F'(\frX)}{(\lambda \gamma v_a - \tau)^2} \sum_s G_{is} G_{si} \right] = \frac{\gamma^2}{(\lambda \gamma v_a - \tau)^2} \E [ F'(\frX) \caX_{22}]\,,
\end{align}
and we stop expanding it since $v_a$ is already decoupled from the random part. All the other terms in \eqref{eq:11+12+13+14+15} need to be expanded further. For that purpose we repeat the same procedure again. We remark that we may take the partial expectation $\E_a$ as many times as we want, since for any random variable $X$,
$$
\E X = \E\, \E_a X = \E\, \E_a \E_a X\,,
$$
and, although not written explicitly in~\eqref{eq:11+12+13+14+15}, the object we expand is $\E [F'(\frX) G_{ia}G_{ai}]$, which has the full expectation.

\subsubsection{Expansion of the second term in \eqref{eq:11+12+13+14+15}}

We now expand the second term in \eqref{eq:11+12+13+14+15}. We begin with
\begin{align} \label{eq:121+122+123}
\frac{\gamma^2}{N} \frac{F''(\frX^{(a)})}{(\lambda \gamma v_a - \tau)^2} &(\frX - \frX^{(a)}) \sum_s^{(a)} G_{is} G_{si} \nonumber \\
&= \frac{\gamma^2}{N} \frac{F''(\frX^{(a)})}{(\lambda \gamma v_a - \tau)^2} (\frX - \frX^{(a)}) \sum_s^{(a)} \left( G_{is}^{(a)} G_{si}^{(a)} + \frac{G_{ia} G_{as}}{G_{aa}} G_{si} + G_{is}^{(a)} \frac{G_{sa} G_{ai}}{G_{aa}} \right) \nonumber\\
&= \frac{\gamma^2}{N} \frac{F''(\frX^{(a)})}{(\lambda \gamma v_a - \tau)^2} (\frX - \frX^{(a)}) \sum_s^{(a)} G_{is}^{(a)} G_{si}^{(a)} + \frac{\gamma^2}{N} \frac{F''(\frX^{(a)})}{(\lambda \gamma v_a - \tau)^3} (\frX - \frX^{(a)}) \sum_{s, p, q}^{(a)} G_{ip}^{(a)} h_{pa} h_{aq} G_{qs}^{(a)} G_{si} \nonumber \\
&\qquad + \frac{\gamma^2}{N} \frac{F''(\frX^{(a)})}{(\lambda \gamma v_a - \tau)^3} (\frX - \frX^{(a)}) \sum_{s, p, q}^{(a)} G_{is}^{(a)} G_{sp}^{(a)} h_{pa} h_{aq} G_{qi}^{(a)} + \caO(\Psi^5)\,. 
\end{align}
By the definition of $\frX$ and $\frX^{(a)}$, we have
\begin{align*}
\frX - \frX^{(a)} = \im \int_{E_1 -2}^{E_2 -2} \sum_j \frac{G_{ja} G_{aj} (\widehat{L}_++\wt y +\ii \eta)}{G_{aa}(\widehat{L}_++\wt y +\ii \eta)}\, \dd \wt y \,.
\end{align*}
For simplicity, abbreviate $\wt G \equiv \wt G(\widehat{L}_++y+ + \ii \eta)$, where $\wt y \in [E_1 -2, E_2 -2]$. We then have
\begin{align*}
\frX - \frX^{(a)} = \im \int_{E_1 -2}^{E_2 -2} \sum_j \frac{\wt G_{ja} \wt G_{aj}}{\wt G_{aa}}\, \dd \wt y \,,
\end{align*}
and $\frX - \frX^{(a)} = \caO(\Psi)$. Following the procedure above, we consider
\begin{align}
\frac{\wt G_{ja} \wt G_{aj}}{\wt G_{aa}} &= \wt G_{aa} \sum_{p, q}^{(a)} \wt G^{(a)}_{jp} h_{pa} h_{aq} \wt G^{(a)}_{qj} \nonumber \\
&= \frac{1}{\lambda \gamma v_a - \tau} \sum_{p, q}^{(a)} \wt G^{(a)}_{jp} h_{pa} h_{aq} \wt G^{(a)}_{qj}\nonumber \\
&\qquad + \frac{1}{(\lambda \gamma v_a - \tau)^2} \left( z + \sum_{r, s}^{(a)} h_{ar} \wt G_{rs}^{(a)} h_{sa} - \tau \right) \sum_{p, q}^{(a)} \wt G^{(a)}_{jp} h_{pa} h_{aq} \wt G^{(a)}_{qj} + \caO(\Psi^4)\,. 
\end{align}
Taking the partial expectation $\E_a$, we find that
\begin{align}
\E_a \frac{\wt G_{ja} \wt G_{aj}}{\wt G_{aa}} &= \frac{\gamma^2}{\lambda \gamma v_a - \tau} \frac{1}{N} \sum_p^{(a)} \wt G^{(a)}_{jp} \wt G^{(a)}_{pj} + \frac{\gamma^2}{(\lambda \gamma v_a - \tau)^2} \left( z + \gamma^2 \wt m^{(a)} - \tau \right) \frac{1}{N} \sum_p^{(a)} \wt G^{(a)}_{jp} \wt G^{(a)}_{pj} \nonumber \\
& \qquad + \frac{2 \gamma^4}{(\lambda \gamma v_a - \tau)^2} \frac{1}{N^2} \sum_{p, q}^{(a)} \wt G^{(a)}_{jp} \wt G^{(a)}_{pq} \wt G^{(a)}_{qj} + \caO(\Psi^4)\,,
\end{align}
where we let
\begin{align*}
\wt m \deq \frac{1}{N} \sum_m \wt G_{mm}\,, \qquad \wt m^{(a)} \deq \frac{1}{N} \sum_m^{(a)} \wt G^{(a)}_{mm}\,.
\end{align*}

We define
\begin{align}
\frX_{22} & \deq \im \int_{E_1 -2}^{E_2 -2} \frac{1}{N} \sum_{j, k} \wt G_{jk} \wt G_{kj}\, \dd \wt y \,, \\
\frX_{32} & \deq \im \int_{E_1 -2}^{E_2 -2} (z + \gamma^2 \wt m - \tau) \frac{1}{N} \sum_{j, k}  \wt G_{jk} \wt G_{kj}\, \dd \wt y \,, \\
\frX_{33} & \deq \im \int_{E_1 -2}^{E_2 -2} \frac{1}{N^2} \sum_{j, k, m} \wt G_{jk} \wt G_{km} \wt G_{mj} \,\dd \wt y \,,
\end{align}
and similarly,
\begin{align}
\frX_{22}^{(a)} & \deq \im \int_{E_1 -2}^{E_2 -2} \frac{1}{N} \sum_{j, k}^{(a)} \wt G^{(a)}_{jk} \wt G^{(a)}_{kj}\, \dd \wt y \,, \\
\frX_{32}^{(a)} & \deq \im \int_{E_1 -2}^{E_2 -2} (z + \gamma^2 \wt m^{(a)} - \tau) \frac{1}{N} \sum_{j, k}^{(a)}  \wt G^{(a)}_{jk} \wt G^{(a)}_{kj}\, \dd \wt y \,, \\
\frX_{33}^{(a)} & \deq \im \int_{E_1 -2}^{E_2 -2} \frac{1}{N^2} \sum_{j, k, m}^{(a)} \wt G^{(a)}_{jk} \wt G^{(a)}_{km} \wt G^{(a)}_{mj}\, \dd \wt y \,.
\end{align}
We notice that
\begin{align*}
\frX_{22}, \frX_{22}^{(a)} = \caO(\Psi)\,, \qquad \qquad\frX_{32}, \frX_{33}, \frX_{32}^{(a)}, \frX_{33}^{(a)} = \caO(\Psi^2)\,.
\end{align*}
Thus, we have
\begin{align} \label{frX-frX^a}
\E_a [\frX - \frX^{(a)}] = \frac{\gamma^2}{\lambda \gamma v_a - \tau} \frX_{22}^{(a)} + \frac{\gamma^2}{(\lambda \gamma v_a - \tau)^2} \frX_{32}^{(a)} + \frac{2 \gamma^4}{(\lambda \gamma v_a - \tau)^2} \frX_{33}^{(a)} + \caO(\Psi^3)\,,
\end{align}
which also yields
\begin{align} \label{eq:1211+1212+1213}
&\E_a \left[ \frac{\gamma^2}{N} \frac{F''(\frX^{(a)})}{(\lambda \gamma v_a - \tau)^2} (\frX - \frX^{(a)}) \sum_s^{(a)} G_{is}^{(a)} G_{si}^{(a)} \right] \nonumber \\
&\qquad= \frac{\gamma^4}{N} \frac{F''(\frX^{(a)})}{(\lambda \gamma v_a - \tau)^3} \frX_{22}^{(a)} \sum_s^{(a)} G_{is}^{(a)} G_{si}^{(a)} + \frac{\gamma^4}{N} \frac{F''(\frX^{(a)})}{(\lambda \gamma v_a - \tau)^4} \frX_{32}^{(a)} \sum_s^{(a)} G_{is}^{(a)} G_{si}^{(a)}\nonumber \\
&\qquad\qquad + \frac{2 \gamma^6}{N} \frac{F''(\frX^{(a)})}{(\lambda \gamma v_a - \tau)^4} \frX_{33}^{(a)} \sum_s^{(a)} G_{is}^{(a)} G_{si}^{(a)} + \caO(\Psi^5)\,.
\end{align}

We repeat the same procedure again for the first term in \eqref{eq:1211+1212+1213}. We expand
\begin{align} \label{eq:12111+12112+12113+12114+12115}
&\frac{\gamma^4}{N} \frac{F''(\frX^{(a)})}{(\lambda \gamma v_a - \tau)^3} \frX_{22}^{(a)} \sum_s^{(a)} G_{is}^{(a)} G_{si}^{(a)} \nonumber \\
&\qquad= \frac{\gamma^4}{(\lambda \gamma v_a - \tau)^3} F''(\frX) \frX_{22} \caX_{22} - \frac{\gamma^4}{N} \frac{F''(\frX^{(a)})}{(\lambda \gamma v_a - \tau)^3} \frX_{22}^{(a)} \sum_s^{(a)} G_{is}^{(a)} \frac{G_{sa} G_{ai}}{G_{aa}} - \frac{\gamma^4}{N} \frac{F''(\frX^{(a)})}{(\lambda \gamma v_a - \tau)^3} \frX_{22}^{(a)} \sum_s^{(a)} \frac{G_{ia} G_{as}}{G_{aa}} G_{si}\nonumber\\
&\qquad\qquad  - \frac{\gamma^4}{N} \frac{F''(\frX^{(a)})}{(\lambda \gamma v_a - \tau)^3} ( \frX_{22} - \frX_{22}^{(a)} ) \sum_s^{(a)} G_{is} G_{si} - \frac{\gamma^4}{N} \frac{F'''(\frX^{(a)})}{(\lambda \gamma v_a - \tau)^3} ( \frX - \frX^{(a)} ) \frX_{22} \sum_s^{(a)} G_{is} G_{si} + \caO(\Psi^5)\,, 
\end{align}
where we used that
\begin{align*}
F''(\frX) = F''(\frX^{(a)}) + F'''(\frX^{(a)}) (\frX - \frX^{(a)}) + \caO(\Psi^2)
\end{align*}
and that
\begin{align*}
\frX_{22} - \frX_{22}^{(a)} = \im \int_{E_1 -2}^{E_2 -2} \frac{1}{N} \sum_{j, k} \left( \frac{\wt G_{ja} \wt G_{ak}}{\wt G_{aa}} \wt G_{kj} + \wt G^{(a)}_{jk} \frac{\wt G_{ka} \wt G_{aj}}{\wt G_{aa}} \right) \dd \wt y = \caO(\Psi^2)\,.
\end{align*}

The first term in \eqref{eq:12111+12112+12113+12114+12115} is fully expanded, i.e., $v_a$ is decoupled from the random part. All the other terms in \eqref{eq:12111+12112+12113+12114+12115} are of $\caO(\Psi^4)$. To a $\caO(\Psi^4)$-term we can freely add or remove the upper index $(a)$ at the expense of an error term of $\caO(\Psi^5)$, which is negligible in the calculation. For example, the second term in \eqref{eq:12111+12112+12113+12114+12115} satisfies
\begin{align} \label{eq:12112}
&\E_a \left[ \frac{\gamma^4}{N} \frac{F''(\frX^{(a)})}{(\lambda \gamma v_a - \tau)^3} \frX_{22}^{(a)} \sum_s^{(a)} G_{is}^{(a)} \frac{G_{sa} G_{ai}}{G_{aa}} \right] = \E_a \left[ \frac{\gamma^4}{N} \frac{F''(\frX^{(a)})}{(\lambda \gamma v_a - \tau)^3} \frX_{22}^{(a)} G_{aa} \sum_{s, p, q}^{(a)} G_{is}^{(a)} G_{sp}^{(a)} h_{pa} h_{aq} G_{qi}^{(a)} \right] \nonumber \\
&\qquad= \frac{\gamma^6}{N^2} \frac{F''(\frX^{(a)})}{(\lambda \gamma v_a - \tau)^4} \frX_{22}^{(a)} \sum_{s, p}^{(a)} G_{is}^{(a)} G_{sp}^{(a)} G_{pi}^{(a)} + \caO(\Psi^5) = \frac{\gamma^6}{N^2} \frac{F''(\frX)}{(\lambda \gamma v_a - \tau)^4} \frX_{22} \sum_{s, p} G_{is} G_{sp} G_{pi} + \caO(\Psi^5) \nonumber \\
&\qquad= \frac{\gamma^6}{(\lambda \gamma v_a - \tau)^4} F''(\frX) \frX_{22} \caX_{33} + \caO(\Psi^5)\,.
\end{align}
Similarly, for the third term in \eqref{eq:12111+12112+12113+12114+12115} we have
\begin{align} \label{eq:12113}
&\E_a \left[ \frac{\gamma^4}{N} \frac{F''(\frX^{(a)})}{(\lambda \gamma v_a - \tau)^3} \frX_{22}^{(a)} \sum_s^{(a)} \frac{G_{ia} G_{as}}{G_{aa}} G_{si} \right] = \frac{\gamma^6}{(\lambda \gamma v_a - \tau)^4} F''(\frX) \frX_{22} \caX_{33} + \caO(\Psi^5)\,.
\end{align}
In order to control the fourth term in \eqref{eq:12111+12112+12113+12114+12115}, we first consider $\E_a (\frX_{22} - \frX_{22}^{(a)})$ as in \eqref{frX-frX^a} and get
\begin{align} \label{frX_22-frX_22^a}
\E_a (\frX_{22} - \frX_{22}^{(a)}) = \frac{2 \gamma^2}{\lambda \gamma v_a - \tau} \frX_{33} + \caO(\Psi^3)\,.
\end{align}
We then obtain that
\begin{align} \label{eq:12114}
&\E_a \left[ \frac{\gamma^4}{N} \frac{F''(\frX^{(a)})}{(\lambda \gamma v_a - \tau)^3} ( \frX_{22} - \frX_{22}^{(a)} ) \sum_s^{(a)} G_{is} G_{si} \right] = \frac{2 \gamma^6}{(\lambda \gamma v_a - \tau)^4} F''(\frX) \frX_{33} \caX_{22} + \caO(\Psi^5)\,.
\end{align}
Finally, the last term in \eqref{eq:12111+12112+12113+12114+12115} becomes
\begin{align} \label{eq:12115}
&\E_a \left[ \frac{\gamma^4}{N} \frac{F'''(\frX^{(a)})}{(\lambda \gamma v_a - \tau)^3} ( \frX - \frX^{(a)} ) \frX_{22} \sum_s^{(a)} G_{is} G_{si} \right] = \frac{\gamma^6}{(\lambda \gamma v_a - \tau)^4} F'''(\frX) (\frX_{22})^2 \caX_{22} + \caO(\Psi^5)\,.
\end{align}

We thus have shown from \eqref{eq:12111+12112+12113+12114+12115}, \eqref{eq:12112}, \eqref{eq:12113}, \eqref{eq:12114} and \eqref{eq:12115} that
\begin{align} \label{eq:1211}
&\E \left[ \frac{\gamma^4}{N} \frac{F''(\frX^{(a)})}{(\lambda \gamma v_a - \tau)^3} \frX_{22}^{(a)} \sum_s^{(a)} G_{is}^{(a)} G_{si}^{(a)} \right] \nonumber \\
&\qquad= \frac{\gamma^4}{(\lambda \gamma v_a - \tau)^3} \E[ F''(\frX) \frX_{22} \caX_{22}] - \frac{2 \gamma^6}{(\lambda \gamma v_a - \tau)^4} \E[ F''(\frX) \frX_{22} \caX_{33} ] \nonumber\\
&\qquad\qquad - \frac{2 \gamma^6}{(\lambda \gamma v_a - \tau)^4} \E[ F''(\frX) \frX_{33} \caX_{22} ] - \frac{\gamma^6}{(\lambda \gamma v_a - \tau)^4} \E[ F'''(\frX) (\frX_{22})^2 \caX_{22} ] + \caO(\Psi^5)\,.
\end{align}

We next return to the second and the third terms in \eqref{eq:1211+1212+1213}. Since these are of $\caO(\Psi^4)$, we observe that
\begin{align} \label{eq:1212}
\E \left[ \frac{\gamma^4}{N} \frac{F''(\frX^{(a)})}{(\lambda \gamma v_a - \tau)^4} \frX_{32}^{(a)} \sum_s^{(a)} G_{is}^{(a)} G_{si}^{(a)} \right] = \frac{\gamma^4}{(\lambda \gamma v_a - \tau)^4} \E[ F''(\frX) \frX_{32} \caX_{22} ] + \caO(\Psi^5)
\end{align}
and that
\begin{align} \label{eq:1213}
\E \left[ \frac{2 \gamma^6}{N} \frac{F''(\frX^{(a)})}{(\lambda \gamma v_a - \tau)^4} \frX_{33}^{(a)} \sum_s^{(a)} G_{is}^{(a)} G_{si}^{(a)} \right] = \frac{2 \gamma^6}{(\lambda \gamma v_a - \tau)^4} \E[ F''(\frX) \frX_{33} \caX_{22} ] + \caO(\Psi^5)\,.
\end{align}
We then obtain from \eqref{eq:1211+1212+1213}, \eqref{eq:1211}, \eqref{eq:1212} and \eqref{eq:1213} that
\begin{align} \label{eq:121}
& \E \left[ \frac{\gamma^2}{N} \frac{F''(\frX^{(a)})}{(\lambda \gamma v_a - \tau)^2} (\frX - \frX^{(a)}) \sum_s^{(a)} G_{is}^{(a)} G_{si}^{(a)} \right] \nonumber \\
&\qquad= \frac{\gamma^4}{(\lambda \gamma v_a - \tau)^3} \E[ F''(\frX) \frX_{22} \caX_{22}] + \frac{\gamma^4}{(\lambda \gamma v_a - \tau)^4} \E[ F''(\frX) \frX_{32} \caX_{22} ] \nonumber\\
&\qquad\qquad - \frac{2 \gamma^6}{(\lambda \gamma v_a - \tau)^4} \E[ F''(\frX) \frX_{22} \caX_{33} ] - \frac{\gamma^6}{(\lambda \gamma v_a - \tau)^4} \E[ F'''(\frX) (\frX_{22})^2 \caX_{22} ] + \caO(\Psi^5)\,. 
\end{align}
This finishes the expansion for the first term in \eqref{eq:121+122+123}.

We now return to the second term in~\eqref{eq:121+122+123}. We note that
\begin{align}
&\frac{\gamma^2}{N} \frac{F''(\frX^{(a)})}{(\lambda \gamma v_a - \tau)^3} (\frX - \frX^{(a)}) \sum_{s, p, q}^{(a)} G_{ip}^{(a)} h_{pa} h_{aq} G_{qs}^{(a)} G_{si} \nonumber \\
&\qquad= \frac{\gamma^2}{N} \frac{F''(\frX^{(a)})}{(\lambda \gamma v_a - \tau)^3} \left( \im \int_{E_1 -2}^{E_2 -2} \sum_j \frac{\wt G_{ja} \wt G_{aj}}{\wt G_{aa}} \dd \wt y \right) \sum_{s, p, q}^{(a)} G_{ip}^{(a)} h_{pa} h_{aq} G_{qs}^{(a)} G_{si}\nonumber \\
&\qquad= \frac{\gamma^2}{N} \frac{F''(\frX^{(a)})}{(\lambda \gamma v_a - \tau)^4} \left( \im \int_{E_1 -2}^{E_2 -2} \sum_j \sum_{r, s}^{(a)} \wt G^{(a)}_{jr} h_{ra} h_{as} \wt G^{(a)}_{sj} \dd \wt y \right) \sum_{s, p, q}^{(a)} G_{ip}^{(a)} h_{pa} h_{aq} G_{qs}^{(a)} G_{si}^{(a)} + \caO(\Psi^5)\,.
\end{align}
Taking the partial expectation $\E_a$, we find
\begin{align}
&\E_a \left[ \frac{\gamma^2}{N} \frac{F''(\frX^{(a)})}{(\lambda \gamma v_a - \tau)^3} (\frX - \frX^{(a)}) \sum_{s, p, q}^{(a)} G_{ip}^{(a)} h_{pa} h_{aq} G_{qs}^{(a)} G_{si} \right] \nonumber \\
&= \frac{\gamma^6}{(\lambda \gamma v_a - \tau)^4} F''(\frX^{(a)}) \frX_{22} \caX_{33}  + \frac{2 \gamma^6}{N^3} \frac{F''(\frX^{(a)})}{(\lambda \gamma v_a - \tau)^4} \left( \im \int_{E_1 -2}^{E_2 -2} \sum_j \sum_{r, t}^{(a)} \wt G^{(a)}_{jr} \wt G^{(a)}_{tj} \dd \wt y \right) \sum_s^{(a)} G_{ir}^{(a)} G_{ts}^{(a)} G_{si}^{(a)} + \caO(\Psi^5)\,. \nonumber
\end{align}
Thus, we obtain 
\begin{align} \label{eq:122}
&\E \left[ \frac{\gamma^2}{N} \frac{F''(\frX^{(a)})}{(\lambda \gamma v_a - \tau)^3} (\frX - \frX^{(a)}) \sum_{s, p, q}^{(a)} G_{ip}^{(a)} h_{pa} h_{aq} G_{qs}^{(a)} G_{si} \right] \nonumber \\
&\qquad= \frac{\gamma^6}{(\lambda \gamma v_a - \tau)^4} \E[ F''(\frX) \frX_{22} \caX_{33} ]\nonumber \\
&\qquad\qquad + \frac{2 \gamma^6}{(\lambda \gamma v_a - \tau)^4} \E \left[ F''(\frX) \frac{1}{N^3} \sum_{j, r, t, s} \left( \im \int_{E_1 -2}^{E_2 -2} \wt G_{jr} \wt G_{tj} \,\dd \wt y \right) G_{ir} G_{ts} G_{si} \right] + \caO(\Psi^5)\,. 
\end{align}
The third term in \eqref{eq:121+122+123} can be dealt with in a similar manner, and we get
\begin{align} \label{eq:123}
&\E \left[ \frac{\gamma^2}{N} \frac{F''(\frX^{(a)})}{(\lambda \gamma v_a - \tau)^3} (\frX - \frX^{(a)}) \sum_{s, p, q}^{(a)} G_{is}^{(a)} G_{sp}^{(a)} h_{pa} h_{aq} G_{qi}^{(a)} \right] \\
&= \frac{\gamma^6}{(\lambda \gamma v_a - \tau)^4} \E[ F''(\frX) \frX_{22} \caX_{33} ] + \frac{2 \gamma^6}{(\lambda \gamma v_a - \tau)^4} \E \left[ F''(\frX) \frac{1}{N^3} \sum_{j, r, t, s} \left( \im \int_{E_1 -2}^{E_2 -2} \wt G_{jr} \wt G_{tj} \dd \wt y \right) G_{ir} G_{ts} G_{si} \right] + \caO(\Psi^5)\,. \nonumber
\end{align}
So far, we have shown from \eqref{eq:121+122+123}, \eqref{eq:121}, \eqref{eq:122} and \eqref{eq:123} that
\begin{align} \label{eq:12}
&\E \left[ \frac{\gamma^2}{N} \frac{F''(\frX^{(a)})}{(\lambda \gamma v_a - \tau)^2} (\frX - \frX^{(a)}) \sum_s^{(a)} G_{is} G_{si} \right] \nonumber\\
&\qquad= \frac{\gamma^4}{(\lambda \gamma v_a - \tau)^3} \E[ F''(\frX) \frX_{22} \caX_{22}] + \frac{\gamma^4}{(\lambda \gamma v_a - \tau)^4} \E[ F''(\frX) \frX_{32} \caX_{22} ] - \frac{\gamma^6}{(\lambda \gamma v_a - \tau)^4} \E[ F'''(\frX) (\frX_{22})^2 \caX_{22} ] \nonumber \\
&\qquad\qquad + \frac{4 \gamma^6}{(\lambda \gamma v_a - \tau)^4} \E \left[ F''(\frX) \frac{1}{N^3} \sum_{j, r, t, s} \left( \im \int_{E_1 -2}^{E_2 -2} \wt G_{jr} \wt G_{tj} \dd \wt y \right) G_{ir} G_{ts} G_{si} \right] + \caO(\Psi^5)\,. 
\end{align}
This completes the expansion of the second term in \eqref{eq:11+12+13+14+15}.

\subsubsection{Expansion of the third term in \eqref{eq:11+12+13+14+15}}

Recall that
\begin{align}
\frX - \frX^{(a)} = \im \int_{E_1 -2}^{E_2 -2} \sum_j \frac{\wt G_{ja} \wt G_{aj}}{\wt G_{aa}} \dd \wt y
\end{align}
and that $\frX - \frX^{(a)} = \caO(\Psi)$. Let $\widehat G \equiv G(\widehat{L}_++\widehat y + \ii \eta)$, for $\widehat y \in [E_1 -2, E_2 -2]$. Then, we may write
\begin{align} \label{frX-frX^a^2}
\left(\frX - \frX^{(a)}\right)^2 = \left( \im \int_{E_1 -2}^{E_2 -2} \sum_j \frac{\wt G_{ja} \wt G_{aj}}{\wt G_{aa}} \dd \wt y \right) \left( \im \int_{E_1 -2}^{E_2 -2} \sum_k \frac{\widehat G_{ka} \widehat G_{ak}}{\widehat G_{aa}} \dd \widehat y \right) \,.
\end{align}
Applying the resolvent formula~\eqref{onesided} to both of the integrands, we find
\begin{align*}
&\left(\frX - \frX^{(a)}\right)^2 \\
&\qquad= \left( \im \int_{E_1 -2}^{E_2 -2} \wt G_{aa} \sum_j \sum_{p, q}^{(a)} \wt G^{(a)}_{jp} h_{pa} h_{aq} \wt G^{(a)}_{qj} \dd \wt y \right)\left( \im \int_{E_1 -2}^{E_2 -2} \widehat G_{aa} \sum_k \sum_{r, t} \widehat G^{(a)}_{kr} h_{ra} h_{at} \widehat G^{(a)}_{tk} \dd \widehat y \right) \nonumber \\
&\qquad= \frac{1}{(\lambda \gamma v_a - \tau)^2} \sum_{j, k} \sum_{p, q, r, t}^{(a)} \left( \im \int_{E_1 -2}^{E_2 -2} \wt G^{(a)}_{jp} h_{pa} h_{aq} \wt G^{(a)}_{qj} \dd \wt y \right) \left( \im \int_{E_1 -2}^{E_2 -2} \widehat G^{(a)}_{kr} h_{ra} h_{at} \widehat G^{(a)}_{tk} \dd \widehat y \right) \,. \nonumber
\end{align*}
Thus, by taking the partial expectation $\E_a$, we get
\begin{align*}
\E_a \left(\frX - \frX^{(a)}\right)^2 \nonumber &= \frac{\gamma^4}{(\lambda \gamma v_a - \tau)^2} \frac{1}{N^2} \sum_{j, k} \sum_{p, r}^{(a)} \left( \im \int_{E_1 -2}^{E_2 -2} \wt G^{(a)}_{jp} \wt G^{(a)}_{pj} \dd \wt y \right) \left( \im \int_{E_1 -2}^{E_2 -2} \widehat G^{(a)}_{kr} \widehat G^{(a)}_{rk} \dd \widehat y \right) \\
&\qquad + \frac{2 \gamma^4}{(\lambda \gamma v_a - \tau)^2} \frac{1}{N^2} \sum_{j, k} \sum_{r, t}^{(a)} \left( \im \int_{E_1 -2}^{E_2 -2} \wt G^{(a)}_{jr} \wt G^{(a)}_{tj} \dd \wt y \right) \left( \im \int_{E_1 -2}^{E_2 -2} \widehat G^{(a)}_{kr} \widehat G^{(a)}_{tk} \dd \widehat y \right) + \caO(\Psi^3) \,, \nonumber
\end{align*}
which implies
\begin{align*}
\E_a \left(\frX - \frX^{(a)}\right)^2 &= \frac{\gamma^4}{(\lambda \gamma v_a - \tau)^2} (\frX_{22})^2 \\
& \qquad+ \frac{2 \gamma^4}{(\lambda \gamma v_a - \tau)^2} \frac{1}{N^2} \sum_{j, k, r, t} \left( \im \int_{E_1 -2}^{E_2 -2} \wt G_{jr} \wt G_{tj} \dd \wt y \right) \left( \im \int_{E_1 -2}^{E_2 -2} \widehat G_{kr} \widehat G_{tk} \dd \widehat y \right) + \caO(\Psi^3) \,. \nonumber
\end{align*}
We remark that the terms with the fourth or higher moments of the entries of $H(t)$ have negligible contributions, hence we omit the details on those terms in the calculation. Thus returning to the third term in \eqref{eq:11+12+13+14+15}, we obtain
\begin{align} \label{eq:13}
&\E \left[ \frac{\gamma^2}{2N} \frac{F'''(\frX^{(a)})}{(\lambda \gamma v_a - \tau)^2} (\frX - \frX^{(a)})^2 \sum_s^{(a)} G_{is} G_{si} \right] \nonumber \\
&\qquad= \frac{\gamma^6}{2(\lambda \gamma v_a - \tau)^4} \E \left[ F'''(\frX) (\frX_{22})^2 \caX_{22} \right]\nonumber \\
&\qquad\qquad + \frac{\gamma^6}{(\lambda \gamma v_a - \tau)^4} \E \left[ F'''(\frX) \frac{1}{N^2} \sum_{j, k, r, t} \left( \im \int_{E_1 -2}^{E_2 -2} \wt G_{jr} \wt G_{tj} \dd \wt y \right) \left( \im \int_{E_1 -2}^{E_2 -2} \widehat G_{kr} \widehat G_{tk} \dd \widehat y \right) \caX_{22} \right] + \caO(\Psi^5)\,.
\end{align}

\subsubsection{Expansion of the fourth term and the fifth term in \eqref{eq:11+12+13+14+15}}

We first begin with the expansion
\begin{align} \label{eq:141+142}
\frac{\gamma^2}{N} \frac{F'(\frX^{(a)})}{(\lambda \gamma v_a - \tau)^2} \sum_s^{(a)} \frac{G_{ia} G_{as}}{G_{aa}} G_{si} = \frac{\gamma^2}{N} \frac{F'(\frX^{(a)})}{(\lambda \gamma v_a - \tau)^2} \sum_s^{(a)} \frac{G_{ia} G_{as}}{G_{aa}} G_{si}^{(a)} + \frac{\gamma^2}{N} \frac{F'(\frX^{(a)})}{(\lambda \gamma v_a - \tau)^2} \sum_s^{(a)} \frac{G_{ia} G_{as}}{G_{aa}} \frac{G_{sa} G_{ai}}{G_{aa}} \,. 
\end{align}
Expanding the first term in \eqref{eq:141+142}, we get
\begin{align} \label{eq:1411+1412}
&\frac{\gamma^2}{N} \frac{F'(\frX^{(a)})}{(\lambda \gamma v_a - \tau)^2} \sum_s^{(a)} \frac{G_{ia} G_{as}}{G_{aa}} G_{si}^{(a)} = \frac{\gamma^2}{N} \frac{F'(\frX^{(a)})}{(\lambda \gamma v_a - \tau)^3} \sum_{s, r, t}^{(a)} G_{ir}^{(a)} h_{ra} h_{at} G_{ts}^{(a)} G_{si}^{(a)} \nonumber\\
&\qquad \qquad \qquad+ \frac{\gamma^2}{N} \frac{F'(\frX^{(a)})}{(\lambda \gamma v_a - \tau)^4} \left( z + \sum_{p, q}^{(a)} h_{ap} G_{pq}^{(a)} h_{qa} - \tau \right) \sum_{s, r, t}^{(a)} G_{ir}^{(a)} h_{ra} h_{at} G_{ts}^{(a)} G_{si}^{(a)} + \caO(\Psi^5) \,. 
\end{align}
Taking the partial expectation $\E_a$, we obtain
\begin{align} \label{eq:14111+14112+14113+14114+14115}
&\E_a \left[ \frac{\gamma^2}{N} \frac{F'(\frX^{(a)})}{(\lambda \gamma v_a - \tau)^3} \sum_{s, r, t}^{(a)} G_{ir}^{(a)} h_{ra} h_{at} G_{ts}^{(a)} G_{si}^{(a)} \right] = \frac{\gamma^4}{N^2} \frac{F'(\frX^{(a)})}{(\lambda \gamma v_a - \tau)^3} \sum_{s, r}^{(a)} G_{ir}^{(a)} G_{rs}^{(a)} G_{si}^{(a)}\nonumber \\
&= \frac{\gamma^4}{N^2} \frac{F'(\frX)}{(\lambda \gamma v_a - \tau)^3} \sum_{s, r}^{(a)} G_{ir} G_{rs} G_{si} - \frac{\gamma^4}{N^2} \frac{F'(\frX^{(a)})}{(\lambda \gamma v_a - \tau)^3} \sum_{s, r}^{(a)} G_{ir}^{(a)} G_{rs}^{(a)} \frac{G_{sa} G_{ai}}{G_{aa}} - \frac{\gamma^4}{N^2} \frac{F'(\frX^{(a)})}{(\lambda \gamma v_a - \tau)^3} \sum_{s, r}^{(a)} G_{ir}^{(a)} \frac{G_{ra} G_{as}}{G_{aa}} G_{si} \nonumber \\
&\qquad\qquad - \frac{\gamma^4}{N^2} \frac{F'(\frX^{(a)})}{(\lambda \gamma v_a - \tau)^3} \sum_{s, r}^{(a)} \frac{G_{ia} G_{ar}}{G_{aa}} G_{rs} G_{si} - \frac{\gamma^4}{N^2} \frac{F''(\frX^{(a)})}{(\lambda \gamma v_a - \tau)^3} (\frX - \frX^{(a)}) \sum_{s, r}^{(a)} G_{ir} G_{rs} G_{si} + \caO(\Psi^5)\,. 
\end{align}
Taking the full expectation, the first term in \eqref{eq:14111+14112+14113+14114+14115} becomes
\begin{align} \label{eq:14111}
\E \left[ \frac{\gamma^4}{N^2} \frac{F'(\frX)}{(\lambda \gamma v_a - \tau)^3} \sum_{s, r}^{(a)} G_{ir} G_{rs} G_{si} \right] = \frac{\gamma^4}{(\lambda \gamma v_a - \tau)^3} \E[ F'(\frX) \caX_{33} ] + \caO(\Psi^5)\,.
\end{align}
For the second term in \eqref{eq:14111+14112+14113+14114+14115}, we have that
\begin{align} \label{eq:14112}
\E \left[ \frac{\gamma^4}{N^2} \frac{F'(\frX^{(a)})}{(\lambda \gamma v_a - \tau)^3} \sum_{s, r}^{(a)} G_{ir}^{(a)} G_{rs}^{(a)} \frac{G_{sa} G_{ai}}{G_{aa}} \right] &= \E \left[ \frac{\gamma^4}{N^2} \frac{F'(\frX^{(a)})}{(\lambda \gamma v_a - \tau)^4} \sum_{s, r}^{(a)} G_{ir}^{(a)} G_{rs}^{(a)} \, \E_a \sum_{p, q}^{(a)} G_{sp}^{(a)} h_{pa} h_{aq} G_{qi}^{(a)} \right] + \caO(\Psi^5) \nonumber \\
&= \frac{\gamma^6}{(\lambda \gamma v_a - \tau)^4} \E[ F'(\frX) \caX_{44}] + \caO(\Psi^5) \,.
\end{align}
Similarly, we can also obtain	
\begin{align} \label{eq:14113}
\E \left[ \frac{\gamma^4}{N^2} \frac{F'(\frX^{(a)})}{(\lambda \gamma v_a - \tau)^3} \sum_{s, r}^{(a)} G_{ir}^{(a)} \frac{G_{ra} G_{as}}{G_{aa}} G_{si} \right] = \frac{\gamma^6}{(\lambda \gamma v_a - \tau)^4} \E[ F'(\frX) \caX_{44}] + \caO(\Psi^5)
\end{align}
and
\begin{align} \label{eq:14114}
\E \left[ \frac{\gamma^4}{N^2} \frac{F'(\frX^{(a)})}{(\lambda \gamma v_a - \tau)^3} \sum_{s, r}^{(a)} \frac{G_{ia} G_{ar}}{G_{aa}} G_{rs} G_{si} \right] = \frac{\gamma^6}{(\lambda \gamma v_a - \tau)^4} \E[ F'(\frX) \caX_{44}] + \caO(\Psi^5) \,.
\end{align}
For the last term in \eqref{eq:14111+14112+14113+14114+14115}, we use \eqref{frX-frX^a} to obtain that
\begin{align} \label{eq:14115}
&\E \left[ \frac{\gamma^4}{N^2} \frac{F''(\frX^{(a)})}{(\lambda \gamma v_a - \tau)^3} (\frX - \frX^{(a)}) \sum_{s, r}^{(a)} G_{ir} G_{rs} G_{si} \right] = \frac{\gamma^6}{(\lambda \gamma v_a - \tau)^4} \E [ F''(\frX) \frX_{22} \caX_{33}] + \caO(\Psi^5)\,.
\end{align}

Thus, from \eqref{eq:14111+14112+14113+14114+14115}, \eqref{eq:14111}, \eqref{eq:14112}, \eqref{eq:14113}, \eqref{eq:14114} and \eqref{eq:14115}, we find that
\begin{align} \label{eq:1411}
&\E \left[ \frac{\gamma^2}{N} \frac{F'(\frX^{(a)})}{(\lambda \gamma v_a - \tau)^3} \sum_{s, r, t}^{(a)} G_{ir}^{(a)} h_{ra} h_{at} G_{ts}^{(a)} G_{si}^{(a)} \right]\nonumber \\
&\qquad= \frac{\gamma^4}{(\lambda \gamma v_a - \tau)^3} \E[ F'(\frX) \caX_{33} ] - \frac{3 \gamma^6}{(\lambda \gamma v_a - \tau)^4} \E[ F'(\frX) \caX_{44}] - \frac{\gamma^6}{(\lambda \gamma v_a - \tau)^4} \E [ F''(\frX) \frX_{22} \caX_{33}] + \caO(\Psi^5) \,. 
\end{align}

The second term in \eqref{eq:1411+1412} satisfies 
\begin{align}
&\E_a \left[ \frac{\gamma^2}{N} \frac{F'(\frX^{(a)})}{(\lambda \gamma v_a - \tau)^4} \left( z + \sum_{p, q}^{(a)} h_{ap} G_{pq}^{(a)} h_{qa} - \tau \right) \sum_{s, r, t}^{(a)} G_{ir}^{(a)} h_{ra} h_{at} G_{ts}^{(a)} G_{si}^{(a)} \right] \nonumber \\
&\qquad= \frac{\gamma^4}{N^2} \frac{F'(\frX^{(a)})}{(\lambda \gamma v_a - \tau)^4} \left( z + \gamma^2 m^{(a)} - \tau \right) \sum_{s, t}^{(a)} G_{it}^{(a)} G_{ts}^{(a)} G_{si}^{(a)}  + \frac{2 \gamma^6}{N^3} \frac{F'(\frX^{(a)})}{(\lambda \gamma v_a - \tau)^4} \sum_{s, r, t}^{(a)} G_{ir}^{(a)} G_{rt}^{(a)} G_{ts}^{(a)} G_{si}^{(a)} + \caO(\Psi^5) \,. \nonumber
\end{align}
Thus, we get
\begin{align} \label{eq:1412}
&\E \left[ \frac{\gamma^2}{N} \frac{F'(\frX^{(a)})}{(\lambda \gamma v_a - \tau)^4} \left( z + \sum_{p, q}^{(a)} h_{ap} G_{pq}^{(a)} h_{qa} - \tau \right) \sum_{s, r, t}^{(a)} G_{ir}^{(a)} h_{ra} h_{at} G_{ts}^{(a)} G_{si}^{(a)} \right] \nonumber \\
&\qquad\qquad= \frac{\gamma^4}{(\lambda \gamma v_a - \tau)^4} \E [F'(\frX) \caX_{43}] + \frac{2 \gamma^6}{(\lambda \gamma v_a - \tau)^4} \E [F'(\frX) \caX_{44}] + \caO(\Psi^5) \,.
\end{align}
We then have from \eqref{eq:1411+1412}, \eqref{eq:1411} and \eqref{eq:1412} that
\begin{align} \label{eq:141}
&\E \left[ \frac{\gamma^2}{N} \frac{F'(\frX^{(a)})}{(\lambda \gamma v_a - \tau)^2} \sum_s^{(a)} \frac{G_{ia} G_{as}}{G_{aa}} G_{si}^{(a)} \right] \nonumber \\
& \qquad=\frac{\gamma^4}{(\lambda \gamma v_a - \tau)^3} \E[ F'(\frX) \caX_{33} ] + \frac{\gamma^4}{(\lambda \gamma v_a - \tau)^4} \E [F'(\frX) \caX_{43}] \nonumber\\
&\qquad\qquad - \frac{\gamma^6}{(\lambda \gamma v_a - \tau)^4} \E[ F'(\frX) \caX_{44}] - \frac{\gamma^6}{(\lambda \gamma v_a - \tau)^4} \E [ F''(\frX) \frX_{22} \caX_{33}] + \caO(\Psi^5) \,. 
\end{align}

Expanding the second term in \eqref{eq:141+142}, we have
\begin{align} \label{eq:142}
&\E \left[ \frac{\gamma^2}{N} \frac{F'(\frX^{(a)})}{(\lambda \gamma v_a - \tau)^2} \sum_s^{(a)} \frac{G_{ia} G_{as}}{G_{aa}} \frac{G_{sa} G_{ai}}{G_{aa}} \right] \nonumber \\
&\qquad= \E \left[ \frac{\gamma^2}{N} \frac{F'(\frX^{(a)})}{(\lambda \gamma v_a - \tau)^4} \sum_s^{(a)} \sum_{p, q}^{(a)} G_{ip}^{(a)} h_{pa} h_{aq} G_{qs}^{(a)} \sum_{r, t}^{(a)} G_{ir}^{(a)} h_{ra} h_{at} G_{ts}^{(a)} \right] + \caO(\Psi^5) \nonumber\\
&\qquad= \E \left[ \frac{2 \gamma^6}{N^3} \frac{F'(\frX^{(a)})}{(\lambda \gamma v_a - \tau)^4} \sum_{s, p, r}^{(a)} G_{ip}^{(a)} G_{ps}^{(a)} G_{ir}^{(a)} G_{rs}^{(a)} \right] + \E \left[ \frac{\gamma^6}{N^3} \frac{F'(\frX^{(a)})}{(\lambda \gamma v_a - \tau)^4} \sum_{s, r, t}^{(a)} G_{ir}^{(a)} G_{ts}^{(a)} G_{ir}^{(a)} G_{ts}^{(a)} \right] + \caO(\Psi^5) \nonumber \\
&\qquad= \frac{2 \gamma^6}{(\lambda \gamma v_a - \tau)^4} \E [F'(\frX) \caX_{44}] + \frac{\gamma^6}{(\lambda \gamma v_a - \tau)^4} \E [F'(\frX) \caX_{44}'] + \caO(\Psi^5)\,. 
\end{align}
Thus, from \eqref{eq:141+142}, \eqref{eq:141} and \eqref{eq:142}, we get
\begin{align} \label{eq:14}
&\E \left[ \frac{\gamma^2}{N} \frac{F'(\frX^{(a)})}{(\lambda \gamma v_a - \tau)^2} \sum_s^{(a)} \frac{G_{ia} G_{as}}{G_{aa}} G_{si} \right] \nonumber \\
&\qquad= \frac{\gamma^4}{(\lambda \gamma v_a - \tau)^3} \E[ F'(\frX) \caX_{33} ] + \frac{\gamma^4}{(\lambda \gamma v_a - \tau)^4} \E [F'(\frX) \caX_{43}] + \frac{\gamma^6}{(\lambda \gamma v_a - \tau)^4} \E[ F'(\frX) \caX_{44}] \nonumber\\
&\qquad\qquad + \frac{\gamma^6}{(\lambda \gamma v_a - \tau)^4} \E [F'(\frX) \caX_{44}'] - \frac{\gamma^6}{(\lambda \gamma v_a - \tau)^4} \E [ F''(\frX) \frX_{22} \caX_{33}] + \caO(\Psi^5) \,. 
\end{align}

The fifth term in \eqref{eq:11+12+13+14+15} can be expanded analogously to \eqref{eq:141}, and we find that
\begin{align} \label{eq:15}
&\E \left[ \frac{\gamma^2}{N} \frac{F'(\frX^{(a)})}{(\lambda \gamma v_a - \tau)^2} \sum_s^{(a)}  G_{is}^{(a)} \frac{G_{sa} G_{ai}}{G_{aa}} \right] \nonumber \\
&\qquad= \frac{\gamma^4}{(\lambda \gamma v_a - \tau)^3} \E[ F'(\frX) \caX_{33} ] + \frac{\gamma^4}{(\lambda \gamma v_a - \tau)^4} \E [F'(\frX) \caX_{43}] \nonumber\\
&\qquad\qquad - \frac{\gamma^6}{(\lambda \gamma v_a - \tau)^4} \E[ F'(\frX) \caX_{44}] - \frac{\gamma^6}{(\lambda \gamma v_a - \tau)^4} \E [ F''(\frX) \frX_{22} \caX_{33}] + \caO(\Psi^5) \,.
\end{align}
This finishes the expansion of the fourth term and the fifth term in \eqref{eq:11+12+13+14+15}.

\subsubsection{Expansion of the first term in \eqref{eq:1+2+3+4+5+6}}

Combining \eqref{eq:11+12+13+14+15}, \eqref{eq:11}, \eqref{eq:12}, \eqref{eq:13}, \eqref{eq:14} and \eqref{eq:15}, we conclude that
\begin{align} \label{eq:1}
&\E \left[ \frac{F'(\frX^{(a)})}{(\lambda \gamma v_a - \tau)^2} \sum_{s, t}^{(a)} G_{is}^{(a)} h_{sa} h_{at} G_{ti}^{(a)} \right] \nonumber \\
&\qquad= \frac{\gamma^2}{(\lambda \gamma v_a - \tau)^2} \E [ F'(\frX) \caX_{22}] - \frac{2 \gamma^4}{(\lambda \gamma v_a - \tau)^3} \E[ F'(\frX) \caX_{33} ] - \frac{\gamma^4}{(\lambda \gamma v_a - \tau)^3} \E[ F''(\frX) \frX_{22} \caX_{22}]\nonumber \\
&\qquad\qquad - \frac{\gamma^4}{(\lambda \gamma v_a - \tau)^4} \E[ F''(\frX) \frX_{32} \caX_{22} ] + \frac{\gamma^6}{2(\lambda \gamma v_a - \tau)^4} \E[ F'''(\frX) (\frX_{22})^2 \caX_{22} ]  \nonumber \\
&\qquad\qquad  - \frac{4 \gamma^6}{(\lambda \gamma v_a - \tau)^4} \E \left[ F''(\frX) \frac{1}{N^3} \sum_{j, r, t, s} \left( \im \int_{E_1 -2}^{E_2 -2} \wt G_{jr} \wt G_{tj}\, \dd \wt y \right) G_{ir} G_{ts} G_{si} \right] \nonumber \\
&\qquad\qquad - \frac{\gamma^6}{(\lambda \gamma v_a - \tau)^4} \E \left[ F'''(\frX) \frac{1}{N^2} \sum_{j, k, r, t} \left( \im \int_{E_1 -2}^{E_2 -2} \wt G_{jr} \wt G_{tj}\, \dd \wt y \right) \left( \im \int_{E_1 -2}^{E_2 -2} \widehat G_{kr} \widehat G_{tk} \,\dd \widehat y \right) \caX_{22} \right] \nonumber \\
&\qquad\qquad  - \frac{2 \gamma^4}{(\lambda \gamma v_a - \tau)^4} \E [F'(\frX) \caX_{43}] - \frac{\gamma^6}{(\lambda \gamma v_a - \tau)^4} \E [F'(\frX) \caX_{44}'] + \frac{\gamma^6}{(\lambda \gamma v_a - \tau)^4} \E [ F''(\frX) \frX_{22} \caX_{33}] + \caO(\Psi^5) \,.
\end{align}
This finishes the expansion of the first term in \eqref{eq:1+2+3+4+5+6}.

\subsection{Expansion of the second term in \eqref{eq:1+2+3+4+5+6}}

Following the expansion procedure for the first term in \eqref{eq:1+2+3+4+5+6}, we now expand the second term. By the definitions of $\frX$ and $\frX^{(a)}$, we have
\begin{align} \label{eq:21+22}
&\frac{F''(\frX^{(a)})}{(\lambda \gamma v_a - \tau)^2} (\frX - \frX^{(a)}) \sum_{s, t}^{(a)} G_{is}^{(a)} h_{sa} h_{at} G_{ti}^{(a)} \nonumber \\
&\qquad= \frac{F''(\frX^{(a)})}{(\lambda \gamma v_a - \tau)^2} \left( \im \int_{E_1 -2}^{E_2 -2} \sum_j \frac{\wt G_{ja} \wt G_{aj}}{\wt G_{aa}} \,\dd \wt y \right) \sum_{s, t}^{(a)} G_{is}^{(a)} h_{sa} h_{at} G_{ti}^{(a)}\nonumber \\
&\qquad= \frac{F''(\frX^{(a)})}{(\lambda \gamma v_a - \tau)^3} \left( \im \int_{E_1 -2}^{E_2 -2} \sum_j^{(a)} \sum_{p, q}^{(a)} \wt G^{(a)}_{jp} h_{pa} h_{aq} \wt G^{(a)}_{qj} \,\dd \wt y \right) \sum_{s, t}^{(a)} G_{is}^{(a)} h_{sa} h_{at} G_{ti}^{(a)} \nonumber \\
&\qquad\qquad + \frac{F''(\frX^{(a)})}{(\lambda \gamma v_a - \tau)^4} \left( \im \int_{E_1 -2}^{E_2 -2} \Bigg( z + \sum_{k, m}^{(a)} h_{ak} \wt G^{(a)}_{km} h_{ma} - \tau \Bigg) \sum_j^{(a)} \sum_{p, q}^{(a)} \wt G^{(a)}_{jp} h_{pa} h_{aq} \wt G^{(a)}_{qj} \,\dd \wt y \right) \sum_{s, t}^{(a)} G_{is}^{(a)} h_{sa} h_{at} G_{ti}^{(a)}\nonumber\\
&\qquad\qquad + \caO(\Psi^5) \,. 
\end{align}

\subsubsection{Expansion of the first term in \eqref{eq:21+22}}

After taking the partial expectation $\E_a$, the first term on the right side of~\eqref{eq:21+22} becomes
\begin{align} \label{eq:211+212}
&\E_a \left[ \frac{F''(\frX^{(a)})}{(\lambda \gamma v_a - \tau)^3} \left( \im \int_{E_1 -2}^{E_2 -2} \sum_j^{(a)} \sum_{p, q}^{(a)} \wt G^{(a)}_{jp} h_{pa} h_{aq} \wt G^{(a)}_{qj} \,\dd \wt y \right) \sum_{s, t}^{(a)} G_{is}^{(a)} h_{sa} h_{at} G_{ti}^{(a)} \right] \nonumber\\
&\qquad= \frac{\gamma^4}{N^2} \frac{F''(\frX^{(a)})}{(\lambda \gamma v_a - \tau)^3} \frX_{22}^{(a)} \sum_s^{(a)} G_{is}^{(a)} G_{si}^{(a)}  + \frac{2 \gamma^4}{N^2} \frac{F''(\frX^{(a)})}{(\lambda \gamma v_a - \tau)^3} \sum_{j, s, t}^{(a)} \left( \im \int_{E_1 -2}^{E_2 -2} \wt G^{(a)}_{js} \wt G^{(a)}_{tj} \,\dd \wt y \right) G_{is}^{(a)} G_{ti}^{(a)}\,.
\end{align}

The expectation of the first term in \eqref{eq:211+212} is calculated in \eqref{eq:1211}, which is
\begin{align} \label{eq:211}
&\E \left[ \frac{\gamma^4}{N} \frac{F''(\frX^{(a)})}{(\lambda \gamma v_a - \tau)^3} \frX_{22}^{(a)} \sum_s^{(a)} G_{is}^{(a)} G_{si}^{(a)} \right] \nonumber \\
&\qquad= \frac{\gamma^4}{(\lambda \gamma v_a - \tau)^3} \E[ F''(\frX) \frX_{22} \caX_{22}] - \frac{2 \gamma^6}{(\lambda \gamma v_a - \tau)^4} \E [ F''(\frX) \frX_{22} \caX_{33} ] \nonumber\\
&\qquad\qquad - \frac{2 \gamma^6}{(\lambda \gamma v_a - \tau)^4} \E [ F''(\frX) \frX_{33} \caX_{22} ] - \frac{\gamma^6}{(\lambda \gamma v_a - \tau)^4} \E [ F'''(\frX) (\frX_{22})^2 \caX_{22} ] + \caO(\Psi^5)\,.
\end{align}

We now consider the second term in \eqref{eq:211+212}. We notice that
\begin{align} \label{eq:2121+2122+2123+2124+2125+2126}
&\frac{2 \gamma^4}{N^2} \frac{F''(\frX^{(a)})}{(\lambda \gamma v_a - \tau)^3} \sum_{j, s, t}^{(a)} \left( \im \int_{E_1 -2}^{E_2 -2} \wt G^{(a)}_{js} \wt G^{(a)}_{tj} \,\dd \wt y \right) G_{is}^{(a)} G_{ti}^{(a)} \nonumber \\
&\qquad= \frac{2 \gamma^4}{N^2} \frac{F''(\frX)}{(\lambda \gamma v_a - \tau)^3} \sum_{j, s, t} \left( \im \int_{E_1 -2}^{E_2 -2} \wt G_{js} \wt G_{tj} \,\dd \wt y \right) G_{is} G_{ti}\nonumber \\
&\qquad\qquad - \frac{2 \gamma^4}{N^2} \frac{F''(\frX^{(a)})}{(\lambda \gamma v_a - \tau)^3} \sum_{j, s, t}^{(a)} \left( \im \int_{E_1 -2}^{E_2 -2} \wt G^{(a)}_{js} \wt G^{(a)}_{tj} \,\dd \wt y \right) \left( G_{is}^{(a)} \frac{G_{ta} G_{ai}}{G_{aa}} +  \frac{G_{ia} G_{si}}{G_{aa}} G_{ti} \right) \nonumber \\
&\qquad\qquad - \frac{2 \gamma^4}{N^2} \frac{F''(\frX^{(a)})}{(\lambda \gamma v_a - \tau)^3} \sum_{j, s, t}^{(a)} \left[ \left( \im \int_{E_1 -2}^{E_2 -2} \wt G^{(a)}_{js} \frac{\wt G_{ta} \wt G_{aj}}{\wt G_{aa}} \,\dd \wt y \right) +  \left( \im \int_{E_1 -2}^{E_2 -2} \frac{\wt G_{ja} \wt G_{as}}{\wt G_{aa}} \wt G_{tj}  \,\dd \wt y \right) \right] G_{is} G_{ti} \nonumber \\
&\qquad\qquad - \frac{2 \gamma^4}{N^2} \frac{F'''(\frX^{(a)})}{(\lambda \gamma v_a - \tau)^3} (\frX - \frX^{(a)}) \sum_{j, s, t}^{(a)} \left( \im \int_{E_1 -2}^{E_2 -2} \wt G^{(a)}_{js} \wt G^{(a)}_{tj} \,\dd \wt y \right) G_{is} G_{ti} + \caO(\Psi^5) \,. 
\end{align}
We proceed as in the expansion of \eqref{eq:211} to obtain
\begin{align} \label{eq:212}
&\E \left[ \frac{2 \gamma^4}{N^2} \frac{F''(\frX^{(a)})}{(\lambda \gamma v_a - \tau)^3} \sum_{j, s, t}^{(a)} \left( \im \int_{E_1 -2}^{E_2 -2} \wt G^{(a)}_{js} \wt G^{(a)}_{tj} \,\dd \wt y \right) G_{is}^{(a)} G_{ti}^{(a)} \right] \nonumber \\
&\qquad= \frac{2 \gamma^4}{(\lambda \gamma v_a - \tau)^3} \E \left[ F''(\frX) \frac{1}{N^2} \sum_{j, s, t} \left( \im \int_{E_1 -2}^{E_2 -2} \wt G_{js} \wt G_{tj} \,\dd \wt y \right) G_{is} G_{ti} \right] \nonumber \\
&\qquad\qquad - \frac{4 \gamma^6}{(\lambda \gamma v_a - \tau)^4} \E \left[ F''(\frX) \frac{1}{N^3} \sum_{j, s, t, r} \left( \im \int_{E_1 -2}^{E_2 -2} \wt G_{js} \wt G_{tj} \,\dd \wt y \right) G_{is} G_{tr} G_{ri} \right] \nonumber\\
&\qquad\qquad - \frac{4 \gamma^6}{(\lambda \gamma v_a - \tau)^4} \E \left[ F''(\frX) \frac{1}{N^3} \sum_{j, s, t, r} \left( \im \int_{E_1 -2}^{E_2 -2} \wt G_{jr} \wt G_{rs} \wt G_{tj} \,\dd \wt y \right) G_{is} G_{ti} \right] \nonumber \\
&\qquad\qquad - \frac{2 \gamma^6}{(\lambda \gamma v_a - \tau)^4} \E \left[ F'''(\frX) \frX_{22} \frac{1}{N^2} \sum_{j, s, t} \left( \im \int_{E_1 -2}^{E_2 -2} \wt G_{js} \wt G_{tj} \,\dd \wt y \right) G_{is} G_{ti} \right] + \caO(\Psi^5)\,. 
\end{align}

\subsubsection{Expansion of the second term in \eqref{eq:21+22}}

We first notice that the second term in \eqref{eq:21+22} is of $\caO(\Psi^4)$. Taking the partial expectation $\E_a$, we get
\begin{align}
&\E_a \left[ \left( \im \int_{E_1 -2}^{E_2 -2} \Bigg( z + \sum_{k, m}^{(a)} h_{ak} \wt G^{(a)}_{km} h_{ma} - \tau \Bigg) \sum_j^{(a)} \sum_{p, q}^{(a)} \wt G^{(a)}_{jp} h_{pa} h_{aq} \wt G^{(a)}_{qj} \,\dd \wt y \right) \sum_{s, t}^{(a)} G_{is}^{(a)} h_{sa} h_{at} G_{ti}^{(a)} \right] \nonumber \\
&\qquad= \frac{\gamma^4}{N^2} \sum_{j, p, s}^{(a)} \left( \im \int_{E_1 -2}^{E_2 -2} (z + \gamma^2 \wt m^{(a)} - \tau) \wt G^{(a)}_{jp} \wt G^{(a)}_{pj} \,\dd \wt y \right) G_{is}^{(a)} G_{si}^{(a)}\nonumber \\
& \qquad\qquad + \frac{2 \gamma^6}{N^3} \sum_{j, p, q, s}^{(a)} \left( \im \int_{E_1 -2}^{E_2 -2} \wt G^{(a)}_{jp} \wt G^{(a)}_{pq} \wt G^{(a)}_{qj} \,\dd \wt y \right) G_{is}^{(a)} G_{si}^{(a)} + \frac{2 \gamma^6}{N^3} \sum_{j, p, s, t}^{(a)} \left( \im \int_{E_1 -2}^{E_2 -2} \wt G^{(a)}_{st} \wt G^{(a)}_{jp} \wt G^{(a)}_{pj} \,\dd \wt y \right) G_{is}^{(a)} G_{ti}^{(a)} \nonumber \\
& \qquad\qquad + \frac{2 \gamma^4}{N^2} \sum_{j, s, t}^{(a)} \left( \im \int_{E_1 -2}^{E_2 -2} (z + \gamma^2 \wt m^{(a)} - \tau) \wt G^{(a)}_{js} \wt G^{(a)}_{tj} \,\dd \wt y \right) G_{is}^{(a)} G_{ti}^{(a)} \nonumber \\
& \qquad \qquad+ \frac{8 \gamma^6}{N^3} \sum_{j, p, s, t}^{(a)} \left( \im \int_{E_1 -2}^{E_2 -2} \wt G^{(a)}_{ps} \wt G^{(a)}_{jp} \wt G^{(a)}_{tj} \,\dd \wt y \right) G_{is}^{(a)} G_{ti}^{(a)} + \caO(\Psi^5)\,. 
\end{align}
Thus, we have
\begin{align} \label{eq:22}
&\E \Bigg[ \frac{F''(\frX^{(a)})}{(\lambda \gamma v_a - \tau)^4} \left( \im \int_{E_1 -2}^{E_2 -2} \Bigg( z + \sum_{k, m}^{(a)} h_{ak} \wt G^{(a)}_{km} h_{ma} - \tau \Bigg) \sum_j^{(a)} \sum_{p, q}^{(a)} \wt G^{(a)}_{jp} h_{pa} h_{aq} \wt G^{(a)}_{qj} \,\dd \wt y \right) \sum_{s, t}^{(a)} G_{is}^{(a)} h_{sa} h_{at} G_{ti}^{(a)} \Bigg] \nonumber \\
&\qquad= \frac{\gamma^4}{(\lambda \gamma v_a - \tau)^4} \E [F''(\frX) \frX_{32} \caX_{22}] + \frac{2 \gamma^6}{(\lambda \gamma v_a - \tau)^4} \E [F''(\frX) \frX_{33} \caX_{22}] \nonumber \\
&\qquad\qquad + \frac{2 \gamma^4}{(\lambda \gamma v_a - \tau)^4} \E \left[ F''(\frX) \frac{1}{N^2} \sum_{j, s, t} \left( \im \int_{E_1 -2}^{E_2 -2} (z + \gamma^2 \wt m - \tau) \wt G_{js} \wt G_{tj} \,\dd \wt y \right) G_{is} G_{ti} \right]\nonumber \\
& \qquad\qquad + \frac{2 \gamma^6}{(\lambda \gamma v_a - \tau)^4} \E \left[ F''(\frX) \frac{1}{N^3} \sum_{j, p, s, t} \left( \im \int_{E_1 -2}^{E_2 -2} \wt G_{st} \wt G_{jp} \wt G_{pj} \,\dd \wt y \right) G_{is} G_{ti} \right] \nonumber \\
& \qquad\qquad + \frac{8 \gamma^6}{(\lambda \gamma v_a - \tau)^4} \E \left[ F''(\frX) \frac{1}{N^3} \sum_{j, p, s, t} \left( \im \int_{E_1 -2}^{E_2 -2} \wt G_{ps} \wt G_{jp} \wt G_{tj} \,\dd \wt y \right) G_{is} G_{ti} \right] + \caO(\Psi^5)\,. 
\end{align}

\subsubsection{Expansion of the second term in \eqref{eq:1+2+3+4+5+6}}

We have from \eqref{eq:21+22}, \eqref{eq:211+212}, \eqref{eq:211}, \eqref{eq:212} and \eqref{eq:22} that
\begin{align} \label{eq:2}
&\E \left[ \frac{F''(\frX^{(a)})}{(\lambda \gamma v_a - \tau)^2} (\frX - \frX^{(a)}) \sum_{s, t}^{(a)} G_{is}^{(a)} h_{sa} h_{at} G_{ti}^{(a)} \right] \nonumber \\
&\qquad= \frac{\gamma^4}{(\lambda \gamma v_a - \tau)^3} \E[ F''(\frX) \frX_{22} \caX_{22}] - \frac{2 \gamma^6}{(\lambda \gamma v_a - \tau)^4} \E [ F''(\frX) \frX_{22} \caX_{33} ] \nonumber \\
&\qquad\qquad + \frac{\gamma^4}{(\lambda \gamma v_a - \tau)^4} \E [ F'''(\frX) \frX_{32} \caX_{22} ] - \frac{\gamma^6}{(\lambda \gamma v_a - \tau)^4} \E [ F'''(\frX) (\frX_{22})^2 \caX_{22} ] \nonumber \\
&\qquad\qquad + \frac{2 \gamma^4}{(\lambda \gamma v_a - \tau)^3} \E \left[ F''(\frX) \frac{1}{N^2} \sum_{j, s, t} \left( \im \int_{E_1 -2}^{E_2 -2} \wt G_{js} \wt G_{tj} \,\dd \wt y \right) G_{is} G_{ti} \right] \nonumber \\
&\qquad\qquad + \frac{2 \gamma^4}{(\lambda \gamma v_a - \tau)^4} \E \left[ F''(\frX) \frac{1}{N^2} \sum_{j, s, t} \left( \im \int_{E_1 -2}^{E_2 -2} (z + \gamma^2 \wt m - \tau) \wt G_{js} \wt G_{tj} \,\dd \wt y \right) G_{is} G_{ti} \right]\nonumber \\
&\qquad\qquad - \frac{4 \gamma^6}{(\lambda \gamma v_a - \tau)^4} \E \left[ F''(\frX) \frac{1}{N^3} \sum_{j, s, t, r} \left( \im \int_{E_1 -2}^{E_2 -2} \wt G_{js} \wt G_{tj} \,\dd \wt y \right) G_{is} G_{tr} G_{ri} \right] \nonumber \\
&\qquad\qquad + \frac{4 \gamma^6}{(\lambda \gamma v_a - \tau)^4} \E \left[ F''(\frX) \frac{1}{N^3} \sum_{j, s, t, r} \left( \im \int_{E_1 -2}^{E_2 -2} \wt G_{jr} \wt G_{rs} \wt G_{tj} \,\dd \wt y \right) G_{is} G_{ti} \right] \nonumber \\
&\qquad\qquad - \frac{2 \gamma^6}{(\lambda \gamma v_a - \tau)^4} \E \left[ F'''(\frX) \frX_{22} \frac{1}{N^2} \sum_{j, s, t} \left( \im \int_{E_1 -2}^{E_2 -2} \wt G_{js} \wt G_{tj} \,\dd \wt y \right) G_{is} G_{ti} \right] \nonumber \\
&\qquad \qquad + \frac{2 \gamma^6}{(\lambda \gamma v_a - \tau)^4} \E \left[ F''(\frX) \frac{1}{N^3} \sum_{j, p, s, t} \left( \im \int_{E_1 -2}^{E_2 -2} \wt G_{st} \wt G_{jp} \wt G_{pj} \,\dd \wt y \right) G_{is} G_{ti} \right] + \caO(\Psi^5)\,. 
\end{align}

\subsection{Expansion of the third term in \eqref{eq:1+2+3+4+5+6}}

We first notice that the third term in \eqref{eq:1+2+3+4+5+6} is of $\caO(\Psi^4)$. Using~\eqref{frX-frX^a^2} we get
\begin{align}
&\frac{F'''(\frX^{(a)})}{2(\lambda \gamma v_a - \tau)^2} (\frX - \frX^{(a)})^2 \sum_{s, t}^{(a)} G_{is}^{(a)} h_{sa} h_{at} G_{ti}^{(a)}\nonumber \\
&\qquad= \frac{F'''(\frX^{(a)})}{2(\lambda \gamma v_a - \tau)^2} \left( \im \int_{E_1 -2}^{E_2 -2} \sum_j \frac{\wt G_{ja} \wt G_{aj}}{\wt G_{aa}} \,\dd \wt y \right) \left( \im \int_{E_1 -2}^{E_2 -2} \sum_k \frac{\widehat G_{ka} \widehat G_{ak}}{\widehat G_{aa}} \,\dd \widehat y \right) \sum_{s, t}^{(a)} G_{is}^{(a)} h_{sa} h_{at} G_{ti}^{(a)} 
\end{align}
Taking the partial expectation $\E_a$, we find
\begin{align}
&\E_a \left[ \frac{F'''(\frX^{(a)})}{2(\lambda \gamma v_a - \tau)^2} (\frX - \frX^{(a)})^2 \sum_{s, t}^{(a)} G_{is}^{(a)} h_{sa} h_{at} G_{ti}^{(a)} \right] \nonumber\\
&\qquad= \frac{F'''(\frX^{(a)})}{2(\lambda \gamma v_a - \tau)^4} \frac{\gamma^6}{N^3} \sum_{j, k, p, q}^{(a)} \left( \im \int_{E_1 -2}^{E_2 -2} \wt G^{(a)}_{jp} \wt G^{(a)}_{pj} \dd \wt y \right) \left( \im \int_{E_1 -2}^{E_2 -2} \widehat G^{(a)}_{kq} \widehat G^{(a)}_{qk} \,\dd \widehat y \right) \sum_s^{(a)} G_{is}^{(a)} G_{si}^{(a)} \nonumber \\
&\qquad\qquad + \frac{F'''(\frX^{(a)})}{(\lambda \gamma v_a - \tau)^4} \frac{\gamma^6}{N^3} \sum_{j, k, p, q}^{(a)} \left( \im \int_{E_1 -2}^{E_2 -2} \wt G^{(a)}_{jp} \wt G^{(a)}_{qj} \,\dd \wt y \right) \left( \im \int_{E_1 -2}^{E_2 -2} \widehat G^{(a)}_{kp} \widehat G^{(a)}_{qk} \,\dd \widehat y \right) \sum_s^{(a)} G_{is}^{(a)} G_{si}^{(a)} \nonumber \\
&\qquad\qquad+ \frac{2 F'''(\frX^{(a)})}{(\lambda \gamma v_a - \tau)^4} \frac{\gamma^6}{N^3} \sum_{j, k, p, s, t}^{(a)} \left( \im \int_{E_1 -2}^{E_2 -2} \wt G^{(a)}_{jp} \wt G^{(a)}_{pj} \,\dd \wt y \right) \left( \im \int_{E_1 -2}^{E_2 -2} \widehat G^{(a)}_{ks} \widehat G^{(a)}_{tk} \,\dd \widehat y \right) G_{is}^{(a)} G_{ti}^{(a)} \nonumber \\
&\qquad\qquad+ \frac{4 F'''(\frX^{(a)})}{(\lambda \gamma v_a - \tau)^4} \frac{\gamma^6}{N^3} \sum_{j, k, p, s, t}^{(a)} \left( \im \int_{E_1 -2}^{E_2 -2} \wt G^{(a)}_{jp} \wt G^{(a)}_{sj} \,\dd \wt y \right) \left( \im \int_{E_1 -2}^{E_2 -2} \widehat G^{(a)}_{kp} \widehat G^{(a)}_{tk} \,\dd \widehat y \right) G_{is}^{(a)} G_{ti}^{(a)} + \caO(\Psi^5)\,.
\end{align}
Thus, after taking the full expectation, we obtain 
\begin{align} \label{eq:3}
&\E \left[ \frac{F'''(\frX^{(a)})}{2(\lambda \gamma v_a - \tau)^2} (\frX - \frX^{(a)})^2 \sum_{s, t}^{(a)} G_{is}^{(a)} h_{sa} h_{at} G_{ti}^{(a)} \right] \nonumber \\
&\qquad= \frac{\gamma^6}{2(\lambda \gamma v_a - \tau)^4} \E [ F'''(\frX) (\frX_{22})^2 \caX_{22}] \nonumber\\
&\qquad\qquad+ \frac{\gamma^6}{(\lambda \gamma v_a - \tau)^4} \E \left[ F'''(\frX) \frac{1}{N^2} \sum_{j, k, p, q} \left( \im \int_{E_1 -2}^{E_2 -2} \wt G_{jp} \wt G_{qj} \,\dd \wt y \right) \left( \im \int_{E_1 -2}^{E_2 -2} \widehat G_{kp} \widehat G_{qk} \,\dd \widehat y \right) \caX_{22} \right] \nonumber \\
&\qquad\qquad + \frac{2 \gamma^6}{(\lambda \gamma v_a - \tau)^4} \E \left[ F'''(\frX) \frX_{22} \frac{1}{N^2} \sum_{k, s, t} \left( \im \int_{E_1 -2}^{E_2 -2} \wt G_{kp} \wt G_{qk} \,\dd \wt y \right) G_{is} G_{ti} \right] \nonumber \\
&\qquad\qquad + \frac{4 \gamma^6}{(\lambda \gamma v_a - \tau)^4} \E \left[ F'''(\frX) \frac{1}{N^3} \sum_{j, k, p, s, t} \left( \im \int_{E_1 -2}^{E_2 -2} \wt G_{jp} \wt G_{sj} \,\dd \wt y \right) \left( \im \int_{E_1 -2}^{E_2 -2} \widehat G_{kp} \widehat G_{tk} \,\dd \widehat y \right) G_{is} G_{ti} \right] \nonumber\\
&\qquad\qquad+ \caO(\Psi^5)\,. 
\end{align}

\subsection{Expansion of the fourth term in \eqref{eq:1+2+3+4+5+6}}

We again begin by taking the partial expectation $\E_a$. We have
\begin{align} \label{eq:41+42}
&\E_a \left[ \frac{2 F'(\frX^{(a)})}{(\lambda \gamma v_a - \tau)^3} \left( z + \sum_{p, q}^{(a)} h_{ap} G_{pq}^{(a)} h_{qa} - \tau \right) \sum_{s, t}^{(a)} G_{is}^{(a)} h_{sa} h_{at} G_{ti}^{(a)} \right]\nonumber \\
&\qquad\qquad= \frac{\gamma^2}{N} \frac{2 F'(\frX^{(a)})}{(\lambda \gamma v_a - \tau)^3} ( z + \gamma^2 m^{(a)} - \tau ) \sum_s^{(a)} G_{is}^{(a)} G_{si}^{(a)} + \frac{\gamma^4}{N^2} \frac{4 F'(\frX^{(a)})}{(\lambda \gamma v_a - \tau)^3} \sum_{s, t}^{(a)} G_{is}^{(a)} G_{st}^{(a)} G_{ti}^{(a)} + \caO(\Psi^5)\,. 
\end{align}
Following the procedure in \eqref{eq:14111+14112+14113+14114+14115} we expand \eqref{eq:41+42}. We first consider
\begin{align} \label{eq:411+412+413+414+415}
&\frac{\gamma^2}{N} \frac{2 F'(\frX^{(a)})}{(\lambda \gamma v_a - \tau)^3} ( z + \gamma^2 m^{(a)} - \tau ) \sum_s^{(a)} G_{is}^{(a)} G_{si}^{(a)}\nonumber \\
&\qquad= \frac{\gamma^2}{N} \frac{2 F'(\frX)}{(\lambda \gamma v_a - \tau)^3} ( z + \gamma^2 m - \tau ) \sum_s^{(a)} G_{is} G_{si} - \frac{\gamma^2}{N} \frac{2 F'(\frX^{(a)})}{(\lambda \gamma v_a - \tau)^3} ( z + \gamma^2 m^{(a)} - \tau ) \sum_s^{(a)} G_{is}^{(a)} \frac{G_{sa} G_{ai}}{G_{aa}} \nonumber \\
&\qquad\qquad - \frac{\gamma^2}{N} \frac{2 F'(\frX^{(a)})}{(\lambda \gamma v_a - \tau)^3} ( z + \gamma^2 m^{(a)} - \tau ) \sum_s^{(a)} \frac{G_{ia} G_{as}}{G_{aa}} G_{si} - \frac{\gamma^4}{N} \frac{2 F'(\frX^{(a)})}{(\lambda \gamma v_a - \tau)^3} (m - m^{(a)}) \sum_s^{(a)} G_{is} G_{si} \nonumber \\
&\qquad\qquad- \frac{\gamma^2}{N} \frac{2 F''(\frX^{(a)})}{(\lambda \gamma v_a - \tau)^3} (\frX - \frX^{(a)}) ( z + \gamma^2 m - \tau ) \sum_s^{(a)} G_{is} G_{si} + \caO(\Psi^5)\,. 
\end{align}
We stop expanding the first term and observe that
\begin{align} \label{eq:411}
\E \left[ \frac{\gamma^2}{N} \frac{2 F'(\frX)}{(\lambda \gamma v_a - \tau)^3} ( z + \gamma^2 m - \tau ) \sum_s^{(a)} G_{is} G_{si} \right] = \frac{2 \gamma^2}{(\lambda \gamma v_a - \tau)^3} \E[ F'(\frX) \caX_{32} ] + \caO(\Psi^5)\,.
\end{align}
We notice that all other terms in \eqref{eq:411+412+413+414+415} are of $\caO(\Psi^4)$. As in the estimates \eqref{eq:14112} and \eqref{eq:14113}, we find for the second and the third term of \eqref{eq:411+412+413+414+415} that
\begin{align} \label{eq:412}
\E \left[ \frac{\gamma^2}{N} \frac{2 F'(\frX^{(a)})}{(\lambda \gamma v_a - \tau)^3} ( z + \gamma^2 m^{(a)} - \tau ) \sum_s^{(a)} G_{is}^{(a)} \frac{G_{sa} G_{ai}}{G_{aa}} \right] = \frac{2 \gamma^4}{(\lambda \gamma v_a - \tau)^4} \E[ F'(\frX) \caX_{43} ] + \caO(\Psi^5)\,,
\end{align}
respectively,
\begin{align} \label{eq:413}
\E \left[ \frac{\gamma^2}{N} \frac{2 F'(\frX^{(a)})}{(\lambda \gamma v_a - \tau)^3} ( z + \gamma^2 m^{(a)} - \tau ) \sum_s^{(a)} \frac{G_{ia} G_{as}}{G_{aa}} G_{si} \right] = \frac{2 \gamma^4}{(\lambda \gamma v_a - \tau)^4} \E[ F'(\frX) \caX_{43} ] + \caO(\Psi^5)\,.
\end{align}
Since
\begin{align*}
m - m^{(a)} = \frac{1}{N} \sum_r \frac{G_{ra} G_{ar}}{G_{aa}}\,,
\end{align*}
we obtain for the fourth term in \eqref{eq:411+412+413+414+415} that
\begin{align} \label{eq:414}
\E \left[ \frac{\gamma^4}{N} \frac{2 F'(\frX^{(a)})}{(\lambda \gamma v_a - \tau)^3} (m - m^{(a)}) \sum_s^{(a)} G_{is} G_{si} \right] = \frac{2 \gamma^6}{(\lambda \gamma v_a - \tau)^4} \E[ F'(\frX) \caX_{44}' ] + \caO(\Psi^5)\,.
\end{align}
Finally, similar to~\eqref{eq:14115}, we get for the last term in \eqref{eq:411+412+413+414+415} that
\begin{align} \label{eq:415}
\E \left[ \frac{\gamma^2}{N} \frac{2 F''(\frX^{(a)})}{(\lambda \gamma v_a - \tau)^3} (\frX - \frX^{(a)}) ( z + \gamma^2 m - \tau ) \sum_s^{(a)} G_{is} G_{si} \right] = \frac{2 \gamma^4}{(\lambda \gamma v_a - \tau)^4} \E[ F''(\frX) \frX_{22} \caX_{32} ] + \caO(\Psi^5)\,.
\end{align}
Thus, from \eqref{eq:411+412+413+414+415}, \eqref{eq:411}, \eqref{eq:412}, \eqref{eq:413}, \eqref{eq:414} and \eqref{eq:415}, we get
\begin{align} \label{eq:41}
&\E \left[ \frac{\gamma^2}{N} \frac{2 F'(\frX^{(a)})}{(\lambda \gamma v_a - \tau)^3} ( z + \gamma^2 m^{(a)} - \tau ) \sum_s^{(a)} G_{is}^{(a)} G_{si}^{(a)} \right] \nonumber \\
&\qquad= \frac{2 \gamma^2}{(\lambda \gamma v_a - \tau)^3} \E[ F'(\frX) \caX_{32} ] - \frac{4 \gamma^4}{(\lambda \gamma v_a - \tau)^4} \E[ F'(\frX) \caX_{43} ] - \frac{2 \gamma^6}{(\lambda \gamma v_a - \tau)^4} \E[ F'(\frX) \caX_{44}' ] \nonumber\\
&\qquad\qquad- \frac{2 \gamma^4}{(\lambda \gamma v_a - \tau)^4} \E[ F''(\frX) \frX_{22} \caX_{32} ] + \caO(\Psi^5)\,. 
\end{align}
This completes the expansion of the first term in \eqref{eq:41+42}.

The expansion of the second term in \eqref{eq:41+42} was already done in \eqref{eq:14111+14112+14113+14114+14115} and \eqref{eq:1411}, which shows that
\begin{align} \label{eq:42}
&\E \left[ \frac{\gamma^4}{N^2} \frac{4 F'(\frX^{(a)})}{(\lambda \gamma v_a - \tau)^3} \sum_{s, t}^{(a)} G_{is}^{(a)} G_{st}^{(a)} G_{ti}^{(a)} \right]\nonumber \\
&\qquad= \frac{4 \gamma^4}{(\lambda \gamma v_a - \tau)^3} \E[ F'(\frX) \caX_{33} ] - \frac{12 \gamma^6}{(\lambda \gamma v_a - \tau)^4} \E[ F'(\frX) \caX_{44}] - \frac{4 \gamma^6}{(\lambda \gamma v_a - \tau)^4} \E [ F''(\frX) \frX_{22} \caX_{33}] + \caO(\Psi^5) \,. 
\end{align}
We thus find from \eqref{eq:41+42}, \eqref{eq:41} and \eqref{eq:42} that
\begin{align} \label{eq:4}
&\E \left[ \frac{2 F'(\frX^{(a)})}{(\lambda \gamma v_a - \tau)^3} \left( z + \sum_{p, q}^{(a)} h_{ap} G_{pq}^{(a)} h_{qa} - \tau \right) \sum_{s, t}^{(a)} G_{is}^{(a)} h_{sa} h_{at} G_{ti}^{(a)} \right] \nonumber\\
&\quad= \frac{2 \gamma^2}{(\lambda \gamma v_a - \tau)^3} \E[ F'(\frX) \caX_{32} ] + \frac{4 \gamma^4}{(\lambda \gamma v_a - \tau)^3} \E[ F'(\frX) \caX_{33} ] - \frac{4 \gamma^4}{(\lambda \gamma v_a - \tau)^4} \E[ F'(\frX) \caX_{43} ] - \frac{12 \gamma^6}{(\lambda \gamma v_a - \tau)^4} \E[ F'(\frX) \caX_{44}] \nonumber \\
&\quad\qquad - \frac{2 \gamma^6}{(\lambda \gamma v_a - \tau)^4} \E[ F'(\frX) \caX_{44}' ] - \frac{2 \gamma^4}{(\lambda \gamma v_a - \tau)^4} \E[ F''(\frX) \frX_{22} \caX_{32} ] - \frac{4 \gamma^6}{(\lambda \gamma v_a - \tau)^4} \E [ F''(\frX) \frX_{22} \caX_{33}] + \caO(\Psi^5) \,. 
\end{align}

\subsection{Expansion of the fifth term in \eqref{eq:1+2+3+4+5+6}}

Recall that, by definition,
\begin{align}
\frX - \frX^{(a)} = \im \int_{E_1 -2}^{E_2 -2} \sum_j \frac{\wt G_{ja} \wt G_{aj}}{\wt G_{aa}} \,\dd \wt y
\end{align}
and that $\frX - \frX^{(a)} = \caO(\Psi)$. We have
\begin{align}
& \frac{2 F''(\frX^{(a)})}{(\lambda \gamma v_a - \tau)^3} (\frX - \frX^{(a)}) \left( z + \sum_{p, q}^{(a)} h_{ap} G_{pq}^{(a)} h_{qa} - \tau \right) \sum_{s, t}^{(a)} G_{is}^{(a)} h_{sa} h_{at} G_{ti}^{(a)} \\
&= \frac{2 F''(\frX^{(a)})}{(\lambda \gamma v_a - \tau)^4} \left( \im \int_{E_1 -2}^{E_2 -2} \sum_{j, r, m}^{(a)} \wt G^{(a)}_{jr} h_{ra} h_{am} \wt G^{(a)}_{mj} \,\dd \wt y \right) \left( z + \sum_{p, q}^{(a)} h_{ap} G_{pq}^{(a)} h_{qa} - \tau \right) \sum_{s, t}^{(a)} G_{is}^{(a)} h_{sa} h_{at} G_{ti}^{(a)} + \caO(\Psi^5)\,. \nonumber
\end{align}
As in \eqref{eq:122}, we take the partial expectation $\E_a$ to obtain 
\begin{align}
& \E_a \left[ \frac{2 F''(\frX^{(a)})}{(\lambda \gamma v_a - \tau)^3} (\frX - \frX^{(a)}) \left( z + \sum_{p, q}^{(a)} h_{ap} G_{pq}^{(a)} h_{qa} - \tau \right) \sum_{s, t}^{(a)} G_{is}^{(a)} h_{sa} h_{at} G_{ti}^{(a)} \right] \nonumber \\
&\qquad= \frac{2 F''(\frX^{(a)})}{(\lambda \gamma v_a - \tau)^4} \frac{\gamma^4}{N^2} \sum_{j, r, s}^{(a)} \left( \im \int_{E_1 -2}^{E_2 -2} \wt G^{(a)}_{jr} \wt G^{(a)}_{rj} \,\dd \wt y \right) ( z + \gamma^2 m^{(a)} - \tau ) G_{is}^{(a)} G_{si}^{(a)} \nonumber \\
&\qquad\qquad + \frac{4 F''(\frX^{(a)})}{(\lambda \gamma v_a - \tau)^4} \frac{\gamma^6}{N^3} \sum_{j, r, s, t}^{(a)} \left( \im \int_{E_1 -2}^{E_2 -2} \wt G^{(a)}_{jr} \wt G^{(a)}_{rj} \,\dd \wt y \right) G_{is}^{(a)} G_{st}^{(a)} G_{ti}^{(a)}\nonumber \\
&\qquad\qquad + \frac{4 F''(\frX^{(a)})}{(\lambda \gamma v_a - \tau)^4} \frac{\gamma^4}{N^2} \sum_{j, s, t}^{(a)} \left( \im \int_{E_1 -2}^{E_2 -2} \wt G^{(a)}_{js} \wt G^{(a)}_{tj} \,\dd \wt y \right) ( z + \gamma^2 m^{(a)} - \tau ) G_{is}^{(a)} G_{ti}^{(a)} \nonumber \\
&\qquad\qquad + \frac{4 F''(\frX^{(a)})}{(\lambda \gamma v_a - \tau)^4} \frac{\gamma^6}{N^3} \sum_{j, p, q, s}^{(a)} \left( \im \int_{E_1 -2}^{E_2 -2} \wt G^{(a)}_{jp} \wt G^{(a)}_{qj} \,\dd \wt y \right) G_{is}^{(a)} G_{pq}^{(a)} G_{si}^{(a)} \nonumber \\
&\qquad\qquad + \frac{16 F''(\frX^{(a)})}{(\lambda \gamma v_a - \tau)^4} \frac{\gamma^6}{N^3} \sum_{j, p, s, t}^{(a)} \left( \im \int_{E_1 -2}^{E_2 -2} \wt G^{(a)}_{jp} \wt G^{(a)}_{sj} \,\dd \wt y \right) G_{is}^{(a)} G_{pt}^{(a)} G_{ti}^{(a)} + \caO(\Psi^5)\,.
\end{align}
We thus have for the fifth term in \eqref{eq:1+2+3+4+5+6} that
\begin{align} \label{eq:5}
& \E \left[ \frac{2 F''(\frX^{(a)})}{(\lambda \gamma v_a - \tau)^3} (\frX - \frX^{(a)}) ( z + \sum_{p, q}^{(a)} h_{ap} G_{pq}^{(a)} h_{qa} - \tau ) \sum_{s, t}^{(a)} G_{is}^{(a)} h_{sa} h_{at} G_{ti}^{(a)} \right] \nonumber \\
&\qquad= \frac{2 \gamma^4}{(\lambda \gamma v_a - \tau)^4} \E [ F''(\frX) \frX_{22} \caX_{32} ]  + \frac{4 \gamma^6}{(\lambda \gamma v_a - \tau)^4} \E [ F''(\frX) \frX_{22} \caX_{33} ] \nonumber \\
&\qquad\qquad + \frac{4 \gamma^4}{(\lambda \gamma v_a - \tau)^4} \E \left[ F''(\frX) \frac{1}{N^2} \sum_{j, s, t} \left( \im \int_{E_1 -2}^{E_2 -2} \wt G_{js} \wt G_{tj} \,\dd \wt y \right) ( z + \gamma^2 m - \tau ) G_{is} G_{ti} \right] \nonumber\\
&\qquad\qquad + \frac{4 \gamma^6}{(\lambda \gamma v_a - \tau)^4} \E \left[ F''(\frX) \frac{1}{N^2} \sum_{j, p, q} \left( \im \int_{E_1 -2}^{E_2 -2} \wt G_{jp} \wt G_{qj} \,\dd \wt y \right) G_{pq} \caX_{22} \right] \nonumber \\
&\qquad\qquad + \frac{16 \gamma^6}{(\lambda \gamma v_a - \tau)^4} \E \left[ F''(\frX) \frac{1}{N^3} \sum_{j, p, s, t} \left( \im \int_{E_1 -2}^{E_2 -2} \wt G_{jp} \wt G_{sj} \,\dd \wt y \right) G_{is} G_{pt} G_{ti} \right] + \caO(\Psi^5)\,.
\end{align}

\subsection{Expansion of the sixth term in \eqref{eq:1+2+3+4+5+6}}

We take the partial expectation $\E_a$ to get
\begin{align}
&\E_a \left[ \frac{3 F'(\frX^{(a)})}{(\lambda \gamma v_a - \tau)^4} ( z + \sum_{p, q}^{(a)} h_{ap} G_{pq}^{(a)} h_{qa} - \tau )^2 \sum_{s, t}^{(a)} G_{is}^{(a)} h_{sa} h_{at} G_{ti}^{(a)} \right]\nonumber \\
&\qquad= \frac{3 F'(\frX^{(a)})}{(\lambda \gamma v_a - \tau)^4} \frac{\gamma^2}{N} ( z + \gamma^2 \wt m^{(a)} - \tau )^2 \sum_s^{(a)} G_{is}^{(a)} G_{si}^{(a)} + \frac{12 F'(\frX^{(a)})}{(\lambda \gamma v_a - \tau)^4} \frac{\gamma^4}{N^2} ( z + \gamma^2 \wt m^{(a)} - \tau ) \sum_{s, t}^{(a)} G_{is}^{(a)} G_{st}^{(a)} G_{ti}^{(a)} \nonumber \\
&\qquad\qquad+ \frac{6 F'(\frX^{(a)})}{(\lambda \gamma v_a - \tau)^4} \frac{\gamma^6}{N^3} \sum_{p, q, s}^{(a)} G_{pq}^{(a)} G_{qp}^{(a)} G_{is}^{(a)} G_{si}^{(a)} + \frac{24 F'(\frX^{(a)})}{(\lambda \gamma v_a - \tau)^4} \frac{\gamma^6}{N^3} \sum_{p, s, t}^{(a)} G_{ps}^{(a)} G_{tp}^{(a)} G_{is}^{(a)} G_{ti}^{(a)} + \caO(\Psi^5)\,. 
\end{align}
We thus find for the sixth term in \eqref{eq:1+2+3+4+5+6} that
\begin{align} \label{eq:6}
&\E \left[ \frac{3 F'(\frX^{(a)})}{(\lambda \gamma v_a - \tau)^4} ( z + \sum_{p, q}^{(a)} h_{ap} G_{pq}^{(a)} h_{qa} - \tau )^2 \sum_{s, t}^{(a)} G_{is}^{(a)} h_{sa} h_{at} G_{ti}^{(a)} \right] \nonumber \\
&\qquad= \frac{3 \gamma^2}{(\lambda \gamma v_a - \tau)^4} \E [ F'(\frX) \caX_{42} ] + \frac{12 \gamma^4}{(\lambda \gamma v_a - \tau)^4} \E [ F'(\frX) \caX_{43} ] + \frac{6 \gamma^6}{(\lambda \gamma v_a - \tau)^4} \E [ F'(\frX) \caX_{44}' ] \nonumber\\
&\qquad\qquad+ \frac{24 \gamma^6}{(\lambda \gamma v_a - \tau)^4} \E [ F'(\frX) \caX_{44} ] + \caO(\Psi^5)\,. 
\end{align}

\subsection{Expansion of \eqref{eq:1+2+3+4+5+6}}

We now collect the results \eqref{eq:1}, \eqref{eq:2}, \eqref{eq:3}, \eqref{eq:4}, \eqref{eq:5} and \eqref{eq:6}. We then find from \eqref{eq:1+2+3+4+5+6} that, for $a \neq i$,
\begin{align} \label{eq:first}
&\E [ F'(\frX) G_{ia} G_{ai} ] \nonumber \\
&\qquad= \frac{\gamma^2}{(\lambda \gamma v_a - \tau)^2} \E \left[ F'(\frX) \caX_{22} \right] + \frac{2 \gamma^2}{(\lambda \gamma v_a - \tau)^3} \E[ F'(\frX) \caX_{32} ] + \frac{2 \gamma^4}{(\lambda \gamma v_a - \tau)^3} \E[ F'(\frX) \caX_{33} ] \nonumber \\
&\qquad\qquad + \frac{2 \gamma^4}{(\lambda \gamma v_a - \tau)^3} \E \left[ F''(\frX) \frac{1}{N^2} \sum_{j, s, t} \left( \im \int_{E_1 -2}^{E_2 -2} \wt G_{js} \wt G_{tj} \,\dd \wt y \right) G_{is} G_{ti} \right] \nonumber \\
&\qquad\qquad + \frac{3 \gamma^2}{(\lambda \gamma v_a - \tau)^4} \E\, [ F'(\frX) \caX_{42} ] + \frac{6 \gamma^4}{(\lambda \gamma v_a - \tau)^4} \E\, [ F'(\frX) \caX_{43} ]\nonumber\\
&\qquad\qquad + \frac{3 \gamma^6}{(\lambda \gamma v_a - \tau)^4} \E \,[ F'(\frX) \caX_{44}' ] + \frac{12 \gamma^6}{(\lambda \gamma v_a - \tau)^4} \E\, [ F'(\frX) \caX_{44} ] \nonumber \\
&\qquad\qquad+ \frac{8 \gamma^6}{(\lambda \gamma v_a - \tau)^4} \E \left[ F''(\frX) \frac{1}{N^3} \sum_{j, r, t, s} \left( \im \int_{E_1 -2}^{E_2 -2} \wt G_{jr} \wt G_{tj} \,\dd \wt y \right) G_{ir} G_{ts} G_{si} \right]\nonumber \\
&\qquad\qquad + \frac{2 \gamma^4}{(\lambda \gamma v_a - \tau)^4} \E \left[ F''(\frX) \frac{1}{N^2} \sum_{j, s, t} \left( \im \int_{E_1 -2}^{E_2 -2} (z + \gamma^2 \wt m - \tau) \wt G_{js} \wt G_{tj} \,\dd \wt y \right) G_{is} G_{ti} \right] \nonumber \\
&\qquad\qquad + \frac{4 \gamma^6}{(\lambda \gamma v_a - \tau)^4} \E \left[ F''(\frX) \frac{1}{N^3} \sum_{j, s, t, r} \left( \im \int_{E_1 -2}^{E_2 -2} \wt G_{jr} \wt G_{rs} \wt G_{tj} \,\dd \wt y \right) G_{is} G_{ti} \right] \nonumber \\
& \qquad\qquad + \frac{2 \gamma^6}{(\lambda \gamma v_a - \tau)^4} \E \left[ F''(\frX) \frac{1}{N^3} \sum_{j, p, s, t} \left( \im \int_{E_1 -2}^{E_2 -2} \wt G_{st} \wt G_{jp} \wt G_{pj} \,\dd \wt y \right) G_{is} G_{ti} \right] \nonumber \\
&\qquad\qquad + \frac{4 \gamma^6}{(\lambda \gamma v_a - \tau)^4} \E \left[ F'''(\frX) \frac{1}{N^3} \sum_{j, k, p, s, t} \left( \im \int_{E_1 -2}^{E_2 -2} \wt G_{jp} \wt G_{sj} \,\dd \wt y \right) \left( \im \int_{E_1 -2}^{E_2 -2} \widehat G_{kp} \widehat G_{tk} \,\dd \widehat y \right) G_{is} G_{ti} \right] \nonumber \\
&\qquad\qquad + \frac{4 \gamma^4}{(\lambda \gamma v_a - \tau)^4} \E \left[ F''(\frX) \frac{1}{N^2} \sum_{j, s, t} \left( \im \int_{E_1 -2}^{E_2 -2} \wt G_{js} \wt G_{tj} \,\dd \wt y \right) ( z + \gamma^2 m - \tau ) G_{is} G_{ti} \right] \nonumber \\
&\qquad\qquad + \frac{4 \gamma^6}{(\lambda \gamma v_a - \tau)^4} \E \left[ F''(\frX) \frac{1}{N^2} \sum_{j, p, q} \left( \im \int_{E_1 -2}^{E_2 -2} \wt G_{jp} \wt G_{qj} \,\dd \wt y \right) G_{pq} \caX_{22} \right]  + \caO(\Psi^5)\,. 
\end{align}
We remark that~\eqref{eq:first} is indeed of the form~\eqref{le aim of the expansion} claimed at the beginning of this appendix.

\section{}\label{appendix IV}

In a second step of the proof of Lemma~\ref{lem:G_ii estimate}, we expand in this appendix the second to last term on the right side of~\eqref{EF G_ii expand}. As before, we always work on the event $\Xi$ and abbreviate $\caO_{\Xi}\equiv \caO$. We begin with the  expansion
\begin{align} \label{eq:a+b+c}
&\frac{1}{N} \sum_a F'(\frX) G_{ia}G_{bb}G_{ai} \nonumber \\
&\qquad= \frac{1}{N} \sum_a \frac{1}{\lambda \gamma v_b - \tau} F'(\frX) G_{ia} G_{ai} + \frac{1}{N} \sum_a^{(b)} \frac{1}{(\lambda \gamma v_b - \tau)^2} F'(\frX) \left( z + \sum_{p, q}^{(b)} h_{bp} G_{pq}^{(b)} h_{qb} - \tau \right) G_{ia} G_{ai} \nonumber\\
&\qquad\qquad + \frac{1}{N} \sum_a^{(b)} \frac{1}{(\lambda \gamma v_b - \tau)^3} F'(\frX) \left( z + \sum_{p, q}^{(b)} h_{bp} G_{pq}^{(b)} h_{qb} - \tau \right)^2 G_{ia} G_{ai} + \caO(\Psi^5)\,. 
\end{align}

Next, we decouple the indices $a$ and $b$ in~\eqref{eq:a+b+c} so that the index $b$ appears in the deterministic part only. Since this is already fulfilled in the first term on the right side of~\eqref{eq:a+b+c}, we keep the first term as it is, i.e.,
\begin{align} \label{eq:a}
\E \left[ \frac{1}{N} \sum_a \frac{1}{\lambda \gamma v_b - \tau} F'(\frX) G_{ia} G_{ai} \right] = \frac{1}{\lambda \gamma v_b - \tau} \E \,[ F'(\frX) \caX_{22}]\,.
\end{align}

Next, we expand the other two terms on the right side of~\eqref{eq:a+b+c} as in the Appendix~\ref{appendix III}.

\subsection{Expansion of the second term in \eqref{eq:a+b+c}}

Since
\begin{align}
F'(\frX) = F'(\frX^{(b)}) + F''(\frX^{(b)}) ( \frX - \frX^{(b)}) + \caO(\Psi^2)\,,
\end{align}
we have 
\begin{align} \label{eq:b1+b2+b3+b4}
& \frac{1}{N} \sum_a^{(b)} \frac{1}{(\lambda \gamma v_b - \tau)^2} F'(\frX) \left( z + \sum_{p, q}^{(b)} h_{bp} G_{pq}^{(b)} h_{qb} - \tau \right) G_{ia} G_{ai} \nonumber \\
&\qquad= \frac{1}{N} \sum_a^{(b)} \frac{1}{(\lambda \gamma v_b - \tau)^2} F'(\frX^{(b)}) \left( z + \sum_{p, q}^{(b)} h_{bp} G_{pq}^{(b)} h_{qb} - \tau \right) G_{ia}^{(b)} G_{ai}^{(b)} \nonumber \\
&\qquad\qquad + \frac{1}{N} \sum_a^{(b)} \frac{1}{(\lambda \gamma v_b - \tau)^2} F'(\frX^{(b)}) \left( z + \sum_{p, q}^{(b)} h_{bp} G_{pq}^{(b)} h_{qb} - \tau \right) G_{ia}^{(b)} \frac{G_{ab} G_{bi}}{G_{bb}}\nonumber \\
&\qquad\qquad + \frac{1}{N} \sum_a^{(b)} \frac{1}{(\lambda \gamma v_b - \tau)^2} F'(\frX^{(b)}) \left( z + \sum_{p, q}^{(b)} h_{bp} G_{pq}^{(b)} h_{qb} - \tau \right) \frac{G_{ib} G_{ba}}{G_{bb}} G_{ai} \nonumber \\
&\qquad\qquad + \frac{1}{N} \sum_a^{(b)} \frac{1}{(\lambda \gamma v_b - \tau)^2}  F''(\frX^{(b)}) ( \frX - \frX^{(b)}) \left( z + \sum_{p, q}^{(b)} h_{bp} G_{pq}^{(b)} h_{qb} - \tau \right) G_{ia} G_{ai} + \caO(\Psi^5)\,. 
\end{align} 
Taking the partial expectation $\E_b$, we find for the first term in \eqref{eq:b1+b2+b3+b4} that
\begin{align} \label{eq:b11+b12+b13+b14+b15}
&\E_b \left[ \frac{1}{N} \sum_a^{(b)} \frac{1}{(\lambda \gamma v_b - \tau)^2} F'(\frX^{(b)}) \left( z + \sum_{p, q}^{(b)} h_{bp} G_{pq}^{(b)} h_{qb} - \tau \right) G_{ia}^{(b)} G_{ai}^{(b)} \right] \nonumber \\
&\qquad= \frac{1}{N} \sum_a^{(b)} \frac{1}{(\lambda \gamma v_b - \tau)^2} F'(\frX) ( z + \gamma^2 m - \tau ) G_{ia} G_{ai} - \frac{1}{N} \sum_a^{(b)} \frac{1}{(\lambda \gamma v_b - \tau)^2} F'(\frX^{(b)}) ( z + \gamma^2 m^{(b)} - \tau ) G_{ia}^{(b)} \frac{G_{ab} G_{bi}}{G_{bb}} \nonumber \\
&\quad\qquad - \frac{1}{N} \sum_a^{(b)} \frac{1}{(\lambda \gamma v_b - \tau)^2} F'(\frX^{(b)}) ( z + \gamma^2 m^{(b)} - \tau ) \frac{G_{ib} G_{ba}}{G_{bb}} G_{ai} - \frac{1}{N} \sum_a^{(b)} \frac{1}{(\lambda \gamma v_b - \tau)^2} F'(\frX^{(b)}) (m - m^{(b)}) G_{ia} G_{ai} G_{ai} \nonumber \\
&\quad\qquad - \frac{1}{N} \sum_a^{(b)} \frac{1}{(\lambda \gamma v_b - \tau)^2}  F''(\frX^{(b)}) ( \frX - \frX^{(b)}) ( z + \gamma^2 m^{(b)} - \tau ) G_{ia} G_{ai} + \caO(\Psi^5)\,.
\end{align}
We stop expanding the first term and observe that
\begin{align} \label{eq:b11}
\E \left[ \frac{1}{N} \sum_a^{(b)} \frac{1}{(\lambda \gamma v_b - \tau)^2} F'(\frX) ( z + \gamma^2 m - \tau ) G_{ia} G_{ai} \right] = \frac{1}{(\lambda \gamma v_b - \tau)^2} \E [ F'(\frX) \caX_{32}] + \caO(\Psi^5)\,.
\end{align}
All other terms on the right side of~\eqref{eq:b11+b12+b13+b14+b15} are of $\caO(\Psi^4)$. Thus, continuing, we have
\begin{align} \label{eq:b12}
\E \left[ \frac{1}{N} \sum_a^{(b)} \frac{1}{(\lambda \gamma v_b - \tau)^2} F'(\frX^{(b)}) ( z + \gamma^2 m^{(b)} - \tau ) G_{ia}^{(b)} \frac{G_{ab} G_{bi}}{G_{bb}} \right] = \frac{\gamma^2}{(\lambda \gamma v_b - \tau)^3} \E [ F'(\frX) \caX_{43}] + \caO(\Psi^5)\,,
\end{align}
which we know from \eqref{eq:412}. Similarly, we get
\begin{align} \label{eq:b13}
\E \left[ \frac{1}{N} \sum_a^{(b)} \frac{1}{(\lambda \gamma v_b - \tau)^2} F'(\frX^{(b)}) ( z + \gamma^2 m^{(b)} - \tau ) \frac{G_{ib} G_{ba}}{G_{bb}} G_{ai} \right] = \frac{\gamma^2}{(\lambda \gamma v_b - \tau)^3} \E [ F'(\frX) \caX_{43}] + \caO(\Psi^5)\,,
\end{align}
\begin{align} \label{eq:b14}
\E \left[ \frac{1}{N} \sum_a^{(b)} \frac{1}{(\lambda \gamma v_b - \tau)^2} F'(\frX^{(b)}) (m - m^{(b)}) G_{ia} G_{ai} G_{ai} \right] = \frac{\gamma^2}{(\lambda \gamma v_b - \tau)^3} \E [ F'(\frX) \caX_{44}'] + \caO(\Psi^5)\,,
\end{align}
and, from \eqref{eq:415},
\begin{align} \label{eq:b15}
&\E \left[ \frac{1}{N} \sum_a^{(b)} \frac{1}{(\lambda \gamma v_b - \tau)^2}  F''(\frX^{(b)}) ( \frX - \frX^{(b)}) ( z + \gamma^2 m^{(b)} - \tau ) G_{ia} G_{ai} \right]= \frac{\gamma^2}{(\lambda \gamma v_b - \tau)^3} \E [ F''(\frX) \frX_{22} \caX_{32}] + \caO(\Psi^5)\,.
\end{align}
Thus, we find from \eqref{eq:b11+b12+b13+b14+b15}, \eqref{eq:b11}, \eqref{eq:b12}, \eqref{eq:b13}, \eqref{eq:b14} and \eqref{eq:b15} that
\begin{align} \label{eq:b1}
&\E \left[ \frac{1}{N} \sum_a^{(b)} \frac{1}{(\lambda \gamma v_b - \tau)^2} F'(\frX^{(b)}) \left( z + \sum_{p, q}^{(b)} h_{bp} G_{pq}^{(b)} h_{qb} - \tau \right) G_{ia}^{(b)} G_{ai}^{(b)} \right] \nonumber \\
&\qquad= \frac{1}{(\lambda \gamma v_b - \tau)^2} \E [ F'(\frX) \caX_{32}] - \frac{2 \gamma^2}{(\lambda \gamma v_b - \tau)^3} \E [ F'(\frX) \caX_{43}] - \frac{\gamma^4}{(\lambda \gamma v_b - \tau)^3} \E [ F'(\frX) \caX_{44}']\nonumber \\
&\qquad\qquad - \frac{\gamma^2}{(\lambda \gamma v_b - \tau)^3} \E [ F''(\frX) \frX_{22} \caX_{32}] + \caO(\Psi^5)\,. 
\end{align}

In order to expand the second term in \eqref{eq:b1+b2+b3+b4}, we first notice that
\begin{align*}
&\frac{1}{N} \sum_a^{(b)} \frac{1}{(\lambda \gamma v_b - \tau)^2} F'(\frX^{(b)}) \left( z + \sum_{p, q}^{(b)} h_{bp} G_{pq}^{(b)} h_{qb} - \tau \right) G_{ia}^{(b)} \frac{G_{ab} G_{bi}}{G_{bb}} \nonumber\\
&\qquad= \frac{1}{N} \frac{1}{(\lambda \gamma v_b - \tau)^3} F'(\frX^{(b)}) \left( z + \sum_{p, q}^{(b)} h_{bp} G_{pq}^{(b)} h_{qb} - \tau \right) \sum_{a, p, q}^{(b)} G_{ia}^{(b)} G_{ap}^{(b)} h_{pb} h_{bq} G_{qi}^{(b)} + \caO(\Psi^5)\,.
\end{align*}
Then, using~\eqref{eq:1412}, we find that
\begin{align} \label{eq:b2}
&\E \left[ \frac{1}{N} \sum_a^{(b)} \frac{1}{(\lambda \gamma v_b - \tau)^2} F'(\frX^{(b)}) \left( z + \sum_{p, q}^{(b)} h_{bp} G_{pq}^{(b)} h_{qb} - \tau \right) G_{ia}^{(b)} \frac{G_{ab} G_{bi}}{G_{bb}} \right]\nonumber \\
&\qquad= \frac{\gamma^2}{(\lambda \gamma v_b - \tau)^3} \E [ F'(\frX) \caX_{43}] + \frac{2 \gamma^4}{(\lambda \gamma v_b - \tau)^3} \E [ F'(\frX) \caX_{44}] + \caO(\Psi^5)\,.
\end{align}
Similarly, we also have
\begin{align} \label{eq:b3}
&\E \left[ \frac{1}{N} \sum_a^{(b)} \frac{1}{(\lambda \gamma v_b - \tau)^2} F'(\frX^{(b)}) \left( z + \sum_{p, q}^{(b)} h_{bp} G_{pq}^{(b)} h_{qb} - \tau \right) \frac{G_{ib} G_{ba}}{G_{bb}} G_{ai} \right]\nonumber \\
&\qquad= \frac{\gamma^2}{(\lambda \gamma v_b - \tau)^3} \E [ F'(\frX) \caX_{43}] + \frac{2 \gamma^4}{(\lambda \gamma v_b - \tau)^3} \E [ F'(\frX) \caX_{44}] + \caO(\Psi^5)\,.
\end{align}

Finally, for the last term in \eqref{eq:b1+b2+b3+b4}, we observe that
\begin{align*}
&\frac{1}{N} \sum_a^{(b)} \frac{F''(\frX^{(b)})}{(\lambda \gamma v_b - \tau)^2} ( \frX - \frX^{(b)}) \left( z + \sum_{p, q}^{(b)} h_{bp} G_{pq}^{(b)} h_{qb} - \tau \right) G_{ia} G_{ai}\nonumber \\
&\qquad= \frac{1}{N} \sum_a^{(b)} \frac{F''(\frX^{(b)})}{(\lambda \gamma v_b - \tau)^3} \left( \im \int_{E_1 -2}^{E_2 -2} \sum_{j, r, m}^{(b)} \wt G^{(b)}_{jr} h_{ra} h_{am} \wt G^{(b)}_{mj} \,\dd \wt y \right) \left( z + \sum_{p, q}^{(b)} h_{bp} G_{pq}^{(b)} h_{qb} - \tau \right) G_{ia}^{(b)} G_{ai}^{(b)} + \caO(\Psi^5)\,. 
\end{align*}
Thus taking the partial expectation we find 
\begin{align} \label{eq:b4}
&\E \left[ \frac{1}{N} \sum_a^{(b)} \frac{F''(\frX^{(b)})}{(\lambda \gamma v_b - \tau)^2} ( \frX - \frX^{(b)}) \left( z + \sum_{p, q}^{(b)} h_{bp} G_{pq}^{(b)} h_{qb} - \tau \right) G_{ia} G_{ai} \right] \\
&\qquad= \frac{\gamma^2}{(\lambda \gamma v_b - \tau)^3} \E [ F''(\frX) \frX_{22} \caX_{32}] + \frac{2 \gamma^4}{(\lambda \gamma v_b - \tau)^3} \E \left[ F''(\frX) \frac{1}{N^2} \sum_{j, p, q} \left( \im \int_{E_1 -2}^{E_2 -2} \wt G_{jp} \wt G_{qj} \,\dd \wt y \right) G_{pq} \caX_{22} \right] + \caO(\Psi^5)\,. \nonumber
\end{align}

Combining \eqref{eq:b1+b2+b3+b4}, \eqref{eq:b1}, \eqref{eq:b2}, \eqref{eq:b3} and \eqref{eq:b4}, we obtain
\begin{align} \label{eq:b}
&\E \left[ \frac{1}{N} \sum_a^{(b)} \frac{1}{(\lambda \gamma v_b - \tau)^2} F'(\frX) \left( z + \sum_{p, q}^{(b)} h_{bp} G_{pq}^{(b)} h_{qb} - \tau \right) G_{ia} G_{ai} \right] \nonumber \\
&\qquad= \frac{1}{(\lambda \gamma v_b - \tau)^2} \E [ F'(\frX) \caX_{32}] - \frac{\gamma^2}{(\lambda \gamma v_b - \tau)^3} \E [ F'(\frX) \caX_{44}'] + \frac{4 \gamma^4}{(\lambda \gamma v_b - \tau)^3} \E [ F'(\frX) \caX_{44}]\nonumber \\
&\qquad\qquad + \frac{2 \gamma^4}{(\lambda \gamma v_b - \tau)^3} \E \left[ F''(\frX) \frac{1}{N^2} \sum_{j, p, q} \left( \im \int_{E_1 -2}^{E_2 -2} \wt G_{jp} \wt G_{qj} \,\dd \wt y \right) G_{pq} \caX_{22} \right] + \caO(\Psi^5)\,. 
\end{align}
This completes the expansion of the second term in \eqref{eq:a+b+c}.

\subsection{Expansion of the third term in \eqref{eq:a+b+c}}

We next focus on the third term in~\eqref{eq:a+b+c} which is of $\caO(\Psi^4)$. We have
\begin{align}
&\E_b \left[ \frac{1}{N} \sum_a^{(b)} \frac{1}{(\lambda \gamma v_b - \tau)^3} F'(\frX) \left( z + \sum_{p, q}^{(b)} h_{bp} G_{pq}^{(b)} h_{qb} - \tau \right)^2 G_{ia} G_{ai} \right] \nonumber \\
&\qquad= \E_b \left[ \frac{1}{N} \sum_a^{(b)} \frac{1}{(\lambda \gamma v_b - \tau)^3} F'(\frX^{(b)}) \left( z + \sum_{p, q}^{(b)} h_{bp} G_{pq}^{(b)} h_{qb} - \tau \right)^2 G_{ia}^{(b)} G_{ai}^{(b)} \right] + \caO(\Psi^5)\nonumber \\
&\qquad= \frac{F'(\frX^{(b)})}{(\lambda \gamma v_b - \tau)^3} \frac{1}{N} \sum_a^{(b)} ( z + \gamma^2 m^{(b)} - \tau )^2 G_{ia}^{(b)} G_{ai}^{(b)} + \frac{F'(\frX^{(b)})}{(\lambda \gamma v_b - \tau)^3} \frac{\gamma^4}{N} \sum_{a, p, r}^{(b)} G_{pr}^{(b)} G_{rp}^{(b)} G_{ia}^{(b)} G_{ai}^{(b)} + \caO(\Psi^5)\,.
\end{align}
Thus, we get
\begin{align} \label{eq:c}
&\E \left[ \frac{1}{N} \sum_a^{(b)} \frac{1}{(\lambda \gamma v_b - \tau)^3} F'(\frX) \left( z + \sum_{p, q}^{(b)} h_{bp} G_{pq}^{(b)} h_{qb} - \tau \right)^2 G_{ia} G_{ai} \right] \nonumber\\
&\qquad= \frac{1}{(\lambda \gamma v_b - \tau)^3} \E [F'(\frX) \caX_{42}] + \frac{2 \gamma^4}{(\lambda \gamma v_b - \tau)^3} \E [F'(\frX) \caX_{44}'] + \caO(\Psi^5)\,.
\end{align}

\subsection{Expansion of $\E [F'(\frX) G_{ia}G_{bb}G_{ai}]$}

From \eqref{eq:a+b+c}, \eqref{eq:a}, \eqref{eq:b} and \eqref{eq:c}, we obtain
\begin{align} \label{eq:second}
&\E \left[ \frac{1}{N} \sum_a F'(\frX) G_{ia}G_{bb}G_{ai} \right] \nonumber \\
&\quad= \frac{1}{\lambda \gamma v_b - \tau} \E [ F'(\frX) \caX_{22}] + \frac{1}{(\lambda \gamma v_b - \tau)^2} \E [ F'(\frX) \caX_{32}] + \frac{1}{(\lambda \gamma v_b - \tau)^3} \E [ F'(\frX) \caX_{42}]  + \frac{\gamma^4}{(\lambda \gamma v_b - \tau)^3} \E [ F'(\frX) \caX_{44}']\nonumber \\
&\quad\quad  + \frac{4 \gamma^4}{(\lambda \gamma v_b - \tau)^3} \E [ F'(\frX) \caX_{44}] + \frac{2 \gamma^4}{(\lambda \gamma v_b - \tau)^3} \E \left[ F''(\frX) \frac{1}{N^2} \sum_{j, p, q} \left( \im \int_{E_1 -2}^{E_2 -2} \wt G_{jp} \wt G_{qj} \,\dd \wt y \right) G_{pq} \caX_{22} \right] + \caO(\Psi^5)\,. 
\end{align}

\begin{remark}
Choosing $F' \equiv 1$ we infer from \eqref{EF G_ii expand}, \eqref{eq:first} and \eqref{eq:second} that
\begin{align}
\E \,\dd G_{ii} &= \left( -\gamma^2 \partial_t(\lambda\gamma) \sum_j \frac{v_j}{(\lambda \gamma v_a - \tau)^2} + \dot z + 2 \dot\gamma\gamma \sum_j \frac{1}{\lambda \gamma v_j - \tau} \right) \E\, [\caX_{22} ]\,\dd t \nonumber \\
&\qquad\quad + \left( -2 \gamma^2 \partial_t(\lambda\gamma) \sum_j \frac{v_j}{(\lambda \gamma v_a - \tau)^3} + 2\dot\gamma\gamma^{-1} \right) \E \,[\caX_{32} + \gamma^2 \caX_{33}] \,\dd t\nonumber \\
&\qquad\quad -3 \gamma^2 \partial_t(\lambda\gamma) \left( \sum_j \frac{v_j}{(\lambda \gamma v_a - \tau)^4} \right) \E\, \big[ \caX_{42} + 2 \gamma^2 \caX_{43} + 4 \gamma^4 \caX_{44} + \gamma^4 \caX_{44}' \big] \,\dd t \nonumber \\
&\qquad\quad + 2\dot\gamma\gamma \left( \sum_j \frac{1}{(\lambda \gamma v_a - \tau)^3} \right) \E\, \big[ \caX_{42} + 4 \gamma^4 \caX_{44} + \gamma^4 \caX_{44}' \big] \,\dd t + \E\,[G_{ii}^2] \,\dd t + \caO(N^{1/2} \Psi^3) \,\dd t \,,
\end{align}
where we used the sum rule
\begin{align}
\sum_j \frac{1}{(\lambda \gamma v_j - \tau)^2} = \frac{1}{\gamma^2}\,.
\end{align}
Note that we have added the term $G_{ii}$ at the end to account for the case $a=i$.
\end{remark}

\section{}\label{appendix V}

In a third step of the proof of Lemma~\ref{lem:G_ii estimate}, we further simplify in this appendix the right side of~\eqref{EF G_ii expand}, using~\eqref{eq:first} and~\eqref{eq:second}. We always work on the event $\Xi$ and abbreviate $\caO_{\Xi}\equiv \caO$.

Let
\begin{align} \label{X_2}
X_2 \deq \frac{1}{N} \sum_i F'(\frX) \caX_{22}
\end{align}
and
\begin{align} \label{X_3}
X_3 \deq \frac{1}{N} \sum_i \left( F'(\frX) \caX_{32} + \gamma^2 F'(\frX) \caX_{33} + F''(\frX) \frac{\gamma^2}{N^2} \sum_{j, s, t} \left( \im \int_{E_1 -2}^{E_2 -2} \wt G_{js} \wt G_{tj} \,\dd \wt y \right) G_{is} G_{ti} \right)\,.
\end{align}
It is obvious that $X_2 = \caO(\Psi^2)$ and $X_3 = \caO(\Psi^3)$. Furthermore, for $a \neq i$, we let $X_4$ be a random variable not containing $a$ as a fixed index that satisfies
\begin{align} \label{X_4}
\frac{1}{N} \sum_i \E [F'(\frX) G_{ia} G_{ai}] &= \frac{\gamma^2}{(\lambda \gamma v_a - \tau)^2} \E X_2 + \frac{2\gamma^2}{(\lambda \gamma v_a - \tau)^3} \E X_3 + \frac{\gamma^2}{(\lambda \gamma v_a - \tau)^4} \E X_4 + \caO(\Psi^5) 
\end{align}
and $X_4 = \caO(\Psi^4)$. We can easily check the existence of such an $X_4$ from \eqref{eq:first}.

Using the notations in \eqref{A_n}, we have from \eqref{X_4} that, after summing over the index $a$,
\begin{align}
N \E\,[ X_2 ]= N \gamma^2 A_2\E\,[ X_2 ]+ 2N \gamma^2 A_3 \E\,[ X_3] + N \gamma^2 A_4 \E\,[ X_4] + \frac{1}{N} \sum_i \E\,[ F'(\frX) G_{ii}^2] + \caO(\Psi^2)\,,
\end{align}
which also implies
\begin{align} \label{3-4 relation}
\E \,[X_4] = -\frac{2 A_3}{A_4} \E\,[ X_3] - \frac{1}{\gamma^2 A_4 N^2} \sum_i \E\,[ F'(\frX) G_{ii}^2] + \caO(\Psi^5)\,,
\end{align}
 where we used that $A_2 = \gamma^{-2}$.

\subsection{Simplification of \eqref{eq:first}}

We next consider the first term~\eqref{EF G_ii expand}, where we have an additional factor $v_a$ in the summand. Since $v_a$ is considered fix, the addition of such a factor does not change the expansion results we have obtained so far. We hence get
\begin{align} \label{eq:first_2}
&\frac{1}{N} \sum_{i, a} v_a \E \,[F'(\frX) G_{ia} G_{ai}] \nonumber \\
&\qquad= \sum_a \frac{\gamma^2 v_a}{(\lambda \gamma v_a - \tau)^2} \E\, X_2 + \sum_a \frac{2\gamma^2 v_a}{(\lambda \gamma v_a - \tau)^3} \E \,X_3 + \sum_a \frac{\gamma^2 v_a}{(\lambda \gamma v_a - \tau)^4} \E\, X_4 + \frac{1}{N} \sum_i \E\,[ v_i F'(\frX) G_{ii}^2] \nonumber \\
&\qquad= N \gamma^2 A_2' \E\, X_2 + 2N \gamma^2 A_3' \E\, X_3 + N \gamma^2 A_4' \E\, X_4 + \frac{1}{N} \sum_i \E\,[ v_i F'(\frX) G_{ii}^2] + \caO(\Psi^2)\nonumber \\
&\qquad= N \gamma^2 A_2' \E\, X_2 + 2N \gamma^2 \left( A_3' - \frac{A_3 A_4'}{A_4} \right) \E\, X_3 - \frac{A_4'}{A_4} \frac{1}{N} \sum_i \E\,[ F'(\frX) G_{ii}^2] + \frac{1}{N} \sum_i \E\,[ v_i F'(\frX) G_{ii}^2] + \caO(\Psi^2)\,. 
\end{align}
The last two terms on the right side of~\eqref{eq:first_2} can be further expanded using
\begin{align*}
F'(\frX) G_{ii}^2 = \frac{1}{(\lambda \gamma v_i - \tau)^2} F'(\frX) + \frac{2}{(\lambda \gamma v_i - \tau)^3} \left( z + \sum_{p, q}^{(i)} h_{ip} G_{pq}^{(i)} h_{qi} - \tau \right) F'(\frX) + \caO(\Psi^2)\,.
\end{align*}
We further observe that
\begin{align*}
 \E_i\, \left[ \left( z + \sum_{p, q}^{(i)} h_{ip} G_{pq}^{(i)} h_{qi} - \tau \right) F'(\frX) \right] &= \E_i \,\left[ \left( z + \sum_{p, q}^{(i)} h_{ip} G_{pq}^{(i)} h_{qi} - \tau \right) F'(\frX^{(i)}) \right] + \caO(\Psi^2) \nonumber \\
&= (z + \gamma^2 m^{(i)} - \tau) F'(\frX^{(i)}) + \caO(\Psi^2)\nonumber\\& = (z + \gamma^2 m - \tau) F'(\frX) + \caO(\Psi^2)\,,
\end{align*}
and we hence obtain
\begin{align*}
\E\, [ F'(\frX) G_{ii}^2 ] = \frac{1}{(\lambda \gamma v_i - \tau)^2} \E\, [F'(\frX)] + \frac{2}{(\lambda \gamma v_i - \tau)^3} \E\,[ (z + \gamma^2 m - \tau) F'(\frX)] + \caO(\Psi^2)\,.
\end{align*}
We thus have 
\begin{align} \label{G_ii^2}
\frac{1}{N} \sum_i \E\,[ F'(\frX) G_{ii}^2] = A_2 \E\, [F'(\frX)] + 2A_3 \E\,[ (z + \gamma^2 m - \tau) F'(\frX)] + \caO(\Psi^2)
\end{align}
and, similarly,
\begin{align}\label{G_ii^2 2}
\frac{1}{N} \sum_i \E\,[ v_i F'(\frX) G_{ii}^2] = A_2' \E \,[F'(\frX)] + 2A_3' \E\,[ (z + \gamma^2 m - \tau) F'(\frX)] + \caO(\Psi^2)\,.
\end{align}
Putting~\eqref{G_ii^2} and~\eqref{G_ii^2 2} back into~\eqref{eq:first_2}, we find 
\begin{align} \label{eq:first_final}
\frac{1}{N} \sum_{i, a} v_a \E\, [F'(\frX) G_{ia} G_{ai}] &= N \gamma^2 A_2' \E\,[ X_2] + 2N \gamma^2 \left( A_3' - \frac{A_3 A_4'}{A_4} \right) \E\,[ X_3] + \left( A_2'- \frac{A_2 A_4'}{A_4} \right) \E\,[F'(\frX)] \nonumber\\
&\qquad + \left( A_3'- \frac{A_3 A_4'}{A_4} \right) \E[(z + \gamma^2 m - \tau) F'(\frX)] + \caO(\Psi^2)\,.
\end{align}

We now go back to the other terms in \eqref{EF G_ii expand}. We have from \eqref{eq:second} that
\begin{align} \label{eq:second_2}
&\frac{1}{N^2} \sum_{i, a, b} \big(\E\, [F'(\frX) G_{ia}G_{ab}G_{bi}] + \E\, [F'(\frX) G_{ia}G_{bb}G_{ai}] \big) \nonumber \\
&\qquad + \frac{1}{N^2} \sum_{i, a, b, j} \E\, \left[ F''(\frX) \left( \im \int_{E_1 -2}^{E_2 -2} G_{ja}' G_{bj}' \,\dd \wt y \right) G_{ia}G_{bi} \right]\nonumber \\
&= N A_1 \E\,[ X_2 ]+ N A_2 \E\, [X_3] + A_3 \sum_i \left( \E\,[ F'(\frX) \caX_{42} ] + \gamma^4 \E\, [F'(\frX) \caX_{44}'] + 4 \gamma^4 \E\,[ F'(\frX) \caX_{44} ] \right) \nonumber \\
&\qquad + 2 \gamma^4 A_3 \E\, \left[ F''(\frX) \frac{1}{N^3} \sum_{i, j, p, q, s} \left( \im \int_{E_1 -2}^{E_2 -2} \wt G_{jp} \wt G_{qj} \,\dd \wt y \right) G_{pq} G_{is} G_{si} \right] + \caO(\Psi^2)\,. 
\end{align}
We note that the right side of~\eqref{eq:second_2} contains several terms of $\caO(\Psi)$ that are not directly related to $X_4$. In order to compare those terms with the terms in \eqref{eq:first_final}, we expand the terms in $X_3$ to find more ``optical theorems''.

\subsection{Optical theorem from $\caX_{32}$}

For $F'(\frX) \caX_{32}$, we consider
\begin{align} \label{eq:321+322}
F'(\frX) \caX_{32} = \frac{F'(\frX)}{N} (z+ \gamma^2 m - \tau) \sum_a G_{ia} G_{ai} = \frac{F'(\frX)}{N} (z+ \gamma^2 m - \tau) \left( G_{ii}^2 + \sum_a^{(i)} G_{ia} G_{ai} \right)
\end{align}
and expand each term on the right side. We can easily see that the first term of \eqref{eq:321+322} becomes
\begin{align} \label{eq:321}
\frac{F'(\frX)}{N} (z+ \gamma^2 m - \tau) G_{ii}^2 = \frac{1}{(\lambda \gamma v_i - \tau)^2} \frac{F'(\frX)}{N} (z+ \gamma^2 m - \tau) + \caO(\Psi^2)\,.
\end{align}

We next expand the second term of \eqref{eq:321+322} to find that, for $a \neq i$,
\begin{align} \label{eq:3221+3222}
& F'(\frX) (z+ \gamma^2 m - \tau) G_{ia} G_{ai} \nonumber \\
& \qquad= \frac{1}{(\lambda \gamma v_a - \tau)^2} F'(\frX) (z+ \gamma^2 m - \tau) \sum_{p, q}^{(a)} G_{ip}^{(a)} h_{pa} h_{aq} G_{qi}^{(a)}\nonumber \\
&\qquad\qquad + \frac{2}{(\lambda \gamma v_a - \tau)^3} F'(\frX) (z+ \gamma^2 m - \tau) \left( z + \sum_{p, q}^{(a)} h_{ap} G_{pq}^{(a)} h_{qa} - \tau \right) \sum_{p, q}^{(a)} G_{ip}^{(a)} h_{pa} h_{aq} G_{qi}^{(a)} + \caO(\Psi^5)\,. 
\end{align}
Expanding further the first term in \eqref{eq:3221+3222}, we get
\begin{align} \label{eq:32211+32212+32213}
&\frac{1}{(\lambda \gamma v_a - \tau)^2} F'(\frX) (z+ \gamma^2 m - \tau) \sum_{p, q}^{(a)} G_{ip}^{(a)} h_{pa} h_{aq} G_{qi}^{(a)} \nonumber \\
&\qquad= \frac{1}{(\lambda \gamma v_a - \tau)^2} F'(\frX^{(a)}) (z+ \gamma^2 m^{(a)} - \tau) \sum_{p, q}^{(a)} G_{ip}^{(a)} h_{pa} h_{aq} G_{qi}^{(a)} \nonumber\\
&\qquad\qquad + \frac{\gamma^2}{(\lambda \gamma v_i - \tau)^2} F'(\frX^{(a)}) (m - m^{(a)}) \sum_{p, q}^{(a)} G_{ip}^{(a)} h_{pa} h_{aq} G_{qi}^{(a)} \nonumber \\
&\qquad\qquad + \frac{1}{(\lambda \gamma v_i - \tau)^2} F''(\frX^{(a)}) (\frX - \frX^{(a)}) (z+ \gamma^2 m - \tau) \sum_{p, q}^{(a)} G_{ip}^{(a)} h_{pa} h_{aq} G_{qi}^{(a)} + \caO(\Psi^5)\,.
\end{align}
We then take the partial expectation $\E_a$ for each term in \eqref{eq:32211+32212+32213}. The first term yields 
\begin{align}
&\E_a \left[ \frac{1}{(\lambda \gamma v_a - \tau)^2} F'(\frX^{(a)}) (z+ \gamma^2 m^{(a)} - \tau) \sum_{p, q}^{(a)} G_{ip}^{(a)} h_{pa} h_{aq} G_{qi}^{(a)} \right]\nonumber \\
&\qquad= \frac{\gamma^2}{N} \frac{1}{(\lambda \gamma v_a - \tau)^2} F'(\frX^{(a)}) (z+ \gamma^2 m^{(a)} - \tau) \sum_p^{(a)} G_{ip}^{(a)} G_{pi}^{(a)}\,.
\end{align}
Thus, taking the full expectation we obtain with \eqref{eq:41} that
\begin{align} \label{eq:32211}
&\E\, \left[ \frac{1}{(\lambda \gamma v_a - \tau)^2} F'(\frX^{(a)}) (z+ \gamma^2 m^{(a)} - \tau) \sum_{p, q}^{(a)} G_{ip}^{(a)} h_{pa} h_{aq} G_{qi}^{(a)} \right] \nonumber \\
&\qquad= \frac{\gamma^2}{(\lambda \gamma v_a - \tau)^2} \E\,[ F'(\frX) \caX_{32} ] - \frac{2 \gamma^4}{(\lambda \gamma v_a - \tau)^3} \E\,[ F'(\frX) \caX_{43} ] - \frac{\gamma^6}{(\lambda \gamma v_a - \tau)^3} \E\,[ F'(\frX) \caX_{44}' ]\nonumber \\
&\qquad\qquad - \frac{\gamma^4}{(\lambda \gamma v_a - \tau)^3} \E\,[ F''(\frX) \frX_{22} \caX_{32} ] + \caO(\Psi^5)\,. 
\end{align}
The second term in \eqref{eq:32211+32212+32213} can be expanded similarly, using the relation
$$
m - m^{(a)} = \frac{1}{N} \sum_r \frac{G_{ra} G_{ar}}{G_{aa}}.
$$
We then have
\begin{align} \label{eq:32212}
&\E \,\left[ \frac{\gamma^2}{(\lambda \gamma v_i - \tau)^2} F'(\frX^{(a)}) (m - m^{(a)}) \sum_{p, q}^{(a)} G_{ip}^{(a)} h_{pa} h_{aq} G_{qi}^{(a)} \right]\nonumber \\
&\qquad= \frac{\gamma^6}{(\lambda \gamma v_a - \tau)^3} \E\,[ F'(\frX) \caX_{44}' ] + \frac{2 \gamma^6}{(\lambda \gamma v_a - \tau)^3} \E\,[ F'(\frX) \caX_{44} ] + \caO(\Psi^5)\,. 
\end{align}
Finally, we have for the third term in \eqref{eq:32211+32212+32213}, from the relation
$$
\frX - \frX^{(a)} = \im \int_{E_1 -2}^{E_2 -2} \sum_j \frac{\wt G_{ja} \wt G_{aj}}{\wt G_{aa}} \,\dd \wt y \,,
$$
that
\begin{align} \label{eq:32213}
&\E\left[ \frac{1}{(\lambda \gamma v_i - \tau)^2} F''(\frX^{(a)}) (\frX - \frX^{(a)}) (z+ \gamma^2 m - \tau) \sum_{p, q}^{(a)} G_{ip}^{(a)} h_{pa} h_{aq} G_{qi}^{(a)} \right] \nonumber \\
&\qquad= \frac{\gamma^4}{(\lambda \gamma v_a - \tau)^3} \E\,[ F'(\frX) \frX_{22} \caX_{32} ]\nonumber \\
&\qquad\qquad + \frac{2 \gamma^4}{(\lambda \gamma v_a - \tau)^3} \E \left[ F''(\frX) \frac{1}{N^2} \sum_{r, p, q} \left( \im \int_{E_1 -2}^{E_2 -2} \wt G_{rp} \wt G_{qr} \,\dd \wt y \right) ( z + \gamma^2 m - \tau ) G_{ip} G_{qi} \right] + \caO(\Psi^5)\,. 
\end{align}

Thus, from \eqref{eq:32211+32212+32213}, \eqref{eq:32211}, \eqref{eq:32212} and \eqref{eq:32213}, we get
\begin{align} \label{eq:3221}
&\E \left[ \frac{1}{(\lambda \gamma v_a - \tau)^2} F'(\frX) (z+ \gamma^2 m - \tau) \sum_{p, q}^{(a)} G_{ip}^{(a)} h_{pa} h_{aq} G_{qi}^{(a)} \right] \nonumber \\
&\qquad= \frac{\gamma^2}{(\lambda \gamma v_a - \tau)^2} \E\,[ F'(\frX) \caX_{32} ] - \frac{2 \gamma^4}{(\lambda \gamma v_a - \tau)^3} \E\,[ F'(\frX) \caX_{43} ] + \frac{2 \gamma^6}{(\lambda \gamma v_a - \tau)^3} \E\,[ F'(\frX) \caX_{44} ] \nonumber\\
&\qquad\qquad + \frac{2 \gamma^4}{(\lambda \gamma v_a - \tau)^3} \E \left[ F''(\frX) \frac{1}{N^2} \sum_{r, p, q} \left( \im \int_{E_1 -2}^{E_2 -2} \wt G_{rp} \wt G_{qr} \,\dd \wt y \right) ( z + \gamma^2 m - \tau ) G_{ip} G_{qi} \right] + \caO(\Psi^5)\,, 
\end{align}
which completes the expansion of the first term in \eqref{eq:3221+3222}. The second term in \eqref{eq:3221+3222} is of $\caO(\Psi^2)$ and we observe that
\begin{align} \label{eq:3222}
&\E \left[ \frac{2}{(\lambda \gamma v_a - \tau)^3} F'(\frX) (z+ \gamma^2 m - \tau) \left( z + \sum_{p, q}^{(a)} h_{ap} G_{pq}^{(a)} h_{qa} - \tau \right) \sum_{p, q}^{(a)} G_{ip}^{(a)} h_{pa} h_{aq} G_{qi}^{(a)} \right] \nonumber \\
&\qquad= \frac{2 \gamma^2}{(\lambda \gamma v_a - \tau)^3} \E[ F'(\frX) \caX_{42} ] + \frac{4 \gamma^4}{(\lambda \gamma v_a - \tau)^3} \E[ F'(\frX) \caX_{43} ] + \caO(\Psi^5)\,.
\end{align}
Combining \eqref{eq:321+322}, \eqref{eq:321}, \eqref{eq:3221+3222}, \eqref{eq:3221} and \eqref{eq:3222}, we conclude that
\begin{align}
\E\,[ F'(\frX) \caX_{32} ] &= \E\,[ F'(\frX) \caX_{32} ] + 2 A_3 \big( \gamma^2 \E\,[ F'(\frX) \caX_{42} ] + \gamma^4 \E\,[ F'(\frX) \caX_{43} ] + \gamma^6 \E\,[ F'(\frX) \caX_{44} ] \big) \nonumber \\
&\qquad + 2 A_3 \gamma^4 \E\, \left[ F''(\frX) \frac{1}{N^2} \sum_{r, p, q} \left( \im \int_{E_1 -2}^{E_2 -2} \wt G_{rp} \wt G_{qr} \,\dd \wt y \right) ( z + \gamma^2 m - \tau ) G_{ip} G_{qi} \right]\nonumber \\
&\qquad + \frac{1}{(\lambda \gamma v_i - \tau)^2} \frac{1}{N} \E\, \left[ F'(\frX) (z+ \gamma^2 m - \tau) \right] + \caO(\Psi^5)\,, 
\end{align}
which implies the first ``optical theorem'' of this appendix,
\begin{align} \label{3-4 relation 2}
&\E\,[ F'(\frX) \caX_{42} ] + \gamma^2 \E\,[ F'(\frX) \caX_{43} ] + \gamma^4 \E\,[ F'(\frX) \caX_{44} ] \big) \nonumber \\
&\qquad + \gamma^2 \E \left[ F''(\frX) \frac{1}{N^2} \sum_{r, p, q} \left( \im \int_{E_1 -2}^{E_2 -2} \wt G_{rp} \wt G_{qr} \,\dd \wt y \right) ( z + \gamma^2 m - \tau ) G_{ip} G_{qi} \right]\nonumber \\
&= - \frac{1}{2 \gamma^2 A_3} \frac{1}{(\lambda \gamma v_i - \tau)^2} \frac{1}{N} \E\, \left[ F'(\frX) (z+ \gamma^2 m - \tau) \right] + \caO(\Psi^5)\,. 
\end{align}

\subsection{Optical theorem from $\caX_{33}$}

We perform a similar expansion for $F'(\frX) \caX_{33}$. We first notice that
\begin{align} \label{eq:331+332}
&F'(\frX) \frac{1}{N} \sum_b G_{ia} G_{ab} G_{bi} \nonumber \\
&\qquad= \frac{1}{(\lambda \gamma v_a - \tau)^2} \frac{F'(\frX)}{N} \sum_{b, s, t}^{(a)} G_{is}^{(a)} h_{sa} h_{at} G_{tb}^{(a)} G_{bi}\nonumber \\
&\qquad\qquad + \frac{2}{(\lambda \gamma v_a - \tau)^3} \frac{F'(\frX)}{N} \left( z + \sum_{p, q}^{(a)} h_{ap} G_{pq}^{(a)} h_{qa} - \tau \right) \sum_{b, s, t}^{(a)} G_{is}^{(a)} h_{sa} h_{at} G_{tb}^{(a)} G_{bi} + \caO(\Psi^5)\,. 
\end{align}
The first term in \eqref{eq:331+332} can be written as
\begin{align} \label{eq:3311+3312+3313}
&\frac{1}{(\lambda \gamma v_a - \tau)^2} \frac{F'(\frX)}{N} \sum_{b, s, t}^{(a)} G_{is}^{(a)} h_{sa} h_{at} G_{tb}^{(a)} G_{bi}\nonumber \\
&\qquad= \frac{1}{(\lambda \gamma v_a - \tau)^2} \frac{F'(\frX^{(a)})}{N} \sum_{b, s, t}^{(a)} G_{is}^{(a)} h_{sa} h_{at} G_{tb}^{(a)} G_{bi}^{(a)} + \frac{1}{(\lambda \gamma v_a - \tau)^2} \frac{F'(\frX^{(a)})}{N} \sum_{b, s, t}^{(a)} G_{is}^{(a)} h_{sa} h_{at} G_{tb}^{(a)} \frac{G_{ba} G_{ai}}{G_{aa}}. \nonumber \\
&\qquad\qquad + \frac{1}{(\lambda \gamma v_a - \tau)^2} \frac{F''(\frX^{(a)})}{N} (\frX - \frX^{(a)}) \sum_{b, s, t}^{(a)} G_{is}^{(a)} h_{sa} h_{at} G_{tb}^{(a)} G_{bi} + \caO(\Psi^5)\,.
\end{align}
The expectation of the first term in \eqref{eq:3311+3312+3313} has already been computed in~\eqref{eq:1411}, which gives us
\begin{align} \label{eq:3311}
&\E \left[ \frac{1}{(\lambda \gamma v_a - \tau)^2} \frac{F'(\frX^{(a)})}{N} \sum_{b, s, t}^{(a)} G_{is}^{(a)} h_{sa} h_{at} G_{tb}^{(a)} G_{bi}^{(a)} \right]\nonumber \\
&\qquad= \frac{\gamma^2}{(\lambda \gamma v_a - \tau)^2} \E\,[ F'(\frX) \caX_{33} ] - \frac{3 \gamma^4}{(\lambda \gamma v_a - \tau)^3} \E\,[ F'(\frX) \caX_{44}] - \frac{\gamma^4}{(\lambda \gamma v_a - \tau)^3} \E \,[ F''(\frX) \frX_{22} \caX_{33}] + \caO(\Psi^5) \,. 
\end{align}
The expansion of the second term in \eqref{eq:3311+3312+3313} is similar to the one in \eqref{eq:142} and we get
\begin{align} \label{eq:3312}
&\E\, \left[ \frac{1}{(\lambda \gamma v_a - \tau)^2} \frac{F'(\frX^{(a)})}{N} \sum_{b, s, t}^{(a)} G_{is}^{(a)} h_{sa} h_{at} G_{tb}^{(a)} \frac{G_{ba} G_{ai}}{G_{aa}} \right]\nonumber \\
&\qquad= \frac{\gamma^4}{(\lambda \gamma v_a - \tau)^3} \E\,[ F'(\frX) \caX_{44}'] + \frac{2 \gamma^4}{(\lambda \gamma v_a - \tau)^3} \E\,[ F'(\frX) \caX_{44}] + \caO(\Psi^5) \,.
\end{align}
The third term in \eqref{eq:3311+3312+3313} yields, using \eqref{eq:122},
\begin{align} \label{eq:3313}
&\E \left[ \frac{1}{(\lambda \gamma v_a - \tau)^2} \frac{F''(\frX^{(a)})}{N} (\frX - \frX^{(a)}) \sum_{b, s, t}^{(a)} G_{is}^{(a)} h_{sa} h_{at} G_{tb}^{(a)} G_{bi} \right] \\
&\qquad= \frac{\gamma^4}{(\lambda \gamma v_a - \tau)^3} \E\,[ F''(\frX) \frX_{22} \caX_{33} ] + \frac{2 \gamma^4}{(\lambda \gamma v_a - \tau)^3} \E \left[ F''(\frX) \frac{1}{N^3} \sum_{j, r, t, s} \left( \im \int_{E_1 -2}^{E_2 -2} \wt G_{jr} \wt G_{tj} \,\dd \wt y \right) G_{ir} G_{ts} G_{si} \right] + \caO(\Psi^5)\,.\nonumber
\end{align}
Thus, we obtain from \eqref{eq:3311+3312+3313}, \eqref{eq:3311}, \eqref{eq:3312} and \eqref{eq:3313} that
\begin{align} \label{eq:331}
&\E \left[ \frac{1}{(\lambda \gamma v_a - \tau)^2} \frac{F'(\frX)}{N} \sum_{b, s, t}^{(a)} G_{is}^{(a)} h_{sa} h_{at} G_{tb}^{(a)} G_{bi} \right]\nonumber \\
&\qquad= \frac{\gamma^2}{(\lambda \gamma v_a - \tau)^2} \E\,[ F'(\frX) \caX_{33} ] - \frac{\gamma^4}{(\lambda \gamma v_a - \tau)^3} \E\,[ F'(\frX) \caX_{44}] + \frac{\gamma^4}{(\lambda \gamma v_a - \tau)^3} \E\,[ F'(\frX) \caX_{44}']  \nonumber \\
&\qquad\qquad + \frac{2 \gamma^4}{(\lambda \gamma v_a - \tau)^3} \E\, \left[ F''(\frX) \frac{1}{N^3} \sum_{j, r, t, s} \left( \im \int_{E_1 -2}^{E_2 -2} \wt G_{jr} \wt G_{tj} \,\dd \wt y \right) G_{ir} G_{ts} G_{si} \right] + \caO(\Psi^5)\,. 
\end{align}

The second term in \eqref{eq:331+332} can be expanded as in \eqref{eq:1412} and we find that
\begin{align} \label{eq:332}
&\E \left[ \frac{2}{(\lambda \gamma v_a - \tau)^3} \frac{F'(\frX)}{N} \left( z + \sum_{p, q}^{(a)} h_{ap} G_{pq}^{(a)} h_{qa} - \tau \right) \sum_{b, s, t}^{(a)} G_{is}^{(a)} h_{sa} h_{at} G_{tb}^{(a)} G_{bi} \right] \nonumber \\
&\qquad= \frac{2 \gamma^2}{(\lambda \gamma v_a - \tau)^3} \E\,[ F'(\frX) \caX_{43}] + \frac{4 \gamma^4}{(\lambda \gamma v_a - \tau)^3} \E\,[ F'(\frX) \caX_{44}] + \caO(\Psi^5)\,.
\end{align}
We thus now have from \eqref{eq:331+332}, \eqref{eq:331} and \eqref{eq:332} that
\begin{align}
\E [ F'(\frX) \caX_{33} ] &= \E\,[ F'(\frX) \caX_{33} ] + \gamma^2 A_3 \big( 2 \E[ F'(\frX) \caX_{43}] + \gamma^4 \E\,[ F'(\frX) \caX_{44}'] + 3 \gamma^4 \E\,[ F'(\frX) \caX_{44}] ) \nonumber \\
&\qquad + 2 \gamma^4 A_3 \E\, \left[ F''(\frX) \frac{1}{N^3} \sum_{j, r, t, s} \left( \im \int_{E_1 -2}^{E_2 -2} \wt G_{jr} \wt G_{tj} \,\dd \wt y \right) G_{ir} G_{ts} G_{si} \right] + \caO(\Psi^5)\,, \nonumber
\end{align}
which implies the second ``optical theorem'' of this appendix,
\begin{align} \label{3-4 relation 3}
&2 \E\,[ F'(\frX) \caX_{43}] + \gamma^4 \E\,[ F'(\frX) \caX_{44}'] + 3 \gamma^4 \E\,[ F'(\frX) \caX_{44}] + 2 \gamma^2 \E\, \left[ F''(\frX) \frac{1}{N^3} \sum_{j, r, t, s} \left( \im \int_{E_1 -2}^{E_2 -2} \wt G_{jr} \wt G_{tj} \,\dd \wt y \right) G_{ir} G_{ts} G_{si} \right] \nonumber \\
&\qquad\qquad= \caO(\Psi^5)\,.
\end{align}

\subsection{Optical theorem from the other term in $X_3$}

We next expand the term
\begin{align} \label{eq:o1+o2+o3}
&\frac{1}{N} \sum_{b, j} F''(\frX) \left( \im \int_{E_1 -2}^{E_2 -2} G_{ja}' G_{bj}' \,\dd \wt y \right) G_{ia}G_{bi} \nonumber \\
&\quad= \frac{1}{(\lambda \gamma v_j - \tau)^2} \frac{F''(\frX)}{N} \sum_{b, j, s, t}^{(a)} \left( \im \int_{E_1 -2}^{E_2 -2} \wt G^{(a)}_{js} h_{sa} \wt G_{bj} \,\dd \wt y \right) G_{it}^{(a)} h_{ta} G_{bi}\nonumber \\
&\quad \quad+ \frac{1}{(\lambda \gamma v_j - \tau)^3} \frac{F''(\frX)}{N} \sum_{b, j, s, t}^{(a)} \left( \im \int_{E_1 -2}^{E_2 -2} \left( z + \sum_{p, q}^{(a)} h_{ap} \wt G^{(a)}_{pq} h_{qa} - \tau \right) \wt G^{(a)}_{js} h_{sa} \wt G_{bj} \,\dd \wt y \right) G_{it}^{(a)} h_{ta} G_{bi} \nonumber \\
&\quad \quad+ \frac{1}{(\lambda \gamma v_j - \tau)^3} \frac{F''(\frX)}{N} \sum_{b, j, s, t}^{(a)} \left( \im \int_{E_1 -2}^{E_2 -2} \wt G^{(a)}_{js} h_{sa} \wt G_{bj} \,\dd \wt y \right) \left( z + \sum_{p, q}^{(a)} h_{ap} G^{(a)}_{pq} h_{qa} - \tau \right) G_{it}^{(a)} h_{ta} G_{bi} + \caO(\Psi^5)\,. 
\end{align}
We further expand the first term in \eqref{eq:o1+o2+o3} to find
\begin{align} \label{eq:o11+o12+o13+o14}
&\frac{1}{(\lambda \gamma v_j - \tau)^2} \frac{F''(\frX)}{N} \sum_{b, j, s, t}^{(a)} \left( \im \int_{E_1 -2}^{E_2 -2} \wt G^{(a)}_{js} h_{sa} \wt G_{bj} \,\dd \wt y \right) G_{it}^{(a)} h_{ta} G_{bi} \nonumber \\
&\qquad= \frac{1}{(\lambda \gamma v_j - \tau)^2} \frac{F''(\frX^{(a)})}{N} \sum_{b, j, s, t}^{(a)} \left( \im \int_{E_1 -2}^{E_2 -2} \wt G^{(a)}_{js} h_{sa} \wt G^{(a)}_{bj} \,\dd \wt y \right) G_{it}^{(a)} h_{ta} G_{bi}^{(a)} \nonumber \\
&\qquad\qquad + \frac{1}{(\lambda \gamma v_j - \tau)^2} \frac{F''(\frX^{(a)})}{N} \sum_{b, j, s, t}^{(a)} \left( \im \int_{E_1 -2}^{E_2 -2} \wt G^{(a)}_{js} h_{sa} \wt G^{(a)}_{bj} \,\dd \wt y \right) G_{it}^{(a)} h_{ta} \frac{G_{ba} G_{ai}}{G_{aa}} \nonumber\\
&\qquad\qquad + \frac{1}{(\lambda \gamma v_j - \tau)^2} \frac{F''(\frX^{(a)})}{N} \sum_{b, j, s, t}^{(a)} \left( \im \int_{E_1 -2}^{E_2 -2} \wt G^{(a)}_{js} h_{sa} \frac{\wt G_{ba} G_{aj}}{G_{aa}} \,\dd \wt y \right) G_{it}^{(a)} h_{ta} G_{bi}^{(a)} \nonumber \\
&\qquad\qquad + \frac{1}{(\lambda \gamma v_j - \tau)^2} \frac{F'''(\frX^{(a)})}{N} (\frX - \frX^{(a)}) \sum_{b, j, s, t}^{(a)} \left( \im \int_{E_1 -2}^{E_2 -2} \wt G^{(a)}_{js} h_{sa} \wt G^{(a)}_{bj} \,\dd \wt y \right) G_{it}^{(a)} h_{ta} G_{bi}^{(a)} + \caO(\Psi^5)\,. 
\end{align}

Taking the partial expectation $\E_a$, we obtain
\begin{align}
&\E_a \left[ \frac{1}{(\lambda \gamma v_j - \tau)^2} \frac{F''(\frX^{(a)})}{N} \sum_{b, j, s, t}^{(a)} \left( \im \int_{E_1 -2}^{E_2 -2} \wt G^{(a)}_{js} h_{sa} \wt G^{(a)}_{bj} \,\dd \wt y \right) G_{it}^{(a)} h_{ta} G_{bi}^{(a)} \right] \nonumber \\
&\qquad= \frac{\gamma^2}{(\lambda \gamma v_j - \tau)^2} \frac{F''(\frX^{(a)})}{N^2} \sum_{b, j, s}^{(a)} \left( \im \int_{E_1 -2}^{E_2 -2} \wt G^{(a)}_{js} \wt G^{(a)}_{bj} \,\dd \wt y \right) G_{is}^{(a)} G_{bi}^{(a)}\nonumber \\
&\qquad= \frac{\gamma^2}{(\lambda \gamma v_j - \tau)^2} \frac{F''(\frX)}{N^2} \sum_{b, j, s} \left( \im \int_{E_1 -2}^{E_2 -2} \wt G_{js} \wt G_{bj} \,\dd \wt y \right) G_{is} G_{bi} \nonumber \\
&\qquad\qquad - \frac{\gamma^2}{(\lambda \gamma v_j - \tau)^2} \frac{F''(\frX^{(a)})}{N^2} \sum_{b, j, s}^{(a)} \left( \im \int_{E_1 -2}^{E_2 -2} \wt G^{(a)}_{js} \wt G^{(a)}_{bj} \,\dd \wt y \right) G_{is}^{(a)} \frac{G_{ba} G_{ai}}{G_{aa}} \nonumber\\
&\qquad\qquad - \frac{\gamma^2}{(\lambda \gamma v_j - \tau)^2} \frac{F''(\frX^{(a)})}{N^2} \sum_{b, j, s}^{(a)} \left( \im \int_{E_1 -2}^{E_2 -2} \wt G^{(a)}_{js} \wt G^{(a)}_{bj} \,\dd \wt y \right) \frac{G_{ia} G_{as}}{G_{aa}} G_{bi} \nonumber \\
&\qquad\qquad - \frac{\gamma^2}{(\lambda \gamma v_j - \tau)^2} \frac{F''(\frX^{(a)})}{N^2} \sum_{b, j, s}^{(a)} \left( \im \int_{E_1 -2}^{E_2 -2} \wt G^{(a)}_{js} \frac{\wt G_{ba} \wt G_{aj}}{\wt G_{aa}} \,\dd \wt y \right) G_{is} G_{bi} \nonumber \\
&\qquad\qquad - \frac{\gamma^2}{(\lambda \gamma v_j - \tau)^2} \frac{F''(\frX^{(a)})}{N^2} \sum_{b, j, s}^{(a)} \left( \im \int_{E_1 -2}^{E_2 -2} \frac{\wt G_{ja} \wt G_{as}}{\wt G_{aa}} \wt G_{bj} \,\dd \wt y \right) G_{is} G_{bi} \nonumber \\
&\qquad\qquad- \frac{\gamma^2}{(\lambda \gamma v_j - \tau)^2} \frac{F'''(\frX^{(a)})}{N^2} (\frX - \frX^{(a)}) \sum_{b, j, s} \left( \im \int_{E_1 -2}^{E_2 -2} \wt G_{js} \wt G_{bj} \,\dd \wt y \right) G_{is} G_{bi} + \caO(\Psi^5)\,.
\end{align}
Taking the full expectation, we find for the first term in \eqref{eq:o11+o12+o13+o14} that
\begin{align} \label{eq:o11}
&\E \left[ \frac{1}{(\lambda \gamma v_j - \tau)^2} \frac{F''(\frX^{(a)})}{N} \sum_{b, j, s, t}^{(a)} \left( \im \int_{E_1 -2}^{E_2 -2} \wt G^{(a)}_{js} h_{sa} \wt G^{(a)}_{bj} \,\dd \wt y \right) G_{it}^{(a)} h_{ta} G_{bi}^{(a)} \right] \nonumber \\
&\qquad= \frac{\gamma^2}{(\lambda \gamma v_j - \tau)^2} \E \left[ \frac{F''(\frX)}{N^2} \sum_{b, j, s} \left( \im \int_{E_1 -2}^{E_2 -2} \wt G_{js} \wt G_{bj} \,\dd \wt y \right) G_{is} G_{bi} \right] \nonumber \\
&\qquad\qquad - \frac{2 \gamma^4}{(\lambda \gamma v_j - \tau)^3} \E \left[ \frac{F''(\frX)}{N^3} \sum_{b, j, s, p} \left( \im \int_{E_1 -2}^{E_2 -2} \wt G_{js} \wt G_{bj} \,\dd \wt y \right) G_{is} G_{bp} G_{pi} \right] \nonumber\\
&\qquad\qquad - \frac{2 \gamma^4}{(\lambda \gamma v_j - \tau)^3} \E \left[ \frac{F''(\frX)}{N^3} \sum_{b, j, s, p} \left( \im \int_{E_1 -2}^{E_2 -2} \wt G_{js} \wt G_{bp} \wt G_{pj} \,\dd \wt y \right) G_{is} G_{bi}  \right] \nonumber \\
&\qquad\qquad - \frac{\gamma^4}{(\lambda \gamma v_j - \tau)^3} \E \left[ \frac{F'''(\frX)}{N^2} \frX_{22} \sum_{b, j, s} \left( \im \int_{E_1 -2}^{E_2 -2} \wt G_{js} \wt G_{bj} \,\dd \wt y \right) G_{is} G_{bi} \right] + \caO(\Psi^5)\,. 
\end{align}

We expand the other terms in \eqref{eq:o11+o12+o13+o14}. The second term becomes
\begin{align} \label{eq:o12}
&\E \left[ \frac{1}{(\lambda \gamma v_j - \tau)^2} \frac{F''(\frX^{(a)})}{N} \sum_{b, j, s, t}^{(a)} \left( \im \int_{E_1 -2}^{E_2 -2} \wt G^{(a)}_{js} h_{sa} \wt G^{(a)}_{bj} \,\dd \wt y \right) G_{it}^{(a)} h_{ta} \frac{G_{ba} G_{ai}}{G_{aa}} \right] \nonumber \\
&\qquad= \frac{2 \gamma^4}{(\lambda \gamma v_j - \tau)^3} \E \left[ \frac{F''(\frX)}{N^3} \sum_{b, j, s, p} \left( \im \int_{E_1 -2}^{E_2 -2} \wt G_{js} \wt G_{bj} \,\dd \wt y \right) G_{is} G_{bp} G_{pi} \right]\nonumber \\
&\qquad\qquad + \frac{\gamma^4}{(\lambda \gamma v_j - \tau)^3} \E \left[ \frac{F''(\frX)}{N^3} \sum_{b, j, s, t} \left( \im \int_{E_1 -2}^{E_2 -2} \wt G_{js} \wt G_{bj} \,\dd \wt y \right) G_{it} G_{bs} G_{ti} \right] + \caO(\Psi^5)\,.
\end{align}
Similarly,
\begin{align} \label{eq:o13}
&\E \left[ \frac{1}{(\lambda \gamma v_j - \tau)^2} \frac{F''(\frX^{(a)})}{N} \sum_{b, j, s, t}^{(a)} \left( \im \int_{E_1 -2}^{E_2 -2} \wt G^{(a)}_{js} h_{sa} \frac{\wt G_{ba} G_{aj}}{G_{aa}} \,\dd \wt y \right) G_{it}^{(a)} h_{ta} G_{bi}^{(a)} \right] \nonumber \\
&\qquad= \frac{2 \gamma^4}{(\lambda \gamma v_j - \tau)^3} \E \left[ \frac{F''(\frX)}{N^3} \sum_{b, j, s, p} \left( \im \int_{E_1 -2}^{E_2 -2} \wt G_{js} \wt G_{bp} \wt G_{pj} \,\dd \wt y \right) G_{is} G_{bi} \right]\nonumber \\
&\qquad\qquad + \frac{\gamma^4}{(\lambda \gamma v_j - \tau)^3} \E \left[ \frac{F''(\frX)}{N^3} \sum_{b, j, s, t} \left( \im \int_{E_1 -2}^{E_2 -2} \wt G_{js} \wt G_{bt} \wt G_{sj} \,\dd \wt y \right) G_{it} G_{bi} \right] + \caO(\Psi^5)\,. 
\end{align}
Finally, the fourth term in \eqref{eq:o11+o12+o13+o14} yields
\begin{align} \label{eq:o14}
&\E \left[ \frac{1}{(\lambda \gamma v_j - \tau)^2} \frac{F'''(\frX^{(a)})}{N} (\frX - \frX^{(a)}) \sum_{b, j, s, t}^{(a)} \left( \im \int_{E_1 -2}^{E_2 -2} \wt G^{(a)}_{js} h_{sa} \wt G^{(a)}_{bj} \,\dd \wt y \right) G_{it}^{(a)} h_{ta} G_{bi}^{(a)} \right] \nonumber \\
&\qquad= \frac{\gamma^4}{(\lambda \gamma v_j - \tau)^3} \E \left[ \frac{F'''(\frX)}{N^2} \frX_{22} \sum_{b, j, s} \left( \im \int_{E_1 -2}^{E_2 -2} \wt G_{js} \wt G_{bj} \,\dd \wt y \right) G_{is} G_{bi} \right]\nonumber \\
&\qquad\quad + \frac{2 \gamma^4}{(\lambda \gamma v_j - \tau)^3} \E \left[ \frac{F'''(\frX)}{N^2} \sum_{b, j, k, s, t} \left( \im \int_{E_1 -2}^{E_2 -2} \wt G_{js} \wt G_{bj} \,\dd \wt y \right) \left( \im \int_{E_1 -2}^{E_2 -2} \widehat G_{ks} \widehat G_{tk} \,\dd \widehat y \right) G_{is} G_{bi} \right] + \caO(\Psi^5)\,. 
\end{align}
Thus, from \eqref{eq:o11+o12+o13+o14}, \eqref{eq:o11}, \eqref{eq:o12}, \eqref{eq:o13} and \eqref{eq:o14}, we obtain 
\begin{align} \label{eq:o1}
&\E \left[ \frac{1}{(\lambda \gamma v_j - \tau)^2} \frac{F''(\frX)}{N} \sum_{b, j, s, t}^{(a)} \left( \im \int_{E_1 -2}^{E_2 -2} \wt G^{(a)}_{js} h_{sa} \wt G_{bj} \,\dd \wt y \right) G_{it}^{(a)} h_{ta} G_{bi} \right] \nonumber \\
&\qquad= \frac{\gamma^2}{(\lambda \gamma v_j - \tau)^2} \E \left[ \frac{F''(\frX)}{N^2} \sum_{b, j, s} \left( \im \int_{E_1 -2}^{E_2 -2} \wt G_{js} \wt G_{bj} \,\dd \wt y \right) G_{is} G_{bi} \right] \nonumber \\
&\quad\qquad + \frac{\gamma^4}{(\lambda \gamma v_j - \tau)^3} \E \left[ \frac{F''(\frX)}{N^3} \sum_{b, j, s, t} \left( \im \int_{E_1 -2}^{E_2 -2} \wt G_{js} \wt G_{bj} \,\dd \wt y \right) G_{it} G_{bs} G_{ti} \right]\nonumber \\
&\quad\qquad+ \frac{\gamma^4}{(\lambda \gamma v_j - \tau)^3} \E \left[ \frac{F''(\frX)}{N^3} \sum_{b, j, s, t} \left( \im \int_{E_1 -2}^{E_2 -2} \wt G_{js} \wt G_{bt} \wt G_{sj} \,\dd \wt y \right) G_{it} G_{bi} \right] \nonumber \\
&\quad\qquad + \frac{2 \gamma^4}{(\lambda \gamma v_j - \tau)^3} \E \left[ \frac{F'''(\frX)}{N^2} \sum_{b, j, k, s, t} \left( \im \int_{E_1 -2}^{E_2 -2} \wt G_{js} \wt G_{bj} \,\dd \wt y \right) \left( \im \int_{E_1 -2}^{E_2 -2} \widehat G_{ks} \widehat G_{tk} \,\dd \widehat y \right) G_{it} G_{bi} \right]  + \caO(\Psi^5)\,.
\end{align}
The other terms in \eqref{eq:o1+o2+o3} are of $\caO(\Psi^4)$ and we observe that
\begin{align} \label{eq:o2}
&\E \left[ \frac{1}{(\lambda \gamma v_j - \tau)^3} \frac{F''(\frX)}{N} \sum_{b, j, s, t}^{(a)} \left( \im \int_{E_1 -2}^{E_2 -2} \left( z + \sum_{p, q}^{(a)} h_{ap} \wt G^{(a)}_{pq} h_{qa} - \tau \right) \wt G^{(a)}_{js} h_{sa} \wt G_{bj} \,\dd \wt y \right) G_{it}^{(a)} h_{ta} G_{bi} \right] \nonumber \\
&\qquad= \frac{\gamma^2}{(\lambda \gamma v_j - \tau)^3} \E \left[ \frac{F''(\frX)}{N^2} \sum_{b, j, s} \left( \im \int_{E_1 -2}^{E_2 -2} ( z + \gamma^2 \wt m - \tau ) \wt G_{js} \wt G_{bj} \,\dd \wt y \right) G_{is} G_{bi} \right]\nonumber \\
&\qquad\qquad + \frac{2 \gamma^4}{(\lambda \gamma v_j - \tau)^3} \E \left[ \frac{F''(\frX)}{N^3} \sum_{b, j, s, t} \left( \im \int_{E_1 -2}^{E_2 -2} \wt G_{st} \wt G_{js} \wt G_{bj} \,\dd \wt y \right) G_{it} G_{bi} \right] + \caO(\Psi^5)
\end{align}
and, similarly,
\begin{align} \label{eq:o3}
&\E \left[ \frac{1}{(\lambda \gamma v_j - \tau)^3} \frac{F''(\frX)}{N} \sum_{b, j, s, t}^{(a)} \left( \im \int_{E_1 -2}^{E_2 -2} \wt G^{(a)}_{js} h_{sa} \wt G_{bj} \,\dd \wt y \right) \left( z + \sum_{p, q}^{(a)} h_{ap} G^{(a)}_{pq} h_{qa} - \tau \right) G_{it}^{(a)} h_{ta} G_{bi} \right] \nonumber \\
&\qquad= \frac{\gamma^2}{(\lambda \gamma v_j - \tau)^3} \E \left[ \frac{F''(\frX)}{N^2} \sum_{b, j, s} \left( \im \int_{E_1 -2}^{E_2 -2} \wt G_{js} \wt G_{bj} \,\dd \wt y \right) ( z + \gamma^2 m - \tau ) G_{is} G_{bi} \right]\nonumber \\
&\qquad\qquad + \frac{2 \gamma^4}{(\lambda \gamma v_j - \tau)^3} \E \left[ \frac{F''(\frX)}{N^3} \sum_{b, j, s, t} \left( \im \int_{E_1 -2}^{E_2 -2} \wt G_{js} \wt G_{bj} \,\dd \wt y \right) G_{st} G_{it} G_{bi} \right] + \caO(\Psi^5)\,. 
\end{align}
Thus, from \eqref{eq:o1+o2+o3}, \eqref{eq:o1}, \eqref{eq:o2} and \eqref{eq:o3}, we conclude that
\begin{align}
&\E \left[ \frac{1}{N} \sum_{b, j} F''(\frX) \left( \im \int_{E_1 -2}^{E_2 -2} G_{ja}' G_{bj}' \,\dd \wt y \right) G_{ia}G_{bi} \right] \nonumber \\
&\qquad= \frac{\gamma^2}{(\lambda \gamma v_j - \tau)^2} \E \left[ \frac{F''(\frX)}{N^2} \sum_{b, j, s} \left( \im \int_{E_1 -2}^{E_2 -2} \wt G_{js} \wt G_{bj} \,\dd \wt y \right) G_{is} G_{bi} \right] \nonumber \\
&\qquad\qquad + \frac{\gamma^4}{(\lambda \gamma v_j - \tau)^3} \E \left[ \frac{F''(\frX)}{N^3} \sum_{b, j, s, t} \left( \im \int_{E_1 -2}^{E_2 -2} \wt G_{js} \wt G_{bj} \,\dd \wt y \right) G_{it} G_{bs} G_{ti} \right]\nonumber \\
&\qquad\qquad + \frac{\gamma^4}{(\lambda \gamma v_j - \tau)^3} \E \left[ \frac{F''(\frX)}{N^3} \sum_{b, j, s, t} \left( \im \int_{E_1 -2}^{E_2 -2} \wt G_{js} \wt G_{bt} \wt G_{sj} \,\dd \wt y \right) G_{it} G_{bi} \right] \nonumber \\
&\qquad\qquad + \frac{2 \gamma^4}{(\lambda \gamma v_j - \tau)^3} \E \left[ \frac{F'''(\frX)}{N^2} \sum_{b, j, k, s, t} \left( \im \int_{E_1 -2}^{E_2 -2} \wt G_{js} \wt G_{bj} \,\dd \wt y \right) \left( \im \int_{E_1 -2}^{E_2 -2} \widehat G_{ks} \widehat G_{tk} \,\dd \widehat y \right) G_{it} G_{bi} \right] \nonumber \\
&\qquad\qquad + \frac{\gamma^2}{(\lambda \gamma v_j - \tau)^3} \E \left[ \frac{F''(\frX)}{N^2} \sum_{b, j, s} \left( \im \int_{E_1 -2}^{E_2 -2} ( z + \gamma^2 \wt m - \tau ) \wt G_{js} \wt G_{bj} \,\dd \wt y \right) G_{is} G_{bi} \right] \nonumber \\
&\qquad\qquad + \frac{\gamma^2}{(\lambda \gamma v_j - \tau)^3} \E \left[ \frac{F''(\frX)}{N^2} \sum_{b, j, s} \left( \im \int_{E_1 -2}^{E_2 -2} \wt G_{js} \wt G_{bj} \,\dd \wt y \right) ( z + \gamma^2 m - \tau ) G_{is} G_{bi} \right] \nonumber \\
&\qquad\qquad + \frac{2 \gamma^4}{(\lambda \gamma v_j - \tau)^3} \E \left[ \frac{F''(\frX)}{N^3} \sum_{b, j, s, t} \left( \im \int_{E_1 -2}^{E_2 -2} \wt G_{st} \wt G_{js} \wt G_{bj} \,\dd \wt y \right) G_{it} G_{bi} \right] \nonumber \\
&\qquad\qquad + \frac{2 \gamma^4}{(\lambda \gamma v_j - \tau)^3} \E \left[ \frac{F''(\frX)}{N^3} \sum_{b, j, s, t} \left( \im \int_{E_1 -2}^{E_2 -2} \wt G_{js} \wt G_{bj} \,\dd \wt y \right) G_{st} G_{it} G_{bi} \right] + \caO(\Psi^5)\,, 
\end{align}
which yields, after summing over the index $j$, the third ``optical theorem'' of this appendix,
\begin{align} \label{3-4 relation 4}
&\gamma^2 \E \left[ \frac{F''(\frX)}{N^3} \sum_{b, j, s, t} \left( \im \int_{E_1 -2}^{E_2 -2} \wt G_{js} \wt G_{bj} \,\dd \wt y \right) G_{it} G_{bs} G_{ti} \right]\nonumber \\
&\qquad + \gamma^2 \E \left[ \frac{F''(\frX)}{N^3} \sum_{b, j, s, t} \left( \im \int_{E_1 -2}^{E_2 -2} \wt G_{js} \wt G_{bt} \wt G_{sj} \,\dd \wt y \right) G_{it} G_{bi} \right] \nonumber \\
&\qquad + 2 \gamma^2 \E \left[ \frac{F'''(\frX)}{N^2} \sum_{b, j, k, s, t} \left( \im \int_{E_1 -2}^{E_2 -2} \wt G_{js} \wt G_{bj} \,\dd \wt y \right) \left( \im \int_{E_1 -2}^{E_2 -2} \widehat G_{ks} \widehat G_{tk} \,\dd \widehat y \right) G_{it} G_{bi} \right] \nonumber \\
&\qquad + \E \left[ \frac{F''(\frX)}{N^2} \sum_{b, j, s} \left( \im \int_{E_1 -2}^{E_2 -2} ( z + \gamma^2 \wt m - \tau ) \wt G_{js} \wt G_{bj} \,\dd \wt y \right) G_{is} G_{bi} \right] \nonumber \\
&\qquad + \E \left[ \frac{F''(\frX)}{N^2} \sum_{b, j, s} \left( \im \int_{E_1 -2}^{E_2 -2} \wt G_{js} \wt G_{bj} \,\dd \wt y \right) ( z + \gamma^2 m - \tau ) G_{is} G_{bi} \right] \nonumber \\
&\qquad + 2 \gamma^2 \E \left[ \frac{F''(\frX)}{N^3} \sum_{b, j, s, t} \left( \im \int_{E_1 -2}^{E_2 -2} \wt G_{st} \wt G_{js} \wt G_{bj} \,\dd \wt y \right) G_{it} G_{bi} \right] \nonumber \\
&\qquad + 2 \gamma^2 \E \left[ \frac{F''(\frX)}{N^3} \sum_{b, j, s, t} \left( \im \int_{E_1 -2}^{E_2 -2} \wt G_{js} \wt G_{bj} \,\dd \wt y \right) G_{st} G_{it} G_{bi} \right] \nonumber \\
&= \caO(\Psi^5)\,. 
\end{align}

\subsection{Simplification of \eqref{eq:second} and Proof of Lemma \ref{lem:G_ii estimate}}

Recall that $X_4$ is the sum (over the index $i$) of the terms of order $\caO(\Psi^4)$ given on the right side of~\eqref{eq:first}. Subtracting twice~\eqref{3-4 relation 2} and~$(2 \gamma^2)$-times \eqref{3-4 relation 3} and~\eqref{3-4 relation 4} from~\eqref{3-4 relation}, we obtain
\begin{align}\label{le second_final before}
& \sum_i \left( \E\,[ F'(\frX) \caX_{42} ] + \gamma^4 \E\, [F'(\frX) \caX_{44}'] + 4 \gamma^4 \E\,[ F'(\frX) \caX_{44} ] \right) \nonumber \\
&\qquad\qquad + 2 \gamma^4 \E\, \left[ F''(\frX) \frac{1}{N^3} \sum_{i, j, p, q, s} \left( \im \int_{E_1 -2}^{E_2 -2} \wt G_{jp} \wt G_{qj} \,\dd \wt y \right) G_{pq} G_{is} G_{si} \right]\nonumber \\
&\qquad= N \E\, X_4 + \frac{1}{\gamma^2 A_3} \frac{1}{N} \sum_i \frac{1}{(\lambda \gamma v_i - \tau)^2} \E\, \left[ F'(\frX) (z+ \gamma^2 m - \tau) \right] \nonumber \\
&\qquad= -\frac{2 A_3}{A_4} N \E\, X_3 - \frac{1}{\gamma^2 A_4} \frac{1}{N} \sum_i \E\,[ F'(\frX) G_{ii}^2] + \frac{1}{\gamma^4 A_3} \E \left[ F'(\frX) (z+ \gamma^2 m - \tau) \right] \nonumber \\
&\qquad= -\frac{2 A_3}{A_4} N \E\, X_3 - \frac{A_2}{\gamma^2 A_4} \E [F'(\frX)] + \left( \frac{1}{\gamma^4 A_3} - \frac{2 A_3}{\gamma^2 A_4} \right) \E\,[ (z + \gamma^2 m - \tau) F'(\frX)] \,,
\end{align}
where we also used \eqref{G_ii^2}. Plugging~\eqref{le second_final before} into \eqref{eq:second_2}, we conclude that
\begin{align} \label{eq:second_final}
&\frac{1}{N^2} \sum_{i, a, b} \big(\E \,[F'(\frX) G_{ia}G_{ab}G_{bi}] + \E\, [F'(\frX) G_{ia}G_{bb}G_{ai}] \big)  + \frac{1}{N^2} \sum_{i, a, b, j} \E \left[ F''(\frX) \left( \im \int_{E_1 -2}^{E_2 -2} G_{ja}' G_{bj}' \,\dd \wt y \right) G_{ia}G_{bi} \right] \nonumber \\
&\qquad= N A_1 \E\,[ X_2 ]+ \left( \gamma^{-2} - \frac{2 A_3^2}{A_4} \right) N \E\,[ X_3] - \frac{A_2 A_3}{\gamma^2 A_4} \E\, [F'(\frX)] \nonumber\\ 
&\qquad\qquad\qquad+ \left( \gamma^{-4} - \frac{2 A_3^2}{\gamma^2 A_4} \right) \E\,[ (z + \gamma^2 m - \tau) F'(\frX)] + \caO(\Psi^2)\,.
\end{align}
Finally, combining~\eqref{eq:first_final} and~\eqref{eq:second_final}, we obtain the estimate~\eqref{G_ii estimate}. This completes the proof of Lemma \ref{lem:G_ii estimate}.

\section{} \label{appendix II}

In this last appendix, we prove Lemma~\ref{lem:Q'' estimate}. Recall from~\eqref{definition of the Q} that we denote $R(w_{ab})=F'(\frX)G_{ia}G_{bi}$ and that we assumed in Lemma~\ref{lem:Q'' estimate} that $i\not=a\not=b\not=j$.
\begin{proof}[Proof of Lemma \ref{lem:Q'' estimate}]
In~\eqref{le definition of le X} we defined
\begin{align*}
\frX \deq N \int_{E_1}^{E_2} \im m( \widehat{L}_++x -2 + \ii \eta) \,\dd x = N \int_{E_1 -2}^{E_2 -2} \im m( \widehat{L}_++\wt y  + \ii \eta) \,\dd \wt y\,.
\end{align*}
Also recall that we have abbreviated $\wt G \equiv G(\widehat{L}_+ + \wt y + \ii \eta)$. From definition above we see that
\begin{align*}
\frac{\partial \frX}{\partial w_{ab}} = 2 \im \sum_j \int_{E_1 -2}^{E_2 -2} \wt G_{ja} \wt G_{bj} \,\dd \wt y = \caO_{\Xi}(\Psi)\,.
\end{align*}
Similarly, we can also show that $\frac{\partial^2 \frX}{\partial w_{ab}^2} = \caO_{\Xi}(\Psi)$.

 We next consider
\begin{align} \label{Q'' expand}
\frac{\partial^2 R (w_{ab})}{\partial w_{ab}^2} &= F'''(\frX) \left(\frac{\partial \frX}{\partial w_{ab}} \right)^2 G_{ia} G_{bi} + F''(\frX) \frac{\partial^2 \frX}{\partial w_{ab}^2} G_{ia} G_{bi} + F''(\frX) \frac{\partial \frX}{\partial w_{ab}} \frac{\partial (G_{ia} G_{bi})}{\partial w_{ab}}
\nonumber\\&\qquad + F'(\frX) \frac{\partial^2 (G_{ia} G_{bi})}{\partial w_{ab}^2}\,.
\end{align}
Clearly the first two terms are of $\caO_{\Xi}(\Psi^3)$. Since
\begin{align*}
\frac{\partial (G_{ia} G_{bi})}{\partial w_{ab}} = 2 G_{ia} G_{ba} G_{bi} + G_{ib} G_{aa} G_{bi} + G_{ia} G_{bb} G_{ai} = \caO_{\Xi}(\Psi^2)\,,
\end{align*}
the third term in \eqref{Q'' expand} is also $\caO_{\Xi}(\Psi^2)$. Finally, in $(\partial^2 / \partial w_{ab}^2) (G_{ia} G_{bi})$ every term contains at least three off-diagonal terms except $G_{ia} G_{bb} G_{aa} G_{bi}$ and $G_{ib} G_{aa} G_{bb} G_{ai}$. We first observe that
\begin{align*}
\E \,[ F'(\frX) G_{ia} G_{bb} G_{aa} G_{bi} ] = \frac{1}{\lambda \gamma v_a - \tau} \frac{1}{\lambda \gamma v_b - \tau} \E\, [ F'(\frX) G_{ia} G_{bi} ]+ \caO_\Xi(\Psi^3)\,.
\end{align*}
Using the resolvent formula~\eqref{twosided}, we find that
\begin{align*}
G_{ia} = -G_{aa} G_{ii}^{(a)} \left(h_{ia} - \sum_{p, q}^{(a)} h_{ip} G_{pq}^{(a)} h_{qa}\right) = \frac{1}{\lambda \gamma v_a - \tau} \frac{1}{\lambda \gamma v_i - \tau} \left(h_{ia} - \sum_{p, q}^{(a)} h_{ip} G_{pq}^{(a)} h_{qa}\right) + \caO_{\Xi}(\Psi^2)\,.
\end{align*}
Let
\begin{align*}
\frX^{(a)} \deq N \int_{E_1}^{E_2} \im m^{(a)}(\widehat{L}_++x   -2 + \ii \eta) \,\dd x = N \int_{E_1 -2}^{E_2 -2} \im m^{(a)}(\widehat{L}_++y   + \ii \eta) \,\dd y \,,
\end{align*}
and note that $\frX - \frX^{(a)} = \caO_{\Xi}(\Psi)$. Thus, we obtain
\begin{align}\label{le equation in last appendix}
\E_a\, [ F'(\frX) G_{ia} G_{bi} ] &= \E_a\, [ F'(\frX^{(a)}) G_{ia} G_{bi}^{(a)} ] + \caO_{\Xi}(\Psi^3)\nonumber \\
&= \frac{1}{\lambda \gamma v_a - \tau} \frac{1}{\lambda \gamma v_i - \tau} \E_a\left[ F'(\frX^{(a)}) G_{bi}^{(a)} (h_{ia} - \sum_{p, q}^{(a)} h_{ip} G_{pq}^{(a)} h_{qa})  \right] + \caO_{\Xi}(\Psi^3)\,. 
\end{align}
 Since $i\not=a$, the first term on the right side of~\eqref{le equation in last appendix} vanishes. Therefore we have
\begin{align*}
\E \,[ F'(\frX) G_{ia} G_{bi} ] = \E\, \E_a \,[ F'(\frX) G_{ia} G_{bi} ] = \caO_{\Xi}(\Psi^3)\,.
\end{align*}
This completes the proof of the Lemma~\ref{lem:Q'' estimate} and also concludes this last appendix.
\end{proof}

\end{appendix} 
\newpage

\end{document}